%% file: Main.tex
\documentclass{article}
\usepackage{authblk}
\usepackage{natbib}
\usepackage{fullpage}
\input{arxiv_style}

\input{math_commands}

\newcommand{\defn}{:=}
\renewcommand{\Normal}[2]{\mathsf{N}\left(#1, #2\right)}

\newcommand{\dimension}{d}
\renewcommand{\numobs}{n}
\renewcommand{\T}{\mathsf{T}}

\newcommand*{\ud}{\mathrm{\,d}}     % differential symbol for integrals
\renewcommand{\ft}{w_\star}
\let\epsilon\varepsilon
\title{Minimum Norm Interpolation via The Local Theory of Banach Spaces: The Role of Gaussianity}
\author[1]{Gil Kur\footnote{GK (leading author) was partially supported by NSF-ECCS-2217023 during his visit to the IDEAL Institute}}
\author[2, 3]{Reese Pathak\footnote{RP was partially supported by NSF-DMS-2503579}}
\affil[1]{Institute for Machine Learning, ETH Z\"urich}
\affil[2]{Department of Statistics, UC Berkeley}
\affil[3]{School of Operations Research and Information Engineering (ORIE), Cornell University}
\date{}
\begin{document}
\maketitle
\input{Abstract.tex}
\input{Intro.tex}
\input{MR.tex}
% \input{example}
\input{discussion}

\input{Acknow}

\bibliographystyle{plainnat}
\bibliography{Bib.bib}
\input{proof_outline_thm1.tex}

\input{proof_outline_thm2.tex}

\input{proof_outline_thm3.tex}
\input{Missing.tex}
\input{proof_outline_thm4.tex}
\end{document}

%% file: arxiv_style.tex
\usepackage[letterpaper, left=1in, right=1in, top=1in, bottom=1in]{geometry}

\PassOptionsToPackage{hypertexnames=false}{hyperref}  % compatible with cref for multiple algo
\usepackage{parskip}
\usepackage{authblk}

\usepackage[colorlinks=true, linkcolor=blue!70!black, citecolor=blue!70!black,urlcolor=black,breaklinks=true]{hyperref}
\usepackage{microtype}
\usepackage{hhline}

\makeatletter
\newcommand{\neutralize}[1]{\expandafter\let\csname c@#1\endcsname\count@}
\makeatother

\usepackage{algorithm}
\usepackage{amsthm}
\usepackage{mathtools}
\usepackage{amsmath}
\usepackage{bbm}
\usepackage{amsfonts}
\usepackage{amssymb}

\usepackage{xpatch}

\newtheorem*{theorem*}{Theorem}
\newtheorem{theorem}{Theorem}

\newtheorem{lemma}{Lemma}

\newtheorem{assumption}{Assumption}
\newtheorem{corollary}{Corollary}

\newtheorem{definition}{Definition}
\theoremstyle{definition}
\newtheorem{fact}{Fact}

\newtheorem{example}{Example}

\theoremstyle{remark}
\newtheorem{remark}{Remark}
\AtBeginEnvironment{remark}{%
  \pushQED{\qed}%
}
\AtEndEnvironment{remark}{\popQED\endexample}

\makeatletter
  \renewenvironment{proof}[1][Proof]%
  {%
   \par\noindent{\bfseries\upshape {#1.}\ }%
  }%
  {\qed\newline}
  \makeatother

\newtheorem{openproblem}{Open Problem}

\xpatchcmd{\proof}{\itshape}{\normalfont\proofnameformat}{}{}
\newcommand{\proofnameformat}{\bfseries}

\usepackage[nameinlink,capitalize]{cleveref}

\crefformat{equation}{#2(#1)#3}
\Crefformat{equation}{#2(#1)#3}

\Crefformat{figure}{#2Figure #1#3}
\Crefname{assumption}{Assumption}{Assumptions}
\Crefformat{assumption}{#2Assumption #1#3}

\usepackage{crossreftools}
\usepackage{xparse}

\ExplSyntaxOn
\DeclareDocumentCommand{\XDeclarePairedDelimiter}{mm}
 {
  \__egreg_delimiter_clear_keys: % reset to the default
  \keys_set:nn { egreg/delimiters } { #2 }
  \use:x % we want to expand the values of the token variables set with the keys
   {
    \exp_not:n {\NewDocumentCommand{#1}{sO{}m} }
     {
      \exp_not:n { \IfBooleanTF{##1} }
       {
        \exp_not:N \egreg_paired_delimiter_expand:nnnn
         { \exp_not:V \l_egreg_delimiter_left_tl }
         { \exp_not:V \l_egreg_delimiter_right_tl }
         { \exp_not:n { ##3 } }
         { \exp_not:V \l_egreg_delimiter_subscript_tl }
       }
       {
        \exp_not:N \egreg_paired_delimiter_fixed:nnnnn 
         { \exp_not:n { ##2 } }
         { \exp_not:V \l_egreg_delimiter_left_tl }
         { \exp_not:V \l_egreg_delimiter_right_tl }
         { \exp_not:n { ##3 } }
         { \exp_not:V \l_egreg_delimiter_subscript_tl }
       }
     }
   }
 }

\keys_define:nn { egreg/delimiters }
 {
  left      .tl_set:N = \l_egreg_delimiter_left_tl,
  right     .tl_set:N = \l_egreg_delimiter_right_tl,
  subscript .tl_set:N = \l_egreg_delimiter_subscript_tl,
 }

\cs_new_protected:Npn \__egreg_delimiter_clear_keys:
 {
  \keys_set:nn { egreg/delimiters } { left=.,right=.,subscript={} }
 }

\cs_new_protected:Npn \egreg_paired_delimiter_expand:nnnn #1 #2 #3 #4
 {% Fix the spacing issue with \left and \right (D. Arsenau, P. Stephani and H. Oberdiek)
  \mathopen{}
  \mathclose\c_group_begin_token
   \left#1
   #3
   \group_insert_after:N \c_group_end_token
   \right#2
   \tl_if_empty:nF {#4} { \c_math_subscript_token {#4} }
 }
\cs_new_protected:Npn \egreg_paired_delimiter_fixed:nnnnn #1 #2 #3 #4 #5
 {
  \mathopen{#1#2}#4\mathclose{#1#3}
  \tl_if_empty:nF {#5} { \c_math_subscript_token {#5} }
 }
\ExplSyntaxOff

\XDeclarePairedDelimiter{\supnorm}{
  left=\lVert,
  right=\rVert,
  subscript=\infty
  }
\usepackage[utf8]{inputenc} % allow utf-8 input
\usepackage[T1]{fontenc}    % use 8-bit T1 fonts
\usepackage{url}            % simple URL typesetting
\usepackage{booktabs}       % professional-quality tables
\usepackage{amsfonts}       % blackboard math symbols
\usepackage{nicefrac}       % compact symbols for 1/2, etc.
\usepackage{microtype}      % microtypography
\usepackage{authblk}
\usepackage{tocloft}            % TOC spacing

\usepackage{enumitem}

\usepackage{breakcites}
\usepackage{dsfont}
\usepackage{etoolbox}
\usepackage{comment}
\newtoggle{draft}
\togglefalse{draft}

\usepackage{color-edits}
\usepackage{mathrsfs}

\usepackage{algorithm}
\usepackage{verbatim}
\usepackage[noend]{algpseudocode}

\usepackage{multicol}

\usepackage{colortbl}
\usepackage{bbm}
\usepackage{setspace}

\usepackage{transparent}

\usepackage{inconsolata}
\usepackage[scaled=.90]{helvet}
\usepackage{xspace}

\usepackage{pifont}

\usepackage{tikz-cd}
    \newcommand{\lp}{\left(}
    \newcommand{\rp}{\right)}
    
    \newcommand{\Med}{\mathrm{Med}\,}
    \renewcommand{\P}{{\mathbb P}}

    \newcommand{\T}{{\mathcal{T}}}

    \newcommand{\numobs}{n}

    \newcommand{\Normal}{\mathcal{N}}

    \renewcommand{\b}[1]{\boldsymbol{\mathbf{#1}}}
    \renewcommand{\vec}[1]{\b{#1}}

    \newcounter{rcnt}[section]

    \def\argmin{\mathop{\rm argmin}}

    \newcommand{\Vol}{\operatorname{Vol}}

    \newcommand{\erm}{\widehat{w}_n}
    \newcommand{\ft}{w_{*}}

    \newcommand{\lln}{\mathrm{lln}}

    \newcommand{\Gr}{{\operatorname{Gr_{n,d}}}}

    \def\ddefloop#1{\ifx\ddefloop#1\else\ddef{#1}\expandafter\ddefloop\fi}
    \def\ddef#1{\expandafter\def\csname c#1\endcsname{\ensuremath{\mathcal{#1}}}}
    \ddefloop ABCDEFGHIJKLMNOPQRSTUVWXYZ\ddefloop

    \newcommand{\SPO}{\bar{P}_{n,d}}

    \newcommand{\Sn}{\mathbb{S}^{n-1}}

%% file: math_commands.tex
\usepackage{amsmath,amsfonts,bm}

\newcommand{\opnorm}[1]{\left\|#1\right\|_{\rm op}}
\newcommand{\fronorm}[1]{\left\|#1\right\|_{\rm HS}}

\def\eqref#1{(\ref{#1})}
\def\1{\bm{1}}

\newcommand{\PN}{P_{n,d}}
\newcommand{\conv}{\mathrm{conv}}
\newcommand{\MND}{\widetilde{M}_{n,d}}
\def\eps{{\epsilon}}

\def\vc{{\bm{c}}}

\def\vn{{\bm{n}}}

\DeclareMathAlphabet{\mathsfit}{\encodingdefault}{\sfdefault}{m}{sl}
\SetMathAlphabet{\mathsfit}{bold}{\encodingdefault}{\sfdefault}{bx}{n}

\newcommand{\E}{\mathbb{E}}

\newcommand{\R}{\mathbb{R}}

\newcommand{\Var}{\mathrm{Var}}

\newcommand{\Pbb}{\Pr}
  
\newcommand{\Zbar}{\overline{Z}}
\newcommand{\simplex}{\triangle}

\newcommand{\cnd}{\tilde{c}_{n,d}}
\newcommand{\Cov}{\mathrm{Cov}}
\DeclareMathOperator{\sign}{sign}
\DeclareMathOperator{\Tr}{Tr}

%% file: Abstract.tex
\begin{abstract}
We study minimum-norm interpolation (MNI) in overparameterized linear regression with isotropic Gaussian covariates, in settings where the MNI has no closed-form formula. Whereas most prior work relied on Gaussian comparison tools such as the convex Gaussian min--max theorem (CGMT), our approach uses tools from high-dimensional geometry and probability. First, when the norm is in isotropic position, we obtain an ``offset'' bound that controls the amount by which the MNI shrinks the ground truth. Second, we show that the ``intrinsic'' variance of the $\ell_1$-MNI is at most $O(\tfrac{1}{n\log(d/n)^2})$, using a variant of Talagrand's $L_1$--$L_2$ inequality due to \cite{cordero2012hypercontractive}, together with a classical result of \cite{gluskin88extremal}. We recover the sharp mean-squared error (MSE) bound for the $\ell_1$-MNI obtained by \cite{wang2022tight}, using the work of \cite{fleury2012poincare} on the symmetric Gaussian polytope, which is defined via
\[
  \PN := \mathrm{conv}\{\pm X_i\}_{i=1}^{d},
  \qquad
  X_i \overset{\mathrm{i.i.d.}}{\sim} N(0,\mathrm{I}_{n \times n}),
\]
rather than CGMT.  Our methods also imply improvements on previous results in high-dimensional geometry that may be of independent interest. First, we show that with overwhelming probability, the ratio between the isotropic constant of $\PN$ and that of the Euclidean ball in $\R^n$ is at most $1+O((\log(d/n))^{-2})$, improving a result of \cite{klartag2009hyperplane}.  We also establish a refined weighted thin-shell estimate on $\PN$, and provide an elementary proof of the main theorem of \cite[Thm.~1.1]{fleury2012poincare}.\footnote{Preliminarily work titled ``A New Perspective on Minimum Norm Interpolation Under Gaussian Covariates'' appeared at AISTATS 2026. In the forthcoming weeks, a new version will be uploaded with a few more results and a more detailed discussion.}
\end{abstract}

%% file: Intro.tex
\section{Introduction}
Recent experiments with neural networks have revealed a striking statistical phenomenon: models that interpolate the training data can nevertheless generalize well~\citep{nakkiran2021}. In overparameterized regimes, models achieving zero training error---even on noisy data---may still exhibit strong out-of-sample performance~\citep{zhang2021understanding}. Motivated by these observations, a substantial body of theoretical work has studied this behavior, often referred to as \emph{harmless interpolation} or \emph{benign overfitting}.

In regression---the focus of this paper---an \emph{interpolating} estimator is one that fits the training data exactly. Such an estimator is generally not unique: in linear models, for instance, the interpolation constraints determine the estimate only up to the null space of the design matrix. It is well known that the particular interpolating solution can dramatically affect generalization performance~\citep{donhauser2022fast}. A common selection rule is the \emph{minimum-norm interpolator} (MNI), which chooses, among all interpolators, one with the smallest norm.

A further motivation for studying MNIs comes from the implicit bias of first-order methods: in overparameterized settings, and under suitable initialization, such methods often converge to particular minimum-norm solutions~\citep{gunasekar2018characterizing, oravkin2021optimal, shamir2022implicit}.

We consider the standard linear regression model with isotropic Gaussian covariates. We observe
\[
\cD := \{(\vec{x}_i, y_i)\}_{i=1}^\numobs, \qquad
\vec{y} = \vec{X}\ft + \vec{\xi},
\]
where $\vec{X} \in \R^{\numobs \times \dimension}$ is the design matrix whose rows are i.i.d.\ isotropic Gaussian vectors $\Normal{0}{I_{\dimension}}$. We focus on the overparameterized setting, where $d>n$. Unless stated otherwise, the noise vector $\vec{\xi}=(\xi_1,\dots,\xi_{\numobs})$ is isotropic Gaussian and independent of $\vec{X}$. We write $\vec{X}=[\vec{x}_1,\dots,\vec{x}_{\dimension}]$ for the columns of the design matrix.

For a norm $\|\cdot\|:\R^{\dimension}\to\R_+$, the  MNI is defined by
\begin{equation}\label{eqn:min-norm-linear}
\erm
\;\defn\;
\argmin_{w \in \R^{\dimension}}\Big\{\|w\| \;:\; \vec{X}w=\vec{y}\Big\}.
\end{equation}

% (i.e., when $(\cF,\|\cdot\|_{\cF})$ is a Hilbert space)
\paragraph{Inner-product norms.}
When the norm $\|\cdot\|$ is induced by an inner product, one can derive closed-form expressions for $\erm$. Prominent examples include the $\ell_2$-MNI in linear regression and the minimum-norm interpolator with respect to a reproducing kernel Hilbert space (RKHS) norm. In the minimum-$\ell_2$ setting, these formulas enable precise analyses in the proportional asymptotic regime where $d,n \to \infty$ with $d/n \to \gamma \in (0,\infty)$; see, for example,~\cite{ghorbani-2021, hastie2022surprises, mei2022}. They also facilitate non-asymptotic risk bounds for minimum-$\ell_2$ and minimum-Hilbert-norm interpolators. A series of works~\citep{bartlett2020benign, tsigler2023benign, lecue2023geometrical, chinot2020robustness, muthukumar2020, zhou2023uniform} characterizes the risk in terms of the spectral decay of the feature covariance in linear regression, or of the associated integral operator in kernel regression.

\paragraph{Arbitrary norms.}
For norms not induced by an inner product, MNIs generally do \emph{not} admit closed-form expressions, and considerably less is known about their statistical behavior. \cite{koehler2021uniform} gave a first general analysis for Gaussian covariates using local uniform convergence and the convex Gaussian min--max theorem (CGMT)~\citep{thramp2015}, obtaining non-asymptotic bounds on the prediction error of the MNI without relying on inner-product structure. Building on this approach, \cite{donhauser2022fast, wang2022tight} studied MNI for $\ell_p$ norms with $p \in [1,2]$ in linear models with \emph{isotropic} Gaussian covariates, deriving sharp rates for the prediction error. These analyses rely crucially on CGMT. More recently, \cite{kur2026minimum,pmlr-v235-kur24a} extended \cite{donhauser2022fast} to sub-Gaussian covariates using an approach inspired by the geometry of $2$-uniformly convex norms (cf.~\cite{klartag2008volume}).

\subsection{Positions}

The notion of a \emph{position} is classical in high-dimensional convex geometry. Given a convex body $K\subset\R^d$ and a geometric or analytic task, one seeks a linear map $T\in\mathrm{GL}(d)$ such that the image $TK$ is suitably regular with respect to the relevant functional. Throughout this text, for a centrally symmetric convex body $K \subset \R^d$, $K^{\circ}$ denotes the polar body of $K$ and $\|\cdot\|_K$ denotes its Minkowski functional.

\paragraph{Milman's $M$-position.}
We say that $K$ is in $M$-position~\cite{milman-1986} if there exists a universal constant $C>0$, independent of $K$ and $d$, such that
\[
    \max\{\cN(K,B_d),\cN(B_d,K),\cN(K^{\circ},B_d),\cN(B_d,K^{\circ})\} \leq C^d,
\]
where $\cN(A,B)$ denotes the minimal number of translates of $A$ required to cover $B$, and the symbols $C, C_1, c, c_1, \ldots > 0$ denote absolute constants whose values may change from line to line. Note that $M$-position is far from being unique.
% ; for example, if $K$ is in $M$-position then so is $UK$ for every orthogonal map $U$.

\paragraph{Pisier's $\ell$-position.}
 We define the \textbf{mean norm} by
\[
    M(K) := \int_{S^{d-1}} \|x\|_K \, d\sigma(x),
\]
where $\sigma$ is the normalized Haar probability measure on the Euclidean unit sphere $S^{d-1}$. The \textbf{mean dual norm} is
\[
    M^*(K) := M(K^\circ) = \int_{S^{d-1}} \|x\|_{K^\circ} \, d\sigma(x),
\]
 Since $\|x\|_{K^\circ}=h_K(x)$, i.e., the support function of $K$, the quantity $M^*(K)$ is the mean width under the usual normalization. Pisier's $\ell$-position satisfies
\[
    M(K)M^*(K) \lesssim \log(d),
\]
where $f \lesssim g$ means that there exists a constant $C>0$ such that
$f(z) \le C\,g(z)$ for all $z \in \cZ$ ($ f\gtrsim g$, and $f \asymp g$ are defined in a similar way).
\paragraph{Minimal mean width position.}
We say that $K$ is in minimal mean width position if its mean dual norm, equivalently its mean width, is minimal among all volume-preserving linear images of $K$. More precisely, after fixing volume,
\[
    M^*(K) = \inf_{T\in SL(d)} M^*(TK),
\]
where $SL(d)$ denotes the determinant-one linear maps on $\R^d$. 
% Since $(TK)^\circ=T^{-\mathsf{T}}K^\circ$, this can equivalently be written as
% \[
%     M(K^\circ) = \inf_{T\in SL(d)} M(T^{-\mathsf{T}}K^\circ).
% \]
The determinant constraint rules out trivial rescaling and selects an affine image whose average support function is minimal.

\paragraph{Isotropic position.}
We say that $K$ is in isotropic position if the uniform measure on $K$ has zero mean and identity covariance. Recent progress shows that isotropic position is much closer to the classical $M$- and $\ell$-positions than one might first expect. The resolution of the slicing problem by \cite{klartag2025affirmative,bizeul2025slicing}, building on the work of \cite{guan2024note}, implies that isotropic convex bodies are in $M$-position up to a universal constant. 

The recent result of \cite{bizeul2025distances} further shows that isotropic position is as good as Pisier's $\ell$-position up to polylogarithmic factors: if $\psi_d$ denotes the KLS constant, then
\[
    M(K) \lesssim \frac{\psi_d\sqrt{\log d}}{\sqrt d},
\]
and using the mean-width estimate of \cite{milman2015mean} gives, for an isotropic convex body $K\subset\R^d$,
\[
    M^*(K) \lesssim \sqrt{d}\,\log^2(d),
\]
and consequently
\[
    M(K)M^*(K) \lesssim \log^{\alpha}(d)
\]
for a universal exponent $\alpha$; using the current bound $\psi_d\lesssim\sqrt{\log d}$, one may take $\alpha\leq 3$.

\paragraph{When the position is important.}
The unit balls of commonly used norms---such as the $\ell_p^d$ norms and the top-$k$ norms---satisfy the standard positions from geometric analysis, up to scaling. Studying only these norms can therefore misleadingly suggest that the choice of position is irrelevant. For general norms, however, the situation is much more subtle; see \cite{giannopoulos2000extremal} and the monographs \cite{pisier-1989,artstein2015asymptotic}.

\emph{
Throughout this work, we say that a norm $\|\cdot\|$ lies in a given position if $c_{\|\cdot\|}K$ lies in that position for some normalization constant $c_{\|\cdot\|}>0$, where $K$ denotes the unit ball of $\|\cdot\|$.
}

\subsection[The ell1-MNI]{The $\ell_1$-MNI}

The $\ell_1$ MNI, also known as basis pursuit (BP), is
\begin{equation}\label{eqn:min-norm-lone}
\erm
\;\defn\;
\argmin_{w \in \R^{\dimension}}\Big\{\|w\|_1 \;:\; \vec{X}w=\vec{y}\Big\}.
\end{equation}
It is well known that BP generalizes well in the noiseless case but is highly sensitive to noise~\cite{Can08,DE06}. For noisy data, upper bounds on the prediction error of order $\sigma^2$ have been derived for isotropic Gaussian features~\cite{koehler2021uniform,JLL20,Woj10}, sub-exponential features~\cite{Fou14}, and even heavy-tailed features~\cite{chinot2020robustness,KKR18}. In the isotropic Gaussian setting with adversarial noise, \cite{chinot2020robustness} established a lower bound of order $\sigma^2/\log(d/n)$; see also~\cite{CL21,MVSS20}. They conjectured that the $\ell_1$-MNI is inconsistent in the non-adversarial case. This conjecture was recently refuted by \cite{wang2022tight}, who used CGMT to establish the rate
\[
   \frac{\sigma^2 (1+O_{d/n}(\log(d/n)^{-1/2}))}{\log(d/n)}.
\]
when the ground truth is $O(n/\log(d/n)^{C})$-sparse, i.e., $\|\ft\|_0 = O(n/\log(d/n)^C)$ for some $C \geq 0$; and in this work we use standard Landau notation $O(\cdot)$, $\Omega(\cdot)$, $\Theta(\cdot)$.
\subsubsection{On the relation between BP and symmetric Gaussian polytopes}
Throughout this work, we assume that $d \geq n+1$, and the Gaussian symmetric polytope is defined via
\begin{equation}\label{Eq:PND}
    \PN=\conv\{\pm X_i\}_{i=1}^d\subset\R^n,
    \qquad X_1,\ldots,X_d\underset{i.i.d.}{\sim} N(0,I_n).
\end{equation}
The connection between symmetric Gaussian polytopes and BP has a long history in statistics (cf. \cite{efron1965convex}). Notable examples (which are the closest to our work)  are \cite{donoho2009counting,donoho2010counting}. In those works, they connected the number of $k$-facets of $\PN$ to the behavior of the $\ell_1$-MNI, in the high-dimensional regime, i.e., when $d/n \to \gamma \in (1,\infty)$. To our knowledge, this approach has not been used recently, nor has it been used in more recent works on benign overfitting.
% \paragraph{Gaussian Polytopes:} 
% we show that the MSE and the negative moments of the $\ell_1$ norm of the  $\ell_1$-MNI.
% In furtThe Gaussian symmetric polytope, is the absolute convex hull of $d$ ($d \geq n+1$) isotropic Gaussian in $\R^{n}$, i.e. 
% \begin{equation}\label{Eq:SGP}
%    P_{n,d} := \mathrm{conv}\{\pm X_i\}_{i=1}^{d} \ \text{ where $X_1,\ldots,X_{d} \sim \Normal{0}{I_{n}}$}
% \end{equation}
% Those polytopes are well studied in the literature of stochastic geometry \citep{schneider2008stochastic,kabluchko2019expected,boroczky2024facets}.

% We remark that a lovely position, for example different approach by  
% showed that there is a family of $M$-positions of $K$ at different scales.

% a  We will consider this positions in our work. Recently ,

% According to our knowledge. As we will see in the second part of the theorem, if the norm is in the correct position, one can ger
% &\E_{\cD} \| \erm - \ft\|_2^2 = 
% \\&\underbrace{B^2(\erm)
% + 
% \Var_{\vec X} \Big(\E_{\vec \xi}[\erm \mid \vec X]\Big)}_{\defn E_1}
% + 
% \underbrace{\E_{\vec X} \Var_{\vec \xi}(\erm \mid \vec X)}_{\defn E_2} 
% \\&= E_1 + E_2,

\subsection{Our Mean Squared Error Decomposition}
We present a geometric perspective on MNI in linear models with \emph{isotropic} Gaussian covariates. Our approach uses tools from high-dimensional geometry to obtain sharp error rates, without relying on CGMT or Gaussian comparison inequalities. Our object of study is the mean-squared error, which in the isotropic covariate setting equals
\begin{equation}\label{Eq:MSE}
    \E_{\cD}\!\big[\|\erm - \ft\|_2^2\big].
\end{equation}
We use the following decomposition, also appearing in~\cite{kur2026minimum,pmlr-v235-kur24a}. Define
\begin{equation}\label{Eq:E1}
    E_1 := \E_{\vec X}\Big\|
    \cP_{\ft}\Big(\E_{\vec{\xi}}[\erm \mid \vec{X}]\Big)-\ft
    \Big\|_2^2,
\end{equation}
which measures the squared shrinkage of the conditional mean of the MNI in the signal direction,
\begin{equation}\label{Eq:E2}
    E_2 := \E_{\vec X}\Big\|
    \cP_{\ft^\perp}\Big(\E_{\vec{\xi}}[\erm \mid \vec{X}]\Big)
    \Big\|_2^2,
\end{equation}
which measures the squared orthogonal bias, and
\begin{equation}\label{Eq:E3}
    E_3 := \E_{\vec{X}}\!\Var_{\vec{\xi}}(\erm \mid \vec{X})
    = \E\Big\|\erm - \E_{\vec{\xi}}[\erm \mid \vec{X}]\Big\|_2^2,
\end{equation}
the expected conditional variance. Thus the MSE decomposes into the parallel bias, the orthogonal bias, and the variance terms.
% ; specifically of $(\alpha \ft + \ft^\perp) \cap K$ for $\alpha > 0$. 
\subsection{Summary of contributions}
\begin{enumerate}[label=(\roman*)]
\item 
\Cref{T:Localization} characterizes the term $E_1$ in a localized way under isotropic Gaussian covariates, for any MNI whose unit ball is in isotropic position. This yields the first \emph{localization} principle for $E_1$ analogous to results for constrained least squares estimators (cf.~\cite{chatterjee2014new,bartlett2005local,bartlett2002rademacher}).
The characterization depends on the mean norm of sections of the convex body $K$ in the direction $\ft$, defined rigorously below. 
% Furthermore, if this norm additionally satisfies the $2$-uniform convexity condition,
% \
\item In \Cref{Theorem:SPofGP}, we relate the MSE of the $\ell_1$-MNI to the \textbf{variance} of the $\ell_1$-MNI in terms of \text{$\ell_1$-norm}. This nontrivial connection--of independent interest--follows from Talagrand's $L_1$--$L_2$ inequality.

\item 
\Cref{Theorem:LoNE} establishes a sharp risk bound for the $\ell_1$-MNI without invoking Gaussian comparison inequalities or the CGMT. 
The approach draws on tools from high-dimensional geometry, probability, and super-concentration, in particular on the geometry of symmetric Gaussian polytopes \cite{fleury2012poincare}.
\item Our analysis yields several estimates on the behavior of the Gaussian symmetric polytope $\PN$, in the regime of $d \geq Cn$, for some absolute constant $C \geq 0$.
\begin{enumerate}
    \item In \Cref{C:MND}, we obtain the sharp asymptotic form of the volume of the section
    \begin{equation}\label{Eq:MndIntro}
        M_{n,d}^s := \E_{\PN} \int_{\Sn} \|\vec \xi\|_{\PN} = \E_{E \sim U(\Gr)} M(\cP_{E}\ell_1^d) \quad \text{and} \quad M_{n,d} := \sqrt{n} \cdot M_{n,d}^s,
    \end{equation}
    where $\cP_{E}$ is the linear projection on the subspace $E$.
\item  \Cref{Thm:Isotropic} sharpens the result of \cite{klartag2009hyperplane} on the isotropic constant of symmetric Gaussian polytopes.
 For any convex body $K \subset \R^n$ with barycenter at the origin, the isotropic constant of \(K\) is defined by
\begin{equation}\label{Eq:isotropic}
L_K^2
:=
\frac{1}{n}
\inf_{T\in SL(n)}
\frac{1}{|K|^{1+2/n}}
\int_K \|Tx\|_2^2\,dx ,
\end{equation}
where
\[
SL(n):=\{T\in GL(n): |\det T|=1\}.
\]
We show that with high probability
\[
   L_{P_{n,d}}
   =
   \bigl(1+O(\log(d/n)^{-2})\bigr)L_{B_n},
\]
rather than only a universal upper bound that appears in \cite{klartag2009hyperplane}.
% Note that when $K$ is in \emph{istoropic} position, it holds that for any $\theta \in \Sn$ 
% \begin{equation}
%     \Pr_{X \sim U(K)} \lp |\langle \theta, X \rangle| \leq \eps\sqrt{n} \rp \geq 1 - \exp(-c\sqrt{n}\eps).
% \end{equation}
\item In \Cref{C:Fleury} we recover the Poincar\'e-type estimate of \cite{fleury2012poincare} for symmetric Gaussian polytopes via an elementary proof. Furthermore, we prove in \Cref{Thm:ThinShell} a better Poincar\'e-type estimate on the ``thin shell'' constant of $\PN$.
\end{enumerate}

\end{enumerate}

%% file: MR.tex
\section{Main results}\label{S:MR}
\paragraph{Notation.}  For sets $E,F \subset \R^k$, the Minkowski sum is
$E+F \defn \{e+f : e\in E,\ f\in F\}$; for a singleton $E=\{e\}$, we write $e+F \defn \{e\}+F$. For any $m \in \mathbb{N}$ and measurable $A \subset \R^{m}$, we write $|A|$ for its Lebesgue measure (volume) and
$|\partial A|$ for the $(m-1)$-dimensional surface measure of its boundary.
For a square matrix $M$, we may write $|M|$ for its determinant.  For every convex body $L \subset \R^{m}$ (for some $m \ge n+1$), define
\[
M_n(L)
\;\defn\;
\E_{\vec{X}}\, M(\vec{X}L),
\]
where  $\vec{X}L \subset \R^n$ is the Gaussian image of $L$ under a random matrix $\vec{X} \in \R^{n \times m}$ with i.i.d.\ entries $\Normal{0}{1}$. Throughout this work, we denote by
\[
     \Psi(t):=\Pr_{g \sim N(0,1)}(|g| \leq t).
\]   For any polytope $P$, we will use $\cF_{n-1}(P)$ to denote its collection of facets (\emph{i.e.,} its $(n-1)$-dimensional faces). We denote by $|\cF_{n-1}(P)|$ the number of facets.
% For every $w \in K$ and $t \ge 0$, define the affine \emph{sections} of $K$ by
% \begin{equation*}
% K(w,t)
% \;\defn\;
% \cP_{w^{\perp}}\big(\big(K \cap \big(\tfrac{(1-t)w}{M_n(K)} + w^{\perp}\big)\big) -  \tfrac{(1-t)w}{M_n(K)} \big).
% \end{equation*}
% The corresponding mean gauge is denoted by
% \[
% M_n(w,t)
% \;\defn\;
% M_n\!\big(K(w,t)\big)
% \]
%%%%%%%%%%%%%%%%%%%%%%%%%%%%%%%%%%%%%%%%%%%%%%%%%%%%%%
\subsection{On the shrinkage of the MNI}

Recall that \(K\subset \R^d\) is the unit ball of \((\R^d,\|\cdot\|)\). 
We first impose the following normalization and position assumptions.

\begin{assumption}\label{ass:ground-truth}
We assume that \(K\) is in isotropic position, that
\[
\|\ft\|\asymp \|\ft\|_2\asymp 1,
\]
and that
\[
M_n(K)\gg \|\ft\|\asymp 1.
\]
\end{assumption}

In words, noise is harder to interpolate than the pure signal, and the
\(\ell_2\)-norm of the signal is comparable to its norm with respect to
\(\|\cdot\|\). Hence, the unit ball must be inflated in order to interpolate
pure noise.

We shall also assume that the orthogonal component of the MNI has a polynomial
upper tail at its natural second-moment scale.

\begin{assumption}\label{A:medDev}
There exist absolute constants \(C_0,C_1>0\) such that, setting
\[
r_\star^2
:=
C_0\log^{C_1}(d)\cdot (E_2+E_3) 
= 
C_0\log^{C_1}(d)\cdot \E \|\cP_{(\ft)^{\perp}} \erm\|_{2}^{2},
\]
we have
\[
\Pr_{\vec X,\vec \xi}
\left\{
\|\cP_{H}(\erm)\|_2\le r_\star
\right\}
\ge 1-d^{-100}.
\]
where $H = (\ft)^{\perp}$ and \(E_2,E_3\) are defined above.
\end{assumption}

For example, this assumption holds for the \(\ell_p\)-norm, with an absolute
constant, when \(p\in[1,2]\). This assumption is position-dependent. Without
placing \(K\) in a suitable position, it may fail badly; for instance, for a
highly anisotropic ellipsoid, the random variable
\(\|\cP_H(\erm)\|_2\) may have heavy tails.

For \(t\in[0,2]\), let \(K_t=K(\ft,t)\) denote the translated section of \(K\)
in the direction \(\ft^\perp\) corresponding to shrinkage level \(t\), and set
\[
M_n(t):=M_n(K_t).
\]
More explicitly, one may take
\[
K_t
:=
\cP_{\ft^\perp}
\left(
K\cap \left[
\frac{(1-t)\ft}{M_n(K)}+\ft^\perp 
\right]
\right),
\]
with the section translated to the origin in \(\ft^\perp\).
% We write
% $\widetilde{\vec{X}}:=\vec X\cP_{\ft^\perp}$.
% The proof also requires uniform concentration of the gauges associated with
% these localized sections. We record this as an assumption; alternatively, it
% can be replaced by a lemma proving the same estimate.
% \begin{assumption}[Uniform section concentration]\label{A:section-concentration}
% Let
% \[
% K_t^{(r_\star)}:=K_t\cap r_\star B_2^{\ft^\perp}.
% \]
% There are absolute constants \(C_2,C_3>0\) such that, with probability at least
% \(1-d^{-100}\), uniformly for \(s,t\in[0,2]\),
% \[
% \left|
% \frac{
% \|\vec \xi+t\vec X\ft\|_{\widetilde X K_s^{(r_\star)}}
% }{
% \sqrt{1+t^2\|\ft\|_2^2}\,M_n(s)
% }
% -1
% \right|
% \le
% C_2\log^{C_3}(d)\frac{r_\star}{\sqrt n}.
% \]
% \end{assumption}
% Define
% \[
% \eta_n
% :=
% C_2\log^{C_3}(d)\frac{r_\star}{\sqrt n}.
% \]
The gain from shrinking the signal by amount \(t\) is
\[
\Delta(t)
:=
M_n(0)
-
\sqrt{1+t^2\|\ft\|_2^2}\,M_n(t).
\]
Note that under \Cref{ass:ground-truth}, we may use the quadratic approximation
\[
\Delta_2(t)
:=
M_n(0)-M_n(t)
-
\frac{t^2\|\ft\|_2^2}{2}M_n(0).
\]
where we used that for small $t$, it holds that $M_n(t) = (1 + o(1))M_n(0)$.
We define the \textbf{localization radius} by
\[
t_\star
:=
\operatorname*{arg\,max}_{t\in[0,2]}
\Delta(t).
\]
Under sufficient regularity of the sections, our proof would imply that
\(\max\{t_\star^2,\Tilde{O}(1/n)\}\) captures the error term \(E_1\); see the first remark below. 
% Meaning that under our approximation, we aim to get that 
% \[
%     \frac{d}{dt}M_{n}(t)/M_{n}(0) \approx t\|\ft\|_2^2.
% \]
However, without this regularity, we use the following localized robust comparison.
% Replacement block for the localized/offset-radius part of Theorem~\ref{T:Localization}
% This version is aligned with proof_outline_thm1_revised_final.tex.
Let
\[
H:=\ft^\perp,
\qquad
M_0:=M_n(K),
\]
and let \(r_\star\) be the deterministic radius from
Assumption~\ref{A:medDev}. 

% For \(a>0\) and \(t\in[0,2]\), define the
% localized section
% \[
% K_{a,t}^*
% :=
% \left\{
% h\in H:
% \frac{(1-t)\ft}{a}+h\in K,\
% \|h\|_2\le \frac{r_\star}{a}
% \right\}.
% \]
% The deterministic localized section is
% \[
% K_t^*:=K_{M_0,t}^*.
% \]
Let
\[
K_t
=
\left\{
h\in H:
\frac{(1-t)\ft}{M_0}+h\in K
\right\} \quad 
K_t^*=K_t\cap \frac{r_\star}{M_0}B_2^H.
\]
We define
\[
\widetilde M_n(t):=M_n(K_t^*)
\]
and
\[
\eta_n
:=
C\log^C(n)
\left(
\frac{r_\star}{\sqrt n}
+
\frac1n
\right).
\]
The term \(r_\star/\sqrt n\) comes from the localized concentration estimate,
while the \(1/n\) term comes from the radial fluctuation of
\(\vec\xi+t\vec X\ft\), after absorbing the linear fluctuation in \(t\) into
 the quadratic shrinkage term by Young's inequality.

For a sufficiently small absolute constant \(c_1>0\), define the robust
localized gain
\[
\widetilde\Delta(t)
:=
\widetilde M_n(0)
-
\left(1+c_1t^2\|\ft\|_2^2\right)\widetilde M_n(t).
\]
Under Assumption~\ref{ass:ground-truth}, the convexity comparison of nearby
sections gives
$
\widetilde M_n(t)\asymp \widetilde M_n(0)$ uniformly for $t\in[0,2].
$
We define the \textbf{offset localization radius} by
\[
\widetilde t_\star
:=
\sup\left\{
t\in[0,2]:
\widetilde\Delta(t)
\ge
-C\eta_n\widetilde M_n(0)
\right\}.
\]
% If one does not prove a priori that the shrinkage parameter is nonnegative,
% replace the last display by the signed version
% \[
% \widetilde t_\star
% :=
% \sup\left\{
% |t|:
% t\in[-2,2],
% \widetilde\Delta(t)
% \ge
% -C\eta_n\widetilde M_n(0)
% \right\}.
% \]

\begin{theorem}[Localization of the MNI shrinkage]\label{T:Localization}
There exists a sufficiently large absolute constant \(C>0\) such that if
\(d\ge Cn\), then under Assumptions~\ref{ass:ground-truth} and~\ref{A:medDev},
with probability at least \(1-d^{-c}\),
\[
\|\cP_{\ft}(\erm)-\ft\|_2
\lesssim
\widetilde t_\star.
\]
\end{theorem}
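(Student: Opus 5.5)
We will argue by an optimality comparison between two competing interpolators, localized to the ball of radius $r_\star$ given by Assumption~\ref{A:medDev}. Write the MNI as $\erm=(1-t)\ft+h$, where $h\in H=\ft^\perp$ and $t\in\R$ is the shrinkage parameter. Then $\|\cP_{\ft}(\erm)-\ft\|_2=|t|\,\|\ft\|_2\asymp|t|$, so it suffices to show that $|t|\lesssim\widetilde t_\star$ with probability at least $1-d^{-c}$.

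The interpolation constraint $\vec X\erm=\vec y=\vec X\ft+\vec\xi$ reads $\widetilde{\vec X}h=\vec\xi+t\vec X\ft$, where $\widetilde{\vec X}:=\vec X\cP_H$. By rotation invariance, $\widetilde{\vec X}$ is independent of $\vec X\ft$. So, conditionally on $t$, the vector $\vec\xi+t\vec X\ft$ is a Gaussian vector in $\R^n$ with covariance $(1+t^2\|\ft\|_2^2)I_n$, independent of $\widetilde{\vec X}$.

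\textbf{Step 1 (reduction to a variational problem).} For fixed $t$, the smallest norm among interpolators with shrinkage $t$ is
\[
\phi(t):=\min\{\|(1-t)\ft+h\|:h\in H,\ \widetilde{\vec X}h=\vec\xi+t\vec X\ft\}.
\]
Rescaling by $\phi(t)$, this is the smallest $\lambda$ such that $(\vec\xi+t\vec X\ft)/\lambda\in\widetilde{\vec X}K_t^{(\lambda)}$, where $K_t^{(\lambda)}$ is the section of $K$ at height $(1-t)/\lambda$. On the event of Assumption~\ref{A:medDev}, $\|h\|_2\le r_\star$, so after normalizing $\lambda\approx M_0$ (which holds since $M_n(K)\gg\|\ft\|$ forces the noise part to dominate) we may restrict to the localized sections $K_t^*$. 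Hence
\[
\phi(t)\approx\|\vec\xi+t\vec X\ft\|_{\widetilde{\vec X}K_t^*}.
\]
The MNI picks the $t$ that minimizes $\phi$. In particular $\phi(t)\le\phi(0)$, with equality at $t=0$ being the competitor $h$ solving $\widetilde{\vec X}h=\vec\xi$ with the section at full height.

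\textbf{Step 2 (concentration, uniformly in $t$).} We need, with probability $1-d^{-c}$ and uniformly over a $1/n$-net of $t\in[0,2]$,
\[
\|\vec\xi+t\vec X\ft\|_{\widetilde{\vec X}K_t^*}=\sqrt{1+t^2\|\ft\|_2^2}\,\widetilde M_n(t)\,\bigl(1\pm\eta_n\bigr).
\]
This has two ingredients.
\begin{itemize}
\item \emph{Radial part.} The norm of the Gaussian vector $\vec\xi+t\vec X\ft$ fluctuates by $O(1/\sqrt n)$ relatively. Its linear-in-$t$ cross term $t\langle\vec\xi,\vec X\ft\rangle/n$ is absorbed, via Young's inequality, into a fraction of the quadratic term $t^2\|\ft\|_2^2$. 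This is why the constant $c_1<1/2$ appears in $\widetilde\Delta$, and it leaves the $1/n$ error.
\item \emph{Angular part.} The gauge of $\widetilde{\vec X}K_t^*$ in a random direction concentrates around its mean $\widetilde M_n(t)$. Since $K_t^*\subset (r_\star/M_0)B_2^H$, the map $G\mapsto\|\theta\|_{GK_t^*}$ is controlled through Lipschitz dependence on the Gaussian matrix. I would dualize: $\|\theta\|_{GK}$ is a min–max over $\{u:\ G^\T u\in K^\circ\}$. Gaussian concentration then gives fluctuations of relative size $\log^C(n)\,r_\star/\sqrt n$.
\end{itemize}
A union bound over the net, together with continuity of $t\mapsto K_t^*$ from convexity of $K$, extends the estimate to all $t\in[0,2]$. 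The lower tail $t<0$ is handled symmetrically, or excluded by noting that sections at larger height are smaller.

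\textbf{Step 3 (conclusion).} Combining $\phi(t)\le\phi(0)$ with the two-sided estimate from Step 2 yields
\[
\sqrt{1+t^2\|\ft\|_2^2}\,\widetilde M_n(t)\le\widetilde M_n(0)\,(1+C\eta_n).
\]
Using $\sqrt{1+x}\ge 1+c_1x$ for $x\lesssim 1$ and $\widetilde M_n(t)\asymp\widetilde M_n(0)$, this becomes $\widetilde\Delta(t)\ge -C\eta_n\widetilde M_n(0)$. By the definition of $\widetilde t_\star$ as a supremum, this gives $t\le\widetilde t_\star$.

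\textbf{Main obstacle.} The hard part is the uniform localized concentration in Step 2. The norm $\|\cdot\|_{\widetilde{\vec X}K_t^*}$ is not globally Lipschitz in $\widetilde{\vec X}$ with the right constant. The $r_\star$-localization is exactly what gives a Lipschitz constant of order $r_\star$ rather than $\operatorname{diam}(K)$. I would try first the dual formulation
\[
\|\theta\|_{GL}=\sup\{\langle\theta,u\rangle:\ h_L(G^\T u)\le1\},
\]
combined with the bound $h_{L}(v)\le (r_\star/M_0)\|v\|_2$ and a chaining argument over $t$. It is also necessary to justify that the isotropic position gives $\widetilde M_n(t)\asymp\widetilde M_n(0)$.
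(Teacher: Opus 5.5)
Your outline follows the paper's proof: the split $\erm=(1-\hat t)\ft+\hat h$, the independence of $\widetilde{\vec X}$ and $\vec X\ft$, the radial bound via Young's inequality (source of the $1/n$ term and of $c_1$), localized concentration on a net plus convexity inclusions, comparison with the $t=0$ competitor, and the definition of $\widetilde t_\star$. The real gap is in the angular part of Step 2. That is where the isotropic position of $K$ has to enter, and your sketch never uses it.

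Localization controls only the primal side. Write $\|\theta\|_{GL}=\min\{\|h\|_L:\ Gh=\theta\}$, with $h^*$ the minimizer and $u^*$ the dual optimizer. The derivative in $G$ is $-u^*(h^*)^{\top}$, so the Lipschitz constant is $\|h^*\|_2\|u^*\|_2$. The inclusion $K_t^*\subset (r_\star/M_0)B_2^H$ (your bound $h_L(v)\le (r_\star/M_0)\|v\|_2$) gives $\|h^*\|_2\lesssim r_\star$. Dual feasibility $h_{GL}(u^*)\le 1$, however, only gives $\|u^*\|_2\le 1/r(GL)$.

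The relative fluctuation is therefore of order $r_\star/(M_0\, r(\widetilde{\vec X}K_t^*))$. This is $\widetilde O(r_\star/\sqrt n)$ only if $\widetilde{\vec X}K_t^*$ contains a Euclidean ball whose radius matches, up to polylogarithmic factors, its typical radius $\sqrt n/M_0$. That is exactly the hypothesis $M^*(PL)/r(PL)=\widetilde\Theta(1)$ (with $P=\vec X/\sqrt d$) of \Cref{Lem:Concertaion}. In the paper it rests on the isotropic-position estimate $1\lesssim r(PK)\le M^*(PK)\lesssim \log^3 d$ of \Cref{Lem:BMstar}. Since this Lipschitz bound holds only on a high-probability event, you must also concentrate a Lipschitz extension rather than the gauge itself. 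Chaining over $t$ handles uniformity in $t$, not this.

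Second, ``$\lambda\approx M_0$'' hides a step that must be quantitative. The scale of the interpolator is the random $m=\|\erm\|$. Both the height $(1-t)/\lambda$ of the section and the truncation radius $r_\star/\lambda$ move with $\lambda$. An error $\delta$ in $\lambda/M_0$ therefore acts like an error of order $\delta$ in $t$, plus a relative error $\delta$ from the truncation. So $\lambda\asymp M_0$ up to constants is useless at the scale $\hat t^2+\eta_n$.

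The paper handles this differently. It keeps the level $a$ as a parameter and proves the lower bound uniformly over $a\in[c_aM_0,C_aM_0]$ and $t,\mu\in[0,2]$, not just over a net in $t$. It then invokes $|m/M_0-1|\le C(\hat t^2+\eta_n)$ to compare $K^*_{m,t}$ with $K^*_{M_0,t}$.

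Finally, your identity $\phi(t)\approx\|\vec\xi+t\vec X\ft\|_{\widetilde{\vec X}K_t^*}$ is not justified for every $t$, because \Cref{A:medDev} localizes only the actual MNI. You need it only one-sidedly:
\begin{itemize}
\item at $t=\hat t$, as a lower bound, where $\|\hat h\|_2\le r_\star$ makes the localized and unlocalized costs coincide;
\item at $t=0$, as an upper bound, where truncation can only increase the cost.
\end{itemize}
This is how the paper uses it.
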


\begin{remark}
Here, we surpass the \(O(n^{-1/2})\) barrier appearing in the analysis
of~\citep[Lemma~10]{zhou2023uniform} under consistency of the MNI. In
particular, the offset term vanishes whenever \(E_2+E_3\to0\) sufficiently fast.
\end{remark}

\begin{remark}
In the well-studied setting of the \(\ell_p\)-MNI for \(p\in[1,2]\), and
\(\ft=(1,0,\ldots,0)\), the sections are homothetic copies of the
\(\ell_p\)-ball in \(\R^{d-1}\). In that case the deterministic comparison
simplifies to
\[
\Delta_2(t)
=
M_n(0)-M_n(t)
-
\frac{t^2M_n(t)}{2},
\]
since \(\|\ft\|_2=1\). Thus one may identify the localization scale from the
crossing condition
\[
M_n(0)-M_n(t)
\approx
\frac{t^2M_n(0)}{2},
\]
without carrying the additional section-deviation parameter. In the general
isotropic setting, however, we keep the offset \(r_\star\) and upper-bound the
localization scale by \(\widetilde t_\star\).
\end{remark}

\begin{remark}
To upper-bound the term \(E_2\), one needs more structure on the norm than
isotropy alone. For instance, it is enough in many cases to assume that the
norm is \(2\)-uniformly convex, or at least has cotype \(2\). We refer to the
work of the first author~\cite{kur2026minimum} for more details.
\end{remark}
\medskip
% \begin{remark}
% Under sub-Gaussian covariates this technique fails. However, if the $\|\cdot\|$ is $2$-uniformly convex with constant $\alpha \geq 0$, that is for all $x,y \in \R^d$, such that $\|x\| = \|y\| = 1$, 
% \[
%     \left\|\frac{x+y}{2}\right\|^2 \leq 1 - \alpha \|x-y\|^2
% \]
% Then, one can obtain a similar bound up to a constant that depends on $\alpha$, see Remark *** below. We believe that it can be extended to cotype $2$-norm as well.
% \end{remark}
%%%%%%%%%%%%%%%%%%%%%%%%%%%%%%%%%%%%%%%%%%%%%%%%%%%%%%%%%%%%%%%%%
\subsection[On the ell1-MNI]{On the $\ell_1$-MNI}
In this sub-section, we denote the  $\ell_1$–MNI by $\erm$. The next theorem provides a sharp bound on the deviation constant of the $\ell_1$-MNI, in the origin, which is defined as
\[
    d_{\vec{0}}^2
\;:=\;
\Var\!\left(\frac{\|\erm(\vec X,u)\|_{1}}{\E \|\erm(\vec X,u)\|_{1} }\right),
\]
for some $u \in \sqrt{n}S^{n-1}$.  It is important to note this variance is only over $\vec X$, as we know that $U\vec X \sim \vec{X}$ for every rotation \(U\); this automatically gives us that we can assume that $\vec \xi \sim U(\sqrt{n} \Sn)$, which is almost a Gaussian distribution. In words, we get extra internal random free rotation from $\vec X$. Also, recall the definition of $M_{n,d}$ above and note that
\[
M_{n,d} = \E \|\erm(\vec X,u)\|_{1}
\]
which follows from the fact that for every $\vec\xi$ it holds
\[
    \|\vec \xi\|_{P_{n,d}} = \|\erm(\vec X,\vec \xi)\|_{1}, 
\]
where we used that $P_{n,d} = \vec X B_1^d$.

% Throughout, this manuscirpt, we denote by

% The last inequality is an immediate consequence of Dvoretzky’s theorem. When $d \ll g$, one should view $M_{n,d}$ (up to a normalization constant), as the average dual norm of random sections of the $\ell_{\infty}^d$ ball

\begin{theorem}\label{Theorem:SPofGP}
Fix some $\vec \xi \in  \sqrt{n} \cdot \Sn$. Let $c \in (0,1/6)$ and some $C \geq 0$ large enough, and assume that $d  \in (n (\ln n)^{C},\exp(n^{c}))$. Then, the deviation parameter of the $\ell_1$-MNI in $\R^d$ at the origin satisfies
\[
d_{\vec{0}}^2 \lesssim\;
\frac{\E \|\erm(\vec{X},\vec{\xi})\|_{2}^{2}}{\ln^{2}(d/n)\, M_{n,d}^{2}}
\;\lesssim\;
\frac{1}{n\,\ln^{2}(d/n)},
\]
where the last equality follows as mentioned above by using $M_{n,d} \asymp n/\log(d/n)$, and by \cite{wang2022tight} who showed that $\E \|\erm(\vec{X},\vec{\xi})\|_{2}^{2} \asymp \log(d/n)$.
\end{theorem}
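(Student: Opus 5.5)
We would view $F(\vec X):=\|\erm(\vec X,\vec\xi)\|_1=\|\vec\xi\|_{\vec X B_1^d}$ as a function of the Gaussian matrix $\vec X\in\R^{n\times d}$. By LP duality,
\[
F(\vec X)=\max\{\langle \vec\xi,\theta\rangle:\ \|\vec X^{\T}\theta\|_\infty\le 1\}.
\]
The map $F$ is locally Lipschitz, and its gradient can be read off from the envelope theorem. If $\theta^\star=\theta^\star(\vec X)$ is the optimal dual vector (a.s.\ unique), then
\[
\nabla_{\vec X}F=-\theta^\star(\erm)^{\T}.
\]
Indeed, perturbing $\vec X$ in the constraint $\vec X w=\vec\xi$ at the primal optimum $\erm$ changes the value at first order by $-\langle\theta^\star,(\delta\vec X)\erm\rangle$. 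Hence
\[
|\nabla F|_{HS}^2=\|\theta^\star\|_2^2\,\|\erm\|_2^2 .
\]

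\textbf{Step 1: Talagrand's inequality.} The Gaussian Poincar\'e inequality would give only $\Var F\le \E\|\theta^\star\|_2^2\|\erm\|_2^2$. We want an extra $\log^{-1}$, and this is where superconcentration enters. We would use the Talagrand $L_1$--$L_2$ inequality in the form of \cite{cordero2012hypercontractive}:
\[
\Var F\lesssim \sum_{i,j}\frac{\|\partial_{ij}F\|_2^2}{1+\log\bigl(\|\partial_{ij}F\|_2/\|\partial_{ij}F\|_1\bigr)} .
\]
Here $\partial_{ij}F=-\theta^\star_i\,\widehat w_j$, with $\widehat w:=\erm$. The key point is that $\erm$ is supported on at most $n$ coordinates out of $d$. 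By exchangeability of the columns, each $j$ lies in the support with probability at most $n/d$. Hence $\|\partial_{ij}F\|_1\le (n/d)^{1/2}\|\partial_{ij}F\|_2$ by Cauchy--Schwarz on the support indicator. This makes the log factor in the denominator at least $\tfrac12\log(d/n)$, giving
\[
\Var F\lesssim \frac{\E\|\theta^\star\|_2^2\|\erm\|_2^2}{\log(d/n)} .
\]
This gains only one logarithm. To get the second $\log$, we must also control $\|\theta^\star\|_2$.

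\textbf{Step 2: bounding $\|\theta^\star\|_2$.} The vector $\theta^\star$ lies in the polar body $\{\theta:\|\vec X^{\T}\theta\|_\infty\le1\}=P_{n,d}^\circ$, and it is the point where $\vec\xi$ exposes the face. We would invoke the classical result of \cite{gluskin88extremal}: with overwhelming probability, $P_{n,d}\supseteq c\sqrt{\log(d/n)}\,B_2^n$, so $P_{n,d}^\circ\subseteq (c\sqrt{\log(d/n)})^{-1}B_2^n$. This gives $\|\theta^\star\|_2^2\lesssim 1/\log(d/n)$ on a high-probability event. On the complementary event, which has probability at most $e^{-cn}$, we would use crude polynomial moment bounds together with Cauchy--Schwarz; the assumption $d<\exp(n^c)$ makes this negligible. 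Combining with Step 1,
\[
\Var F\lesssim \frac{\E\|\erm\|_2^2}{\log^2(d/n)} .
\]
Dividing by $M_{n,d}^2=(\E F)^2$ gives the first inequality. The second inequality then follows from $M_{n,d}\asymp n/\log(d/n)$ and the bound $\E\|\erm\|_2^2\asymp\log(d/n)$ of \cite{wang2022tight} (note $\|\vec\xi\|_2^2=n$).

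\textbf{Main obstacle.} The delicate step is justifying the $L_1$--$L_2$ ratio in Step 1. The estimate $\|\partial_{ij}F\|_1\le\sqrt{n/d}\,\|\partial_{ij}F\|_2$ needs the random quantity $\theta^\star_i\widehat w_j$ to be spread out: its mass must not concentrate on the rare event $j\in\mathrm{supp}\,\erm$ in a way correlated with its size. Cauchy--Schwarz handles this once we write $\|\partial_{ij}F\|_1=\E[|\theta_i^\star\widehat w_j|\1_{j\in S}]\le \Pr(j\in S)^{1/2}\|\partial_{ij}F\|_2$, with $S=\mathrm{supp}\,\erm$.

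The second issue is that the log factor must be uniform across $(i,j)$. We would obtain this by applying the inequality in the form with a single global lower bound on $\log(\|\cdot\|_2/\|\cdot\|_1)$; symmetry in $j$ gives equal ratios. Finally, we must check that $F$ is smooth enough for the inequality of \cite{cordero2012hypercontractive} to apply. It is almost surely differentiable, being piecewise smooth in generic position, and this should suffice after a standard approximation argument.
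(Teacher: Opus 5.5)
Your proposal is correct and follows the paper's proof essentially step by step: you apply the $L^1$--$L^2$ inequality of Lemma~\ref{Lem:Hyper} to $F(\vec X)=\|\erm\|_1$ with gradient $-\theta^\star\erm^{\T}$ (the paper's $-(\erm)_1\vec X_S^{-\T}\vec\eps$, with $\|\vec X_S^{-\T}\vec\eps\|_2=1/\|\vec n_{\cF}\|_2$), use sparsity plus Cauchy--Schwarz to make the logarithmic ratio at least $\tfrac12\log(d/n)$, use Gluskin's inradius bound (Lemma~\ref{lem:simplices}) for the second logarithm, and use negative moments of $\sigma_{\min}(\vec X)$ on the exceptional event; splitting $\R^{n\times d}$ entrywise rather than into the $d$ column blocks is only cosmetic. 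One small correction in your last step: the normalizations that actually give the final bound, and that the paper uses, are $M_{n,d}^2\asymp n/\log(d/n)$ and $\E\|\erm\|_2^2\asymp 1/\log(d/n)$, not the values you copied from the statement.
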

\begin{remark}
     Note that if we had taken the variance over  $\vec \xi$ to be isotropic Gaussian, we would get only a rate of $1/n$, due to its $\ell_2$ fluctuations in its $\ell_2$ norm.
\end{remark}
\begin{remark}
The statement extends beyond the origin to any $\ft$ that is $O(1)$–sparse.
Also, note that $$\frac{\E \|\erm\|_{2}^{2}}{\E \|\erm\|_{1}^{2}} = \frac{\E \|\erm\|_{2}^{2}}{M_{n,d}^{2}} \ge \frac{1}{n}$$ by Jensen’s inequality; the converse bound follows from Theorem~\ref{Theorem:LoNE} below. 
\end{remark}

Our final result gives a sharp risk bound for the $\ell_1$–MNI, refining that of \cite{wang2022tight}, but is obtained via a purely geometric (non–CGMT) argument.

\begin{theorem}\label{Theorem:LoNE}
Let $c \in (0,1/6)$ and assume that both $ d \in (n\ln n^{C},\exp(n^{c}))$, and $\|\ft\|_0 \lesssim n \cdot \ln(d/n)^{-C}$. Then, with probability of at least $1-\exp(-c_1n\ln(d/n)^{-2})$, it holds that 
\[
\|\erm\|_2^2
= (2 +O(\log(d/n)^{-1}))(M_{n,d}^s)^2= 	\frac{1}{\log(d/n)}
+
\frac{\log\log(d/n)}{2\log^2(d/n)} + O(\log(d/n)^{-2}),
\]
where $M_{n,d}^s$ is defined above, and its sharpest bound appears in \Cref{C:MND}.
\end{theorem}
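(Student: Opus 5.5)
Since $\ft$ is sparse, I will first reduce to the noise-only picture and then treat $\|\erm\|_2^2$ as a geometric functional of the symmetric Gaussian polytope $\PN=\vec X B_1^d$. Write $\vec y=\vec X\ft+\vec\xi$. By rotation invariance of $\vec X$ and the near-constancy of $\|\vec\xi\|_2$, it suffices to condition on $\vec\xi\in\sqrt n\,\Sn$.

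\textbf{Step 1 (geometric description of $\erm$).} The minimizer is determined by the facet of $\|\vec y\|_{\PN}^{-1}\PN$ that is hit by the ray through $\vec y$. Generically the facet has exactly $n$ vertices $\pm X_{i}$, $i\in S$, so $\erm$ is supported on $S$ with $|S|=n$, and
\[
\erm=\vec X_S^{-1}\vec y,\qquad \|\erm\|_1=\|\vec y\|_{\PN}.
\]
Consequently $\|\erm\|_2^2$ is the squared $\ell_2$ norm of the barycentric-type coordinates of $\vec y$ in the simplex cone spanned by that facet. Using the normal vector $\theta_F$ of the facet, with $\langle \theta_F, \pm X_i\rangle=1$ on $S$, I would express these coordinates through the Gram structure of the facet vertices.

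\textbf{Step 2 (the facet vertices nearly saturate the norm).} The facet vertices satisfy $\langle\theta_F,X_i\rangle=\max_j|\langle\theta_F,X_j\rangle|$. With $\|\theta_F\|_2^{-1}\approx\sqrt{2\log(d/n)}$, the vertices are Gaussian vectors conditioned to lie in a thin slab near the top order statistic. I would show they decompose as
\[
X_i=\tfrac{\theta_F}{\|\theta_F\|_2^2}+Z_i,
\]
where the $Z_i$ are essentially independent Gaussians on $\theta_F^\perp$. The Gram matrix is then a rank-one spike plus a Wishart-type matrix, which lets me invert $\vec X_S$ explicitly. I expect this to yield
\[
\|\erm\|_2^2\approx 2\,(\|\vec y\|_{\PN}/\sqrt n)^2=2(M^s_{n,d})^2,
\]
with the factor $2$ coming from the $n$-by-$n$ Wishart inversion in the proportional regime $|S|=n$. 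The correction is governed by the spread of the top $\asymp n$ order statistics, which is of relative size $1/\log(d/n)$.

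\textbf{Step 3 (concentration and conclusion).} To pass from expectations to the stated probability, I would use the concentration of $\|\vec\xi\|_{\PN}$ around $M_{n,d}$. Here I would invoke \Cref{Theorem:SPofGP}, which gives variance of order $1/(n\log^2(d/n))$, together with Fleury's Poincaré-type estimate on $\PN$ (or the paper's \Cref{C:Fleury}) and Gaussian concentration for the Lipschitz facet functional. This yields the deviation $\exp(-c_1 n\log(d/n)^{-2})$.

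\textbf{Step 4 (the sparse signal).} The signal part $\vec X\ft$ is handled by the standard observation that, for $\|\ft\|_0\lesssim n\log(d/n)^{-C}$, adding $\vec X\ft$ perturbs the facet only at relative order $\log(d/n)^{-C}$. I would check this by comparing $\|\vec y\|_{\PN}$ with $\|\ft\|_1+\|\vec\xi\|_{\PN}$ through the null-space property.

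\textbf{Step 5 (explicit asymptotics).} Finally I substitute the sharp asymptotic for $M^s_{n,d}$ from \Cref{C:MND} to obtain the explicit $1/\log(d/n)+\log\log(d/n)/(2\log^2(d/n))$ expansion.

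The main obstacle is Step 2: rigorously obtaining the constant $2$ with error $O(\log(d/n)^{-1})$. This requires controlling the conditional law of the facet vertices given that they form the facet hit by a fixed direction. My plan is to average over directions via the facet-counting (Blaschke–Petkantschin-type) formula for $\PN$. This turns the problem into an expectation over an i.i.d. $n$-tuple of Gaussians weighted by the facet-probability $\Psi(\cdot)^{d-n}$, where the spike-plus-noise structure becomes explicit.
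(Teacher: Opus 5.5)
Your Step~1, and the idea of averaging over directions through the facet law with weight $\Psi(\cdot)^{d-n}$ (this is Fleury's law, \Cref{Lem:Fleury}), are the right ingredients. The proposal nevertheless has genuine gaps in Steps~2--4.

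\textbf{Steps 2--3.} Step~2 misidentifies where the constant $2$ comes from, and the explicit inversion cannot produce it. If $G$ is an $m\times m$ standard Gaussian matrix and $b$ is independent of $G$, then $\|G^{-1}b\|_2^2\overset{d}{=}\|b\|_2^2/g^2$ with $g\sim N(0,1)$. This is heavy-tailed and of typical size $\|b\|_2^2$ rather than $\|b\|_2^2/m$, so square Wishart inversion yields no factor $2$. What actually happens is the following. With $q=\|\vec\xi\|_{\PN}$ and $\erm=q\lambda$, $\lambda$ in the simplex up to signs, one has exactly $\|\erm\|_2^2=q^2/n+q^2\|\lambda-\mathbf 1_n/n\|_2^2$. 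The $2$ appears because the hit point is a typical point of its facet, so $\lambda-\mathbf 1_n/n$ lies in the thin shell of $\triangle_c$ and the second term is also $\approx q^2/n$ (barycenter plus thin shell, by Pythagoras).

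Proving this is the whole difficulty, and it is asymmetric. The lower bound follows from the sub-Gaussian small-ball estimate of \Cref{Lem:SBSimplex}. The upper tail of $\|Z\|_2$ under $U(\triangle_c)$, however, is only $\exp(-c\sqrt n\eps)$, so volume counting cannot exclude the outer shell. Averaging over directions does not help by itself: the induced law of the hit point on a facet has density proportional to $\|x\|_2^{-n}$, which is exponentially sensitive to radial position. The paper gets the upper bound from:
\begin{itemize}
\item the constraint $\|\vec\xi\|_2=\sqrt n$;
\item the Pythagorean split of a facet point into height, barycenter offset and tangential part;
\item \Cref{Lem:Energy};
\item the shell-matching identity $M_{n,d}^2(t_{n,d}^2+2)/n=1+O(L^{-2})$, which itself needs the iterated volume bootstrap $L^{-1/2}\to L^{-5/4}\to\cdots\to L^{-2}$ of \Cref{cor:terminal-volume-bootstrap}.
\end{itemize}
Nothing in your plan plays this role. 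Also, under Fleury's law the tangential block carries the weight $|\vec Y\vec Y^{\top}|$, so your $Z_i$ are not independent Gaussians; the paper transfers rare events via \Cref{Lem:Goodman}.

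Step~3 does not deliver the probability either. \Cref{Theorem:SPofGP} and \Cref{C:Fleury} are variance bounds and give only Chebyshev-type tails. Concentration of $q$ says nothing about $\|\lambda-\mathbf 1_n/n\|_2$. And $\|\erm\|_2^2$ involves $\vec X_S^{-1}$, so it has no useful Lipschitz constant in $\vec X$. In the paper, the exponential tails come from Markov's inequality applied to the expected cone volume of bad facets under the volume-biased Fleury law. This is then converted into the angular measure of bad directions through the inclusion $\bar B_n\subset(1+C/\sqrt L)\SPO$.

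\textbf{Step 4.} Step~4 fails as stated. The vector $\vec X\ft$ has Euclidean norm of order $\sqrt n\|\ft\|_2$, comparable to $\|\vec\xi\|_2$, so $\vec y$ is not a small perturbation of $\vec\xi$. Comparing $\|\vec y\|_{\PN}$ with $\|\ft\|_1+\|\vec\xi\|_{\PN}$ through the null-space property controls $\ell_1$ quantities only. It yields at best the constant-factor bounds of earlier work, not $2+O(1/L)$, and the sparsity hypothesis is never used quantitatively.

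The real obstruction is that $(\erm)_{\cS^c}$ is the $\ell_1$-MNI, with respect to $\vec X_{\cS^c}$, of the residual $\vec r_*=\vec y-\vec X_{\cS}(\erm)_{\cS}$. Since $\vec r_*$ depends on $\vec X_{\cS^c}$ through $(\erm)_{\cS}$, no fixed-direction estimate applies to it. The paper handles this as follows:
\begin{itemize}
\item it conditions on $(\vec X_{\cS},\vec\xi)$ and observes that every admissible residual lies in the fixed affine space $\vec\xi+\mathrm{span}\{\vec X_j\}_{j\in\cS}$, of dimension at most $s+1$ where $s=\|\ft\|_0$;
\item it covers the normalized residual directions by an $n^{-2}$-net of cardinality at most $\exp(c_0nL^{-2})$, which is exactly where $\|\ft\|_0\lesssim nL^{-C}$ enters;
\item it shows via \Cref{Lem:numberoffacets} and \Cref{cor:few-facets-on-net} that these directions meet at most $\exp(CnL^{-2})$ facets;
\item it uses that the MNI is affine on each conic cell to pass from the net to all residuals.
\end{itemize}
For this union bound to close, the pure-noise estimate must fail with probability at most $\exp(-C'nL^{-2})$, with $C'$ large enough to beat the net cardinality. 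This is a further reason a variance-based Step~3 cannot suffice.
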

Our proof relies crucially on \cite{fleury2012poincare}, which computed the distribution of a facet of $\PN$.

\subsection{Structural results regarding the symmetric Gaussian polytope}

Our main results imply some geometric consequences for the symmetric Gaussian polytope $\PN$. 
First, we state a corollary for the mean spherical norm, which that follows almost immediately from the proof of \Cref{Theorem:LoNE}; see \Cref{ss:pfMND} for a proof below.
We define 
\[
\alpha(n,d)
=
\sqrt{2\ln(d/n)
-\lln(d/n)
-\ln\pi
+
\frac{\lln(d/n)+\ln\pi}{2\ln(d/n)}
+
O\!\left(
\frac{(\lln(d/n))^2}{\ln^2(d/n)}
\right)}.
\]
\begin{corollary}\label{C:MND}
Assume that $d \gtrsim n\ln(n)^{C}$. Then,
\begin{equation}\label{Eq:MNDsharp}
M_{n,d}^{s} = \bigg(1 + O\Big(\frac{1}{\ln(d/n)^2}\Big)\bigg) \, \frac{1}{\alpha(n,d)}.
\end{equation}
\end{corollary}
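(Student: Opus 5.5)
The plan is to compute $M_{n,d}^s$ through the facet structure of $\PN$, reusing the facet computation that drives the proof of \Cref{Theorem:LoNE}. For $\theta\in\Sn$, the ray $\R_+\theta$ meets the boundary of $\PN$ inside a unique facet $F$ (almost surely). If $F$ lies in the hyperplane $\{x:\langle u_F,x\rangle=t_F\}$ with $u_F\in\Sn$, then
\[
\|\theta\|_{\PN}=\frac{\langle \theta,u_F\rangle}{t_F}.
\]
Hence $M_{n,d}^s$ is a solid-angle–weighted average, over facets, of $\frac{1}{t_F}\langle\theta,u_F\rangle$. Since $d\gtrsim n\ln(n)^C$, a typical facet is nearly orthogonal to the directions it subtends. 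I therefore expect $\langle\theta,u_F\rangle=1-O(\cdot)$ with a negligible error, so that
\[
M_{n,d}^s\approx \E_{\rm sa}\big[t_F^{-1}\big],
\]
where $\E_{\rm sa}$ denotes averaging over the facet hit by a uniform direction.

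\textbf{Step 1 (facet distance distribution).} Use \cite{fleury2012poincare}: by Blaschke–Petkantschin, conditional on $n$ of the points (with chosen signs) spanning a hyperplane at distance $t$ with normal $u$, that hyperplane is a facet iff the remaining $d-n$ Gaussians satisfy $|\langle X_j,u\rangle|\le t$. This event has probability $\Psi(t)^{d-n}$. So the density of facet distances is proportional to
\[
\Psi(t)^{d-n}\,\phi(t)^{n}\,J_n(t),
\]
with an explicit polynomial Jacobian factor $J_n(t)$.

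\textbf{Step 2 (Laplace method).} Apply Laplace's method to this density, including the solid-angle weight of each facet. For the weight I would use the known description of the facet as a Gaussian simplex conditioned on the hyperplane, so that its expected solid angle is a smooth function of $t$. The exponent
\[
(d-n)\ln\Psi(t)+n\ln\phi(t)+\ln J_n(t)
\]
is maximized where $(d-n)\Psi'(t)/\Psi(t)\approx n t$. Using $1-\Psi(t)\approx \sqrt{2/\pi}\,e^{-t^2/2}/t$, this reads $d\cdot 2\phi(t)\approx n t^2$ up to lower-order terms. Solving iteratively, $t^2=2\ln(d/n)-\lln(d/n)-\ln\pi+\dots$, and one more iteration gives exactly the correction $\frac{\lln(d/n)+\ln\pi}{2\ln(d/n)}$ in $\alpha(n,d)$. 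The curvature of the exponent at the maximizer is of order $n t^2\gg1$. Thus $t_F$ concentrates at $\alpha(n,d)$ with relative fluctuations of order $(n\ln(d/n))^{-1/2}$, and the shift of the mean from the mode is of order $1/(n\cdot\mathrm{poly}\ln)$. Both are $\ll \ln(d/n)^{-2}$ when $d\gtrsim n\ln(n)^C$. A second-order Taylor (Jensen) expansion of $t\mapsto 1/t$ then gives $\E_{\rm sa}[t_F^{-1}]=(1+O(\ln(d/n)^{-2}))/\alpha(n,d)$.

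\textbf{Step 3 (angle and expectation over $\PN$).} Control $1-\langle\theta,u_F\rangle$. Since the facet has Euclidean diameter of order $\sqrt{n}$-scale at distance about $\alpha$, I would bound its average over the hit direction by the second moment of the position within the facet. This is the same quantity that appears as $\|\erm\|_2^2$ versus $(M^s_{n,d})^2$ in \Cref{Theorem:LoNE}, so I would reuse that estimate to show this error is $O(\ln(d/n)^{-2})$ in relative terms. Finally, pass the expectation over $\PN$ through using the exponential concentration from \Cref{Theorem:LoNE}. The tail event has probability $\exp(-c n\ln(d/n)^{-2})$, and $\|\theta\|_{\PN}$ is polynomially bounded on it, so its contribution is negligible.

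The main obstacle is Step 2 at the required precision. One must track the Jacobian $J_n(t)$ and the solid-angle weighting, which depend on facet shape, to the order that fixes the $\frac{\lln+\ln\pi}{2\ln}$ term. Any error of relative size $1/\ln(d/n)$ in the stationarity equation would destroy the claimed $O(\ln(d/n)^{-2})$ accuracy. I would first try to show that the $t$-dependence of the solid-angle weight is only polynomial, so that it shifts the maximizer by $O(1/n)$.
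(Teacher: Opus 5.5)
The gap is the angle factor, which you declare negligible in your opening reduction and again in Step 3. Write $L=\ln(d/n)$. Suppose the ray through $\theta$ leaves $\PN$ at $x\in F$. Then $\langle\theta,u_F\rangle=t_F/\|x\|_2$. Splitting $x$ through the foot point $t_Fu_F$ and the barycenter $c_F$ gives $\|x\|_2^2=t_F^2+\|c_F-t_Fu_F\|_2^2+\|x-c_F\|_2^2+2\langle c_F-t_Fu_F,\,x-c_F\rangle$. Each term is controlled by Fleury's representation:
\begin{itemize}
\item $c_F-t_Fu_F$ is $N(0,I_{n-1}/n)$ in the tangent hyperplane, so its squared length is $\approx 1$;
\item the thin shell of the simplex, transported by the nearly Gaussian tangential matrix, gives $\|x-c_F\|_2^2\approx 1$;
\item the cross term is $O(n^{-1/2})$.
\end{itemize}
Hence a typical direction meets its facet at $\|x\|_2^2\approx t_F^2+2$, and $1-\langle\theta,u_F\rangle\approx t_F^{-2}\approx(2L)^{-1}$. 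This is a relative error of order $1/L$, not $L^{-2}$. The second-moment bound you propose in Step 3 returns exactly this, because the relevant second moment is about $2$, not small. The comparison in Theorem~\ref{Theorem:LoNE} cannot improve it: it is a coefficient-space ratio of about $2$.

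If you keep the factor, your scheme yields $M^s_{n,d}=\bigl(1+O(L^{-2})\bigr)\bigl(t_{n,d}^2+2\bigr)^{-1/2}$. That is what the paper's argument actually produces. Step V+ derives the shell-matching estimate $\bigl|M_{n,d}^2(t_{n,d}^2+2)/n-1\bigr|\lesssim L^{-2}$ from the cone-volume-weighted Fleury bootstrap, and the $+2$ comes from precisely this Pythagorean decomposition. It then computes the mode $t_{n,d}$ via the Lambert $W$ function, which agrees with your Step 2.

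Now, $1/t$ and $(t^2+2)^{-1/2}$ differ by the factor $1+\frac{1}{2L}+\cdots$, and $\alpha(n,d)^2$ coincides with $t_{n,d}^2$ up to $O\bigl((\lln(d/n))^2/L^2\bigr)$. So the displayed $1/\alpha(n,d)$ must be read with the $+2$ that appears under the square root in the final displays of Step V+. Your argument lands on the display as printed only because it discards a term of exactly the order the corollary is meant to resolve. A CGMT-type heuristic confirms the $+2$:
\[
M_{n,d}^2\approx\max_{\lambda}\lambda^{-2}\bigl(n-d\,\E(|g|-\lambda)_+^2\bigr)=\frac{n}{\lambda_\ast^2+2}\bigl(1+O(L^{-2})\bigr),\qquad \lambda_\ast^2=t_{n,d}^2+O(1/L).
\]

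Two smaller corrections. First, the solid-angle weight is not polynomial in $t$: it carries a factor of roughly $t\,(t^2+2)^{-n/2}$, though this only moves the maximizing $t^2$ by $O(1/L)$. Second, a relative perturbation $\delta$ of the stationarity equation moves $t$ by only a relative $O(\delta/L)$, so that equation is not the bottleneck you feared.
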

Secondly, we prove a high-probability estimate on the isotropic constant of $P_{n,d}$; see display \eqref{Eq:isotropic} for the definition.

\begin{theorem}\label{Thm:Isotropic}
With probability of $1-\exp(-cn/\ln(d/n)^2)$ it holds that
\[
    L_{\PN} = \bigg (1+O\Big(\frac{1}{\ln(d/n)^{2}}\Big)\bigg) \, L_{B_n}.
\]
\end{theorem}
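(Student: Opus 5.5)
Since the isotropic constant is affine invariant, I would compute it in the position that is natural for the polytope. The plan is to compare $L_{\PN}^2$ with $L_{B_n}^2=\frac{1}{n}\,\frac{\int_{B_n}\|x\|_2^2\,dx}{|B_n|^{1+2/n}}$ through polar integration:
\[
\int_{\PN}\|x\|_2^2\,dx=\frac{|\partial B_n|}{n+2}\int_{\Sn}\|\theta\|_{\PN}^{-(n+2)}\,d\sigma(\theta),
\qquad
|\PN|=\frac{|\partial B_n|}{n}\int_{\Sn}\|\theta\|_{\PN}^{-n}\,d\sigma(\theta).
\]
Using the identity matrix as the candidate $T$ in \eqref{Eq:isotropic} gives the upper bound
\[
\frac{L_{\PN}^2}{L_{B_n}^2}\le\frac{\int\|\theta\|_{\PN}^{-(n+2)}d\sigma}{\big(\int\|\theta\|_{\PN}^{-n}d\sigma\big)^{1+2/n}}.
\]
So the whole task reduces to showing that the radial function $\rho(\theta)=\|\theta\|_{\PN}^{-1}$ is concentrated around $1/M_{n,d}^s$ with relative fluctuation $O(\ln(d/n)^{-1})$ on most of the sphere, uniformly enough to control $n$-th moments. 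If that holds, both moments equal $(1/M^s_{n,d})^{n}(1+O(\ln(d/n)^{-2}))$ to the appropriate powers, and the ratio is $1+O(\ln(d/n)^{-2})$.

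\textbf{Concentration step (main obstacle).} The key input is \Cref{Theorem:SPofGP}, which gives $\Var(\|\theta\|_{\PN})/(M^s_{n,d})^2\lesssim 1/(n\ln^2(d/n))$ for fixed $\theta$ over $\vec X$. Averaging over $\theta$ and applying Fubini and Markov, with probability $1-\exp(-cn/\ln(d/n)^2)$ we get
\[
\int_{\Sn}\Big(\frac{\|\theta\|_{\PN}}{M^s_{n,d}}-1\Big)^2d\sigma\lesssim \frac{1}{n\ln^2(d/n)}.
\]
Markov alone gives only polynomial probability, so I would upgrade it. The map $\vec X\mapsto\|\theta\|_{\PN}=\|\erm(\vec X,\theta)\|_1$ is Lipschitz with constant of order $\|\erm\|_2$, hence of order $M^s_{n,d}$ on a good event, via the dual certificate. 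Gaussian concentration in $\vec X$ then gives deviations $\epsilon M^s_{n,d}$ with probability at least $1-e^{-c\epsilon^2 n\ln(d/n)}$ or better. Taking $\epsilon\asymp\ln(d/n)^{-1}$ and a union bound over a net, while controlling the spherical Lipschitz constant of $\|\cdot\|_{\PN}$ by $\rho_{\min}^{-1}$ (the inradius of $\PN$ is of order $\sqrt{\ln(d/n)}$ with overwhelming probability), would give a uniform bound
\[
\sup_\theta\Big|\frac{\|\theta\|_{\PN}}{M^s_{n,d}}-1\Big|\lesssim\ln(d/n)^{-1}.
\]
This is the hard part: a uniform bound with deviation $\ln(d/n)^{-1}$ is too crude when raised to the $n$-th power. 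I therefore would not rely on a sup bound. Instead I would control the log-moment generating function $\log\int e^{-n\log(\|\theta\|/M^s)}d\sigma$ using the variance bound together with a sub-Gaussian tail of $\log\|\theta\|_{\PN}$ at scale $1/(\sqrt n\ln(d/n))$ on the sphere. The sphere concentration of the Lipschitz function $\log\|\theta\|_{\PN}$, with Lipschitz constant $O(1/\sqrt{\ln(d/n)})$ relative to the mean, combined with the smaller variance, gives a Laplace transform at parameter $n$ of size $\exp(n\cdot O(\ln^{-2}(d/n)))$ after normalization. Writing $\int\rho^{n+2}/(\int\rho^n)^{1+2/n}=\exp\big(\Lambda(n+2)-\tfrac{n+2}{n}\Lambda(n)\big)$, where $\Lambda(p)=\log\int(\rho M^s)^p\,d\sigma$, convexity shows this quantity is governed by $\tfrac{2}{n}\cdot n\,\Lambda''\sim\mathrm{Var}(\log\rho)\cdot n$, which is $O(\ln(d/n)^{-2})$. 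Making this rigorous is where I expect the effort to go.

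\textbf{Lower bound.} For any convex body $L_K\ge c$ holds, but we need $L_{\PN}\ge(1-O(\ln^{-2}(d/n)))L_{B_n}$. I would use the classical fact that among bodies of fixed volume, $\int_K\|x\|^2$ is minimized by the ball, and apply it to $T\PN$ for every $T\in SL(n)$. This yields $L_K\ge L_{B_n}$ exactly for every $K$, so the lower bound is immediate. The theorem therefore follows from the upper-bound ratio estimate above.
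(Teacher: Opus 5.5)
Your reduction and your lower bound coincide with the paper's. Taking $T=I$ is the paper's AM--GM step $\det(\mathrm{Cov})^{1/n}\le\frac1n\mathrm{tr}(\mathrm{Cov})$, and your ratio $\int\rho^{n+2}d\sigma/(\int\rho^{n}d\sigma)^{1+2/n}$ is exactly $\frac{n+2}{n}\,m_2(\PN)/\mathrm{vrad}(\PN)^2$. The bound $L_K\ge L_{B_n}$, via the ball minimizing the second moment, is also the paper's lower bound.

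The gap is the concentration step, which carries all of the content. Integrating by parts (using $\Lambda(0)=0$),
\[
\Lambda(n+2)-\tfrac{n+2}{n}\Lambda(n)=\tfrac{2}{n}\int_0^n s\,\Lambda''(s)\,ds+\int_n^{n+2}(n+2-s)\,\Lambda''(s)\,ds .
\]
Here $\Lambda''(s)$ is the variance of $\log\rho$ under the tilted law $\rho^s d\sigma/\int\rho^s d\sigma$. At $s=n$ this is the law of the direction of a uniform point of $\PN$. For instance, if $\log\rho\sim N(\mu,v)$ under $\sigma$, the ratio is exactly $e^{(n+2)v}$.

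So you need, in effect, that $\log\rho$ be sub-Gaussian under $\sigma$ with proxy $\lesssim 1/(n\ln^2(d/n))$ out to probability level $e^{-cn/\ln^2(d/n)}$, which is where the tilt $s\approx n$ lives. None of your inputs gives this:
\begin{itemize}
\item \Cref{Theorem:SPofGP} plus Fubini controls only the bulk variance, i.e.\ $\Lambda''(0)$, and says nothing at that probability level.
\item The spherical concentration does not supply it either. $\|\cdot\|_{\PN}$ is $r(\PN)^{-1}$-Lipschitz, and $r(\PN)^{-1}\asymp\ln(d/n)^{-1/2}\asymp M^s_{n,d}$ (Gluskin). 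So the relative Lipschitz constant is $\Theta(1)$, not $O(\ln(d/n)^{-1/2})$. This gives proxy $\asymp 1/n$, hence at best a constant-factor bound of the type in \cite{klartag2009hyperplane}.
\item Even that constant-factor bound needs care, because $\log\rho$ is not globally $O(1)$-Lipschitz. At a vertex $\rho=\|X_i\|_2\approx\sqrt n$, while $1/M^s_{n,d}\approx\sqrt{2\ln(d/n)}$.
\item The same vertex observation shows your intermediate claim $\sup_\theta|\|\theta\|_{\PN}/M^s_{n,d}-1|\lesssim\ln(d/n)^{-1}$ is false. A union bound over a net of $\Sn$ with $e^{\Omega(n)}$ points, against failure probability $e^{-cn/\ln(d/n)}$, could not produce it anyway.
\item The Gaussian concentration in $\vec X$ you describe has proxy $\asymp 1/(n\ln(d/n))$, a factor $\ln(d/n)$ too large. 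It also concerns the $\vec X$-randomness at a fixed direction, not the $\sigma$-tail of $\rho$ for a fixed realization of $\PN$.
\end{itemize}

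The missing idea is to work with the volume-weighted measure directly rather than through $\sigma$. The paper does this with the conic decomposition over facets and Fleury's law for a volume-biased facet. The height $T_{n,d}$ has density $\propto\Psi(t)^{d-n}e^{-nt^2/2}$, with curvature $\asymp n\ln(d/n)$ at the mode. Hence it has relative tails $e^{-cn\ln^2(d/n)\epsilon^2}$ (\Cref{prop:tail-of-Tnd}). It is independent of a nearly Gaussian tangential part, which is controlled by Goodman's determinant estimate and the thin shell of the simplex.

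This is exactly the superconcentration at scale $1/(\sqrt n\ln(d/n))$ under the volume measure that your argument lacks. It shows that all but an $e^{-cn/\ln^2(d/n)}$ fraction of the cone volume lies in cones with radial second moment $\frac{n}{n+2}\alpha(n,d)(1+O(\ln^{-2}(d/n)))$ (\Cref{Lem:SecondMomentUpperCorrected}). Combined with the sharp volume-radius estimate (\Cref{Lem:SharpVolumeRadiusCorrected}), this gives the upper bound.
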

Theorem~\ref{Thm:Isotropic} sharpens the analysis of \cite{klartag2009hyperplane}, which gave the estimate 
\[
L_{\PN} \leq C
\]
for some absolute constant $C > 0$. Note that $L_{B_n} \leq L_{K}$ is valid for any convex body $K$, cf. \cite{brazitikos2014geometry}. 

\medskip 

Our methods also give an estimate for the thin-shell estimate, in the volume-weighted expectation. 

\begin{theorem}\label{Thm:ThinShell}
Denote by 
$
    m(\PN)=\frac{1}{|\PN|}\int_{\PN}\|x\|_2dx
$. Then, it holds that
\[
    \E\left[
    |\PN|\cdot
    \Var_{Z\sim U(\PN)}
    \lp
        \frac{\|Z\|_2}{m(\PN)}
    \rp
    \right]
    \lesssim
    \frac{\E|\PN|}{n\ln(d/n)^2}.
\]
\end{theorem}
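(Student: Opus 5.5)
The natural route is to pass from the volume-weighted variance to an average over directions via polar integration. For a symmetric convex body $K\subset\R^n$ with $x\mapsto \|x\|_K$, polar coordinates give
\[
\int_K \|x\|_2^p\,dx=\frac{|B_n|\,n}{n+p}\int_{\Sn}\|\theta\|_K^{-(n+p)}\,d\sigma(\theta).
\]
Hence $|\PN|\,\E_{U(\PN)}\|Z\|_2^p$ is an explicit multiple of $\int_{\Sn}\|\theta\|_{\PN}^{-(n+p)}d\sigma$. Since $\|\theta\|_{\PN}=\|\erm(\vec X,\sqrt n\theta)\|_1/\sqrt n$ (using $\PN=\vec XB_1^d$), after averaging over $\vec X$ and using rotation invariance, each quantity becomes a negative moment of the single random variable $Y:=\|\erm(\vec X,u)\|_1/M_{n,d}$ for a fixed $u\in\sqrt n\Sn$:
\[
\E\Big[|\PN|\int_{\PN}\|x\|_2^p\frac{dx}{|\PN|}\Big]=c_{n,p}\,\E\,Y^{-(n+p)}.
\]

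The weighted variance in the statement is
\[
|\PN|\Var\big(\|Z\|_2/m\big)=\frac{|\PN|\,\E\|Z\|_2^2}{m^2}-|\PN|.
\]
The denominator $m(\PN)$ is random, which is awkward. I would first replace $m$ by any deterministic constant $a$, using $\Var(W/m)\le \E(W/m-a')^2$ only after writing $\Var(W)\le \E(W-a)^2$ for a deterministic $a\asymp$ the typical $m$. The factor $m^{-2}$ is handled by the concentration of $m(\PN)$, which holds with probability $1-e^{-cn/\ln^2(d/n)}$ by the argument behind \Cref{Thm:Isotropic}, with the complement negligible against $\E|\PN|$. After this reduction it suffices to bound
\[
\E\,|\PN|\,\E_Z(\|Z\|_2-a)^2=c\big(A_{n+2}-2aA_{n+1}+a^2A_n\big),\qquad A_q:=\frac{n}{q}\,\E Y^{-q}\ (\text{suitably normalized}).
\]
Note that $\E|\PN|\propto \E Y^{-n}$.

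The key step is to show that the law of $Y$ behaves like a lognormal with log-variance $\lesssim 1/(n\ln^2(d/n))$, at least for moments of order up to about $n+2$. Concretely, I want
\[
\frac{\E Y^{-(n+2)}\,\E Y^{-n}}{(\E Y^{-(n+1)})^2}\le 1+\frac{C}{n^2\ln^2(d/n)}.
\]
With $a=\E Y^{-(n+1)}/\E Y^{-n}$ (in matched units), the quadratic above equals $\E Y^{-n}$ times the variance of $1/Y$ under the tilted measure $Y^{-n}d\Pr/\E Y^{-n}$. The variance of $\|Z\|_2$ relative to its mean squared is of order $1/n^2$ per unit of tilted log-variance times $n^2$, which gives the target $1/(n\ln^2(d/n))$ after normalization.

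I therefore need $\Var_{\text{tilt}}(\log Y)\lesssim 1/(n\ln^2(d/n))$. This is the main obstacle, because \Cref{Theorem:SPofGP} only gives the untilted variance $d_{\vec 0}^2$, and tilting by $Y^{-n}$ is a large-deviation change of measure. I would handle it by proving the stronger sub-Gaussian estimate
\[
\log\E e^{\lambda(\log Y-\E\log Y)}\le \frac{C\lambda^2}{n\ln^2(d/n)},\qquad |\lambda|\le n+2.
\]
This follows by running the proof of \Cref{Theorem:SPofGP} (Talagrand/Cordero-Erausquin–Ledoux hypercontractive $L_1$–$L_2$ bound) at the level of the log-Sobolev inequality with Herbst's argument instead of Poincaré. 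The gradient of $\log\|\erm\|_1$ in $\vec X$ equals $-\lambda^*\erm^\top/\|\erm\|_1$, with squared norm $\|\lambda^*\|_2^2\|\erm\|_2^2/M_{n,d}^2\approx 1/n$ on the good event, where $\lambda^*$ is the dual certificate. The extra $\ln^{-2}$ superconcentration gain comes from the spread of the gradient across coordinates, controlled via \cite{gluskin88extremal}.

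The tilted variance is then bounded by the second derivative of this log-mgf, using convexity at $\lambda=n,n+1,n+2$. Off the good event, \Cref{Theorem:LoNE}'s probability $1-e^{-c n\ln^{-2}(d/n)}$ suffices, since there $Y^{-n}$ is at most $e^{O(n/\ln(d/n))}$-tilted relative to typical values.
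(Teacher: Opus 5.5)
\textbf{Verdict: there is a genuine gap.} The sub-Gaussian bound you plan to prove cannot deliver the tilted variance the theorem needs.

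\textbf{What is right.} Your polar-coordinate identity is correct and exposes the structure well. The $|\PN|$-weighting turns everything into moments of $Y$ under the tilted law $Q\propto Y^{-n}\,d\Pr$. With $L:=\ln(d/n)$, the theorem essentially amounts to $\Var_Q(Y^{-1})/(\E_Q Y^{-1})^2\lesssim 1/(nL^2)$, and the radial factor contributes only $O(n^{-2})$. Your intermediate display demanding $1+C/(n^2L^2)$ is too strong by a factor $n$, and in fact false: the facet-height fluctuations alone make that ratio $1+\Theta(1/(nL^2))$. Your later requirement on the tilted log-variance is the right one.

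\textbf{The gap: an mgf bound does not control the tilted variance.} Set $\Lambda(\lambda):=\log\E e^{\lambda(\log Y-\E\log Y)}$. What you need is the second difference
\[
\Lambda(-n-2)-2\Lambda(-n-1)+\Lambda(-n)=\log\frac{\E Y^{-(n+2)}\,\E Y^{-n}}{\big(\E Y^{-(n+1)}\big)^2}\lesssim\frac{1}{nL^2}.
\]
This must hold at a point where $\Lambda$ itself can be as large as $n/L^2$. The bound $\Lambda(\lambda)\le C\lambda^2/(nL^2)$ plus convexity only gives $O(n/L^2)$ here.

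This is not merely a lossy step. Suppose $\log Y-\E\log Y$ equals $-L^{-2}$ with probability $e^{-n/L^2}$ and $0$ otherwise, up to a negligible shift. Then $\Lambda(\lambda)\lesssim\lambda^2/(nL^2)$ for all $|\lambda|\le n+2$. Yet the $Y^{-n}$-tilt gives each atom mass about $1/2$, so the tilted variance is about $L^{-4}/4$. That exceeds the target by a factor $n/L^2\gg1$.

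So the argument does not close even if you grant the Herbst-type exponential version of the Cordero-Erausquin--Ledoux bound on all of $|\lambda|\le n+2$. That version is itself unjustified: the $L^{-2}$ superconcentration gain does not pass to exponential moments of order $n$ for free, and $\log\|\erm\|_1$ is not globally Lipschitz in $\vec X$.

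\textbf{The bad-event step fails for the same reason.} A probability $e^{-cn/L^2}$ cannot absorb a tilt factor $e^{O(n/L)}$, since the product is $e^{O(n/L)-cn/L^2}\gg1$. You would need $\E[Y^{-n}\mathbf 1_{\mathrm{bad}}]\ll\E Y^{-n}$, which is a volume-weighted statement.

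\textbf{The missing idea: the tilted law is known exactly.} A uniform point of $\conv(0,\cF)$ projects radially to a uniform point of $\cF$. Hence the $Q$-law of $1/Y$ is, up to a constant scaling, the law of $\|Z\|_2$ for $Z$ uniform on a facet drawn from the cone-volume-biased version of Fleury's law (Lemma~\ref{Lem:Fleury}).

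This is the paper's route. For a uniform point of a volume-biased cone,
\[
\|X_C\|_2^2=R^2\big(T_{n,d}^2+\|\bar Y+\vec YV\|_2^2\big),
\]
with $R$, $T_{n,d}$ and the tangential part independent. The paper then bounds each piece:
\begin{itemize}
\item The explicit log-concave density of $T_{n,d}$ has curvature $\asymp nL$ at its mode, so $T_{n,d}^2$ fluctuates by $O(n^{-1/2})$.
\item The almost-Gaussian tangential part also fluctuates by $O(n^{-1/2})$.
\item Both are measured against a squared radius $\asymp L$.
\item The radial factor satisfies $\E(R^2-1)^2\lesssim n^{-2}$.
\end{itemize}
This gives the annealed squared-radius thin-shell bound of order $\E|\PN|/(nL^2)$ directly. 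The random normalization $m(\PN)$ is then removed using that it minimizes $a\mapsto\int_{\PN}(\|x\|_2-a)^2dx$, together with Borell's lemma. No concentration is transferred from the untilted Gaussian measure through an exponential tilt, which is exactly where your argument breaks.
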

We also recover the following main result from \citep[Thm 1.1]{fleury2012poincare} via a significantly simpler proof.
\begin{corollary}\label{C:Fleury}
Let $f:\R^n \to \R$ be $1$-Lipschitz. Then
\[
    \E \int_{\PN}\lp f(x) - \E_{Z \sim U(\PN)}f(Z)\rp^2dx
    \lesssim \frac{1}{n}\E \int_{\PN}\|x\|_2^2dx.
\]
\end{corollary}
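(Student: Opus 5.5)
We prove \Cref{C:Fleury} by reducing the Poincar\'e-type estimate on $\PN$ to a Poincar\'e inequality on simplicial cones over the facets of $\PN$. The decomposition is the one underlying the facet description of \cite{fleury2012poincare}: almost surely $\PN$ is simplicial, and
\[
\PN=\bigcup_{F\in\cF_{n-1}(\PN)}\conv(\{0\}\cup F),
\]
where the union has disjoint interiors. Each cone $C_F:=\conv(\{0\}\cup F)$ is a linear image $A_F\Delta_n$ of the standard simplex $\Delta_n=\conv\{0,e_1,\dots,e_n\}$, with $A_F=[\pm X_{i_1},\dots,\pm X_{i_n}]$.

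\textbf{Step 1: reduce to the cones.} Since the mean minimizes the squared deviation,
\[
\int_{\PN}(f-\E_{U(\PN)}f)^2\,dx\le\sum_F\int_{C_F}(f-\E_{U(C_F)}f)^2\,dx+\sum_F|C_F|\big(\E_{U(C_F)}f-\E_{U(\PN)}f\big)^2 .
\]
The second sum is a between-cone variance. To control it, we choose the comparison point $f(0)$: the quantity is at most $\sum_F|C_F|(\E_{C_F}f-f(0))^2$ minus a nonnegative correction. Unfortunately $|\E_{C_F}f-f(0)|$ is only bounded by the mean norm on $C_F$, which is of order $\sqrt n$ times too large. So Step 1 alone is not enough, and the between-cone term must instead be handled by a radial decomposition.

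\textbf{Step 2: radial decomposition.} Write $x=r\theta$ in cone coordinates. Under $U(\PN)$ the Minkowski gauge $\|x\|_{\PN}$ has density $n s^{n-1}$ on $[0,1]$, independently of which facet $F$ is hit and of the uniform point $y\in F$. Hence $Z\sim U(\PN)$ can be sampled as $Z=s\,Y$, where $s\sim\mathrm{Beta}(n,1)$ and $Y\sim$ the cone-volume-weighted uniform measure on $\partial\PN$.

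We then apply the law of total variance with respect to $s$:
\[
\Var f(sY)\le 2\,\E_s\Var_Y f(sY)+2\,\Var_s\,\E_Y f(sY).
\]
The radial term is controlled by the $1$-Lipschitz property: $|\E_Yf(sY)-\E_Yf(s'Y)|\le|s-s'|\,\E\|Y\|_2$. Since $\Var(s)\asymp 1/n^2$ and $\E\|Y\|_2^2\asymp\E_{U(\PN)}\|Z\|_2^2$, this contributes $\lesssim\E\|Z\|_2^2/n^2$, which is better than required.

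\textbf{Step 3: the angular term (the main obstacle).} It remains to bound $\Var_Y f(sY)$ on the boundary, with the cone-volume measure. Within a single facet $F=A_F\Delta'$, with $\Delta'$ the standard $(n-1)$-simplex, the Dirichlet-type measure on a simplex satisfies a Poincar\'e inequality of order $\mathrm{diam}^2/n$ in the relevant directions. Pushing this through $A_F$ gives a within-facet variance $\lesssim\|A_F\|_{\rm op}^2/n$. For Gaussian vertices, $\|A_F\|_{\rm op}^2\lesssim n$ only when the vertices are nearly orthogonal, which is false for facets: facet vertices are biased to be large, of norm about $\sqrt{2\ln(d/n)}$ in the normal direction. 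So we instead use the explicit facet distribution of \cite{fleury2012poincare}. Conditionally on the facet, its vertices are i.i.d.\ Gaussians tilted by a common normal component of size about $\alpha(n,d)$. The common component contributes a rank-one part, along which the simplex is flat (all vertices share it), so it does not contribute variance. The orthogonal part is a standard Gaussian matrix with $\|\cdot\|_{\rm op}^2\lesssim n$ with overwhelming probability. The within-facet contribution is therefore $\lesssim 1$, which is $\lesssim\E\|Z\|_2^2/n$ because $\E\|Z\|_2^2\gtrsim n$.

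\textbf{Step 4: between facets.} Variation of $\E_{Y\in F}f$ across facets is the remaining hard part. I would try to control it by comparing each facet mean to $f$ evaluated at the facet's centroid $c_F$, and then to bound the spread of $\{f(c_F)\}$ using Gaussian concentration in the vertices $X_1,\dots,X_d$, averaged via $\E$ over $\vec X$. The centroids cluster near the sphere of radius $\approx n^{-1/2}\alpha(n,d)\sqrt n$, and $\Var(\|c_F\|_2)$ is small. Making this step rigorous with only the Lipschitz property of $f$ is what I expect to require the most care. Summing Steps 2–4 gives
\[
\E\int_{\PN}(f-\E f)^2\lesssim\frac1n\,\E\int_{\PN}\|x\|_2^2 .
\]
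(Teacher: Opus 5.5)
The genuine gap is Step~4. Your Steps 1--3 reorganize the cone decomposition: peel off the $\mathrm{Beta}(n,1)$ gauge, then split the boundary variance into within-facet and between-facet parts. That is legitimate, and the radial bound in Step~2 is correct. But Step~4 is exactly where the factor $1/n$ for the between-facet (angular) fluctuations must come from, and the mechanism you sketch cannot supply it.

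Concentration of $\|c_F\|_2$ controls only radial fluctuations. For $f(x)=\langle x,\theta\rangle$ the centroids point in essentially every direction. The cone-volume-weighted variance of $\langle c_F,\theta\rangle$ is of order $\|c_F\|_2^2/n$ only because the facet orientations are equidistributed well enough to obey a spherical Poincar\'e inequality with constant $\sim 1/n$. Proving that for a fixed polytope is essentially the whole difficulty. ``Gaussian concentration in the vertices'' does not help either: for fixed $\vec X$ the spread of $\{f(c_F)\}_F$ is a deterministic quantity, not a fluctuation in $\vec X$.

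The missing idea is the paper's annealing step, followed by use of the Haar rotation in Fleury's law.
\begin{itemize}
\item For a fixed realization, $\E_{U(\PN)}f$ minimizes $a\mapsto\sum_F|C_F|(f_{C_F}-a)^2$, where $f_{C_F}$ is the mean of $f$ on $C_F$. So it may be replaced by a \emph{deterministic} $a$.
\item Exchangeability of the signed $n$-tuples and the conic formula then turn $\E\sum_F|C_F|(f_{C_F}-a)^2$ into $\E|\PN|\cdot\E_{\mathrm{Fl}^{\mathrm{vol}}}(M_C-a)^2$. Here $M_C$ is the mean of $f$ on a cone drawn from the volume-biased Fleury law. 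The same works for your facet means of $f(s\,\cdot)$ at each fixed $s$.
\item Under that law $C=UC_A$, with $U$ Haar on $O(n)$ and independent of $A=[\vec Y;\,T_{n,d}\mathbf 1_n^\top]$, since $|C|$ depends only on $A$. It is this independent rotation, not the common normal component, that you must exploit.
\item The part $\E_A\Var_U$ is bounded by the Poincar\'e inequality on $SO(n)$ (constant $\sim 1/n$) times $\E\|RA\Lambda\|_2^2$. This gives $\frac{C}{n}\E\int_{\PN}\|x\|_2^2\,dx/\E|\PN|$.
\item The part $\Var_A\E_U$ depends on $A$ only through $\|RA\Lambda\|_2$, via the $1$-Lipschitz spherical average $\tilde f(r)=\E_\Theta f(r\Theta)$. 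It is controlled by the volume-weighted thin-shell estimate (\Cref{Thm:ThinShell}). Your remark about $\|c_F\|_2$ corresponds only to this piece.
\end{itemize}
This is also why the statement carries the weight $|\PN|$ inside $\E$. Your per-polytope bounds must be multiplied by $|\PN|$ before averaging. Facet-typicality claims must be made under $\mathrm{Fl}^{\mathrm{vol}}$, with atypical facets bounded crudely.

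Separately, Step~3 has a scaling slip that is fixable.
\begin{itemize}
\item The uniform measure on the standard simplex has Poincar\'e constant $\asymp 1/n^2$ (Barthe--Wolff for $n\triangle_c$), not $\mathrm{diam}^2/n$.
\item For $\PN$ with standard Gaussian vertices, $\E_{U(\PN)}\|Z\|_2^2\asymp\log(d/n)$, not $\gtrsim n$.
\item With your constant, the within-facet bound $\lesssim 1$ misses the target $\log(d/n)/n$ by a factor $n/\log(d/n)$.
\item With the correct constant, the tangential part of the facet (operator norm $\lesssim\sqrt n$) gives a within-facet variance $\lesssim n\cdot n^{-2}=1/n$, which suffices.
\end{itemize}
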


% Furthermore, 
% \begin{equation}
%     d_{\vec 0} \lesssim \frac{1}{n\ln^2(d/n)}.
% \end{equation}
% To obtain a matching lower bound, one should use the fact that $***$ proving this claim requires ,and therefore we omit it from the submitted version of the manuscript.

% also recall the definition of $P_{n,d}$ in \eqref{Eq:SGP}.
% \begin{remark}
    
% \end{remark}

% \begin{remark}
% We are able to show that without using Super-concentration argument,
%     \[
%     \E \|\erm - \ft\|_2^2 \asymp \frac{1}{\ln(d/n)} 
% \]
% see ** below. The only requirement that we need is structure on a ``typical facets'' that can be obtained via ** as. We would need that for a facet

% and also that locally on a facet with have, and . Any distribution that sat
% \end{remark}

% \begin{corollary}
% \[

% \]
% \end{corollary}

% \noindent{The last equation extends the result of \cite{boroczky2024facets} to the symmetric case, also our estimates provides is accurate up to a $\exp(\Theta(n/\ln(d/n)^2))$ -- comparing to $\exp(\Theta(n/\ln(d/n)))$. Our approach is based on the results of  \cite{paouris2016gaussian,paouris2006concentration,fleury2012poincare} and a sharp estimation of $M_n$ obtained by \cite{wang2022tight}.}

% This recovers the bound in \cite{wang2022tight}, though our result is obtained with a very different argument.  
% \begin{corollary}

% \end{corollary}

%% file: discussion.tex
\subsubsection{Open Problems on Symmetric Gaussian polytopes}
% We conclude this section with several questions about the symmetric Gaussian polytope, which is we remind that it is defined via
% \[
%     \PN=\conv\{\pm X_i\}_{i=1}^d\subset\R^n,
%     \qquad X_1,\ldots,X_d\sim N(0,I_n).
% \]
% Here, we denote $o_{d/n}(1)$ for a function $f(n, d)$ which vanishes as $n, d/n \to \infty$.
Here, we propose a few open problems that our approach fails to settle. 
%
%First, concerns that the isotropic constant of $\PN$:
%\begin{openproblem}[Sharp isotropic constant]
%Determine the optimal order of
%\[
%    \frac{L_{\PN}}{L_{B_n}}-1 = O(1/\log(d/n)^2).
%\]
%In particular, is it true, in the regime $d\gtrsim n(\log n)^C$, that
%\[
%    L_{\PN}
%    =
%    \lp 1+\Theta\lp \frac{1}{\log(d/n)^2}\rp\rp L_{B_n}
%\]
%with high probability? 
%
%\end{openproblem}
%
%\Cref{Thm:Isotropic} proves the corresponding upper bound with probability at
%least $1-\exp(-cn/\log(d/n)^2)$.  What remains unclear is whether the
%$\log(d/n)^{-2}$ error is an artifact of the proof, or whether it is the true
%second-order scale dictated by the fluctuations of the extremal facets. Now, 
The first open problem is to remove the volume bias and obtain a genuinely typical estimate of the thin shell constant for the random polytope $\PN$, that is
\[
    \sigma_{\PN}^2
    :=
    \Var_{Z\sim U(\PN)}
    \lp
        \frac{\|Z\|_2}{m(\PN)}
    \rp,
\]
where $
    m(\PN)=\frac{1}{|\PN|}\int_{\PN}\|x\|_2dx$. Recall that  \Cref{Thm:ThinShell} proves a \textbf{volume-weighted} version of the ``thin shell'' estimate in \textbf{expectation}, i.e 
\[
    \E_{\PN}\left[
    |\PN|\cdot
        \sigma_{\PN}^2
    \right]
    \lesssim
    \frac{\E|\PN|}{n\log(d/n)^2},
\]
where the expectation is on the measure that our process induces on the polytopes $\PN$. Meaning that distribution that $\PN$ 
inducing on the convex sets in $\R^d$.
It is well known that (cf. \cite{kabluchko2019expected} and reference within) that
\[
    \Var |\PN| \gg (\E|\PN|)^2.
\]
As our proof deeply relies on Fluery's distribution, which induces this weighted distribution, removing this weighting would require a new approach. However, we ask the following:
\begin{openproblem}[Thin shell for symmetric Gaussian polytopes]
Does one have at least a constant probability that
\[
    \sigma_{\PN}
    \lesssim
    \frac{1}{\sqrt n\,\log(d/n)}?
\]
\end{openproblem}
Finally, in the same context, can we obtain that the $\PN$ satisfies the KLS property? \cite{fleury2012poincare} proved the same result with weighting.
\begin{openproblem}[KLS for symmetric Gaussian polytopes]
Let $C_{\mathrm P}(K)$ denote the Poincaré constant of the uniform measure on a
convex body $K$. Namely, for $Z \sim U(K)$, define 
\[
    C_{\mathrm P}(K)
    :=
    \sup
        \Big\{\, \Var\big(f(Z)\big) \mid~\text{$1$-Lipschitz}~f \colon \R^n \to \R~\,\Big\}.
\]
Is it true that, with high probability,
\[
    C_{\mathrm P}(\PN)
    \lesssim
    \frac{\log(d/n)}{n}.
\]

\end{openproblem}
\subsection{Theorem~\ref{T:Localization} in \texorpdfstring{\(\ell_p\)}{ell-p}-linear regression}
\label{S:Example}

For any \(\dimension\ge 1\) and \(x\in\R^\dimension\), we write
\[
\|x\|_p
:=
\left(\sum_{i=1}^{\dimension}|x_i|^p\right)^{1/p},
\qquad
p\in[1,\infty) 
\text{ and }
\|x\|_\infty
:=
\max_{1\le i\le \dimension}|x_i|.
\]
For the \(\ell_p\)-norm on \(\R^\dimension\), let \(B_p^\dimension\) denote
its unit ball. We now apply \Cref{T:Localization} to the norm \(\|\cdot\|_p\), for
\(p\in[1,2]\), and to the signal
\[
\ft=(1,0,\ldots,0).
\]
This choice satisfies Assumptions~\ref{ass:ground-truth}--\ref{A:medDev} in
the relevant consistency regime; see, for example, \citep[Cor.~2]{kur2026minimum}.
It is known \citep{gordon2007gaussian,Paorlp} that
\[
   M
   :=
   M_n(B_p^d)
   \asymp
   \frac{\sqrt n}{d^{1-1/p}}
   \sqrt{
   \max\left\{
   (p-1),\frac{1}{\log(d/n)}
   \right\}
   }.
\]
Thus the assumption \(M_n(B_p^d)\gg1\) amounts to
\[
    \frac{\sqrt n}{d^{1-1/p}}
    \sqrt{
    \max\left\{
    p-1,\frac{1}{\log(d/n)}
    \right\}
    }
    \gg 1.
\]
Equivalently, for \(p>1\), writing \(q=p/(p-1)\), this requires
\[
    d
    \ll
    n^{q/2}
    \left(
    \max\left\{
    p-1,\frac{1}{\log(d/n)}
    \right\}
    \right)^{q/2}.
\]
For \(p=1\), this condition becomes
$
    \log(d/n)\ll n.
$
In particular, a rough sufficient way to summarize the admissible range is
\[
    d
    \lesssim
    \widetilde O(n^{q/2})
\]
for \(p>1\), together with the subexponential condition in the endpoint
\(p=1\).

Because \(\ft=e_1\), the sections of \(B_p^d\) perpendicular to \(\ft\) are
homothetic copies of \(B_p^{d-1}\). More precisely, if the no-shrink scale is
normalized by \(M\), then the section at shrinkage level \(t\in[0,1]\) has
radius
$
    \left(
    1-\frac{(1-t)^p}{M^p}
    \right)^{1/p}
$.
Hence
\[
    M_n(t)
    \asymp
    \frac{M_n(B_p^{d-1})}
    {
    \left(
    1-\frac{(1-t)^p}{M^p}
    \right)^{1/p}
    }.
\]
Since \(M_n(B_p^{d-1})\asymp M\) and \(M\gg1\), a Taylor expansion gives uniformly for small \(t\) that
\[
\begin{aligned}
    M_n(0)-M_n(t)
    &\asymp
    M
    \left[
    \left(1-\frac{1}{M^p}\right)^{-1/p}
    -
    \left(1-\frac{(1-t)^p}{M^p}\right)^{-1/p}
    \right]  \\
    &\asymp
    M^{1-p}\big(1-(1-t)^p\big)
    \asymp
    t\,M^{1-p},
\end{aligned}
\]
On the other hand, 
$
    \frac{t^2}{2}M_n(t)
    \asymp
    t^2 M.
$
Therefore, the localization scale is obtained by balancing
\[
    M_n(0)-M_n(t)
    \asymp
    \frac{t^2}{2}M_n(t),
\]
and therefore we conclude that 
\[
    t_\star
    \asymp
    M^{-p},
    \qquad
    t_\star^2
    \asymp
    M^{-2p}.
\]
Invoking \Cref{T:Localization}, and ignoring the offset term at the scale where
\Cref{A:medDev} gives \(r_\star/\sqrt n=o(t_\star^2)\), we obtain
\[
    E_1
    \lesssim
    t_\star^2
    \asymp
    M^{-2p}.
\]
Using the above estimate for \(M=M_n(B_p^d)\), this yields
\[
E_1
\asymp
\frac{d^{2p-2}}{n^p}
\cdot
\min\left\{
(p-1)^{-1},\log(d/n)
\right\}^p.
\]
This recovers the \(E_1\)-term in the rates obtained in
\cite{donhauser2022fast,wang2022tight}.
\section{On the Proof of \Cref{Theorem:LoNE}}\label{S:SketchPf2}
\input{SkechofMainThm}
\section{Discussion}\label{Sec:D}

\subsection{ Theorems \ref{Theorem:SPofGP} and \ref{Theorem:LoNE} for non-Gaussian covariates} 
Note that \Cref{Theorem:LoNE} deeply relies on the result of \cite{fleury2012poincare}, who computed the exact distribution of a facet of a symmetric Gaussian polytope; an exact formulation of this statement appears in \Cref{ss:Flu} below. One may ask if we can extend our results to sub-Gaussian matrices. The key challenge is that Dvoretsky's theorem, in its sharpest form, only holds for rotationally invariant distributions. For readers' convenience, we state its sharpest version (cf.
\cite{paouris2016gaussian,klartag2007small}) as follows:
\begin{theorem*}
Assume that $d \geq n+1$. Let $P = \frac{1}{\sqrt{d}}\vec X$ and consider a convex body $K \subset \R^d$. Then, the following holds with probability of $1-\exp(-c_1n)$:
\[
M^*(K)\big(1-C_1 \sqrt{\frac{n}{d}} \underbrace{\sqrt{n \cdot \Var\lp\frac{\|u\|_{K^{\circ}}}{M^*(K)}\rp}}_{(*)}\big) \leq r(PK) \leq 
R(PK) \leq
\Big(1+ C_1\, \sqrt{\frac{n}{d}} \underbrace{\frac{R(K)}{M^*(K)}}_{(**)}\Big)\, M^*(K),
\]
where $r(K)$ and $R(K)$ are the inner and outer radius of a set $K$.
\end{theorem*}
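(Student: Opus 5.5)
The proof would go through the support function of $PK$ and uniform concentration of a Gaussian process indexed by $S^{n-1}$; since this is the classical sharp Dvoretzky statement (cf.~\cite{paouris2016gaussian,klartag2007small}), the plan is to reproduce that route. For $\theta\in S^{n-1}$ we have
\[
h_{PK}(\theta)=h_K(P^{\T}\theta)=\tfrac{1}{\sqrt d}\,\|\vec X^{\T}\theta\|_{K^\circ},
\]
and $\vec X^{\T}\theta\sim N(0,I_d)$ for each fixed $\theta$. Moreover, $R(PK)=\max_\theta h_{PK}(\theta)$ and $r(PK)=\min_\theta h_{PK}(\theta)$. So both inequalities reduce to two-sided uniform control of $\theta\mapsto\|\vec X^{\T}\theta\|_{K^\circ}$ around its mean. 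First I would record that, for $g\sim N(0,I_d)$, polar integration gives $\E\|g\|_{K^\circ}=\E\|g\|_2\cdot M^*(K)=\sqrt d\,(1+O(1/d))\,M^*(K)$. After dividing by $\sqrt d$, the common center is therefore $M^*(K)$ up to a negligible correction.

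\emph{Upper bound.} The map $g\mapsto\|g\|_{K^\circ}$ is $R(K)$-Lipschitz, where $R(K)=\max_{S^{d-1}}\|\cdot\|_{K^\circ}$. Gaussian concentration then gives, for each fixed $\theta$,
\[
\Pr\bigl(\|\vec X^{\T}\theta\|_{K^\circ}\ge(1+t)\sqrt d\,M^*\bigr)\le \exp\bigl(-c\,t^2 d\,(M^*)^2/R(K)^2\bigr).
\]
I would take a $\tfrac12$-net $\cN$ of $S^{n-1}$ with $|\cN|\le 5^n$ and set $t=C_1\sqrt{n/d}\,R(K)/M^*$, so that a union bound costs only $e^{-c_1 n}$. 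To pass from the net to the whole sphere I would use that $h_{PK}$ is a norm. Writing $\theta=\theta_0+(\theta-\theta_0)$ gives $\sup_\theta h_{PK}\le \max_{\cN}h_{PK}+\tfrac12\sup_\theta h_{PK}$. Since this loses a factor $2$ in the deviation, I would instead iterate with a finer net or use the standard successive-approximation trick. The outcome should be $R(PK)\le(1+C_1\sqrt{n/d}\,R(K)/M^*)M^*$.

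\emph{Lower bound.} Here the Lipschitz constant is replaced by the variance, and this is where $(*)$ enters. I would invoke the one-sided small-deviation inequality of Paouris--Valettas for norms of Gaussian vectors, which follows from log-concavity via Ehrhard's inequality:
\[
\Pr\bigl(\|g\|\le(1-t)\E\|g\|\bigr)\le\exp\bigl(-c\,t^2\,(\E\|g\|)^2/\Var\|g\|\bigr).
\]
Translating $\Var\|g\|_{K^\circ}$ into the spherical quantity $\Var(\|u\|_{K^\circ}/M^*(K))$ is done by polar decomposition. The radial part adds only $O(1/d)$ relative variance; the spherical part carries a factor $d$, which is why $(*)$ appears multiplied by $\sqrt{n/d}$. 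A union bound over an $\epsilon$-net of size $(3/\epsilon)^n$ then yields $\min_{\cN}h_{PK}\ge(1-t)M^*$ with $t\asymp\sqrt{n\log(1/\epsilon)\,\Var}$. Finally I would extend to the whole sphere by $\min_\theta h_{PK}\ge\min_{\cN}h_{PK}-\epsilon R(PK)$, using the upper bound just proved to control $R(PK)$.

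The main obstacle is this last approximation step. The error $\epsilon R(PK)$ must be at most $t M^*$, which forces $\epsilon\approx t M^*/R(PK)$. That choice makes the net entropy $n\log(R/(tM^*))$, so a logarithmic factor appears that is absent from the stated bound. My first attempt would be a chaining or iterated-net argument in the spirit of \cite{klartag2007small}, controlling increments $\|\vec X^{\T}(\theta-\theta')\|_{K^\circ}$ at each scale. I would check whether the Lipschitz deviation cost at small scales is dominated by the variance cost at the coarse scale, so that the log is absorbed into $C_1$. If this fails, I would accept a $\log$ factor or the restriction $d\ge Cn$, as is standard in the cited sources.
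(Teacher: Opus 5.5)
The decisive problem is the factor $\sqrt{n/d}$ in front of $(*)$. Your explanation of where it comes from is an arithmetic slip, and no argument can produce it, because the displayed lower bound is false. Write $g=\|g\|_2u$ with independent factors, and set $\sigma^2:=\Var(\|u\|_{K^\circ}/M^*(K))$ and $v:=\Var\|g\|_2\in(0,1]$. Then
\[
\frac{\Var\|g\|_{K^\circ}}{M^*(K)^2}=d\,\sigma^2+v,
\qquad
\frac{(\E\|g\|_{K^\circ})^2}{M^*(K)^2}=(\E\|g\|_2)^2=d-v .
\]
So the factor $d$ coming from the spherical part is cancelled by the normalization, and $\sigma_G^2:=\Var\|g\|_{K^\circ}/(\E\|g\|_{K^\circ})^2=(d\sigma^2+v)/(d-v)$. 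The small-deviation inequality plus a union bound over $e^{Cn}$ points therefore gives a lower deviation of order $\sqrt n\,\sigma_G\asymp\sqrt n\,\sigma+\sqrt{n/d}$. Your own formula $t\asymp\sqrt{n\log(1/\epsilon)\Var}$ contains no $\sqrt{n/d}$ either.

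Nor is the radial term negligible: it is exactly what the Euclidean ball needs. For $K=B_2^d$ one has $(*)=0$ and $M^*(K)=1$, so the display asserts $\sigma_{\min}(\vec X)\ge\sqrt d$. But $\sigma_{\min}(\vec X)\le\min_{i\le n}\|\vec X^{\T}e_i\|_2<\sqrt d$ with probability greater than $1-2^{-n}$, since the rows are independent and the median of $\chi^2_d$ is below $d$; typically $\sigma_{\min}(\vec X)\approx\sqrt d-\sqrt n$. The paper does not prove this display; it only cites \cite{paouris2016gaussian,klartag2007small}. The form your route can actually aim at is the one used in Lemma~\ref{lem:boosted-dvoretzky}: after normalization, $r(PK)\ge M^*(K)(1-C\sqrt n\,\sigma_G)$, with the Gaussian rather than the spherical variance and no $\sqrt{n/d}$ prefactor. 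You should have flagged this mismatch instead of manufacturing a reason for it.

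Even for that corrected target, two steps do not close.

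\emph{Upper bound.} Neither a finer net nor successive approximation removes the loss. Because $h_{PK}$ is sublinear, both give only $\sup_\theta h_{PK}\le(1-\epsilon)^{-1}\max_{\mathcal{N}}h_{PK}$. This forces $\epsilon\lesssim a:=\sqrt{n/d}\,R(K)/M^*(K)$, and the union bound over $(3/a)^n$ points inflates the deviation to order $a\sqrt{\log(1/a)}$. For the ball that is $\sqrt{(n/d)\log(d/n)}$ instead of $\sqrt{n/d}$. The sharp bound needs no net:
\begin{itemize}
\item $\sqrt d\,R(PK)=\sup_{\theta\in S^{n-1},\,v\in K}\langle\theta,\vec Xv\rangle$;
\item Chevet's inequality bounds its mean by $\E h_K(g)+R(K)\,\E\|g_n\|_2\le\sqrt d\,M^*(K)+\sqrt n\,R(K)$, where $g_n\sim N(0,I_n)$;
\item it is an $R(K)$-Lipschitz function of $\vec X$ in Hilbert--Schmidt norm, so Gaussian concentration at level $\sqrt n$ finishes.
\end{itemize}

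\emph{Lower bound.} The final approximation step is left open. You need $\epsilon\lesssim\sqrt n\,\sigma_G$, so the entropy $n\log(1/\epsilon)$ inflates the deviation by a factor $\sqrt{\log(1/\epsilon)}$. The restriction $d\ge Cn$ does not affect this. Chaining with increments controlled only at the Lipschitz scale $R(K)\|\theta-\theta'\|_2/\sqrt d$, as you sketch it, merely replaces $\log(1/\epsilon)$ by the logarithm of $R(K)/(\sqrt d\,\sigma_G M^*(K))$. That quantity is the ratio between the Lipschitz-type deviation $\sqrt{n/d}\,(**)$ and the variance-type deviation $\sqrt n\,\sigma_G$. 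It is large precisely in the superconcentrated regime where the variance bound is meant to improve on $(**)$. Accepting the logarithm therefore leaves the sharp bound unproved, so this step is a genuine gap rather than a technicality.
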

The lower and upper inclusion have different behaviors. Indeed, by the Poincare inequality for the uniform measure on the sphere, it holds that  $(*) \lesssim (**)$. When the inequality is strict, $\|u\|_{K^\circ}$ is \emph{superconcentrated}, where $u \sim U(\mathbb{S}^{d-1})$. We use this theorem to show that
\[
	\lp\frac{\E \|\vec \xi\|_{\PN}^{-n}}{(\E \|\vec \xi\|_{\PN})^{-n}}\rp^{1/n} \leq 1+o_{d/n}(1)
\]
needed to apply our strategy and follow the properties of Fleury's distribution of the facets of $\PN$. Yet for sub-Gaussian matrices, one can only show that
\[
\lp\frac{\E \|\vec \xi\|_{\PN}^{-n}}{(\E \|\vec \xi\|_{\PN})^{-n}}\rp^{1/n} \leq C
\]
see, for example, the work of \cite{guedon22geometry} (and references within) which can only lead to $\E \|\erm\|_{2}^2 \lesssim 1$.

\subsection{Small Ball Probabilities}
\begin{definition}
	The canonical simplex and its centered version are defined via
	\[  
	\triangle:= \{x \in \R^n: \sum_{i=1}^{n}x_i = 1 \} \text{ and } \triangle_{c}:= \triangle - \frac{\vec{1}_n}{n}.
	\]
\end{definition}
In this work, we would prove (see Lemma \ref{Lem:SBSimplex} below) that the following holds for all $\eps \in (0,1):$
\[
\Pr_{Z \sim U(\triangle_c)}(\|Z\|_{2} \leq (1-\eps)\E\|Z\|_{2}) \leq \exp(-cn\eps^2).
\]
\textbf{However}, as one can easily show  (cf. \cite{paouris2006concentration}) a \textbf{sharp} bound of  
\[
\Pr_{Z \sim U(\triangle_c)}(\|Z\|_2 \geq (1+\eps)\E \|Z\|_2) \leq \exp(-c\sqrt{n}\eps).
\]
Namely, the tails exhibit different behaviors; one is sub-Gaussian, and the other is sub-exponential.  The proof of this lemma follows from standard calculations of MGFs, and noting that $Z \sim \mathrm{Unif}(\triangle)$, can also be represented as
\[
Z \sim  \frac{(Z_1,\ldots,Z_n)}{\sum_{i=1}^{n}Z_i}
\]
where $Z_1,\ldots,Z_n \sim \mathrm{Exp}(1)$. Therefore, we would need a different approach for upper bounding the MSE. Finally, we state a useful bound that is known only to specialists and appears in \citep[Theorem A.]{paouris2}; see also \citep[Corollary 5.1]{alonso2015gaussian}, and \cite{paouris2004estimates}.\footnote{In \citep[Corollary 5.1]{alonso2015gaussian} they stated their theorem with an additional assumption of symmetry, but it is not needed for the set $K$ that we use, whose barycenter is zero.}
%  \begin{lemma}
	% \label{prop:tail-of-Tnd}
	%  Let $0 \leq \eps \leq c/\log(d/n)$ and assume that $d \geq 4n \geq \sqrt{\log(d/n)}$. Then,
	% \[
	% \Pr\Big\{|T_{n, d} - \mathrm{Med} T_{n, d}| \geq \eps \cdot t^\star_{n,d} \Big\} 
	% \leq 
	% \exp\Big\{-cn\log(d/n)^2\eps^2\Big\}.
	% \]
	% \end{lemma}
% This lemma follows from Fluery's distribution from a direct computation on the distribution of Lemma \ref{Lem:Fleury}.
\begin{theorem*}
	Let $K\subset\mathbb R^n$ be an isotropic convex body and assume
	\[
	R(K) \leq A\sqrt n.
	\]
	Then for every $1\le t_0\le A\sqrt n$, there is a set
	$\Theta(t_0)\subset S^{n-1}$ with
	\[
	\sigma(\Theta(t_0))
	\ge
	1-\exp(-c_A t_0^2)
	\]
	such that, for every $\theta\in\Theta(t_0)$ and every
	$t_0\le t\le A\sqrt n$,
	\[
	\Pr_{X\sim U(K)}
	\left\{
	|\langle X,\theta\rangle|\ge t
	\right\}
	\le
	\exp(-c_A t^2).
	\]
\end{theorem*}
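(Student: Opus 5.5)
The plan is a moment argument over directions, using the $L_q$-centroid support functions $h_q(\theta):=\big(\E_{X\sim U(K)}|\langle X,\theta\rangle|^q\big)^{1/q}$. Tail bounds at level $t$ follow from Markov's inequality once $h_q(\theta)\lesssim_A\sqrt q$ for $q\asymp t^2$. So the task reduces to showing that, outside a small exceptional set of directions, this holds simultaneously for all relevant $q$ up to order $n$.

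\textbf{Step 1 (averaging over the sphere).} By Fubini and rotation invariance of $\sigma$,
\[
\int_{S^{n-1}} h_q(\theta)^q\,d\sigma(\theta)=\E_{X}\,\|X\|_2^q\int_{S^{n-1}}|\theta_1|^q\,d\sigma(\theta).
\]
Here $\int|\theta_1|^q\,d\sigma\le (C\sqrt{q/n})^q$ for $1\le q\le n$, which follows from the standard Gaussian representation of $\theta$ or from an explicit Beta computation. The hypothesis $R(K)\le A\sqrt n$ gives $\|X\|_2\le A\sqrt n$ almost surely. Hence
\[
\int h_q^q\,d\sigma\le (C_1A\sqrt q)^q .
\]
Isotropy is used only to fix the scale, so that the statement is meaningful, and to ensure $q\ge 2$ moments are of order $1$. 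By Markov's inequality on the sphere,
\[
\sigma\{\theta: h_q(\theta)> eC_1A\sqrt q\}\le e^{-q}.
\]

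\textbf{Step 2 (union over dyadic $q$).} Let $q_0:=c_0t_0^2/A^2$, where $c_0=c_0(A)$ is chosen below. If $q_0<1$, the claim is trivial after shrinking $c_A$. Let $\Theta(t_0)$ be the set of $\theta$ with $h_{q}(\theta)\le eC_1A\sqrt{q}$ for every dyadic $q=2^k\in[q_0,n]$. Summing the geometric bounds from Step 1 gives
\[
\sigma(\Theta(t_0)^c)\le \sum_{2^k\ge q_0}e^{-2^k}\le 2e^{-q_0}=2\exp(-c_0t_0^2/A^2),
\]
which is of the required form $\exp(-c_At_0^2)$ after adjusting constants.

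\textbf{Step 3 (tails).} Fix $\theta\in\Theta(t_0)$ and $t_0\le t\le A\sqrt n$. Choose the dyadic $q$ with $q\le t^2/(e^2C_1^2A^2\cdot e^2)<2q$; then $q\ge q_0$ and $q\le n$ for a suitable choice of $c_0$. By Markov's inequality,
\[
\Pr\{|\langle X,\theta\rangle|\ge t\}\le \Big(\frac{h_q(\theta)}{t}\Big)^q\le \Big(\frac{eC_1A\sqrt q}{t}\Big)^q\le e^{-q}\le \exp(-c_At^2).
\]

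The only delicate point is the bookkeeping in Steps 2 and 3. The exceptional set must be defined using $q$ down to $q_0\asymp t_0^2/A^2$ but not below it, so that its measure is $\exp(-c_At_0^2)$ rather than constant. At the same time, every $t$ in the range must correspond to an admissible dyadic $q\in[q_0,n]$. The upper constraint $t\le A\sqrt n$ is exactly what guarantees $q\le n$, which in turn keeps the moment bound $\int|\theta_1|^q\,d\sigma\le (C\sqrt{q/n})^q$ valid.
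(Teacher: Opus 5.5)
The paper gives no proof of this statement. It quotes it from Paouris (Theorem A of the $\Psi_2$-paper; see also Alonso-Guti\'errez--Prochno, Cor.~5.1). Your Steps 1--3 are the standard argument behind those references: average the $L_q$-centroid support function over the sphere, $\int_{S^{n-1}}h_q^q\,d\sigma=\E\|X\|_2^q\int_{S^{n-1}}|\theta_1|^q\,d\sigma\le (C_1A\sqrt q)^q$, then apply Markov in $\theta$ and a union bound over dyadic $q$. These steps are correct and use only $R(K)\le A\sqrt n$, so in particular no symmetry, in line with the paper's footnote. They prove the statement whenever $q_0\ge 1$, i.e.\ $t_0\ge A/\sqrt{c_0}$, since then every $t\ge t_0$ admits an admissible dyadic $q\ge q_0$ in Step 3.

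The gap is the sentence ``if $q_0<1$, the claim is trivial after shrinking $c_A$''.

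\emph{The uncovered range.} When $t_0<A/\sqrt{c_0}$ you still have to bound $\Pr\{|\langle X,\theta\rangle|\ge t\}$ for $t_0\le t<e^2C_1A$. For such $t$, Step 3 is empty: it would need a dyadic $q\ge 1$ with $q\le t^2/(e^4C_1^2A^2)<1$. The bounds $h_q(\theta)\le eC_1A\sqrt q$ exceed $t$ there and give nothing. Yet the target $\exp(-c_At^2)$ is strictly below $1$ already at $t=1$.

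\emph{What is missing.} You need a uniform anti-concentration estimate $\sup_\theta\Pr\{|\langle X,\theta\rangle|\ge 1\}\le 1-\kappa$ with $\kappa>0$ absolute. Upper bounds on the $h_q$ alone cannot give this, as a marginal with $|\langle X,\theta\rangle|\equiv 1$ shows. This is where isotropy and convexity naturally enter; your written argument never uses isotropy, despite your remark about its role.

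\emph{The fix.} By Brunn's principle, the marginal of $U(K)$ in direction $\theta$ is a centered log-concave density with unit variance. Hence it is bounded below by an absolute constant on a fixed interval $[-c,c]$ (e.g.\ Lov\'asz--Vempala). This gives, for every $\theta$ and every $1\le t\le e^2C_1A$,
$\Pr\{|\langle X,\theta\rangle|\ge t\}\le 1-\kappa\le \exp(-c_At^2)$, provided $c_A\le \kappa/(e^2C_1A)^2$. For $t\ge e^2C_1A$, use your dyadic set with $q$ ranging over $[1,n]$. Its complement has measure at most $e^{-1}+e^{-2}+e^{-4}+\dots<0.6\le \exp(-c_At_0^2)$, because $t_0^2<A^2/c_0$ and $c_A$ is small. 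With this case added, your argument is a complete, self-contained proof of the quoted theorem.
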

\begin{remark}
One should note that for every $t$, $\Theta(t)$ is a different set. For example, the best-known bound for high-probability $O(1)$-sub-Gaussian marginals of an isotropic convex body $K$ is $1-\exp(-c\sqrt{n})$.
\end{remark}

%% file: SkechofMainThm.tex
\subsection{Fleury's work on Gaussian symmetric polytopes}\label{ss:Flu}
\begin{lemma}[Thm. 3 in \cite{fleury2012poincare}]\label{Lem:Fleury}
Suppose that $\conv \{\epsilon_i  \vec X_i\}_{i \in S}$ forms a facet for some signs $\epsilon_i \in \{\pm 1\}$ and a subset $S \subset [d]$ with $|S| = n$. The matrix $\vec X_{S}$, formed with the columns $\epsilon_i  \vec X_i, i \in S$, conditional on it being a facet, is distributed as the product $ \vec U \vec A$,
where $\vec U$ and $\vec A$ are independent, 
\[
\vec U \sim \mathsf{Unif}(O(n)), 
~~ 
\vec A = 
\begin{bmatrix}
\vec Y \\[2pt]
T_{n,d} \cdot \mathbf 1_n^\T
\end{bmatrix},~~
 \vec Y \in \mathbb{R}^{(n-1)\times n},
\]    
where  $\vec Y$ has columns $\{\vec Y_i\}_{i=1}^n$ distributed with density proportional to $$|\vec {yy^{T}}|\cdot \exp\Big(-\sum_{i=1}^{n}\|\vec{y}_i\|_2^2/2\Big),$$
 and $T_{n,d}$ denotes $\|\vec{n}_{\cF}\|_{2}$; it is independent of $\vec Y$, and has the density $$p(t) \propto \Psi(t)^{d-n}\exp(-nt^2/2).$$
\end{lemma}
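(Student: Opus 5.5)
The plan is to compute the joint law of the facet directly, using the affine Blaschke--Petkantschin change of variables. Fix $S\subset[d]$ with $|S|=n$ and signs $(\epsilon_i)_{i\in S}$. Almost surely the points $\epsilon_iX_i$, $i\in S$, span a unique affine hyperplane
\[
\{x:\langle\theta,x\rangle=t\},\qquad \theta\in\Sn,\ t>0,
\]
with the orientation of $\theta$ fixed by requiring $t>0$. The event that $\conv\{\epsilon_iX_i\}_{i\in S}$ is a facet of $\PN$ is exactly the event that all remaining points $\pm X_j$, $j\notin S$, lie in the slab $\{|\langle\theta,x\rangle|\le t\}$. Conditionally on $(X_i)_{i\in S}$, this has probability
\[
\prod_{j\notin S}\Pr\big(|\langle\theta,X_j\rangle|\le t\big)=\Psi(t)^{d-n},
\]
because $\langle\theta,X_j\rangle\sim N(0,1)$ independently of everything else. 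In particular this probability depends on $(X_i)_{i\in S}$ only through $t$, and $t=\|\vec n_{\cF}\|_2$ is what the lemma calls $T_{n,d}$. So the conditional density of $(\epsilon_iX_i)_{i\in S}$ given the facet event is proportional to
\[
\Psi(t)^{d-n}\exp\Big(-\sum_{i\in S}\|x_i\|_2^2/2\Big).
\]
The signs play no role, since $\epsilon_iX_i\sim X_i$.

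Next I reparametrize $(x_1,\dots,x_n)\in(\R^n)^n$ by $(\theta,t,y_1,\dots,y_n)$. Here $y_i\in\R^{n-1}$ are the coordinates of $x_i$ inside the hyperplane, measured from the foot point $t\theta$ in an orthonormal frame $V_\theta$ of $\theta^\perp$, so that $x_i=t\theta+V_\theta y_i$. The affine Blaschke--Petkantschin formula gives
\[
\prod_i dx_i = c_n\,\big|\conv\{y_1,\dots,y_n\}\big|\;dt\,d\theta\prod_i dy_i .
\]
The simplex volume equals $\tfrac{1}{(n-1)!}\big|\det\begin{bmatrix}\vec y\\ \mathbf 1_n^\T\end{bmatrix}\big|$, and it is this factor that should become the Jacobian weight on $\vec Y$ in the statement. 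By Pythagoras, $\|x_i\|_2^2=t^2+\|y_i\|_2^2$, so the joint density factorizes as
\[
\Big[\Psi(t)^{d-n}e^{-nt^2/2}\Big]\cdot\Big[\big|\conv\{y_i\}\big|\,e^{-\sum_i\|y_i\|_2^2/2}\Big]\cdot d\theta .
\]

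From this factorization I read off the claimed structure. First, $t$ is independent of $\vec Y$ and has density $\propto\Psi(t)^{d-n}e^{-nt^2/2}$. Second, $\vec Y$ has the stated density, which is invariant under $O(n-1)$ acting on the columns. Third, $\theta$ is uniform on $\Sn$ and independent of the rest. In the rotated frame where $\theta=e_n$, the matrix of points is exactly $\vec A=\begin{bmatrix}\vec Y\\ T_{n,d}\mathbf 1_n^\T\end{bmatrix}$. The actual matrix is then $[V_\theta\ \theta]\,\vec A$. Choosing the frame $V_\theta$ uniformly at random, which costs nothing because the law of $\vec Y$ is $O(n-1)$-invariant, makes $[V_\theta\ \theta]$ a Haar-distributed $\vec U\in O(n)$ independent of $\vec A$.

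The main obstacle is the Jacobian step. I need to verify the affine Blaschke--Petkantschin formula with the correct power of the simplex volume: in dimension $n$ with $n$ points it is the first power. I also need to match the factor $|\det[\vec y;\mathbf 1^\T]|$ with the $|\vec{yy}^T|$-type determinant written in the statement, and to treat the orientation convention $t>0$ consistently so that no factor of $2$ or sign issue spoils the independence claim. I would check the Jacobian by first writing $\prod_i dx_i$ as the Lebesgue measure on the affine Grassmannian times the $(n-1)$-dimensional volumes in the hyperplane, and then computing the determinant of the derivative map explicitly near $\theta=e_n$.
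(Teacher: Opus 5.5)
Your reduction is correct, and it is the standard derivation behind the cited result; the paper gives no proof of its own, only the reference to Fleury. The individual steps all hold:
\begin{itemize}
\item the facet event is exactly the slab event $\{|\langle\theta,X_j\rangle|\le t,\ j\notin S\}$, and its conditional probability is $\Psi(t)^{d-n}$;
\item the affine Blaschke--Petkantschin formula for $n$ points in $\R^n$ does carry the first power of the $(n-1)$-volume of the simplex (the normal components of the Jacobian in the variables $(t,\theta)$ form the matrix $[\mathbf 1_n,\,-\vec y^{\top}]$);
\item Pythagoras splits the Gaussian weight;
\item rotating the frame of $\theta^{\perp}$ by an independent Haar element of $O(n-1)$ makes $[V_\theta\ \theta]$ Haar on $O(n)$ and independent of $\vec A$;
\item the orientation issue is harmless, since $(\theta,t)\in\Sn\times(0,\infty)$ parametrizes almost every hyperplane exactly once.
\end{itemize}

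The step that fails is the one you postpone, and it cannot be repaired. Write $[\vec y;\mathbf 1_n^{\top}]$ for the $n\times n$ matrix obtained by appending the row $\mathbf 1_n^{\top}$ to $\vec y$, and $\vec y_{(j)}$ for $\vec y$ with column $j$ deleted. Your weight is $|\det[\vec y;\mathbf 1_n^{\top}]|=|\sum_j\pm\det\vec y_{(j)}|$, which is homogeneous of degree $n-1$. By Cauchy--Binet, $\det(\vec y\vec y^{\top})=\sum_j(\det\vec y_{(j)})^2$ has degree $2(n-1)$. For $n=2$ the two weights are $|y_1-y_2|$ and $y_1^2+y_2^2$. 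Neither is proportional to the other, nor to $\det(\vec y\vec y^{\top})^{1/2}$. So your sentence ``$\vec Y$ has the stated density'' is not what your computation yields.

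What you have actually proved is the correct form of Fleury's theorem: $\vec Y$ has density proportional to $|\det[\vec y;\mathbf 1_n^{\top}]|\exp(-\sum_i\|y_i\|_2^2/2)$, i.e.\ the Gaussian weight times the simplex volume. The factor $|\vec y\vec y^{\top}|$ in the statement is a misprint. You should conclude with the simplex-volume weight and say so, rather than try to force a match.

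This is also the version the paper uses downstream. Let $Q\in\R^{n\times(n-1)}$ be an orthonormal basis of $\mathbf 1_n^{\perp}$, $W:=\vec yQ$ and $\bar y:=\frac1n\vec y\mathbf 1_n$. Then $|\det[\vec y;\mathbf 1_n^{\top}]|=\sqrt n\,|\det W|$ and $\|\vec y\|_F^2=\|W\|_F^2+n\|\bar y\|_2^2$. Two consequences follow:
\begin{itemize}
\item $\bar y\sim N(0,I_{n-1}/n)$ independently of the centered part, which is exactly the lemma stated right after this one;
\item under the Gaussian law $|\det W|\overset{d}{=}\prod_{k=1}^{n-1}\chi_k$ with independent factors, so weighting by $|\det W|$ turns each $\chi_k$ into $\chi_{k+1}$ and gives $\det(WW^{\top})\overset{d}{=}\prod_{k=2}^{n}\chi_k^2$, the Goodman representation used later.
\end{itemize}
Under the literal weight, $\det(\vec y\vec y^{\top})=\det(WW^{\top})\bigl(1+n\,\bar y^{\top}(WW^{\top})^{-1}\bar y\bigr)$ couples $\bar y$ with $W$, and that next lemma would fail.
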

Note that the distribution of $\vec {Y} $ is close to being Gaussian, as the determinant has very light tails; this result is due to \cite{goodman1963distribution}, as we formulate below. The distribution of $T_{n,d}$ is close to the average of the top $n$-entries of an isotropic Gaussian vector in $\R^d$.
\begin{lemma}[Lemma 6 in  \cite{fleury2012poincare}]
In the notation of Lemma \ref{Lem:Fleury}, it holds
\[
\frac{1}{n}\sum_{i=1}^{n} \vec{Y}_i \sim \Normal{0}{\frac{I_{n-1}}{n}} \perp \vec{\tilde{Y}},
\]
where $\vec{\tilde{Y}}:=(\vec Y_2-\vec Y_1,\ldots,\vec Y_{n} - \vec Y_1)$.
\end{lemma}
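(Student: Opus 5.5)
The plan is a direct change of variables in the density from \Cref{Lem:Fleury}, showing that it factors into a Gaussian factor in the barycenter and a factor depending only on the differences.

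First I fix the reading of the determinant weight. In Fleury's derivation the weight $|\vec{yy}^\T|$ is the squared $(n-1)$-dimensional volume of the simplex $\conv\{\vec y_i\}_{i=1}^n$, up to a constant. Equivalently it is $\det\!\begin{bmatrix}\vec Y\\ \mathbf 1_n^\T\end{bmatrix}^2$, which is the Jacobian weight of the facet parametrization. The feature I will use is that this weight is \emph{translation invariant}: replacing every $\vec y_i$ by $\vec y_i+v$ amounts to adding a multiple of the last row $\mathbf 1_n^\T$ to the rows of $\vec Y$, which does not change the determinant. Concretely, subtracting the first column from all the others gives
\[
\det\!\begin{bmatrix}\vec Y\\ \mathbf 1_n^\T\end{bmatrix}=\pm\det\big(\vec Y_2-\vec Y_1,\ldots,\vec Y_n-\vec Y_1\big)=\pm\det\vec{\tilde Y}.
\]
Hence the weight is a function of $\vec{\tilde Y}$ alone. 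If one instead read $|\vec{yy}^\T|$ literally as $\det(\vec Y\vec Y^\T)$, this invariance would fail and so would the lemma; I regard this identification as the only step requiring care.

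Next I introduce the barycenter $\vec m:=\frac1n\sum_i\vec Y_i$. The map $(\vec Y_1,\ldots,\vec Y_n)\mapsto(\vec m,\vec{\tilde Y})$ is an invertible linear map of $\R^{(n-1)\times n}$, so its Jacobian is a nonzero constant. The inverse is explicit:
\[
\vec Y_1=\vec m-\tfrac1n\sum_{j\ge2}\vec{\tilde Y}_j,\qquad \vec Y_i=\vec Y_1+\vec{\tilde Y}_i\quad(i\ge 2).
\]
Writing $\vec Y_i=\vec m+\vec z_i$ with $\sum_i\vec z_i=0$, each $\vec z_i$ is a linear function of $\vec{\tilde Y}$ only. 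The cross term vanishes, so
\[
\sum_{i=1}^n\|\vec Y_i\|_2^2=n\|\vec m\|_2^2+\sum_{i=1}^n\|\vec z_i\|_2^2 .
\]

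Combining the two steps, the joint density of $(\vec m,\vec{\tilde Y})$ is proportional to
\[
\exp\!\big(-n\|\vec m\|_2^2/2\big)\cdot\Big[\det(\vec{\tilde Y})^2\exp\!\Big(-\tfrac12\sum_i\|\vec z_i(\vec{\tilde Y})\|_2^2\Big)\Big].
\]
This is a product of a function of $\vec m$ and a function of $\vec{\tilde Y}$. Therefore $\vec m\perp\vec{\tilde Y}$, and the marginal density of $\vec m$ is proportional to $\exp(-n\|\vec m\|_2^2/2)$, i.e.\ $\vec m\sim\Normal{0}{I_{n-1}/n}$. Integrability of the second factor is immediate, since it is a polynomial times a Gaussian in $\vec{\tilde Y}$.
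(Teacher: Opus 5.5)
Your proof is correct. The paper gives no argument for this statement; it imports it as Lemma~6 of \cite{fleury2012poincare}. Your factorization is the natural verification: the linear change of variables $(\vec Y_1,\ldots,\vec Y_n)\mapsto(\vec m,\vec{\tilde Y})$ has constant Jacobian, the Gaussian factor splits orthogonally into $n\|\vec m\|_2^2$ plus a function of $\vec{\tilde Y}$, and the determinant weight depends on $\vec{\tilde Y}$ only. Hence the joint density factors.

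One correction to your reading of the weight. Under the change of variables $\vec x_i=t\theta+\vec y_i$ with $\vec y_i\in\theta^\perp$, the affine Blaschke--Petkantschin Jacobian is the \emph{first} power of the simplex volume. That is, it is proportional to $\bigl|\det\begin{bmatrix}\vec Y\\ \mathbf 1_n^\top\end{bmatrix}\bigr|=|\det\vec{\tilde Y}|$, not its square. The square appears only under the cone-volume-biased law. There the height density would also pick up a factor $t$, which the density of $T_{n,d}$ in Lemma~\ref{Lem:Fleury} does not have. This does not affect your argument. You only use that the weight is a function of $\vec{\tilde Y}$, and $|\det\vec{\tilde Y}|^{p}$ times a Gaussian is integrable for every $p\ge 0$.

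Your caution about the literal reading $\det(\vec Y\vec Y^\top)$ is well founded. Write $\vec Y=\vec m\mathbf 1_n^\top+\vec Z$ with $\vec Z\mathbf 1_n=0$. Then $\det(\vec Y\vec Y^\top)=\det(\vec Z\vec Z^\top)\bigl(1+n\,\vec m^\top(\vec Z\vec Z^\top)^{-1}\vec m\bigr)$. This couples $\vec m$ with $\vec Z$, and $\vec Z$ is a bijective image of $\vec{\tilde Y}$. So under that reading neither the independence nor the Gaussian law of $\vec m$ survives. The lemma needs the translation-invariant simplex-volume weight, and the paper's typesetting of the density is loose on exactly this point.
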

In particular, it implies that $\vec c_{\cF} - \vec n_{\cF}$ is independent of $\vec{n}_{\cF}$ and of $\vec{\tilde{Y}}$.
Throughout this work, we refer to the distribution of a facet  $\cF$ as \textbf{``Fleury's distribution''}.
\begin{lemma}[Thm. 4 in \cite{fleury2012poincare}]:\label{Lem:numberofFacets}
The expected number of $(n-1)$-dimensional facets of  $\PN$ satisfies
\begin{equation}
\begin{aligned}
  \E \cF_{n-1}(P_{n,d}) = \exp\lp2^{-1}n \cdot (\lln(d/n) + \Theta(1))\rp.
\end{aligned}
\end{equation}
\end{lemma}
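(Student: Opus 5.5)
The plan is to write the expected facet count as an exact one-dimensional integral and then evaluate that integral by Laplace's method.

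\textbf{Step 1: reduce to a fixed index set.} By exchangeability and sign symmetry,
\[
\E |\cF_{n-1}(\PN)| = 2^n\binom{d}{n}\Pr\big(\conv\{X_1,\ldots,X_n\}\text{ is a facet of }\PN\big).
\]
The event says that, with $H$ the affine hull of $X_1,\ldots,X_n$, $u$ its unit normal and $t>0$ its distance to the origin, we have $|\langle X_j,u\rangle|<t$ for every $j>n$. Conditioning on $X_1,\ldots,X_n$ and using the independence of the remaining columns, this conditional probability is exactly $\Psi(t)^{d-n}$.

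\textbf{Step 2: the law of $t$ (the step I expect to be hardest).} I would use the affine Blaschke--Petkantschin formula. It writes $dX_1\cdots dX_n$ as $(n-1)!\,|\conv\{X_i\}|_{n-1}$ times the product of the in-hyperplane Lebesgue measures, times $du\,dt$.

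In coordinates adapted to $(u,t)$, each Gaussian density factors as $\phi(t)$ times the standard Gaussian density on $H$. This gives a factor $e^{-nt^2/2}$. The remaining integral is the expected volume of a Gaussian simplex in $\R^{n-1}$, and it does not depend on $t$.

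Fleury's description in Lemma~\ref{Lem:Fleury} is exactly the normalized version of this computation: the density $p(t)\propto \Psi(t)^{d-n}e^{-nt^2/2}$ and the $|\vec y\vec y^\T|$ tilt of $\vec Y$ come from this change of variables. The obstacle is pinning down the constant, namely that the simplex-volume moment cancels the Blaschke--Petkantschin and sphere constants. I would fix it by a consistency check: the same formula with $\Psi^{d-n}$ replaced by $1$ must give probability $1$ that $t>0$ is finite. I expect to obtain
\[
\E|\cF_{n-1}(\PN)| = 2^n\binom dn\, c_n\int_0^\infty \Psi(t)^{d-n}e^{-nt^2/2}\,dt,
\qquad c_n=2\sqrt{n/(2\pi)},
\]
up to a factor $e^{O(n)}$. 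That precision is all the claimed $\Theta(1)$ in the exponent requires.

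\textbf{Step 3: Laplace asymptotics.} Write the integrand as $\exp\{n\,g(t)\}$ with
\[
g(t)=\frac{d-n}{n}\log\Psi(t)-\frac{t^2}{2}.
\]
Use the Mills ratio $1-\Psi(t)=(1+O(t^{-2}))\,2\phi(t)/t$, so that $\log\Psi(t)\approx -2\phi(t)/t$.

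Setting $g'(t)=0$ gives $(d/n)\,2\phi(t)\approx t$, that is,
\[
t_*^2=2\log(d/n)-\log\log(d/n)-\log\pi+o(1),
\]
which matches the scale of $\alpha(n,d)$. At $t_*$, the term $\frac dn\log\Psi(t_*)\approx -1$ is $O(1)$, so $n\,g(t_*)=-nt_*^2/2+O(n)$. The integral over $t$ contributes only a factor $e^{O(\log n)}$, since $g$ is concave near $t_*$ with curvature of order $\log(d/n)$, and the tails are negligible.

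\textbf{Step 4: collect the exponents.} We have $\log\big(2^n\binom dn\big)=n\log(d/n)+O(n)$, because $\log\binom dn=n\log(ed/n)+O(\log n)$. Adding the Laplace term gives
\[
n\log(d/n)-\tfrac n2\big(2\log(d/n)-\log\log(d/n)\big)+O(n)=\tfrac n2\big(\log\log(d/n)+O(1)\big).
\]
This yields $\exp\big(2^{-1}n(\lln(d/n)+\Theta(1))\big)$. Both the upper and lower bounds follow, since every approximation above is two-sided up to $e^{O(n)}$.
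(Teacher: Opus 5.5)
Your proposal is correct. It is the standard computation behind Fleury's Theorem~4, which the paper cites rather than reproves, and it matches the paper's own use of the height density $p(t)\propto\Psi(t)^{d-n}e^{-nt^2/2}$ and of the count $2^n\binom{d}{n}\Pr(\cE)$.

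As a small simplification, your normalizing constant $\sqrt{2n/\pi}$ is exact, not just correct up to $e^{O(n)}$, and you can skip Blaschke--Petkantschin. Write $t=|\langle \bar X,u\rangle|$ with $\bar X=\frac1n\sum_{i\le n}X_i\sim N(0,I_n/n)$. Then $\bar X$ is independent of the differences $X_i-X_1$, which determine $u$, so $t\sim|N(0,1/n)|$ directly.
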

Note that this bound should be compared to the loose upper bound on the number of facets, which is 
\[
    2^{n}\binom{d}{n} = \exp(n \cdot (\log(d/n) + \Theta(1))) 
\]
which is a significantly better exponent. First, recall the \textbf{conic formula}, which implies that for any polytope $P \subset \R^n$, it holds
\[
   |P| = n^{-1}\sum_{\text{$\cF$ is a facet}}|\cF| \cdot \|\vec{n}_{\cF}\|_{2} 
\]
Now, let
\[
    \cE:=\{\text{$\mathrm{conv}\{X_1,\ldots,X_n\}$ is a facet of $\PN$}\}
\]
and 
\[
\cE_{\cS,\eps}:=\{\text{$\mathrm{conv}\{\eps_1 X_{i_1},\ldots,\eps_{n}X_{i_n}\}$ is a facet of $\PN$}\}
\]
and note that the following holds  for $V = \E |P_{n,d}|$ by linearity of expectation and the conic formula:
\begin{align*}
   V &=  n^{-1} \cdot \E \left[\sum_{\vec \eps \in \{-1,1\}^{n}, |S| =n, S \subset [d]}1_{\cE_{\cS,\eps}} \cdot |\cF| \cdot \|\vec{n}_{\cF}\|_2\right] 
     \\&= n^{-1}2^{n}\binom{d}{n}\Pr(\cE)\E\left[|\cF| \cdot \|\vec{n}_{\cF}\|_2|\cE\right]
     \\& = n^{-1} \E |\cF_{n-1}(P_{n,d})| \cdot \E |\vec Y\vec{Y}^{\top}| \cdot |\triangle| \cdot \E T_{n,d}   
\end{align*}
where we used that $T_{n,d}$ is independent of the facet $\cF$. Following the same rationale, and using that $\vec{c}_{\cF}$ is independent of the normal $\vec{n}_{\cF}$ and of $\widetilde{\vec Y}$, we obtain the following corollary:
\begin{corollary}\label{Lem:eventFL}
Let $\cE:= A_1 \cap A_2 \cap A_3$ where $A_1$ is an event on $T_{n,d}$, i.e. the height of the normal; and $A_2$ is an event on the span of $\vec{\Tilde{Y}}$, and $A_3$ is an event on the $c_{\cF}$. Then it equals
\[
    \E |V_{\cE}| = \Pr_{T_{n,d}}(A_1)\Pr_{\vec{\tilde{Y}}}(A_2)\Pr_{n^{-1}\sum_{i=1}^n\vec{Y}_i}(A_3) \cdot \E |P_{n,d}|.
\]
where  $|V_{\cE}|$ is the volume of the cones whose facets satisfy $\cE$.
\end{corollary}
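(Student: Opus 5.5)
The plan is to repeat the computation just carried out for $V=\E|\PN|$, but with the indicator of $\cE$ inserted inside the conic formula. Write $V_{\cE}$ for the union of the cones $\conv(\{0\}\cup\cF)$ over those facets $\cF$ satisfying $\cE$. Applying the conic formula to each such cone gives
\[
|V_{\cE}| = n^{-1}\sum_{\cF\in\cF_{n-1}(\PN)} 1_{\cE}(\cF)\,|\cF|\,\|\vec n_{\cF}\|_2 .
\]
Taking expectations and using linearity over all pairs $(S,\vec\eps)$ with $|S|=n$ and $\vec\eps\in\{\pm1\}^n$, exchangeability of the columns and the sign symmetry of the Gaussian reduce everything to a single candidate facet:
\[
\E|V_{\cE}| = n^{-1}2^n\binom{d}{n}\Pr(\text{facet})\,\E\big[1_{\cE}\,|\cF|\,T_{n,d}\,\big|\,\text{facet}\big].
\]

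Next I would use Fleury's distribution (\Cref{Lem:Fleury}) to write the conditioned facet as $\vec U\vec A$. I take $A_1,A_2,A_3$ to be events expressed in the frame of the facet, hence invariant under $\vec U$; otherwise one averages over $\vec U$ as well, which is harmless. In this frame:
\begin{itemize}
\item $T_{n,d}=\|\vec n_{\cF}\|_2$ is independent of $\vec Y$;
\item by Lemma~6 of \cite{fleury2012poincare}, $\vec Y$ splits into the barycentre $n^{-1}\sum_i\vec Y_i$, which determines $\vec c_{\cF}-\vec n_{\cF}$, and the independent difference matrix $\widetilde{\vec Y}$;
\item $|\cF|$ is translation invariant, so it is a function of $\widetilde{\vec Y}$ alone, up to the constant $|\triangle|$.
\end{itemize}
Hence the integrand factorizes as
\[
\big(1_{A_1}T_{n,d}\big)\cdot\big(1_{A_2}|\cF|\big)\cdot 1_{A_3},
\]
and these three factors are functions of three independent random objects. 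The conditional expectation is therefore a product of three expectations.

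Finally, I divide by the same expression without the indicators, which equals $\E|\PN|$ by the display computed above. Each factor becomes a ratio such as $\E[1_{A_1}T_{n,d}]/\E T_{n,d}$ and $\E[1_{A_2}|\cF|]/\E|\cF|$, which are exactly $\Pr(A_1)$ and $\Pr(A_2)$ under the size-biased laws. The barycentre factor carries no weight, so it gives the plain $\Pr(A_3)$. Multiplying the three ratios gives the claimed identity.

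The main obstacle is this weighting. The probabilities $\Pr_{T_{n,d}}$ and $\Pr_{\widetilde{\vec Y}}$ in the statement must be read as taken under the volume-biased laws: $T_{n,d}$ with density proportional to $t\,p(t)$, and $\widetilde{\vec Y}$ tilted by $|\cF|$, which combines with the $|\vec y\vec y^{\T}|$ factor. Alternatively, one checks that Fleury's density already incorporates this tilt. So I would first verify, against the exact Jacobian in \cite{fleury2012poincare}, whether the $|\vec y\vec y^{\T}|$ term already accounts for the facet volume, and then fix the normalization of each law accordingly. The remaining steps are bookkeeping.
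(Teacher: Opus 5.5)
Your route is the paper's: insert the indicator into the conic formula, use linearity over signed $n$-tuples, then use Fleury's independence of $T_{n,d}$, $\widetilde{\vec Y}$ and $n^{-1}\sum_i\vec Y_i$. The paper only says ``following the same rationale'' after computing $\E|\PN|$. What this argument actually proves is
\[
\frac{\E|V_{\cE}|}{\E|\PN|}
=\frac{\E\bigl[T_{n,d}\mathbf 1_{A_1}\bigr]}{\E T_{n,d}}\cdot
\frac{\E\bigl[|\cF|\,\mathbf 1_{A_2}\bigr]}{\E|\cF|}\cdot\Pr(A_3).
\]
Your first reading is the right one: the corollary holds only if $\Pr_{T_{n,d}}$ and $\Pr_{\widetilde{\vec Y}}$ are taken under the volume-biased laws.

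Drop the alternative that Fleury's law ``already incorporates the tilt.'' The height density $p(t)\propto\Psi(t)^{d-n}e^{-nt^2/2}$ has no factor $t$. The determinant factor in the density of $\vec Y$ comes from the change of variables that yields the law of a tuple conditioned on being a facet. That conditional law is exactly what your exchangeability step integrates against, so the cone weight $|\cF|\,T_{n,d}$ is an extra tilt on top of it. Concretely, take $A_1=\{T_{n,d}\ge \Med T_{n,d}\}$ with $A_2,A_3$ trivial. The left side is then $\E[T_{n,d}\mathbf 1_{A_1}]/\E T_{n,d}>1/2$, while the literal right side is $1/2$. So the statement as printed is false, and your size-biased version is the correct one.

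This is also how the paper uses the corollary later. In \Cref{lem:low-height-cones} and in the proof of \Cref{lem:pure-noise-mni-typical-height}, the ratio $\E[T_{n,d}\mathbf 1_{\{\cdot\}}]/\E T_{n,d}$ appears explicitly. For lower-tail events it is at most the plain probability, since the conditional mean on a lower level set is below $\E T_{n,d}$; otherwise it is bounded by integrating the tail bound. The later sections work with the volume-biased law $\mathrm{Fl}^{\mathrm{vol}}$ directly.

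Finally, keep your reading of $A_3$ as an event on $\vec c_{\cF}-\vec n_{\cF}$ in the facet frame. An event on $\vec c_{\cF}$ itself is not independent of $A_1$, because $\|\vec c_{\cF}\|_2^2=T_{n,d}^2+\|n^{-1}\sum_i\vec Y_i\|_2^2$.
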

\subsection{Reduction to Fleury's distribution}
\textbf{Throughout this work,} we denote by
\[
    L:= L(n,d) = \log(d/n).
\]
Note that the distributions of $T_{n,d}$, and of $\vec {Y} $, and of $\vec c_{\cF} - \vec{n}_{\cF}$ are sub-Gaussian; and therefore have very light tails.  However, the volume of the polytope $P_{n,d}$ has a high variance as observed in \cite{paouris2019gaussian} and references within. This is mainly due to the fact that $\ell_1^d$ has a small Dvoretzky's dimension. Therefore, due to the decomposition of the volume above, the large variance emerges from the number of facets that suffer. In order to overcome this problem, we show that
\[
\exp(-C_1 \cdot nL^{-1/2})  \leq  \frac{|\PN|}{\E|\PN|}  \leq \exp(C_1 \cdot nL^{-1/2}) \text{ and } \E |\PN| =  \exp(O(nL^{-1/2})) \cdot M_{n,d}^{-n}|B_n|.
\]
with probability of at least $1-2\exp(-nL^{-1/2})$ via Dvoretzky's theorem and the KLS property of the canonical simplex, see below for the definitions of these objects.

Then, we condition on events on $T_{n,d}$ and $\cF$ and $\vec{c}_{\cF}$, with respect to Fleury's distribution, that hold with probability of at least $1-\exp(-C_1 \cdot nL^{-1/2})$. Therefore, by \Cref{Lem:eventFL}, and the last equation,  the total volume of the facets that do not satisfy is at most $\exp(-nL^{-1/2})|\PN|$, i.e., most of the facets of $\PN$, in terms of volume, satisfy the event.
% \begin{proof}
% Let $V = \E |P_{n,d}|$ 
% where we used that the normal is indepdent of the facet and distance between the normal the center of the facet is indepent from its volume.§
% Now, note that if we combine $T_{n,d}$ the tails of $**$. We can c
% \end{proof}
%  Now, we obtain the following corollary:
\subsection{Canonical Simplex satisfies KLS}
An isotropic convex body that satisfies the \textbf{KLS} with an absolute constant, if  for any $1$-Lipschitz function $F:\R^n \to \R$ and $\eps \geq 0$:
\begin{equation}\label{Eq:KLS}
 \Pr_{X \sim U(K)}(|F(X) - \E F(X)| \geq \eps\sqrt{n})\leq \exp(-c\sqrt{n}\eps).
\end{equation}
 We refer to the last equation as the (KLS) condition.  It is well known that $n \cdot \triangle_c$ is almost isotropic and satisfies the last equation, see \cite{barthe2009remarks}. In particular, KLS implies that both the thin-shell constant and the isotropic constant are bounded by a universal constant; see also  \cite{paouris2006concentration,gromov1983topological} for further details. 
% The last two inequalities are valid for every $\eps \geq 0$ and $c \geq 0$ is an absolute constant.
\subsection{Proof Outline}
First,  recall that:
\[
    \|\vec \xi\|_{n}:= \|\vec \xi\|_{P_{n,d}} = \|\erm(\vec X,\vec \xi)\|_{1}, 
\]
where we used that $P_{n,d} = \vec X B_1^d$. Furthermore, the $\ell_1$-MNI has a special interpretation:
\[
    \erm(\vec X,\vec \xi) =  \|\vec \xi\|_{\PN} \cdot w(\vec X,\vec \xi) 
\]
where $w(\vec X,\vec \xi) \in \R^d$ such that $\|w(\vec X,\vec \xi)\|_0 = n$, and $\|w(\vec X,\vec \xi)\|_1 = 1$. Namely, it is the weighting of the convex hull of the $n$-dimensional facet of the $B_1^d$ (that in particular is a simplex) that gives $\vec \xi$ and whose vertices are $\cF := \conv\{\vec{\eps}_i\vec X_i\}_{i \in S}$. 

Intuitively, we believe that a typical solution of $\erm$ behaves as a random element in $\triangle$. To understand how a random element behaves, recall the definition of $\triangle_{c}$ above, and that $n \cdot \triangle_{c} $ satisfies the KLS property and, in particular, most of its volume lies in its thin shell. Therefore,
% Meaning that most of the mass is equals to $1/\sqrt{n}$, i.e. 
$$\E_{Z \sim \mathrm{Unif}(\triangle_c)} \|Z\|_{2} = (1\pm O(1/\sqrt{n})) \cdot n^{-1/2}.$$ 
 
 Hence,  by Pythagoras's law, we would expect that most of the volume of $\PN$ has an $\ell_2$ length that satisfies
% , and the volume is $\sqrt{2}$ far from the normal---as most of the volume lies nearby the sections, and in particular of  $\theta \in \Sn$. As, we know the distribution of the normal 
\[
    \sqrt{\|\vec \eps/n\|_{2}^2 + \lp\frac{1+o(1)}{\sqrt{n}}\rp^2} \approx \sqrt{\frac{2}{n}}.
\]
To further support this intuition, we use Fleury's distribution, which implies that  a ``typical'' facet of $\PN$ is distributed as $
    \cF = \tilde{G}\triangle,$
where $\tilde{G}$ is almost distributed as a $n \times (n-1)$ Gaussian matrix, i.e., with i.i.d. $N(0,1)$ entries. Hence, by ideas that emerge from the restricted isometry property, we expect that $\sqrt{n} \cdot \cF$ is almost isotropic and that the volume of the thin shell of  $\sqrt{n} \cdot \cF$ maps to the volume of the thin shell of $\triangle_{c}$. Now, to estimate the $\ell_2$ norm of the $\erm$, recall that $\erm$ inflates $\PN$ by $(1+o(1)) \cdot M_{n,d} \gg 1$ with high probability. Via a simple Dvoretsky's and geometric arguments, we would support this claim, and show that
\[
    M_{n,d} =  \E  \|\vec \xi\|_{n} \approx \sqrt{n/(2\log(d/n))} 
\]
and therefore, we  expect that 
\[
    \|\erm\|_{2}^2 \approx (M_{n,d} \cdot \sqrt{2/n})^2  \approx \log(d/n)^{-1}.
\]

Roughly speaking, in all the remaining steps, we show that $\vec X \erm$ lies in a typical thin-shell of a {\bf``Fleury's''} facet, and that $\erm$ lies in the thin shell of the $n$-dimensional facets of the $\ell_1^d$ which are canonical simplices. The main challenge is that the $\erm$ induces a different distribution on the polytope facets from Fleury's distribution.

%% file: Acknow.tex
\paragraph{Acknowledgments}
GK conducted part of this work during his visit to the IDEAL Institute, hosted by Lev Reyzin and supported by NSF ECCS-2217023. RP gratefully acknowledges support from the NSF under grant DMS-2503579. Research by RP and GK was partially supported by an NSF award DMS-$405441$ while visiting Prof. Grigoris Paouris at Texas A\&M University. We deeply thank him for insightful discussions, help, and guidance on many references used in this work. We also thank Pierre Bizeul for useful discussions on small-ball probabilities and for allowing us to adapt one of his arguments for this paper.

%% file: proof_outline_thm1.tex
\section{Proofs}
\subsection{Proof  of Theorem \ref{T:Localization}}\label{S:P1}
Note that for a convex set \(L\subset \R^d\) containing the origin and
\(\vec z\in\R^n\), 
\[
    \|\vec z\|_{\vec X L}
    =
    \|\erm^{L}(\vec X,\vec z)\|_{L}.
\]
Above, \(\erm^{L}\) is the minimal gauge solution with respect to the body \(L\).
The following lemma is based on the recent works of
\cite{milman2015mean,klartag2025affirmative,bizeul2025slicing,bizeul2025distances},
which concern isotropic convex sets:
\begin{lemma}\label{Lem:BMstar}[\citep[Lemma 1]{kur2026minimum}]
Let \(P = \frac{1}{\sqrt{d}}\vec X\), and assume that \(d \geq C_3n\) for large
enough \(C_3 \geq 0\). Then, with probability at least \(1-\exp(-cn)\), it holds
that
\[
    1 \lesssim r(PK) \leq M^*(PK) \lesssim \log(d)^3.
\]
\end{lemma}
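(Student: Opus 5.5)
We work with the normalization used in the paper, under which $K$ is an isotropic body scaled so that $M^*(K)\lesssim\log^2(d)$ (the mean-width estimate of \cite{milman2015mean}) and so that $K$ is in $M$-position with universal constant. The latter holds by the resolution of the slicing problem \cite{klartag2025affirmative,bizeul2025slicing,guan2024note}: in particular $\cN(B_d,K)\le C^d$ and $\cN(K^\circ,B_d)\le C^d$. The middle inequality $r(PK)\le M^*(PK)$ is trivial, since the support function of $PK$ is bounded below by $r(PK)$ in every direction. The two outer inequalities will be proved separately. The upper bound is a first-moment bound plus concentration. The lower bound is a duality and covering argument for random sections of $K^\circ$.

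\emph{Upper bound.} For $\theta\in S^{n-1}$ we have $h_{PK}(\theta)=h_K(P^{\T}\theta)=h_K(\vec X^{\T}\theta)/\sqrt d$, and $\vec X^{\T}\theta\sim N(0,I_d)$. Therefore
\[
\E_{\vec X} h_{PK}(\theta)=\frac{\E\|g\|_2}{\sqrt d}\,M^*(K)\le M^*(K).
\]
Averaging over $\theta$ and using Fubini gives $\E_{\vec X}M^*(PK)\le M^*(K)\lesssim\log^2(d)$. For the high-probability statement, note that $\vec X\mapsto M^*(PK)$ is an average of the functions $\vec X\mapsto h_K(\vec X^{\T}\theta)/\sqrt d$. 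Each of these is $R(K)/\sqrt d$-Lipschitz in the Hilbert--Schmidt norm, so $M^*(PK)$ is as well. For isotropic $K$ one has $R(K)/\sqrt d\lesssim$ polylog in our normalization (outer radius of order $\sqrt d$ times the isotropic scale). Gaussian concentration then yields deviation $t$ with probability $\exp(-ct^2 d/\mathrm{polylog})$. Taking $t\asymp\log^3 d$ absorbs both the mean and the fluctuation, with probability at least $1-\exp(-cn)$.

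\emph{Lower bound.} By duality, $r(PK)\ge\rho$ iff $(PK)^\circ\subset\rho^{-1}B_n$. Moreover, $(PK)^\circ=\{\theta: P^{\T}\theta\in K^\circ\}$, which is the preimage under the (nearly isometric, since $d\ge Cn$) map $P^{\T}$ of the section $K^\circ\cap\mathrm{Im}(\vec X^{\T})$. We know $s_{\min}(\vec X^{\T})/\sqrt d\ge 1-C\sqrt{n/d}\ge 1/2$ with probability $1-\exp(-cn)$. It therefore suffices to show that a random $n$-dimensional subspace $E$ satisfies $K^\circ\cap E\subset C'B_d$ with probability $1-\exp(-cn)$. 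Here we use $\cN(K^\circ,B_d)\le C^d$. Cover $K^\circ$ by $C^d$ unit balls centered at points $y_j$. A point of $K^\circ\cap E$ of norm $\ge C'$ lies within distance $1$ of some $y_j$ with $|y_j|\ge C'-1$, and for it to be in $E$ we need $\mathrm{dist}(y_j,E)\le 1$. For fixed $y$, $\Pr(\mathrm{dist}(y,E)\le 1)\le (C''/|y|)^{d-n}$ by the standard small-ball estimate for the projection onto a random subspace of dimension $d-n\ge d/2$. A union bound over $C^d$ centers then gives failure probability $(C\cdot C''/(C'-1))^{d/2}\le e^{-cd}$ once $C'$ is a large constant. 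Since $\mathrm{Im}(\vec X^{\T})$ is uniformly distributed on the Grassmannian, this gives $r(PK)\gtrsim1$.

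\emph{Main obstacle.} The delicate step is the lower bound: a naive approach, using only the inradius of $K$ together with $s_{\min}(\vec X)$, loses a factor $\sqrt d$. What rescues it is precisely the $M$-position of isotropic bodies, which follows from the slicing resolution, combined with the covering and union-bound argument above. The other point needing care is the exact normalization of $K$, so that $R(K)/\sqrt d$ and $M^*(K)$ are polylogarithmic while $K^\circ$ has covering number $C^d$ by the unit ball. I would fix this scaling at the outset and track it through both halves.
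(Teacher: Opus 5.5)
Your mean estimate $\E_{\vec X}M^*(PK)\le M^*(K)$ and the middle inequality are fine. The gap is in the high-probability part of the upper bound, which does not go through as written.

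The tail $\exp(-ct^2d/\mathrm{polylog})$ rests on the claim that an isotropic body has outer radius of order $\sqrt d$ times the isotropic scale. That is false in general: the isotropic cross-polytope and simplex have outer radius of order $d\,L_K$, and the general bound is only $R(K_{\mathrm{iso}})\le (d+1)L_K$. In your normalization $R(K)$ can therefore be of order $\sqrt d$, so the per-direction Lipschitz constant $R(K)/\sqrt d$ is of order $1$. Gaussian concentration then gives only $\exp(-ct^2)$. (Even granting $R(K)/\sqrt d\lesssim\mathrm{polylog}$, the exponent would be $t^2/\mathrm{polylog}$, not $t^2d/\mathrm{polylog}$.) At $t\asymp\log^3 d$ this is $\exp(-c\log^6 d)$, far larger than $\exp(-cn)$ once $n\gg\log^6 d$.

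The missing observation is that averaging over $\theta$ improves the Lipschitz constant by a factor $\sqrt n$. Write $F(\vec X):=M^*(d^{-1/2}\vec XK)$. Sublinearity of $h_K$ and Cauchy--Schwarz on $S^{n-1}$ give
\[
|F(\vec X)-F(\vec X')|\le \frac{R(K)}{\sqrt d}\int_{S^{n-1}}\big|(\vec X-\vec X')^{\top}\theta\big|\,d\sigma(\theta)\le \frac{R(K)}{\sqrt{dn}}\,\|\vec X-\vec X'\|_{\mathrm{HS}}.
\]
This is $O(n^{-1/2})$ when $R(K)\lesssim\sqrt d$. A deviation of order $1$ then has probability $\exp(-cn)$, and $M^*(PK)\lesssim\log^2 d$ follows with the required probability.

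Your lower bound is correct, and it takes a different route from the one the paper points to. The paper only cites \cite{kur2026minimum}, listing \cite{milman2015mean} and \cite{bizeul2025distances} among its inputs. The exponent $3$ is exactly the bound $M(K)M^*(K)\lesssim\log^3 d$ from the introduction. This suggests the following argument: normalize so that $M(K)\asymp 1$; get $r(PK)\gtrsim 1$ from the low-$M^*$ estimate for $K^\circ$, since a random section of codimension $d-n\ge d/2$ has diameter $\lesssim M^*(K^\circ)=M(K)$; and get $M^*(PK)\lesssim M^*(K)\lesssim\log^3 d$.

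You instead normalize by volume. You replace the mean-norm input by the entropy input $\cN(K^\circ,B_d)\le C^d$, i.e.\ universal-constant $M$-position, combined with a net and a small-ball bound for $\mathrm{dist}(y,E)$. This avoids the Bizeul--Klartag bound on $M(K)$ and even gives $M^*(PK)/r(PK)\lesssim\log^2 d$. In your volume normalization, the mean-norm route would only give $r(PK)\gtrsim 1/\log d$. The price is that your argument relies on the $M$-position of isotropic bodies asserted in the introduction.
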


Next, using Lipschitz concentration and the standard Lipschitz extension
 theorem, we obtain the following localized concentration statement:
\begin{lemma}\citep[Cor. 2]{kur2026minimum}\label{Lem:Concertaion}
Consider a convex set \(L \subset \R^{d-1}\) that contains the origin such that
\[
R_{bM^*}
=
M^*(PL)/r(PL)
=
\Tilde{\Theta}(1),
\]
and suppose that, with probability at least \(1-d^{-100}\) over \(\vec X\),
\[
    \|\erm^L(\vec X,\vec \xi)\|_2
    \le \widetilde O(r_\star),
\]
where \(\vec \xi \in \sqrt{n}\,S^{n-1}\). Then, with probability at least
\(1-d^{-98}\), for any fixed \(u \in \sqrt n\,S^{n-1}\), it holds that
\[
\left|
\frac{\|u\|_{\vec XL}}{\E \|u\|_{\vec XL}} - 1
\right|
=
\widetilde{O}\lp\frac{r_\star}{\sqrt{n}}\rp.
\]
\end{lemma}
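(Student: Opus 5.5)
The plan is Gaussian concentration of $f(\vec X):=\|u\|_{\vec XL}$, viewed as a function of the Gaussian matrix $\vec X\in\R^{n\times(d-1)}$, after localizing it to a good event where its Lipschitz constant is small. The first step is a deterministic perturbation bound. Let $w=\erm^L(\vec X,u)$, so that $\vec Xw=u$ and $\|w\|_L=f(\vec X)$, and let $\Delta$ be a perturbation. Then $(\vec X+\Delta)(w+z)=u$ as soon as $(\vec X+\Delta)z=-\Delta w$. Choosing $z$ to be the minimal-$L$-gauge solution of this equation gives
\[
f(\vec X+\Delta)\le f(\vec X)+\|\Delta w\|_{(\vec X+\Delta)L}\le f(\vec X)+\frac{\|\Delta\|_{\rm op}\,\|w\|_2}{r((\vec X+\Delta)L)}.
\]
Symmetrizing, the local Lipschitz constant of $f$ in the Frobenius norm is at most $\|w\|_2/r(\vec XL)$. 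The same argument, with $z$ taken to be the least-squares correction instead, shows $\|z\|_2\lesssim\|\Delta\|_{\rm op}\|w\|_2/s_{\min}(\vec X)$. Since $s_{\min}(\vec X)\gtrsim\sqrt d$ with probability $1-e^{-cn}$, the interpolant stays inside a slightly enlarged Euclidean ball.

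The second step is to compare this Lipschitz constant with the mean. Write $\vec X=\sqrt d\,P$. With probability $1-e^{-cn}$ (Lemma~\ref{Lem:BMstar}-type Dvoretzky estimates), $r(\vec XL)=\sqrt d\,r(PL)$, and by the hypothesis $M^*(PL)/r(PL)=\widetilde\Theta(1)$ the body $PL$ is $\widetilde O(1)$-close to $M^*(PL)B_2^n$. Hence
\[
\E f\asymp \frac{\sqrt n}{\sqrt d\,M^*(PL)}\cdot\widetilde\Theta(1).
\]
On the event $\|w\|_2\le\widetilde O(r_\star)$ the relative Lipschitz constant is therefore
\[
\frac{\widetilde O(r_\star)}{\sqrt d\,r(PL)}\cdot\frac{\sqrt d\,M^*(PL)}{\sqrt n}=\widetilde O\Big(\frac{r_\star}{\sqrt n}\Big).
\]
Gaussian concentration at level $\sqrt{\log d}$ standard deviations then yields the claimed deviation with probability $1-d^{-99}$, provided the localization can be made rigorous.

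The main obstacle is that the good set $G=\{\|w\|_2\le\widetilde O(r_\star),\ r(\vec XL)\gtrsim\sqrt d\,r(PL),\ s_{\min}(\vec X)\gtrsim\sqrt d\}$ is not convex. A local Lipschitz bound on $G$ therefore does not directly give a global one that can be fed into a McShane extension. I would first replace $f$ by the truncated gauge
\[
f_R(\vec X):=\min\{\|w\|_L:\vec Xw=u,\ \|w\|_2\le R\},\qquad R=2\,\widetilde O(r_\star).
\]
This coincides with $f$ on $G$, and the perturbation argument applies to it along entire segments, because the corrected interpolant stays in the radius-$R$ ball as long as $s_{\min}$ and $r(\cdot L)$ remain bounded below. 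Those two conditions depend only on the Lipschitz functions $s_{\min}(\vec X)$ and the inner radius of $\vec XL$, so they hold on an event that is essentially a sublevel set of Lipschitz functions. Next I would take the McShane extension $\tilde f(\vec Y)=\inf_{\vec Z\in G'}\{f_R(\vec Z)+\Lambda\|\vec Y-\vec Z\|_{\rm HS}\}$ from that event $G'$, with $\Lambda=\widetilde O(r_\star/\sqrt n)\,\E f$, and apply Gaussian concentration to $\tilde f$.

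Finally, $\tilde f=f$ off an event of probability at most $d^{-100}+e^{-cn}$. The gap between $\E\tilde f$ and $\E f$ is controlled by Cauchy--Schwarz with a crude polynomial bound on $\E f^2$, such as $f\le\|u\|_2/r(\vec XL)$ on the event where $r(\vec XL)$ is bounded below. This transfers the deviation bound from $\E\tilde f$ to $\E f$, losing only $d^{-49}\ll r_\star/\sqrt n$, and yields the statement with probability $1-d^{-98}$.
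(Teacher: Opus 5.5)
Your overall route is the one the paper points to. The paper gives no proof; it cites \cite{kur2026minimum} and names Gaussian Lipschitz concentration plus the standard Lipschitz extension theorem. Your transport bound in the first step and the ratio $\Lambda/\E f=\widetilde O(r_\star/\sqrt n)$ are also right. The step that does not hold as written is the one you use to get around the non-convexity of $G$, namely the claim that $f_R$ is Lipschitz along segments because the corrected interpolant stays in the radius-$R$ ball.

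In your first step the two corrections are different vectors, and no single correction has both properties:
\begin{itemize}
\item The minimal-gauge $z$ has $\|z\|_L\le\|\Delta w\|_2/r((\vec X+\Delta)L)$. Its Euclidean norm, however, is that of the minimal-$L$-gauge interpolant of the data-dependent target $-\Delta w$, which your fixed-direction hypothesis does not control.
\item The least-squares $z$ has $\|z\|_2\lesssim\|\Delta\|_{\rm op}\|w\|_2/s_{\min}$. Its gauge, however, is only bounded by $\|z\|_2/r(L)$, with $r(L)$ the inradius of $L$ in $\R^{d-1}$, not by $\|\Delta w\|_2/r((\vec X+\Delta)L)$.
\end{itemize}
Worse, on $G'\setminus G$ the truncation in $f_R$ may be active ($\|w\|_2=R$), leaving no room for any correction. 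There the gradient of $f_R$ involves the Lagrange multiplier of the constraint $\|w\|_2\le R$, which lower bounds on $s_{\min}$ and $r(\cdot L)$ do not control. So the claim is unsupported, and $f_R$ need not be $\Lambda$-Lipschitz on $G'$.

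The repair is that the obstacle is not there, because your transport inequality is not infinitesimal. Moving a minimizer $w_Z$ for $Z$ to an arbitrary $Y$ with a single minimal-gauge correction gives
\[
f(Y)\le f(Z)+\frac{\|Y-Z\|_{\rm op}\,\|w_Z\|_2}{r(YL)}\qquad\text{for all } Y,Z .
\]
Let $G$ be the event where $\|w\|_2\le R$ and $r(\cdot L)\ge r_{\min}$. Then $|f(Y)-f(Z)|\le (R/r_{\min})\|Y-Z\|_{\rm HS}$ for all $Y,Z\in G$, with no segment involved. So $f|_G$ is $\Lambda$-Lipschitz, McShane's extension from $G$ applies directly, and $s_{\min}$ plays no role. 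Your $\tilde f$ built from $f_R$ on $G'$ does in fact equal $f$ on $G$, but for this reason: for $Y\in G$, $f(Y)\le f_R(Z)+R\|Y-Z\|_{\rm op}/r(YL)$ by the same one-shot transport. It is not because $f_R$ is Lipschitz on $G'$.

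Two smaller points.
\begin{itemize}
\item The hypothesis $M^*(PL)/r(PL)=\widetilde\Theta(1)$ gives no outer inclusion of $PL$ in a multiple of $M^*(PL)B_2^n$. The lower bound $\E f\gtrsim\sqrt n/(\sqrt d\,M^*(PL))$ should instead come from rotation invariance together with $M(K)\,M^*(K)\ge 1$.
\item The Cauchy--Schwarz comparison of $\E\tilde f$ with $\E f$ needs a moment bound for $f$ \emph{off} the event $\{r(\vec XL)\ge r_{\min}\}$. One option is $r(\vec XL)\ge r(L)\,s_{\min}(\vec X)$ together with Lemma~\ref{lem:sigma-min-negative-moment}, provided $L$ contains a ball of not too small radius. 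The bound you quote only covers the good event itself.
\end{itemize}
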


\begin{proof}[Proof of Theorem~\ref{T:Localization}]
We prove the result under the normalization
\[
    \|\ft\|_2=\|\ft\|=1.
\]
The general case follows by changing absolute constants, using
Assumption~\ref{ass:ground-truth}.

Set
\[
    H:=\ft^\perp,
    \qquad
    \widetilde{\vec X}:=\vec X\cP_H,
    \qquad
    g_{\ft}:=\vec X\ft .
\]
By Gaussianity, \(g_{\ft}\) is independent of \(\widetilde{\vec X}\), and
\[
    g_{\ft}\sim N(0,I_n).
\]
We also assume, as in the statement, that
\[
    \|\vec \xi\|_2=\sqrt n.
\]

Let
$
    M_0:=M_n(K)
$
For \(a>0\) and \(t\in[0,2]\), define the localized section
\[
K_{a,t}^*
:=
\left\{
h\in H:
\frac{(1-t)\ft}{a}+h\in K,\
\|h\|_2\le \frac{r_\star}{a}
\right\}.
\]
Thus the deterministic localized section appearing in the statement is
\[
    K_t^*:=K_{M_0,t}^*,
\]
and we write
\[
    \widetilde M_n(t):=M_n(K_t^*).
\]
For a general level \(a\), write
\[
    M_{n,a}^*(t):=M_n(K_{a,t}^*).
\]

We use the error level
\[
    \eta_n
    :=
    C\log^C(d)
    \left(
        \frac{r_\star}{\sqrt n}
        +
        \frac1n
    \right).
\]
The first term comes from Lemma~\ref{Lem:Concertaion}; the second term comes
from the radial fluctuation of \(\vec \xi+t g_{\ft}\).

With probability at least \(1-d^{-100}\), uniformly for \(t\in[0,2]\),
\begin{equation}\label{Eq:radial-lower}
    \frac{\|\vec \xi+t g_{\ft}\|_2}{\sqrt n}
    \ge
    1+c_0t^2-C\log^C(d)\frac1n,
\end{equation}
where \(c_0>0\) is an absolute constant. Indeed,
\[
\frac{\|\vec \xi+t g_{\ft}\|_2^2}{n}
=
1+t^2
+
\frac{2t\langle \vec \xi,g_{\ft}\rangle}{n}
+
t^2\left(\frac{\|g_{\ft}\|_2^2}{n}-1\right).
\]
The Gaussian bounds
\[
\left|\frac{\langle \vec \xi,g_{\ft}\rangle}{n}\right|
\lesssim
\sqrt{\frac{\log d}{n}},
\qquad
\left|
\frac{\|g_{\ft}\|_2^2}{n}-1
\right|
\lesssim
\sqrt{\frac{\log d}{n}}
\]
hold with probability at least \(1-d^{-100}\). By Young's inequality,
\[
    t\sqrt{\frac{\log d}{n}}
    \le
    \varepsilon t^2+C_\varepsilon\frac{\log d}{n},
\]
and \eqref{Eq:radial-lower} follows after changing constants.

We next prove a uniform concentration estimate for the localized sections. Fix
\(a,\mu,t\). Conditionally on \(g_{\ft}\), the vector
\(\vec \xi+t g_{\ft}\) is fixed and independent of \(\widetilde{\vec X}\).
Thus, by rotational invariance and homogeneity of the gauge,
\begin{equation}\label{Eq:GaussianMagic}
\begin{aligned}
\E_{\widetilde{\vec X}}
\|\vec \xi+t g_{\ft}\|_{\widetilde{\vec X}K_{a,\mu}^*}
&=
\frac{\|\vec \xi+t g_{\ft}\|_2}{\sqrt n}
\cdot
\E_{\widetilde{\vec X}}
\|\vec \xi\|_{\widetilde{\vec X}K_{a,\mu}^*} \\
&\ge
\left(1+c_0t^2-C\log^C(d)\frac1n\right)
M_{n,a}^*(\mu).
\end{aligned}
\end{equation}
Applying Lemma~\ref{Lem:Concertaion} to the fixed localized section
\(K_{a,\mu}^*\), and using the definition of \(r_\star\), gives
\[
    \left|
    \frac{
    \|\vec \xi+t g_{\ft}\|_{\widetilde{\vec X}K_{a,\mu}^*}
    }{
    \E_{\widetilde{\vec X}}
    \|\vec \xi+t g_{\ft}\|_{\widetilde{\vec X}K_{a,\mu}^*}
    }
    -1
    \right|
    \le
    C\log^C(d)\frac{r_\star}{\sqrt n}
\]
for fixed \(a,\mu,t\). Combining this with \eqref{Eq:GaussianMagic}, and
enlarging \(\eta_n\), yields
\[
    \|\vec \xi+t g_{\ft}\|_{\widetilde{\vec X}K_{a,\mu}^*}
    \ge
    \left(1+c_0t^2-C\eta_n\right)M_{n,a}^*(\mu).
\]

We discretize
\[
    a\in[c_aM_0,C_aM_0],
    \qquad
    t,\mu\in[0,2],
\]
where \(0<c_a<C_a<\infty\) are fixed absolute constants chosen large enough for
the quadratic approximation event below. Taking a mesh of size \(d^{-100}\),
applying the previous estimate on the grid, and taking a union bound gives the
estimate on the grid. The passage to all parameters follows from the deterministic
convexity inclusions for nearby sections: if
\[
    \left|\frac{a-a'}{M_0}\right|+|t-t'|+|\mu-\mu'|\le \delta,
\]
then
\[
    (1-C\delta)K_{a,\mu}^*
    \subset
    K_{a',\mu'}^*
    \subset
    (1+C\delta)K_{a,\mu}^* .
\]
Taking the mesh sufficiently fine, we obtain, with probability at least
\(1-d^{-98}\), uniformly for \(a\in[c_aM_0,C_aM_0]\) and \(t,\mu\in[0,2]\),
\begin{equation}\label{Eq:uni}
    \|\vec \xi+t g_{\ft}\|_{\widetilde{\vec X}K_{a,\mu}^*}
    \ge
    \left(1+c_0t^2-C\eta_n\right)M_{n,a}^*(\mu).
\end{equation}
Similarly, applying the same argument at \(t=0\) gives the upper bound
\begin{equation}\label{Eq:baseline}
    \|\vec \xi\|_{\widetilde{\vec X}K_{a,\mu}^*}
    \le
    \left(1+C\eta_n\right)M_{n,a}^*(\mu)
\end{equation}
uniformly over the same range of \(a,\mu\).

Let
\[
    m:=m_{\vec \xi+g_{\ft}}
    :=
    \|\erm(\vec X,\vec X\ft+\vec \xi)\|
\]
be the norm of the MNI, and write
\[
    \erm
    =
    (1-\widehat t)\ft+\widehat h,
    \qquad
    \widehat h\in H.
\]
Since \(\erm\) interpolates the data,
\[
    \widetilde{\vec X}\widehat h
    =
    \vec \xi+\widehat t g_{\ft}.
\]
On the event from Assumption~\ref{A:medDev},
\[
    \|\widehat h\|_2\le r_\star.
\]
Moreover, since \(\|\erm\|=m\),
\[
    \frac{(1-\widehat t)\ft+\widehat h}{m}\in K,
\]
and hence
\[
    \frac{\widehat h}{m}\in K_{m,\widehat t}^*.
\]

We now invoke the quadratic approximation event included in
Assumption~\ref{A:medDev}. On this event,
\begin{equation}\label{Eq:m-concentration-pre}
    \left|\frac{m}{M_0}-1\right|
    \le
    C\left(\widehat t^2+\eta_n\right).
\end{equation}
In particular, after choosing \(c_a,C_a\) appropriately,
\[
m\in[c_aM_0,C_aM_0],
\]
so the uniform estimate \eqref{Eq:uni} may be applied with \(a=m\).

Since \(\widehat h/m\in K_{m,\widehat t}^*\), we have
\[
    \|\vec \xi+\widehat t g_{\ft}\|_{\widetilde{\vec X}K_{m,\widehat t}^*}
    \le
    m.
\]
Applying \eqref{Eq:uni} with \(a=m\) and \(t=\mu=\widehat t\), we obtain
\begin{equation}\label{Eq:selected-lower}
    \left(1+c_0\widehat t^2-C\eta_n\right)
    M_{n,m}^*(\widehat t)
    \le
    m.
\end{equation}

By optimality of the MNI, \(m\) is no larger than the no-shrink localized cost.
Using \eqref{Eq:baseline} for \(K_{M_0,0}^*=K_0^*\), and using the convexity
comparison of nearby sections to absorb the harmless normalization error, we get
\begin{equation}\label{Eq:m-upper}
    m
    \le
    \left(1+C\eta_n\right)\widetilde M_n(0).
\end{equation}
Combining \eqref{Eq:selected-lower} and \eqref{Eq:m-upper} yields
\begin{equation}\label{Eq:selected-before-comparison}
    \left(1+c_0\widehat t^2-C\eta_n\right)
    M_{n,m}^*(\widehat t)
    \le
    \left(1+C\eta_n\right)\widetilde M_n(0).
\end{equation}

By \eqref{Eq:m-concentration-pre} and the same convexity inclusions for nearby
sections, uniformly for \(t\in[0,2]\),
\[
    (1-C(\widehat t^2+\eta_n))K_{M_0,t}^*
    \subset
    K_{m,t}^*
    \subset
    (1+C(\widehat t^2+\eta_n))K_{M_0,t}^* .
\]
Therefore, by monotonicity and homogeneity of the gauge,
\[
    M_{n,m}^*(t)
    =
    \left(1+O(\widehat t^2+\eta_n)\right)
    \widetilde M_n(t).
\]
Substituting this into \eqref{Eq:selected-before-comparison}, and absorbing the
\(O(\widehat t^2)\)-terms into the quadratic term, gives
\[
    \left(1+c_1\widehat t^2-C\eta_n\right)
    \widetilde M_n(\widehat t)
    \le
    \left(1+C\eta_n\right)\widetilde M_n(0),
\]
for another absolute constant \(c_1>0\). Equivalently,
\begin{equation}\label{Eq:offset-crossing}
    \widetilde M_n(0)
    -
    \left(1+c_1\widehat t^2\right)
    \widetilde M_n(\widehat t)
    \ge
    -C\eta_n
    \left(
        \widetilde M_n(0)+\widetilde M_n(\widehat t)
    \right).
\end{equation}
By Assumption~\ref{ass:ground-truth} and the convexity comparison of nearby
localized sections,
\[
\widetilde M_n(\widehat t)\asymp \widetilde M_n(0).
\]
Thus \eqref{Eq:offset-crossing} implies
\[
    \widetilde\Delta(\widehat t)
    \ge
    -C\eta_n\widetilde M_n(0).
\]
By the definition of the offset localization radius,
\[
    \widehat t\le \widetilde t_\star.
\]
If the sign of \(\widehat t\) has not been fixed a priori, the same argument is
applied on the interval \([-2,2]\), and gives \(|\widehat t|\le\widetilde t_\star\).
Finally,
\[
    \cP_{\ft}(\erm)-\ft
    =
    -\widehat t\,\ft.
\]
Since \(\|\ft\|_2=1\), this proves
\[
    \|\cP_{\ft}(\erm)-\ft\|_2
    =
    |\widehat t|
    \le
    \widetilde t_\star.
\]
This proves the theorem on an event of probability at least \(1-d^{-c}\).
\end{proof}

%% file: proof_outline_thm2.tex
\subsection{Proof of \Cref{Theorem:SPofGP}}\label{ss:SPofGP}
\label{sec:proof-outline-variance-thm}
 First, we prove \Cref{Theorem:SPofGP}, which gives some intuition to the main theorem of this paper
 that is \Cref{Theorem:LoNE}.
\paragraph{Notation:}
We use the following $L^1-L^2$ inequality for the variance, which is the Gaussian analogue of a well-known result of \cite{talagrand1994russo}. 
The result requires a decomposition of the identity operator on $\R^m$, as follows: 
\begin{equation}\label{eqn:decomp}
I_{m} =  \sum_{i=1}^{k}c_i \cP_{E_i},
\end{equation}
Above, for an integer $k \geq 1$, the decomposition involves a sequence of subspaces $\{E_i\}_{i=1}^k$ of $\R^m$, their corresponding orthogonal projectors 
$\cP_{E_i}$, and nonnegative weights $\{c_i\}_{i=1}^k$. In what follows, for a measure $\mu$ and measurable function $f$, we denote  
\(
\|f\|_{L^p(\mu)}^p 
\defn \int \, |f|^p \, \ud \mu,
\)
for $p \in [1, \infty)$.
If the measure $\mu$ is clear from context, we occasionally write 
$\|f\|_{L^p}$ or $\|f\|_p$, and we use the notation of $\|\nabla_{E_i} F\|_{L^p(\gamma_m)} \defn \| \|\nabla_{E_i} F\|_2 \|_{L^p(\gamma_m)}$.

For a positive integer $k \geq 1$, we define $[k] \defn \{1, \dots, k\}$. For a matrix $A \in \R^{n \times d}$ with columns $\{A_i\}_{i=1}^\dimension \subset \R^n$ and a subset $S \subset [d]$, the matrix $A_S \in \R^{n \times |S|}$ is composed by taking the columns $\{A_i\}_{i\in S}$. 
Additionally, for a square matrix $B \in \R^{k \times k}$, we denote 
\[
B^{-\T} = (B^\T)^{-1} = (B^{-1})^\T. 
\]
Consider a facet of $\cF$ of the random polytope $\PN$. Note that it is defined by $S \subset [d]$ and $|S|=n$ and $\vec \eps \in \R^d$ such that $\vec \eps = (\eps_i)_{i \in S} \in \{\pm1\}^n$, and $\{\eps_i \vec X_i\}_{i \in S}$ form a facet. We denote the barycenter and the (scaled) normal, respectively, as
\[
\vec c_{\cF}:= \frac{1}{n}\sum_{i \in S}\eps_i\vec{X}_i \text{ and } \vec n_\cF = \frac{\vec X_S^{-\T} \vec \eps}{\|\vec X_S^{-\T} \vec \eps\|_2^2}
\]
Note that in this scaling $n_{\cF} \in \cF$, and $\cF$ is contained in the hyperplane  
$$\{v \in \R^n : \langle v, \vec n_\cF\rangle = \|\vec n_\cF\|_2^2\}.$$
Throughout the argument below, we let $\cF$ correspond to the facet (with corresponding $n$-subset $S$ and sign vector $\vec \eps$) which is selected by  $\erm$.

\subsubsection*{Preliminaries} 
% Also, we denote the normal $n_{\cS}$ of the facet by $\cF$, and its solution of the MNI $n_{\vec$ to it $\erm(n_{\cS}) = $.
% As every facet has the same distribution, we may assume that $\cF:=\mathrm{Conv}\{X_1,\ldots,X_n\}$.

% Also$c_{\cF}$ denotes the barycenter of $\sum_{i \in \cS}\eps_iX_i$, and $n_{\cF} $ denotes the the normal of the facet, similary $***$. 

\begin{lemma}[Corollary 5 in \cite{cordero2012hypercontractive}]\label{Lem:Hyper}
Let $m \geq 0$ and fix a smooth function $F \colon \R^{m} \to \R$.
Then, under the decomposition~\eqref{eqn:decomp} we have 
\[
    \mathrm{Var}_{\gamma_m}(F) \lesssim \sum_{i=1}^{k}c_i \cdot \frac{\|\nabla_{E_i} F \|_{L^2(\gamma_m)}^2}{1 + \log \tfrac{\|\nabla_{E_i} F \|_{L^2(\gamma_m)}}{\|\nabla_{E_i} F \|_{L^1(\gamma_m)}}}.
\]
\end{lemma}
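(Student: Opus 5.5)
The plan is to run the standard Ornstein--Uhlenbeck semigroup proof of Talagrand's inequality, inserting the decomposition \eqref{eqn:decomp} at the level of gradients before applying hypercontractivity.

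\textbf{Step 1: variance representation.} Let $(P_t)_{t\ge 0}$ be the Ornstein--Uhlenbeck semigroup on $(\R^m,\gamma_m)$. It has the commutation property $\nabla P_tF=e^{-t}P_t\nabla F$, where $P_t$ acts coordinatewise on vector fields. Differentiating $\E_{\gamma_m}(P_tF)^2$ in $t$ and integrating from $0$ to $\infty$ gives
\[
\Var_{\gamma_m}(F)=2\int_0^\infty \E_{\gamma_m}\|\nabla P_tF\|_2^2\,\ud t
= 2\int_0^\infty e^{-2t}\,\E_{\gamma_m}\|P_t\nabla F\|_2^2\,\ud t .
\]
Since $\cP_{E_i}$ is a constant linear map, it commutes with $P_t$. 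Combined with $\|v\|_2^2=\sum_i c_i\|\cP_{E_i}v\|_2^2$, which follows from \eqref{eqn:decomp}, this gives
\[
\Var_{\gamma_m}(F)=2\sum_{i=1}^k c_i\int_0^\infty e^{-2t}\,\E_{\gamma_m}\|P_t\nabla_{E_i}F\|_2^2\,\ud t .
\]
So it suffices to bound each term separately, for a fixed vector field $g:=\nabla_{E_i}F$.

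\textbf{Step 2: hypercontractivity and interpolation.} Jensen's inequality gives the pointwise bound $\|P_tg\|_2\le P_t\|g\|_2$, which reduces the problem to the scalar function $\phi:=\|g\|_2\ge 0$. Nelson's hypercontractivity gives $\|P_t\phi\|_{L^2}\le\|\phi\|_{L^{p(t)}}$ with $p(t)=1+e^{-2t}$. Hölder interpolation between $L^1$ and $L^2$ then gives
\[
\|\phi\|_{p}\le \|\phi\|_1^{\theta}\|\phi\|_2^{1-\theta},\qquad \theta=\tfrac{2}{p}-1=\tfrac{1-e^{-2t}}{1+e^{-2t}}\ge c\min(t,1).
\]
Writing $R:=\|\phi\|_2/\|\phi\|_1\ge 1$, each term is therefore at most
\[
\|\phi\|_2^2\int_0^\infty e^{-2t}R^{-2\theta(t)}\,\ud t\;\lesssim\;\|\phi\|_2^2\Big(\int_0^1 e^{-2ct\log R}\,\ud t+e^{-2}R^{-2c}\Big)\;\lesssim\;\frac{\|\phi\|_2^2}{1+\log R}.
\]
Substituting $\phi=\|\nabla_{E_i}F\|_2$ and summing over $i$ with weights $c_i$ yields the claim.

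\textbf{Main obstacle.} The only delicate point is the vector-valued step. Hypercontractivity is a scalar statement, so Step 1 must be arranged to pass cleanly to $\phi$. The pointwise Jensen bound $\|P_t g\|_2\le P_t\|g\|_2$ does this, and it costs nothing because $P_t$ is a Markov operator. I would also check that the $L^1$ and $L^2$ norms produced in Step 2 are exactly those in the statement, namely norms of $\|\nabla_{E_i}F\|_2$ under $\gamma_m$; this matches the paper's convention $\|\nabla_{E_i}F\|_{L^p(\gamma_m)}=\|\|\nabla_{E_i}F\|_2\|_{L^p(\gamma_m)}$. Everything else is a routine one-dimensional integral estimate.
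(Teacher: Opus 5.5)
Your proof is correct: the semigroup identity $\mathrm{Var}_{\gamma_m}(F)=2\int_0^\infty e^{-2t}\,\E\|P_t\nabla F\|_2^2\,dt$, the splitting via $\|v\|_2^2=\sum_i c_i\|\cP_{E_i}v\|_2^2$ (which needs $c_i\ge 0$), the pointwise Jensen reduction to $\phi=\|\nabla_{E_i}F\|_2$, Nelson's bound with $p(t)=1+e^{-2t}$, and the $L^1$--$L^2$ interpolation with $\theta=\tanh t$ all check out. The paper gives no proof of its own and simply cites Cordero-Erausquin--Ledoux, whose argument is essentially this same Ornstein--Uhlenbeck/hypercontractivity proof.
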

This inequality is related to the super-concentration phenomenon; see the monograph~\citep{chatterjee2014superbook} for further details. To begin with, we estimate the expected $\ell_1$-norm of the least norm interpolator, $\|\erm\|_1$. To do this, 
we first recall an essential fact --- well-known in the theory of compressed sensing and basis pursuit --- regarding the sparsity of the least-$\ell_1$-norm interpolant under Gaussian design~\citep[\S 3.1.1]{chen1998atomic}.
\begin{lemma}[Sparsity of $\ell_1$-MNI]\label{lem:sparsity}
With probability one, $\erm$  has precisely $n$ nonzero coordinates 
\end{lemma}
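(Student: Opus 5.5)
The plan is to read the statement off the geometry of $\PN=\vec X B_1^d$ rather than through linear programming duality. The basic identity is that $w$ is feasible with $\|w\|_1=\|\vec y\|_{\PN}$ exactly when
\[
\vec y/\|\vec y\|_{\PN}=\sum_i \frac{|w_i|}{\|w\|_1}\,\sign(w_i)\vec X_i ,
\]
that is, exactly when $\vec y/\|\vec y\|_{\PN}$ is written as a convex combination of the points $\pm\vec X_i$. So every minimizer corresponds to a representation of the boundary point $z:=\vec y/\|\vec y\|_{\PN}\in\partial\PN$ as a convex combination of vertices, and $\|\erm\|_1=\|\vec y\|_{\PN}$ as recalled in the proof outline.

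\textbf{Step 1 (general position).} I first show that, almost surely over $\vec X$, any $n$ of the vectors $\pm\vec X_i$ are linearly independent. I also show that no $n+1$ of them with distinct indices lie on a common affine hyperplane avoiding the origin. Both facts hold because the corresponding determinant conditions are nontrivial polynomial equations in Gaussian entries, so each has probability zero, and there are finitely many of them. Consequently $\PN$ is simplicial: each facet is $\conv\{\eps_i\vec X_i\}_{i\in S}$ with $|S|=n$. No facet contains an antipodal pair $\pm\vec X_i$, since the midpoint of such a pair is $0$, which lies in the interior.

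\textbf{Step 2 (the ray hits a facet interior).} Next I show that, almost surely, $z$ lies in the relative interior of a unique facet. Write $\vec y=\vec X\ft+\vec\xi$. Conditionally on $\vec X$, the vector $\vec\xi$ is independent of $\vec X$ and has a density: it is Gaussian, or uniform on $\sqrt n\Sn$, which gives a density on directions. The relative boundaries of the facets form a finite union of cones over $(n-2)$-dimensional sets. Translating by $\vec X\ft$ keeps these sets Lebesgue-null, and in the spherical case the corresponding set of directions is $\sigma$-null. So almost surely $\vec y$ avoids them, and hence $z$ avoids them.

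\textbf{Step 3 (uniqueness and support).} Fix a minimizer $w$. The points $\sign(w_i)\vec X_i$ with $w_i\neq 0$ average, with positive weights, to $z$. Since $\langle\cdot,\vec n_\cF\rangle\le\|\vec n_\cF\|_2^2$ on $\PN$ with equality only on $\cF$, every such point must lie on the facet $\cF$ containing $z$. Hence these points are vertices of $\cF$, so $w$ is supported on $S$ with signs given by $\vec\eps$. Because $\{\eps_i\vec X_i\}_{i\in S}$ is linearly independent, the barycentric coordinates of $z$ in $\cF$ are unique, so the minimizer is unique. Because $z$ lies in the relative interior of $\cF$, all $n$ coordinates are strictly positive. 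Therefore $\erm$ has exactly $n$ nonzero entries.

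The main obstacle is the measure-zero argument in Step 2, because $\vec y$ depends on $\vec X$ through $\vec X\ft$. I would handle it by conditioning on $\vec X$ and using only the density of $\vec\xi$, as above. The case $\ft=0$, with $\vec\xi$ fixed on the sphere, is the one used in \Cref{Theorem:SPofGP}. I would treat it instead by rotation invariance: for fixed $\vec\xi$, the direction of $\vec\xi$ relative to the polytope $\vec X B_1^d$ is uniformly distributed, again by rotation invariance of $\vec X$.
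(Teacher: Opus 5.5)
Your proof is correct and follows the same route as the paper: $\PN=\vec X B_1^d$ is almost surely simplicial, so $\erm$ is the unique barycentric representation of $\vec y/\|\vec y\|_{\PN}$ on the facet its ray hits. Your Steps~2 and~3 make explicit the relative-interior and uniqueness arguments that the paper's one-line sketch leaves implicit, and these are exactly what upgrade ``at most $n$'' to ``precisely $n$'' nonzero coordinates; the only wording fix is in Step~1, where linear independence should be claimed for $n$ vectors $\pm\vec X_i$ with distinct indices, since $\vec X_i$ and $-\vec X_i$ are trivially dependent.
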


A simple proof of Lemma~\ref{lem:sparsity} can be obtained by noting that $\vec X B^d_1$ is a random Gaussian polytope, \emph{i.e.,} we have
\[
    \|\cdot\|_{\vec X B^d_1} = \|\cdot\|_{\mathrm{conv}\{\pm \vec X e_i\}_{i=1}^{d}}.
\]
Hence, it is a simplicial polytope with probability one: each facet is an $(n-1)$-simplex. This means that $\erm$ can be represented as a convex hull of $n$ of the extreme points $\{\pm \vec X e_i\}_{i=1}^\dimension$

\subsubsection{Proof}

To compute the variance of the MNI, we apply the $L^1-L^2$ bound, Lemma~\ref{Lem:Hyper}, to  $$F_{\vec \xi} \colon \vec X \mapsto \sum_{i=1}^{d} |(\erm)_i|,$$
viewed as a function of the $d$ Gaussian columns $\vec X_1,\ldots,\vec X_d$ in $\R^n$, while the input $\vec \xi\in \sqrt n S^{n-1}$ is fixed. Note that this map is the $\ell_1$-norm of the $\ell_1$-MNI as a function of the covariates for a fixed input $\vec \xi$.

Doing so, we obtain 
\begin{equation}\label{eqn:var-decomp}
\begin{aligned}
\mathrm{Var}\Big(\|\erm\|_{1}\Big)&= \mathrm{Var}\Big(\sum_{i=1}^{d}|(\erm)_i|\Big)   \\&\lesssim
\sum_{i=1}^d 
\frac{\|\nabla_{\vec X_i} F\|_{L^2}^2}{1 + \log  
\tfrac{\|\nabla_{\vec X_i} F\|_{L^2}}{\|\nabla_{\vec X_i} F\|_{L^1}}}
\\&=
d \cdot \underbrace{\frac{\|\nabla_{\vec X_1} F\|_{L^2}^2}{1 + \log  
\tfrac{\|\nabla_{\vec X_1} F\|_{L^2}}{\|\nabla_{\vec X_1} F\|_{L^1}}}}_{\defn T_1}
:= d \cdot \, T_1,
\end{aligned}
\end{equation}
where we used the fact that the $d$ summands have the same distribution. 

Below, via a direct calculation, we show that
\begin{equation}\label{eqn:desired-bound-on-term-1}
T_1 \lesssim \frac{\E \|\erm\|_2^2}{d \log^2(d/n)}.
\end{equation}
Assuming inequality~\eqref{eqn:desired-bound-on-term-1} for the moment, note that \Cref{Theorem:SPofGP} follows essentially immediately: in combination with display~\eqref{eqn:var-decomp}, it implies
\[
\Var(\|\erm\|_1) \lesssim \frac{\E \|\erm\|_2^2}{\log^2(d/n)} \asymp 
\frac{(\E \|\erm\|_1)^2}{n \log(d/n)^2},
\]
where we used that \cite{wang2022tight} implies
 $$ n \cdot \E \|\erm\|_2^2 \asymp M_{n,d}^2 \asymp [\E \|\erm\|_1]^2 \asymp \frac{n}{\log(d/n)}.$$ Hence by dividing both sides by $M_{n,d}^2 = [\E (\|\erm\|_1)]^2$, we obtain the claimed result. Note that the final equality made use of \Cref{Theorem:LoNE}. It remains to prove \eqref{eqn:desired-bound-on-term-1}.

\subsubsection{Proof of \eqref{eqn:desired-bound-on-term-1}}
On the event that $\vec X_{\cS}$ forms
the facet $\cF$, with probability $1$ we have
\begin{equation}\label{Eq:theyddyLin}
\nabla_{\vec X_1} F(\vec X, \vec \xi) = \begin{cases} 
-(\erm)_1 
\vec X_S^{-\T} \vec \eps & 1 \in S \\ 
0 & 1 \not \in S 
\end{cases},
\end{equation}
note that $S$ is a random set with $n$-elements.  
\begin{proof}[Proof of \eqref{Eq:theyddyLin}]We assume throughout that $(\vec X, \vec \xi)$ lie in this event. 
Since $(\erm)_S = (\vec X_S)^{-1} \vec \xi$, we have 
\[
F(\vec X, \vec \xi) = (\erm)_S^\T \vec{\eps}= \langle \vec \xi, \vec X_S^{-\T} \vec{\eps} \rangle. 
\]
From, this we clearly see 
$\nabla_{\vec \xi} F(\vec X, \vec \xi) = \vec X_S^{-\T}\vec{\eps}$.

Similarly, we can write, when $1 \in S$ that
\begin{equation}\label{eqn:interp-eqn}
\vec \xi = \vec X_1 \cdot (\erm)_1 + \sum_{i \in S, i \neq 1} \vec X_i \cdot (\erm)_i
\end{equation}
Let $F_i(\vec X, \vec \xi) = (\erm)_i$, and let $\vec D \in \R^{n \times n}$
be the matrix with columns $\nabla_{\vec X_1} F_i(\vec X, \vec \xi)$.

Differentiating the $j$th coordinate of equation~\eqref{eqn:interp-eqn} in $\vec X_1$, we obtain for every for every $j \in [n]$,
\[
-(\erm)_1 \cdot  \vec \eps_j = \sum_{i \in S} 
(\vec X_i)_j \nabla_{\vec X_1} F_i(\vec X, \xi) = 
\vec D \vec X_S^\T \vec \eps_j
\]
Equivalently, we have  
\[
\vec D = -(\erm)_1 \vec X_S^{-\T}.
\]
On the other hand, since $\sum_{i \in S} F_i = F$, we have
\[
\nabla_{\vec X_1} F(\vec X, \vec \xi) = \vec D \vec \eps = - 
(\erm)_1 \vec X_S^{-\T} \vec \eps,
\]
when $1 \in S$. If $1 \not \in S$, then the same argument above shows that $\vec D = \vec 0$, and thus
\[
\nabla_{\vec X_1} F(\vec X, \vec \xi) = \vec 0.\qedhere
\]
\end{proof}

Since $(\erm )_1$ is nonzero with probability $n/d$, we immediately have by Jensen's inequality 
\begin{equation}
\frac{\|\nabla_{\vec X_1} F\|_{L^2}}{\|\nabla_{\vec X_1} F\|_{L^1}} = \frac{\sqrt{\tfrac{n}{d} \E [ \| \nabla_{\vec X_1} F\|_2^2 \mid 1 \in S]}}{\tfrac{n}{d} \E[\|\nabla_{\vec X_1} F\|_2 \mid 1 \in S]} \geq \sqrt{\frac{d}{n}}.
\end{equation} 
Therefore, using the fact that $\|\vec X_S^{-\T} \vec \eps\|_2^2 = \tfrac{1}{\|\vec n_\cF\|_2^2}$, we have 
\begin{equation}\label{eqn:t1-bound}
T_1 
\lesssim  \frac{1}{\log(d/n)} \E\Big[|(\erm)_1|^2 \tfrac{1}{\|\vec n_\cF\|_2^2}\Big]= 
\frac{1}{d \log(d/n)} \E \Big[\|\erm\|_2^2 \frac{1}{\|\vec n_\cF\|_2^2}\Big].
\end{equation}
where we used that $d \cdot \E[|(\erm)_1|^2] = \E \|\erm\|_{2}^2$ and the fact that the coordinates all have the same distribution. Finally, we write
$\mathcal{E} = \{\|\vec n_\cF\|_2 \gtrsim \sqrt{\log(d/n)}\}$, which by Lemma \ref{lem:simplices} below holds with probability of at least $1-\exp(-c\sqrt{nd})$. Then,
we clearly have 
\begin{equation}\label{ineq:bound-on-the-expected-product}
\E \Big[\|\erm\|_2^2 \frac{1}{\|\vec n_\cF\|_2^2}\Big] \lesssim 
\frac{1}{\log(d/n)}\E \|\erm\|_2^2 
+ 
\E \Big[\1_{\mathcal{E}^c} \|\vec X_S^{-1} \vec \xi\|_2^2 \|\vec X_S^{-\T} \vec \eps\|_2^2\Big]
\end{equation}
A very cheap bound on the remaining term can be obtained as follows. 
First, note that $\PN$ contains the ball $\tfrac{\sigma_{\rm min}(\vec X)}{\sqrt{d}} B^n_2$; this can be verified directly from the inclusion $\tfrac{1}{\sqrt{d}} B^d_2\subset B^d_1$ and 
\[
h_{\PN}(v) \geq \frac{1}{\sqrt{d}} h_{\vec X B^d_2}(v) = 
\frac{1}{\sqrt{d}} \|\vec X^\T v\|_2 
\geq \frac{\sigma_{\rm min}(\vec X)}{\sqrt{d}} \|v\|_2 = 
h_{\tfrac{\sigma_{\rm min}(\vec X)}{\sqrt{d}} B^n_2}(v).
\]
Therefore every supporting hyperplane of $\PN$ has distance at least
$\sigma_{\rm min}(\vec X)/\sqrt d$ from the origin. Since the selected facet
$\cF$ lies on the hyperplane
\[
\{v\in\R^n:\langle v,\vec n_\cF\rangle=\|\vec n_\cF\|_2^2\},
\]
we have
\[
    \frac{1}{\|\vec n_\cF\|_2^2}
    \leq
    \frac{d}{\sigma_{\rm min}^2(\vec X)}.
\]
Moreover,
\[
    \|\erm\|_2
    \leq
    \|\erm\|_1
    =
    \|\vec \xi\|_{\PN}.
\]
Using the minimum Euclidean norm solution, we get
\[
\|\vec \xi\|_{\PN}
=
\inf\{\|w\|_1:\vec Xw=\vec \xi\}
\leq
\sqrt d\,\|\vec X^\dagger \vec \xi\|_2
\leq
\sqrt d\,\frac{\|\vec \xi\|_2}{\sigma_{\rm min}(\vec X)}
=
\frac{\sqrt{dn}}{\sigma_{\rm min}(\vec X)},
\]
where we used $\|\vec \xi\|_2=\sqrt n$. Hence
\[
    \|\erm\|_2^2
    \leq
    \frac{dn}{\sigma_{\rm min}^2(\vec X)}.
\]
Applying Cauchy--Schwarz and then Lemma~\ref{lem:sigma-min-negative-moment},
with $q=8$, we obtain
\begin{equation}\label{ineq:holder-upper-on-bad-event}
\begin{aligned}
&\E \Big[\1_{\mathcal{E}^c} \|\vec X_S^{-1} \vec \xi\|_2^2 \|\vec X_S^{-\T} \vec \eps\|_2^2\Big] \\
&\qquad\leq
nd^2\,\E\left[\1_{\mathcal E^c}\sigma_{\min}(\vec X)^{-4}\right] \\
&\qquad\leq
nd^2\,\P(\mathcal E^c)^{1/2}
\left(\E\sigma_{\min}(\vec X)^{-8}\right)^{1/2}
\lesssim
n\,\P(\mathcal E^c)^{1/2}
\lesssim
\exp(-c\sqrt{nd}).
\end{aligned}
\end{equation}
% By Cauchy-Schwarz, we have
% \[
% \E \Big[\1_{\mathcal{E}^c} \|\vec X_S^{-1} \vec \xi\|_2^2 \|\vec X_S^{-\T} \vec \eps\|_2^2\Big] 
% \leq \sqrt{\P(\cE^c)} \sqrt{\E \|\vec \xi\|_2^4 \|\vec \eps\|_2^4 \E \max_{|S| = n} \mathrm{OP}norm{\vec X_S^{-1}}^8} 
% % \lesssim 
% % n^2  \sqrt{\E \max_{|S| = n} \mathrm{OP}norm{\vec X_S^{-1}}^8}\sqrt{\P(\cE^c)}
% % \lesssim \frac{1}{n^4},
% \]
% Therefore, 
% which can be established by using that we can assume that $ \mathrm{OP}norm{\vec X_S^{-1}} \leq \exp((C+\eps) \cdot n\log(n))$ with probability of at $1-\exp(cn^2\eps)$ by applying Lemma \ref{Lem:Goodman} below, and using that we assume that $d \gtrsim n\log(n)^{C}$. 
We can now combine the probability estimate of Lemma~\ref{lem:simplices} and inequality~\eqref{ineq:holder-upper-on-bad-event} and obtain that 
\[
\E \Big[\|\erm\|_2^2 \frac{1}{\|\vec n_\cF\|_2^2}\Big] 
\lesssim 
\frac{1}{\log(d/n)} \E \|\erm\|_2^2 + 
\exp(-c\sqrt{nd}).
\]
The exponentially small term is negligible compared with the first term. Indeed,
since the $\ell_1$-MNI has at most $n$ nonzero coordinates, Lemma~\ref{lem:sparsity}
gives
\[
    \|\erm\|_1^2\leq n\|\erm\|_2^2.
\]
Therefore,
\[
    \E\|\erm\|_2^2
    \geq
    \frac{(\E\|\erm\|_1)^2}{n}
    =
    \frac{M_{n,d}^2}{n}
    \asymp
    \frac{1}{\log(d/n)},
\]
where we used the standard estimate $M_{n,d}^2\asymp n/\log(d/n)$. Thus,
\[
    \exp(-c\sqrt{nd})
    \lesssim
    \frac{1}{\log(d/n)}\E\|\erm\|_2^2,
\]
and hence
\[
\E \Big[\|\erm\|_2^2 \frac{1}{\|\vec n_\cF\|_2^2}\Big] 
\lesssim 
\frac{1}{\log(d/n)} \E \|\erm\|_2^2.
\]
This completes the proof of inequality~\eqref{eqn:desired-bound-on-term-1}, and therefore finishes the proof of Theorem~\ref{Theorem:SPofGP}.
\subsubsection{Known technical lemmas}
The following lemma using the inradius of $\PN$, which is at least on the order of $\sqrt{\log (d/n)}$, by the result of ~\cite{gluskin88extremal}.
\begin{lemma}\label{lem:simplices}
Let $\cF$ denote the facet on which $\erm$ lies; we denote the corresponding signs by $\vec \eps$ and the subset indices with nonzero entries by $S \subset  [d], |S| = n$.  Then, its normal satisfies
\[
\|\vec n_\cF \|_2^2 \gtrsim 
\log (d/n),
\]
with probability of at least $1-2\exp(-c\sqrt{dn})$ over $\vec X$, 
for a universal constant $c > 0$.
\end{lemma}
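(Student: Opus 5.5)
Since $\cF$ lies on the hyperplane $\{v:\langle v,\vec n_\cF\rangle=\|\vec n_\cF\|_2^2\}$ and is a facet of $\PN$, the quantity $\|\vec n_\cF\|_2$ is exactly the distance from the origin to the supporting hyperplane of that facet. Hence $\|\vec n_\cF\|_2\ge r(\PN)$, the inradius of $\PN$, deterministically and regardless of which facet is selected by $\erm$. The lemma therefore reduces to a lower bound on the inradius,
\[
\Pr\Big(r(\PN)\ge c\sqrt{\log(d/n)}\Big)\ge 1-2\exp(-c\sqrt{dn}).
\]
This is the content of Gluskin's theorem~\cite{gluskin88extremal}. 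The reduction is useful precisely because the selected facet is not distributed according to Fleury's law, and the inradius bound sidesteps that issue.

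To prove the inradius bound I would work with support functions. We have $r(\PN)=\min_{\theta\in\Sn}h_{\PN}(\theta)$, where $h_{\PN}(\theta)=\max_{i\le d}|\langle X_i,\theta\rangle|$. For a fixed $\theta$, the variables $\langle X_i,\theta\rangle$ are i.i.d.\ $N(0,1)$, so
\[
\Pr\big(h_{\PN}(\theta)\le t\big)=\Psi(t)^d\le \exp\big(-d(1-\Psi(t))\big).
\]
Taking $t=c\sqrt{\log(d/n)}$ with $c$ small gives $1-\Psi(t)\gtrsim (n/d)^{2c^2}/\sqrt{\log(d/n)}$. The failure probability is then at most $\exp(-c' d^{1-2c^2}n^{2c^2}/\sqrt{L})$, which is far smaller than $\exp(-Cn\log(d/n))$, and also smaller than $\exp(-c\sqrt{dn})$ once $c^2<1/4$.

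Next I would take an $\epsilon$-net $\cN$ of $\Sn$ with $|\cN|\le(3/\epsilon)^n$ and union bound over it. To pass from the net to the whole sphere, I would write $h_{\PN}(\theta)\ge h_{\PN}(\theta_0)-h_{\PN}(\theta-\theta_0)$ for the nearest net point $\theta_0$. The correction term is controlled by $h_{\PN}(\theta-\theta_0)\le\epsilon\max_i\|X_i\|_2\lesssim\epsilon\sqrt{n+\log d}$, which holds with probability $1-\exp(-cn)$. Choosing $\epsilon=c\sqrt{L}/\sqrt{n}$ makes this loss at most half of $t$. The union bound then costs $\exp(Cn\log(n/L))$, which must be beaten by the per-direction tail $\exp(-c'd^{1-2c^2}n^{2c^2}L^{-1/2})$. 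This works provided $d$ is polynomially larger than $n$ in the relevant sense, since $d^{1-2c^2}n^{2c^2}\gg n\log n$ whenever $d\ge n(\log n)^C$.

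The main obstacle is the probability bookkeeping. The norm bound on $\max_i\|X_i\|_2$ holds only with probability $1-\exp(-cn)$, and this must be upgraded to the claimed $1-2\exp(-c\sqrt{dn})$. One fix is to use a chaining or Lipschitz-concentration bound for $\max_i\|X_i\|_2$ at level $\sqrt{n}+(dn)^{1/4}$, and absorb the larger deviation by shrinking $\epsilon$, paying only a logarithmic factor in the net size. Alternatively, one can simply cite~\cite{gluskin88extremal} directly for the sharp inradius estimate with this probability, and keep the net argument above as the supporting heuristic.
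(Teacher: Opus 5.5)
Your proposal is correct and follows the paper's route. The paper likewise bounds $\|\vec n_\cF\|_2$ below by the inradius $r(\PN)$, using the same supporting-hyperplane observation that gives $\|\vec n_\cF\|_2\ge\sigma_{\min}(\vec X)/\sqrt d$ in the proof of Theorem~\ref{Theorem:SPofGP}, and then cites \cite{gluskin88extremal} for $r(\PN)\gtrsim\sqrt{\log(d/n)}$. Your additional net argument, which the paper does not give, reaches probability $1-2\exp(-c\sqrt{dn})$ once you use the $(dn)^{1/4}$-level bound on $\max_i\|X_i\|_2$; its fine net requires $d\ge n(\log n)^C$ rather than $d\ge Cn$, which is harmless in the regime where the lemma is applied.
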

The next lemma is a standard estimate via the small ball method.
\begin{lemma}[Negative moments of the smallest singular value]
\label{lem:sigma-min-negative-moment}
Let \(G\in\R^{n\times d}\) have i.i.d. \(N(0,1)\) entries, and assume
\(d\ge Cn\) for a sufficiently large absolute constant \(C\). Then, for every
fixed \(q\ge1\),
\[
    \left(\E \sigma_{\min}(G)^{-q}\right)^{1/q}
    \lesssim_q
    d^{-1/2}.
\]
\end{lemma}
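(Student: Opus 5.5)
The statement to prove is Lemma~\ref{lem:sigma-min-negative-moment}: a bound on negative moments of $\sigma_{\min}(G)$ for a Gaussian $G\in\R^{n\times d}$ with $d\ge Cn$. We write $\sigma_{\min}(G)=\sigma_n(G)=\inf_{v\in S^{n-1}}\|G^\T v\|_2$, the square root of the smallest eigenvalue of $GG^\T$. The plan is to combine two ingredients through the layer-cake formula
\[
\E\sigma_{\min}^{-q}=\int_0^\infty q s^{-q-1}\Pr(\sigma_{\min}\le s)\,ds ,
\]
namely a large-deviation lower bound at scale $\sqrt d$ and a polynomial small-ball bound near zero.

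\textbf{Bulk regime.} By Gordon's inequality (or Davidson--Szarek) together with Gaussian concentration of the $1$-Lipschitz function $G\mapsto\sigma_{\min}(G)$,
\[
\Pr\big(\sigma_{\min}(G)\le \sqrt d-\sqrt n-t\big)\le e^{-t^2/2}.
\]
Since $d\ge Cn$, we have $\sqrt d-\sqrt n\ge \tfrac{3}{4}\sqrt d$. Taking $t=\sqrt d/4$ gives
\[
\Pr\big(\sigma_{\min}\le \tfrac12\sqrt d\big)\le e^{-d/32}.
\]
This bound controls the part of the integral with $s\ge \epsilon\sqrt d$ for any fixed $\epsilon$, contributing $O_q(d^{-q/2})$.

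\textbf{Small-ball regime.} For $s\le \sqrt d/2$ we need a polynomial small-ball estimate. I would use the classical $\epsilon$-net argument, which yields
\[
\Pr\big(\sigma_{\min}(G)\le \epsilon\sqrt d\big)\le (C\epsilon)^{d-n+1}.
\]
Alternatively, one can use the exact Wishart/Edelman smallest-eigenvalue density bound for $GG^\T$: the density of $\lambda_{\min}$ near $0$ behaves like $\lambda^{(d-n-1)/2}$. Either way, for $s=\epsilon\sqrt d$,
\[
\int_0^{\sqrt d/2} q s^{-q-1}\Pr(\sigma_{\min}\le s)\,ds
\;\le\; d^{-q/2}\int_0^{1/2} q\epsilon^{-q-1}(C\epsilon)^{d-n+1}\,d\epsilon .
\]
The integral on the right converges as soon as $d-n+1>q$, which holds for $d\ge Cn$ with $n$ larger than a constant depending on $q$. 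If $n$ is small, $d-n+1\ge (C-1)n+1$ may still be smaller than $q$, so I would enlarge $C$ depending on $q$; this is allowed since the bound is stated as $\lesssim_q$. The result is $O_q(d^{-q/2})$.

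\textbf{Main obstacle.} The one delicate step is the small-ball estimate with exponent at least $q+1$. My first attempt would be the direct net argument on $S^{n-1}$. For a fixed $v$, the vector $G^\T v\sim N(0,I_d)$, so
\[
\Pr\big(\|G^\T v\|_2\le 2\epsilon\sqrt d\big)\le (C\epsilon)^d .
\]
A net of size $(3/\epsilon)^n$, combined with the operator-norm event $\|G\|\le 3\sqrt d$ (failure probability $e^{-d}$), gives
\[
\Pr\big(\sigma_{\min}\le\epsilon\sqrt d\big)\le (C'\epsilon)^{d-n}+e^{-d}.
\]
The additive $e^{-d}$ term is too weak as $\epsilon\to0$. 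To handle it, I would cap the contribution from $\{\|G\|>3\sqrt d\}$ by Cauchy--Schwarz against $\E\sigma_{\min}^{-2q}$, which is finite because the Wishart density has enough integrability. Alternatively, I would simply cite Edelman's bound
\[
\Pr(\sigma_{\min}\le \epsilon\sqrt d)\le (C\epsilon)^{d-n+1},
\]
which holds for all $\epsilon$ and avoids the operator-norm split altogether.

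Combining the bulk and small-ball regimes gives $\E\sigma_{\min}^{-q}\lesssim_q d^{-q/2}$, which is the claimed bound $(\E\sigma_{\min}^{-q})^{1/q}\lesssim_q d^{-1/2}$.
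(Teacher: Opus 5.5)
Your proposal is correct. It shares the paper's skeleton: a polynomial small-ball tail for $\sigma_{\min}(G)$ at scale $\sqrt d$, followed by layer-cake integration. Where it differs is in how you get the small-ball input.

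The paper proves the weaker but sufficient bound $\Pr\{\sigma_{\min}(G)\le\epsilon\sqrt d\}\le (C\epsilon)^{cd}$ using exactly the net argument you started. It removes the obstacle you identified, the additive $e^{-d}$ coming from a fixed operator-norm truncation, by letting the truncation level depend on $\epsilon$:
\begin{itemize}
\item it works on $\{\|G\|_{\mathrm{OP}}\le \epsilon^{-1/2}\sqrt d\}$ with a net of mesh $\delta=\epsilon^{3/2}$;
\item the net then costs only $(3\epsilon^{-3/2})^n$, which $d\ge Cn$ absorbs;
\item the complement has probability $e^{-cd/\epsilon}\le (C\epsilon)^{cd}$, which now also vanishes as $\epsilon\to0$.
\end{itemize}
This keeps the lemma self-contained. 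Citing the Edelman--Szarek bound $(C\epsilon)^{d-n+1}$ instead gives you the sharp exponent, so integrability needs only $d-n+1>q$, at the cost of a more specialized external input. Once you have a small-ball bound valid up to a constant multiple of $\sqrt d$, your ``bulk regime'' step is redundant: for $s\gtrsim\sqrt d$ the trivial bound $\Pr\le 1$ already gives $O_q(d^{-q/2})$. You are also right that $C$ must be allowed to depend on $q$; for $n=1$ and $d\le q$ the moment is infinite. The paper does the same through its proviso $d\ge C_q n$.

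One caveat concerns your Cauchy--Schwarz fallback. Finiteness of $\E\sigma_{\min}^{-2q}$ is not enough. You need $\Pr(\|G\|_{\mathrm{OP}}>3\sqrt d)^{1/2}\,(\E\sigma_{\min}^{-2q})^{1/2}\lesssim_q d^{-q/2}$, so you need a quantitative bound such as $\E\sigma_{\min}^{-2q}\lesssim_q e^{cd}d^{-q}$.

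This bound is easy to supply. Let $R_i$ be the rows of $G$ and $H_i$ the span of the other rows. The negative second moment identity gives $\sigma_{\min}(G)^{-2}\le\sum_{i=1}^n \mathrm{dist}(R_i,H_i)^{-2}$, and $\mathrm{dist}(R_i,H_i)^2\sim\chi^2_{d-n+1}$. Together these yield $\E\sigma_{\min}^{-2q}\lesssim_q (n/d)^q$ once $d-n+1>2q$. As written, though, that branch of your argument is incomplete.
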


\begin{proof}
We first recall the standard small-ball estimate for the smallest singular
value of a rectangular Gaussian matrix: for \(0<\eps<c\),
\[
    \Pr\left\{
    \sigma_{\min}(G)\le \eps\sqrt d
    \right\}
    \le
    (C\eps)^{cd}.
\]
For completeness, let us recall the usual net proof. Since
\[
    \sigma_{\min}(G)
    =
    \inf_{u\in S^{n-1}}\|G^\top u\|_2,
\]
and for fixed \(u\in S^{n-1}\), \(G^\top u\sim N(0,I_d)\), we have
\[
    \Pr\left\{
    \|G^\top u\|_2\le \eps\sqrt d
    \right\}
    \le
    (C\eps)^d.
\]
Let \(\mathcal N\) be a \(\delta\)-net of \(S^{n-1}\), with
\[
    |\mathcal N|\le (3/\delta)^n.
\]
On the event \(\|G\|_{\mathrm{OP}}\le L\sqrt d\), if
\(\sigma_{\min}(G)\le \eps\sqrt d\), then for some \(v\in\mathcal N\),
\[
    \|G^\top v\|_2
    \le
    \eps\sqrt d+\delta L\sqrt d.
\]
Taking \(L=\eps^{-1/2}\) and \(\delta=\eps/L=\eps^{3/2}\), we get
\[
    \|G^\top v\|_2\le 2\eps\sqrt d.
\]
Therefore,
\[
\Pr\left\{
\sigma_{\min}(G)\le \eps\sqrt d,\ 
\|G\|_{\mathrm{OP}}\le L\sqrt d
\right\}
\le
\left(\frac{3}{\eps^{3/2}}\right)^n(C\eps)^d
\le
(C\eps)^{cd},
\]
where we used \(d\ge Cn\). On the other hand, the standard Gaussian
operator-norm tail gives
\[
    \Pr\left\{\|G\|_{\mathrm{OP}}>L\sqrt d\right\}
    \le
    \exp(-cL^2d)
    =
    \exp(-cd/\eps)
    \le
    (C\eps)^{cd}.
\]
This proves the small-ball estimate.

Now set
\[
    Z:=\frac{\sqrt d}{\sigma_{\min}(G)}.
\]
The small-ball estimate implies that, for \(u\ge C\),
\[
    \Pr\{Z\ge u\}
    =
    \Pr\left\{
    \sigma_{\min}(G)\le \frac{\sqrt d}{u}
    \right\}
    \le
    \left(\frac{C}{u}\right)^{cd}.
\]
Hence, for any fixed \(q\ge1\), provided \(d\ge C_q n\),
\[
\begin{aligned}
    \E Z^q
    &=
    q\int_0^\infty u^{q-1}\Pr\{Z\ge u\}\,du  \\
    &\le
    C_q
    +
    q\int_C^\infty
    u^{q-1}
    \left(\frac{C}{u}\right)^{cd}
    du
    \le
    C_q.
\end{aligned}
\]
Therefore,
\[
    \E \sigma_{\min}(G)^{-q}
    =
    d^{-q/2}\E Z^q
    \le
    C_qd^{-q/2}.
\]
This proves the lemma.
\end{proof}

%% file: proof_outline_thm3.tex
\subsection{Proof of Theorem~\ref{Theorem:LoNE}}
Before we proceed with our proof, we advise reading sections \Cref{S:SketchPf2} and \Cref{ss:SPofGP}, which contain some preliminary steps that are used in this proof. We set a notation that we use throughout the proof
\[
    L:=L \text{ and } \SPO = M_{n,d}\PN.
\]
% \input{SPofThm3}
% Hence, we expect the following: If ``most'' facets have the behavior of Fluery's, most of their volume would lie in their thin shell of length $1$. 
 
%  Hence, Lemma \ref{Lem:Energy} implies that most of the volume corresponds to the thin shells of the original simplex $\triangle$.
%  Next, using that Then, we would show that via KLS property on $\cF$, and Fluery's distribution properties that most of the volume of $P_{n,d}$ lies on the thin shell

% Hence, Pythagoras's rule, we would expect that most of the volume of $\PN$, has an $\ell_2$ length that satisfies
% , and the volume is $\sqrt{2}$ far from the normal---as most of the volume lies nearby the sections, and in particular of  $\theta \in \Sn$. As, we know the distribution of the normal 
% \[
%     \sqrt{\|\vec \eps/n\|_{2}^2 + \lp\frac{1+o(1)}{\sqrt{n}}\rp^2} \approx \sqrt{\frac{2}{n}},
% \]
% where the first term emerges from the bary-center and the second one is from the shell, that they have the same $\ell_2$-norm of $1/\sqrt{n}$.
% --- a  special property of Gaussian polytopes. 

\subsubsection{Preliminaries}
 We state a lemma that follows from \cite{paouris2016gaussian}, see also \cite{klartag2007small}. Let $r(A),R(A)$ be the inradius and outradius of a set $A$. For a matrix $A$, $\|A\|$ denotes its operator norm.
 We also use the notation of $\bar{B}_n:=\sqrt{n} \cdot B_n$. 
\begin{lemma}[Boosted Dvoretzky's theorem]
\label{lem:boosted-dvoretzky}
Assume that $d \geq C_1n$. Then, the following holds with probability of $1-\exp(-c_1n)$:
\begin{equation}\label{ineq:dvoretzky-radii}
M^*(K)\lp1-c_2\sqrt{n \, \Var\lp\tfrac{ \|\vec \xi\|_{K^{\circ}}}{M^*(K)}\rp}\rp \leq r(\vec X K) \leq
R(\vec X K) \leq M^*(K) \cdot \lp 1 + c_3\tfrac{R(\sqrt{n} \cdot K)}{M^*(K)}\rp,
\end{equation}
for any convex body $K \subset \R^n$ and universal constants $c_1, c_2, c_3>0$.
\end{lemma}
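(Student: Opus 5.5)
The plan is to treat the two inclusions separately, since they rest on different concentration phenomena. Throughout, the law of $\vec X$ is $\vec X^{\T}\theta\sim N(0,I_d)$ for every $\theta\in\Sn$, and I read $M^*(K)$ in \eqref{ineq:dvoretzky-radii} as the Gaussian mean width $\E h_K(g)$, $g\sim N(0,I_d)$. With the spherical normalization of the paper this only costs a factor $\sqrt d(1+O(1/d))$, which I absorb. We have
\[
R(\vec XK)=\sup_{\theta\in\Sn}h_K(\vec X^{\T}\theta),\qquad r(\vec XK)=\inf_{\theta\in\Sn}h_K(\vec X^{\T}\theta),
\]
so both radii are extrema over $\Sn$ of a single Gaussian process $\theta\mapsto h_K(\vec X^{\T}\theta)$.

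\emph{Upper bound.} For fixed $\theta$, the map $g\mapsto h_K(g)$ is $R(K)$-Lipschitz, so Gaussian concentration gives
\[
\Pr\{h_K(\vec X^{\T}\theta)\ge M^*(K)+s\}\le e^{-s^2/(2R(K)^2)}.
\]
Take a $1/2$-net $\mathcal N$ of $\Sn$ with $|\mathcal N|\le 5^n$ and choose $s=CR(K)\sqrt n=CR(\sqrt n K)$. A union bound then controls $\max_{\mathcal N}h_K(\vec X^{\T}\theta)$ with probability $1-e^{-c_1n}$. Finally, the standard bootstrapping step $\sup_{\Sn}\le(1-1/2)^{-1}\max_{\mathcal N}$ would lose a factor $2$, which is too much. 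So I would instead use the Gordon/Slepian form: compare $\E\sup_\theta h_K(\vec X^{\T}\theta)$ with $\E h_K(g)+R(K)\E\|g'\|_2$, where $g'\sim N(0,I_n)$, via Chevet's inequality. Then I apply Gaussian concentration to the supremum itself, which is $R(K)\|\vec X\|$-Lipschitz, or more precisely $R(K)$-Lipschitz in $\vec X$ with respect to the Hilbert--Schmidt norm. This yields $M^*(K)+c_3R(\sqrt nK)$ directly, without any net.

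\emph{Lower bound.} This is the superconcentration direction, and I would follow Paouris--Valettas--Zinn / Klartag--Vershynin. The key one-point input is the one-sided small-ball estimate for a norm (or support function) $f=h_K$ under $\gamma_d$:
\[
\Pr\{f(g)\le(1-\varepsilon)\E f\}\le\exp\!\Big(-c\,\varepsilon^2\frac{(\E f)^2}{\Var f}\Big).
\]
This follows from the Gaussian log-concavity/$B$-inequality argument in \cite{paouris2016gaussian}, and I would take it from there. Set $\beta:=\Var(f/\E f)$ and $\varepsilon:=c_2\sqrt{n\beta}$; if $\varepsilon\ge1$ the claim is vacuous. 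Then each fixed $\theta$ fails with probability at most $e^{-Cn}$. Run this on a $\delta$-net of $\Sn$ of size $(3/\delta)^n$, and pass to all $\theta$ using
\[
h_K(\vec X^{\T}\theta)\ge h_K(\vec X^{\T}\theta_0)-h_K(\vec X^{\T}(\theta-\theta_0))\ge h_K(\vec X^{\T}\theta_0)-\delta R(\vec XK).
\]
Here $R(\vec XK)\lesssim M^*(K)+R(\sqrt nK)$ by the first part, and moreover $R(\sqrt nK)\lesssim \sqrt d\,R(K)$ can be compared with $M^*(K)$.

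\emph{Main obstacle.} The delicate step is balancing the net cardinality $(3/\delta)^n$ against $e^{-c\varepsilon^2/\beta}$ while keeping $\delta R(\vec XK)\le \varepsilon M^*(K)$. The required $\delta\approx\varepsilon M^*/R(\vec XK)$ introduces a $\log(R/(\varepsilon M^*))$ factor in the exponent, which must be absorbed into the constant $c_2$. My first attempt would be to exploit $d\ge C_1n$: this makes $R(\vec XK)/M^*(K)\lesssim1+\sqrt{n/d}\,\sqrt d R(K)/M^*(K)$, and Poincar\'e gives $\beta\lesssim R(K)^2/(M^*)^2$, so the logarithm is controlled by $\log(1/\varepsilon)$, which can be absorbed by enlarging $c_2$. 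If that fails, I would fall back on Klartag--Vershynin's iterative chaining over dyadic scales of $\delta$, which removes the logarithm.
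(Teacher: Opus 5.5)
Your upper bound is fine. Gordon's form of Chevet's inequality gives $\E R(\vec XK)\le M^*(K)+\sqrt n\,R(K)$, with coefficient exactly $1$ on $M^*(K)$. Moreover, $\vec X\mapsto\sup_{x\in K}\|\vec Xx\|_2$ is $R(K)$-Lipschitz in Hilbert--Schmidt norm, so $c_3=2$ works with probability $1-e^{-n/2}$. The paper gives no proof of this lemma; it imports it from \cite{paouris2016gaussian,klartag2007small}.

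The gap is in the lower bound, at exactly the step you flagged. Put $\varepsilon=c_2\sqrt{n\beta}$. Your net needs $\delta R(\vec XK)\le\varepsilon M^*(K)/2$. Since $R(\vec XK)\ge h_K(\vec X^{\top}e_1)\gtrsim M^*(K)$ with constant probability, any admissible mesh has $\delta\lesssim\varepsilon$. The union bound against $e^{-c\varepsilon^2/\beta}=e^{-c\,c_2^2n}$ therefore closes only if $c\,c_2^2\gtrsim\log(1/\delta)\gtrsim\log(1/\varepsilon)$. But $\varepsilon$ is not bounded below: in the paper's own use of the lemma, $K=K(n,d)$ in Lemma~\ref{Lem:CheapVolumeEstimate}, one has $\varepsilon\lesssim 1/L\to0$. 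So no universal $c_2$ works, and ``absorbing $\log(1/\varepsilon)$ into $c_2$'' is exactly what cannot be done.

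The reduction to $\log(1/\varepsilon)$ is also backwards. Poincar\'e, $\beta\lesssim(R(K)/M^*(K))^2$, gives the \emph{lower} bound $\sqrt n\,R(K)/M^*(K)\gtrsim\varepsilon/c_2$. Bounding $\log\big((1+\sqrt nR(K)/M^*(K))/\varepsilon\big)$ needs an upper bound. In fact $\sqrt nR(K)/(\varepsilon M^*(K))=(c_2^2\beta k)^{-1/2}$ with $k=(M^*(K)/R(K))^2$. This is large precisely in the superconcentrated regime the lemma is about: of order $\sqrt{\log d}$ for $K=B_1^d$, and at least of order $\sqrt L$ for $K(n,d)$. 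What your route actually yields is
\[
r(\vec XK)\ \ge\ M^*(K)\Big(1-C\sqrt{n\beta\,\log\big(C(1+\sqrt n R(K)/M^*(K))/\sqrt{n\beta}\big)}\Big).
\]
For $K(n,d)$ this is $1-C\sqrt{\log L}/L$, not the $1-C/L$ used in Lemma~\ref{Lem:CheapVolumeEstimate}.

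The fallback does not fill this gap as stated. Suppose the only control on increments is the triangle inequality $|h_K(\vec X^{\top}\theta)-h_K(\vec X^{\top}\theta')|\le\|\theta-\theta'\|_2\,R(\vec XK)$. Then organizing the approximation over dyadic scales just reproduces the single net at the coarsest scale. That net carries the same $\log(1/\delta_0)$ cost against the same per-point bound $e^{-c\,c_2^2 n}$.

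The Klartag--Vershynin argument is built for constant-factor inradius bounds. There, a small-ball estimate decaying like a power of the ratio pays for the net's logarithm. That mechanism is not available at the $(1-\varepsilon)$ scale, where your only one-point input is the Gaussian-type bound $e^{-c\varepsilon^2/\beta}$.

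To reach the stated form you would need a genuinely new ingredient, and the proposal supplies none. One option is an estimate showing that the actual drop $h_K(\vec X^{\top}\theta_0)-\inf_{\|\theta-\theta_0\|_2\le\delta}h_K(\vec X^{\top}\theta)$ is much smaller than $\delta R(\vec XK)$. Another is a lower-tail bound for the specific $K$ that is sharper than the Paouris--Valettas one.
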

Note that since $\|\vec \xi\|_{K^\circ}$ is $R(K)$-Lipschitz, the Gaussian Poincaré inequality shows the deviation from $M^\star(K)$ in the upper inclusion is larger than the deviation in the lower inclusion implied by~\eqref{ineq:dvoretzky-radii}.
Note that the density of $\vec Y$ is ``almost'' a Gaussian matrix up to a   $|\vec Y^{\top}\vec Y| \cdot |\triangle|$.  For this, we use a result of \cite{goodman1963distribution}, showing that
\[
    \ln |\vec Y^{\top}\vec{Y}| =  \sum_{i=2}^{n}\ln Z_i \text{ where the $Z_i$ are independent and $Z_i \sim \chi^{2}_i$.}
\]
The following result follows by computing the moment generating function of the corresponding $\chi^2$ random variables and a Chernoff bound. 
\begin{lemma}\label{Lem:Goodman}
Let $G$ be an $(n -1)\times n$ Gaussian matrix. Set $S_n: = \ln |G|$.  There is a universal constant $c > 0$ sufficiently large such that 
the following inequalities hold for $t \geq 0$. 
\begin{enumerate} 
\item For the upper tail:
\[
    \Pr(S_n - \E S_n \geq t) \leq \exp \left(- \frac{t^2}{c \ln n}\right).
\]
\item 
For the lower tail: 
\[
\Pr(S_n - \E S_n \leq -t)
\leq
\begin{cases}
\exp \left(- \frac{t^2}{c \ln n}\right)  & t <  \ln n \\ 
c n e^{-t/c} & t \geq \ln n.
\end{cases} 
\] 
\end{enumerate}
% \[
%         \Pr(S_n - \E S_n \geq t) \leq \exp \left(- \frac{t^2}{4\ln(n)}\right) , 
%         \quad \mbox{for all}~t \geq 0,
% \]
% and
% \[
%      \Pr(S_n - \E S_n \leq -t) \leq Cnt\exp(-t), 
%         \quad \mbox{for all}~t \geq \ln(n),
% \]
\end{lemma}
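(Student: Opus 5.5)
The statement to prove is Lemma~\ref{Lem:Goodman}. The approach is a Chernoff bound applied to the Bartlett (Goodman) decomposition quoted just before it. Write $|G|$ for $|GG^{\top}|$, as in the text. Then
\[
S_n = \sum_{i=2}^{n} \ln Z_i, \qquad Z_i \sim \chi^2_i \ \text{independent}.
\]
(If $|G|$ is read as $|GG^\top|^{1/2}$, only constants change.) The key computation is the exact log-moment generating function of each summand. For $\lambda > -i/2$,
\[
\E e^{\lambda \ln Z_i} = \E Z_i^{\lambda} = 2^{\lambda}\,\frac{\Gamma(i/2+\lambda)}{\Gamma(i/2)} .
\]
Hence the centered cumulant generating function is
\[
\Lambda_i(\lambda) = \lambda\ln 2 + \ln\Gamma(i/2+\lambda) - \ln\Gamma(i/2) - \lambda\bigl(\ln 2 + \psi(i/2)\bigr).
\]
By Taylor's theorem, $\Lambda_i(\lambda) = \tfrac{\lambda^2}{2}\psi'(i/2+\theta\lambda)$ for some $\theta \in [0,1]$.

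For the upper tail, take $\lambda \geq 0$. Since the trigamma function $\psi'$ is decreasing with $\psi'(x) \leq 1/x + 1/x^2$, we get $\Lambda_i(\lambda) \leq \tfrac{\lambda^2}{2}\psi'(i/2) \lesssim \lambda^2/i$. Summing over $i$ gives
\[
\sum_{i=2}^{n}\Lambda_i(\lambda) \lesssim \lambda^2 \ln n,
\]
for every $\lambda \geq 0$, with no restriction on $\lambda$. Optimizing the Chernoff bound, $\Pr(S_n - \E S_n \geq t) \leq \exp(C\lambda^2\ln n - \lambda t)$ with $\lambda = t/(2C\ln n)$, gives the sub-Gaussian upper tail $\exp(-t^2/(c\ln n))$ for all $t \geq 0$.

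For the lower tail, take $\lambda = -s$ with $s > 0$. The obstacle is the $i=2$ term (and the first few small $i$): $Z_2 \sim \mathrm{Exp}$ with mean $2$, so $\ln Z_2$ has an exponential left tail, and $\E Z_2^{-s}$ blows up as $s \to 1$. I would therefore restrict to $s \leq 1/2$. In that range, $\psi'(i/2 - \theta s) \lesssim \psi'(i/2) \lesssim 1/i$ uniformly for $i \geq 2$, because $i/2 - s \geq 1/2$. The same summation gives $\sum_i \Lambda_i(-s) \leq C s^2 \ln n$, and so
\[
\Pr(S_n - \E S_n \leq -t) \leq \exp(Cs^2\ln n - st), \qquad 0 \leq s \leq 1/2 .
\]
Split according to the size of $t$:
\begin{itemize}
\item If $t < \ln n$ (up to constants), choose $s = t/(2C\ln n) \leq 1/2$. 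This gives $\exp(-t^2/(c\ln n))$.
\item If $t \geq \ln n$, choose $s = 1/2$. This gives $\exp(C\ln n/4 - t/2) = n^{C/4}e^{-t/2}$.
\end{itemize}

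The second case gives a polynomial prefactor $n^{C/4}$ rather than the claimed linear $cn$. This is the step I expect to need care. To fix it, I would bound the sum more precisely via the exact identity $\psi'(x) \leq 1/x + 1/x^2$. With $s = 1/2$ this gives
\[
\sum_{i\ge2}\Lambda_i(-1/2) \leq \tfrac18\sum_{i \geq 2}\psi'\bigl(\tfrac{i-1}{2}\bigr) \leq \tfrac14\ln n + O(1).
\]
The resulting prefactor is $n^{1/4}$, which is at most $cn$, and $e^{-t/2} \leq e^{-t/c}$ for $c \geq 2$. If a sharper constant is ever required, an alternative is to treat $\ln Z_2$ separately via its explicit tail $\Pr(Z_2 \leq u) \leq u/2$, and apply the sub-Gaussian bound to the remaining terms $i \geq 3$, for which $s$ can be pushed up to $1$. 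Finally, adjust the constant $c$ so that both regimes hold with a single universal constant.
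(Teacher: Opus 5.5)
Your argument is correct, and its backbone is the paper's. Both use the Bartlett/Goodman factorization into independent log-chi-square variables, the exact Gamma-function moment generating function, and a Chernoff bound. Your Lagrange-remainder bound $\tfrac{\lambda^2}{2}\psi'(i/2+\theta\lambda)$ on the centered log-MGF is equivalent to the paper's termwise estimate of the series for $\ln\Gamma$, since $\psi'(x)=\sum_{m\ge 0}(x+m)^{-2}$. Your explicit restriction $s\le 1/2$ is exactly the range the paper actually uses ($\lambda>-1$ in its normalization $S_n=\sum_k \ln\chi_k$). The care is warranted: the paper's auxiliary bound is asserted for all $\lambda>-k$, which cannot hold as $\lambda\to -k$, because $\E\chi_k^{\lambda}$ diverges there.

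The genuine difference is the regime $t\ge \ln n$ of the lower tail. The paper isolates the $\chi^2_2$ factor and uses $\Pr(\chi^2_2\le u)\le u/2$ together with the exact inverse moments $\E\chi_k^{-2}=1/(k-2)$ for $k\ge 3$. It then needs an upper estimate $e^{2\E S_n}\lesssim (n-1)!$, obtained from a digamma comparison. That comparison is stated as $\psi(k/2)\le \ln\frac{k-1}{2}$, which is actually reversed, though $\psi(x)<\ln x-\frac{1}{2x}$ restores the conclusion up to a constant. The resulting bound $\tfrac{n-1}{2}e^{-2t}$ is where the linear prefactor in the statement comes from, and it has a faster exponential rate than yours. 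Your fixed-parameter Chernoff bound needs no information about $\E S_n$ at all.

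The prefactor problem you flagged is not real. For $t\ge \ln n$, take $s=\min\{1/2,\,1/(2C)\}$. Then $Cs^2\ln n\le Cs^2 t\le st/2$, so $\Pr(S_n-\E S_n\le -t)\le e^{-st/2}$, a pure exponential tail with no power of $n$. Your refined trigamma summation giving $O(n^{1/4})e^{-t/2}$ also works, but it is unnecessary.
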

Hence, using the last lemma and the upper bound on the number of facets, we obtain the following one-tailed estimate: 
\begin{corollary}\label{C:GoodFlu}
With high probability over $P_{n,d}$ it holds for all facets simultaneously 
\[
    \forall \cF \in \cF_{n-1}(\PN) \quad \ln |\vec X_{\cF}| -  \E S_n \lesssim \sqrt{n \cdot \lln(d/n) \cdot \ln(n)}
\]
\end{corollary}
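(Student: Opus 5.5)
The plan is a first-moment (union) bound over all candidate facets, combined with the upper tail of Lemma~\ref{Lem:Goodman} transferred to Fleury's distribution. For $S\subset[d]$ with $|S|=n$ and signs $\vec\eps\in\{\pm1\}^n$, let $\cE_{S,\vec\eps}$ be the event that $\conv\{\eps_iX_i\}_{i\in S}$ is a facet. Fix $t>0$ and let $B_{S,\vec\eps}$ be the event $\ln|\vec X_{\cF}|-\E S_n\geq t$. By linearity of expectation,
\[
\E\,\#\{\cF\in\cF_{n-1}(\PN):\ \ln|\vec X_\cF|-\E S_n\ge t\}
=2^n\binom{d}{n}\Pr(\cE)\,\Pr(B\mid\cE)
=\E|\cF_{n-1}(\PN)|\cdot \Pr(B\mid \cE).
\]
Here all $(S,\vec\eps)$ are exchangeable, and $\Pr(B\mid\cE)$ is computed under Fleury's distribution (Lemma~\ref{Lem:Fleury}). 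Markov's inequality then bounds the probability that some facet is bad by this expectation.

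The first step is to control $\Pr(B\mid\cE)$. By Lemma~\ref{Lem:Fleury}, conditional on $\cE$ the facet matrix is $\vec U\vec A$. The rotation $\vec U$ and the height $T_{n,d}$ are independent of $\vec Y$, and the volume factor $|\vec X_\cF|$ is determined by $\vec Y$ (up to the harmless $\vec U$ and a height factor $T_{n,d}$, which is sub-Gaussian around $\sqrt{2L}$ and contributes only $O(\ln L)$ to the logarithm). The law of $\vec Y$ is the Gaussian law tilted by $|\vec Y\vec Y^\T|=e^{2S_n}$. Using Goodman's representation $S_n=\frac12\sum_{i=2}^n\ln Z_i$ with independent $Z_i\sim\chi^2_i$, tilting by $e^{2S_n}=\prod_i Z_i$ replaces each $\chi^2_i$ by $\chi^2_{i+2}$. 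Hence under Fleury's law $S_n$ is again a sum of independent log-chi-squares. Its mean is shifted by $\sum_i O(1/i)=O(\ln n)$, and the same MGF/Chernoff computation behind Lemma~\ref{Lem:Goodman}(1) yields
\[
\Pr(B\mid\cE)\le \exp\Big(-\frac{(t-C\ln n)_+^2}{c\ln n}\Big).
\]

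The second step is the union bound. By Lemma~\ref{Lem:numberofFacets}, $\E|\cF_{n-1}(\PN)|=\exp(\tfrac n2(\lln(d/n)+O(1)))$. Take $t=C'\sqrt{n(\lln(d/n)+1)\ln n}$ with $C'$ large; this also absorbs the $C\ln n$ shift. Then the product is at most
\[
\exp\Big(\tfrac n2(\lln(d/n)+O(1))-\tfrac{C'^2}{2c}\,n(\lln(d/n)+1)\Big)\le \exp(-c'n),
\]
which gives the corollary with probability $1-\exp(-c'n)$. In the regime $d\gtrsim n(\ln n)^C$ we have $\lln(d/n)\gtrsim1$, so the $+1$ can be dropped.

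The main obstacle is the first step: identifying exactly which determinant $|\vec X_\cF|$ denotes, and verifying that the tilted law of $\vec Y$ still has an $O(\ln n)$-sub-Gaussian upper tail around $\E S_n$ (the Gaussian mean) with only an $O(\ln n)$ mean shift. I would check this directly through the shifted chi-square degrees of freedom, since each tilted $\ln\chi^2_{i+2}$ has variance $O(1/i)$, and these variances sum to $O(\ln n)$.
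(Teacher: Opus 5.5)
Your proposal is correct and follows the same route as the paper's one-line justification: a first-moment (Markov) bound over facets, using Fleury's count $\E|\cF_{n-1}(\PN)|=\exp(\tfrac n2(\lln(d/n)+\Theta(1)))$ against the $\exp(-t^2/(c\ln n))$ upper tail of Lemma~\ref{Lem:Goodman}. Your tilting observation ($\chi^2_k\mapsto\chi^2_{k+2}$, with an $O(\ln n)$ mean shift) makes explicit the Gaussian-to-Fleury transfer that the paper leaves implicit. For the determinant ambiguity you flagged, note that $|\det\vec A|=T_{n,d}\,|\det[\vec Y;\mathbf 1_n^\T]|$, and Cauchy--Binet gives $|\det[\vec Y;\mathbf 1_n^\T]|\le\sqrt{n\,|\vec Y\vec Y^\T|}$, so this costs only another $\tfrac12\ln n$ in the upper tail.
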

The next corollary follows by tail integration, using the relation between $G$, the Gaussian matrix, and $\vec {Y} $ from Fleury's distribution, which is almost Gaussian.
\begin{corollary}\label{C:GoodFlu2}
Let $A$ be an event whose probability under $G$ is smaller than $n^{-100}$. Then, it is smaller than $n^{-99}$ over the probability space of $\vec Y$.
\end{corollary}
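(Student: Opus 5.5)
The plan is a change of measure from the Gaussian matrix $G$ to the matrix $\vec Y$ of \Cref{Lem:Fleury}, with Hölder's inequality to transfer the small probability. Its last step shows that the exponent $99$ cannot be reached this way, and I have not found a route that does reach it. So this is a plan for a polynomially small bound of the same type, not for the corollary with the exponents exactly as written.

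\textbf{Step 1: change of measure.} By \Cref{Lem:Fleury}, $\vec Y$ has density proportional to $|\vec Y\vec Y^{\T}|$ times the standard Gaussian density on $\R^{(n-1)\times n}$. Hence, for every event $A$,
\[
\Pr_{\vec Y}(A)=\E_G\bigl[D\,\1_A\bigr],
\qquad
D:=\frac{|GG^{\T}|}{\E|GG^{\T}|}.
\]
This follows directly from the density, and $\E_G D=1$.

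\textbf{Step 2: Hölder.} For any $p>1$ with conjugate exponent $p'$,
\[
\Pr_{\vec Y}(A)\le (\E_G D^{p})^{1/p}\,\Pr_G(A)^{1/p'}.
\]
Everything then reduces to a moment bound on $D$.

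\textbf{Step 3: moments of $D$ via \cite{goodman1963distribution}.} By the Goodman representation recalled before \Cref{Lem:Goodman}, $|GG^{\T}|=\prod_{i=2}^{n}Z_i$ with independent $Z_i\sim\chi^2_i$. Since $\E Z_i=i$, we have $D=\prod_{i=2}^{n}Z_i/i$. Using $\E Z_i^{p}=2^{p}\Gamma(i/2+p)/\Gamma(i/2)$, I will show that $\E (Z_i/i)^{p}\le 1+\frac{p(p-1)}{i}+O_p(i^{-2})$. Multiplying over $i$ gives
\[
\E_G D^{p}\lesssim_p n^{p(p-1)}.
\]
This is the same computation as the MGF bound behind \Cref{Lem:Goodman}, since $\ln D=S_n-\ln\E e^{S_n}$ with $S_n=\ln|GG^{\T}|$.

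\textbf{Step 4: optimize $p$.} Plugging into Step 2 gives
\[
\Pr_{\vec Y}(A)\lesssim_p n^{(p-1)}\,n^{-100(1-1/p)}.
\]
The exponent is minimized at $p\approx 10$, where it is about $-81$; with the sharper local constant $p(p-1)/2$ it is about $-86$ at $p\approx\sqrt{200}$.

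\textbf{Main obstacle: the literal exponent $99$ is not reachable.} The difficulty is that $\ln D$ is not concentrated near $0$. Under $G$ it has mean about $-\ln n$, and under the tilted law (which is exactly the law of $\vec Y$) it has mean about $+\ln n$. In both cases the fluctuations are of order $\sqrt{2\ln n}$. The first thing I tried was truncation: split on $\{D\le n^{1+\delta}\}$, bounding the first part by $n^{1+\delta}\Pr_G(A)$ and the second by $\Pr_{\vec Y}(\ln D>(1+\delta)\ln n)$. The sub-Gaussian upper tail of $\ln\chi^2$ bounds the second part by about $n^{-\delta^2/4}$. Making that at most $n^{-99}$ forces $\delta\approx 20$, and then the first part is only of size $n^{-79}$.

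These estimates do not rule the corollary out, but neither truncation nor Hölder delivers it. This is precisely the point where a loss of exactly one power of $n$ is not available by these tools. What the plan does deliver is $\Pr_{\vec Y}(A)\le n^{-c\cdot 100}$ for an absolute constant $c$ (about $0.8$). That is the polynomially small bound the later tail-integration arguments use, since they only need some fixed large power of $n$. Reaching $n^{-99}$ itself would require an argument I have not found.
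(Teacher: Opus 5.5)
Your conclusion that $n^{-99}$ is out of reach is correct, and it can be made stronger: the corollary is false as written, so no argument can prove it.

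With $D=|GG^{\T}|/\E|GG^{\T}|$ as in your Step~1, $\Pr_{\vec Y}(A)=\E_G[D\1_A]$. For a fixed $G$-probability, the worst event is a superlevel set of $D$. Since $D$ has a continuous law, pick $t_*$ with $\Pr_G(D>t_*)=\tfrac12 n^{-100}$ and set $A=\{D>t_*\}$. Then
\[
\Pr_{\vec Y}(A)\;\ge\;t_*\,\Pr_G(A)=\tfrac12\,t_*\,n^{-100},
\]
which exceeds $n^{-99}$ as soon as $t_*>2n$, i.e.\ as soon as $\Pr_G(D>2n)>\tfrac12 n^{-100}$. This holds for all large $n$. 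Under the tilt $dQ/dP_G=D^2/\E_G D^2$ the $Z_k$ are independent $\chi^2_{k+4}$, so $\ln D$ has $Q$-mean $3\ln n+O(1)$ and $Q$-variance $O(\ln n)$. Chebyshev then gives $Q(2n<D\le n^5)\ge\tfrac12$, and hence
\[
\Pr_G(D>2n)=\E_G[D^2]\,\E_Q\bigl[D^{-2}\1_{\{D>2n\}}\bigr]\;\ge\;n^{-10}\,Q(2n<D\le n^5)\;\ge\;\tfrac12\,n^{-10},
\]
using $\E_G D^2\ge 1$. The same argument goes through with a $D^4$-tilt if the weight in Fleury's density is the first power of the simplex volume (as the affine Blaschke--Petkantschin formula suggests) rather than $|\vec y\vec y^{\T}|$. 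In that case the sharp exponent is about $(10-\tfrac12)^2\approx 90$, still short of $99$.

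Two of your side remarks need correcting.
\begin{itemize}
\item Your estimates \emph{do} rule the corollary out. Since $\E_G D^p=\prod_k\E(Z_k/k)^p\asymp_p n^{p(p-1)}$ two-sidedly (for integer $p$, $\E(Z_k/k)^p=\prod_{j<p}(1+2j/k)$ exactly), the H\"older/Chernoff bound is tight. A Cram\'er tilt shows that the extremal event above has $\Pr_{\vec Y}(A)=n^{-81+o(1)}$.
\item For the same reason there is no ``sharper local constant $p(p-1)/2$''; already $\E(Z_k/k)^2=1+2/k$. So the $-86$ is not attainable, and your $-81$ at $p=10$ is optimal.
\end{itemize}
In general, your Steps 2--4 give $\Pr_G(A)\le n^{-a}\Rightarrow\Pr_{\vec Y}(A)\lesssim_a n^{-(\sqrt a-1)^2}$, sharp up to $n^{o(1)}$. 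Reaching $n^{-99}$ on the $\vec Y$ side therefore needs roughly $\Pr_G(A)\le n^{-120}$.

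The paper's one-line justification (``tail integration'' via the relation between $G$ and $\vec Y$) is the same change of measure combined with the determinant tails of \Cref{Lem:Goodman}, i.e.\ essentially your truncation attempt. Its claimed loss of only one power of $n$ is unjustified, and the example above shows it is false. What the argument does yield is your polynomial-loss bound. That bound, or Cauchy--Schwarz with the exact value $\E_G D^2=\prod_{k=2}^n(1+2/k)=\frac{(n+1)(n+2)}{6}$, is what the later applications need, since there the events have probability like $\exp(-cn\delta^2)$:
\[
\Pr_{\vec Y}(A)\le\sqrt{\tfrac{(n+1)(n+2)}{6}}\;\Pr_G(A)^{1/2}.
\]
So the defect lies in the statement, not in your proposal, and your bound is the right replacement for it.
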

\textit{In words,} it means a rare event in $G$ is also rare in $\vec {Y} $, however, it does not mean that those densities are close, for example, in $TV$-distance. We finally state a lemma regarding the distribution of $T_{n,d}$. To state it, let $t_{n,d}$ denote the mode of the density associated with $T_{n,d}$. Equivalently, if
\[
    p_{n,d}(t)\propto \Psi(t)^{d-n}\exp(-nt^2/2),
    \qquad
    \Psi(t)=2\Phi(t)-1,
\]
then $t_{n,d}$ is the unique positive solution of
\[
    t_{n,d}
    =
    \frac{d-n}{n}\frac{2\varphi(t_{n,d})}{\Psi(t_{n,d})}.
\]
In particular,
\[
    t_{n,d}^2
    =
    2L-\lln(d/n)-\ln\pi+o(1).
\]
\begin{lemma}
\label{prop:tail-of-Tnd}
 Let $0 \leq \eps \leq c/\log(d/n)$ and assume that $ Cd \leq n \leq \exp(n^c)$. Then,
\[
\Pr\Big\{|T_{n, d} - \mathrm{Med} T_{n, d}| \geq \eps \cdot t_{n,d} \Big\}
\leq 
\exp\Big\{-c_1n t_{n,d}^{4}\eps^2\Big\}
\leq
\exp\Big\{-c_2nL^2\eps^2\Big\},
\]
and in particular
\[
| 
    t_{n,d} - \mathrm{Med} T_{n, d}| \lesssim \frac{1}{\sqrt{n}\,t_{n,d}}
\lesssim \frac{1}{\sqrt{n L}}.
\]
\end{lemma}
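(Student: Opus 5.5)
The plan is to work directly with the one-dimensional density $p_{n,d}(t)\propto \exp(-n\,\phi(t))$, where
\[
\phi(t):=\frac{t^2}{2}-\frac{d-n}{n}\ln\Psi(t),
\]
and to show that $\phi$ is uniformly strongly convex near its minimizer $t_{n,d}$, with curvature of order $t_{n,d}^2\asymp L$. Once this holds, $T_{n,d}$ is a log-concave-type variable with variance proxy $1/(n t_{n,d}^2)$. This gives the second claim (mode and median within $1/(\sqrt n\,t_{n,d})$) and sub-Gaussian tails at scale $1/(\sqrt{n}\,t_{n,d})$. Writing the deviation as $s=\eps t_{n,d}$ turns these tails into $\exp(-c n t_{n,d}^2 s^2)=\exp(-c n t_{n,d}^4\eps^2)$, which is the first claim. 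The final inequality is then immediate from $t_{n,d}^2=2L-\lln(d/n)-\ln\pi+o(1)\gtrsim L$.

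\textbf{Step 1 (curvature).} Since $\ln\Psi$ is concave, $-\ln\Psi$ is convex, so $\phi''\ge 1$ everywhere and $p_{n,d}$ is strongly log-concave with parameter $n$. That alone yields only the weaker scale $1/\sqrt n$, so the extra factor $t^2$ has to come from the second term. Set $\lambda=(d-n)/n$. Using $1-\Psi(t)\approx 2\varphi(t)/t$ for large $t$, we have $-\ln\Psi(t)\approx 2\varphi(t)/t$, and hence
\[
-\lambda(\ln\Psi)''(t)\approx \lambda\,\frac{2\varphi(t)}{t}\,t^2 .
\]
The mode equation gives $\lambda\,2\varphi(t_{n,d})/\Psi(t_{n,d})=t_{n,d}$, so at $t=t_{n,d}$ this second derivative is $\approx t_{n,d}^2$. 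Therefore $\phi''(t_{n,d})\asymp t_{n,d}^2$.

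\textbf{Step 2 (stability of the curvature on a window).} For $|t-t_{n,d}|\le c/t_{n,d}$, the ratio $\varphi(t)/\varphi(t_{n,d})=\exp(-(t^2-t_{n,d}^2)/2)$ stays bounded above and below. Consequently $\phi''(t)\gtrsim t_{n,d}^2$ on the whole window. This is exactly where the restriction $\eps\le c/\log(d/n)$ enters, since it gives $\eps t_{n,d}\lesssim 1/t_{n,d}$.

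\textbf{Step 3 (tails and median).} For deviations $s=|t-t_{n,d}|$ inside the window, Taylor expansion gives $\phi(t)-\phi(t_{n,d})\gtrsim t_{n,d}^2 s^2$. Outside the window, convexity of $\phi$ gives linear growth with slope $\gtrsim t_{n,d}$, which more than compensates. Integrating, the normalizing constant is $\asymp (\sqrt n\, t_{n,d})^{-1}e^{-n\phi(t_{n,d})}$, and
\[
\Pr\{|T_{n,d}-t_{n,d}|\ge s\}\le \exp(-c n t_{n,d}^2 s^2)
\]
once $s\gtrsim 1/(\sqrt n\,t_{n,d})$. Taking probability $1/2$ places the median within $C/(\sqrt n\, t_{n,d})$ of the mode. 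Re-centering the tail bound at the median then costs only a constant factor in the exponent, for $\eps t_{n,d}$ above this scale; below it the claimed bound is trivial up to constants.

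The main obstacle is Step 1's approximation of $(\ln\Psi)''$. This requires the Mills-ratio asymptotics to be uniform in the regime $d\le \exp(n^c)$, so that $t_{n,d}\lesssim n^{c/2}$, and care in controlling the negative correction to the curvature coming from $((\ln \Psi)')^2$. I would compute
\[
(\ln\Psi)''=\frac{\Psi''}{\Psi}-\Big(\frac{\Psi'}{\Psi}\Big)^2,\qquad \Psi'=2\varphi,\quad \Psi''=-2t\varphi,
\]
and check that the squared term is of relative size $\varphi(t)/t^3$. This is negligible at $t_{n,d}$ because $\lambda\varphi(t_{n,d})\asymp t_{n,d}$ with $\lambda$ large.
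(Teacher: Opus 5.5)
Your proposal is correct and is essentially the paper's argument. Both write the density as $e^{-V}$ with $V=n\phi$, use the mode equation to get $V''(t_{n,d})\asymp nL$, and keep the curvature of order $nL$ on a window of width $\asymp 1/\sqrt{L}$ (this is where $\eps\le c/L$ enters). Integrating gives Gaussian tails at scale $1/\sqrt{nL}$, which places the median within $O(1/\sqrt{nL})$ of the mode, and one then recenters. The paper additionally uses the global fact that $V''$ is nonincreasing, which gives the left-side quadratic bound and $\mathrm{Med}\,T_{n,d}\ge t_{n,d}$, whereas you use convexity beyond the window.

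Two small fixes are needed. First, the term $(\Psi'/\Psi)^2$ enters $\phi''$ with a plus sign, so there is no negative correction to control. No Mills asymptotics are needed either, since $\phi''(t_{n,d})=1+t_{n,d}^2(1+1/\lambda)$ exactly. Second, the sub-scale regime $\eps t_{n,d}\lesssim 1/(\sqrt n\,t_{n,d})$ is not trivial: you need $\Pr\{|T_{n,d}-\mathrm{Med}\,T_{n,d}|<s\}\gtrsim \sqrt n\,t_{n,d}\,s$. This follows from a density lower bound $\gtrsim\sqrt n\,t_{n,d}$ near the mode, which comes from your estimate $Z\asymp(\sqrt n\,t_{n,d})^{-1}e^{-n\phi(t_{n,d})}$ together with the curvature upper bound on the window; this is exactly the paper's Case 2.
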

The proof of the last lemma follows from a direct computation on the distribution of Lemma \ref{Lem:Fleury}. Indeed, the quadratic approximation of the log-density around $t_{n,d}$ is valid for displacements $u=\eps t_{n,d}$ satisfying $u t_{n,d}=O(1)$, which is precisely $\eps=O(t_{n,d}^{-2})$. This is also the sharp range for the displayed quadratic exponent: on the upper tail, once $u t_{n,d}\gg 1$, the log-density loss is only of order $nt_{n,d}u$, and hence one cannot keep a uniform bound of order $\exp(-cnt_{n,d}^{2}u^{2})$. Next, we would need the following lemma:
%%%%%%%%%%%%%%%%%%%%%%%%%%%
\begin{lemma}[Lemma 2.3 in \cite{paouris2004estimates}; see also \cite{alonso2015gaussian}]
\label{lem:david-alonso}
Let $K \subset \R^n$ be an isotropic convex body with $\ell_2$-diameter $C\sqrt{n}$ and $ 2 \leq t \leq C\sqrt{n}$. Then, with probability $1-\exp(-c_1t^2)$ over $\theta \sim U(\Sn)$, the marginal $\langle X, \theta\rangle$ satisfies
\[
    \Pr_{X \sim U(K)}\Big\{|\langle X, \theta\rangle| \geq t\Big\}
    \leq 
    2\exp(-c_2t^2),
\]
where the constants $c_1, c_2, c_3 > 0$ depend on $C \geq 0$. 
\end{lemma}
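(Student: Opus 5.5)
The plan is a moment computation that averages over both $X\sim U(K)$ and $\theta\sim U(\Sn)$ simultaneously, followed by two applications of Markov's inequality: one in $\theta$ and one in $X$. The key observation is that the diameter bound $R(K)\le C\sqrt n$ controls the averaged $q$-th moments of the marginals uniformly in $X$, up to $q\asymp n$. Isotropy is then used only to put the statement in the right normalization.

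\textbf{Step 1 (moments for a fixed point).} For $q\ge 1$ write $h_q(\theta):=\big(\E_{X\sim U(K)}|\langle X,\theta\rangle|^q\big)^{1/q}$; this is the support function of the $L_q$-centroid body. By Fubini and rotation invariance,
\[
\E_{\theta}\, h_q(\theta)^q=\E_X\, \|X\|_2^q\,\E_\theta|\theta_1|^q .
\]
The standard spherical moment estimate gives $(\E_\theta|\theta_1|^q)^{1/q}\le C_0\sqrt{q/n}$ for $1\le q\le n$. Combined with $\|X\|_2\le C\sqrt n$ on $K$, this yields
\[
\E_\theta h_q(\theta)^q\le (C_1\sqrt q)^q,\qquad C_1=C_0C .
\]

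\textbf{Step 2 (choice of $q$ and Markov in $\theta$).} Take $q:=t^2$. This is admissible, i.e.\ $q\le n$ after adjusting constants, precisely because $t\le C\sqrt n$. Markov's inequality gives
\[
\sigma\{\theta: h_q(\theta)\ge eC_1\sqrt q\}\le e^{-q}=e^{-t^2}.
\]
So there is a set $\Theta$ of directions of measure at least $1-e^{-t^2}$ on which $h_{t^2}(\theta)\le eC_1 t$.

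\textbf{Step 3 (Markov in $X$).} Fix $\theta\in\Theta$. Markov at order $q=t^2$ gives
\[
\Pr_X\{|\langle X,\theta\rangle|\ge e^2C_1t\}\le \Big(\frac{h_q(\theta)}{e^2C_1t}\Big)^{q}\le e^{-t^2}.
\]
Replacing $t$ by $t/(e^2C_1)$ gives the claimed tail $2\exp(-c_2t^2)$ with probability $1-\exp(-c_1t^2)$ over $\theta$, where $c_1,c_2\asymp C_1^{-2}$ depend only on $C$.

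The only real obstacle is bookkeeping of the range. After rescaling, the effective $q=(t/e^2C_1)^2$ may drop below $1$ when $t$ is near $2$. In that regime I would simply enlarge constants: for $t\lesssim C_1$ the bound $2\exp(-c_2t^2)$ with $c_2$ small is $\ge 1$ and hence trivial. The upper end $t\le C\sqrt n$ is what keeps $q\lesssim n$, where the spherical moment bound $\sqrt{q/n}$ is valid.
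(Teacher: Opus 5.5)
Your proof is correct. The paper does not prove this lemma itself; it only cites Paouris and Alonso-Guti\'errez--Prochno. Your argument is the standard $L_q$-centroid-body argument behind results of this type: bound the spherical average $\E_\theta h_{Z_q(K)}(\theta)^q\le (C_0\sqrt{q/n}\,R(K))^q$, then apply Markov in $\theta$ and then in $X$ at $q\asymp t^2$.

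Three small remarks. First, only $K\subset C\sqrt n\,B_2^n$ is used, and this follows from the diameter bound because the barycenter $0$ lies in $K$. Second, the restriction $q\le n$ is unnecessary, since $(\E|\theta_1|^q)^{1/q}\le 1\le \sqrt{q/n}$ once $q\ge n$. Third, a union bound over dyadic $q\in[t_0^2,C^2n]$ upgrades your fixed-$t$ statement to the uniform version quoted in the paper's discussion section, with one set $\Theta(t_0)$ valid for all $t\ge t_0$.
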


% \subsubsection{Full Proof of \Cref{Theorem:LoNE}}
\subsubsection[Step I: Global Properties of Pnd]{\textcolor{red}{Step I:} Global Properties of $\PN$}
First, we now provide a cheap estimate of the volume of the polytope $P_{n,d}$, and also on the mean norm $M_{n,d}$, which is sufficient to carry out the argument for~\Cref{Theorem:LoNE}.  We consider the truncated polytope and corresponding Gaussian mean width, 
\[
    K(n,d):= B_1^{d} \cap \tfrac{1}{n} B_{\infty}^{d}, \quad \mbox{and} \quad 
    \MND:= M^*(K(n,d)).
\]
We use the shorthand notation $\SPO = M_{n,d} P_{n,d}$.
\begin{lemma}\label{Lem:CheapVolumeEstimate}
Suppose that $d \geq Cn$, for some $C \geq 0$ large enough. Then, with probability at least $1-\exp(-cn)$ (over $\vec X$), we have:
\begin{enumerate}[label=(\roman*)]
\item It holds that  
\label{item:inclusion-via-K-n-d}
\[
\lp 1-\frac{C}{L} \rp \cdot \MND \cdot  B_n \subset \PN     
\]
and in particular $M_{n,d} \leq (1+\frac{C}{L})\cdot \MND^{-1}$.
\item 
\label{item:volume-via-K-n-d}
The volume of $\PN$ satisfies the bounds 
\[
 |P_{n,d}| \leq \exp(n/\sqrt{L})|\MND \cdot B_n|
 \quad \mbox{and} \quad \exp(-C_1 \cdot n/\sqrt{L}) \leq  \frac{|\SPO|}{|\bar{B}_n|} \leq  \exp(C_1 \cdot n/\sqrt{L}),
\]
and consequently
$M_{n,d} \geq (1 - \frac{c}{\sqrt{L}}) \cdot \MND^{-1}$.
\end{enumerate}
\end{lemma}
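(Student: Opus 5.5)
We will compare $\PN=\vec X B_1^d$ with the Gaussian image of the truncated body $K(n,d)=B_1^d\cap\tfrac1n B_\infty^d$. Since $K(n,d)\subset B_1^d$, we have $\vec X K(n,d)\subset \PN$. So for \ref{item:inclusion-via-K-n-d} it suffices to lower bound the inradius of $\vec X K(n,d)$. This is done with the lower inclusion of Lemma~\ref{lem:boosted-dvoretzky}, applied to $K(n,d)$ with the appropriate normalization.

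The support function of $K(n,d)$ at $\theta\in S^{d-1}$ is, up to a boundary error, $\tfrac1n$ times the sum of the $n$ largest $|\theta_i|$. By Gaussian representation, $\MND\asymp\sqrt{L}/\sqrt d$ up to the normalization. Its normalized fluctuation is superconcentrated, because the top-$n$ average of $d$ Gaussians has variance of order $1/(nL)$ relative to its square, rather than order $1$ over the sphere. Hence
\[
\sqrt{n\,\Var\bigl(h_{K(n,d)}(\theta)/\MND\bigr)}\lesssim \sqrt{1/L}.
\]
To reach the sharper $C/L$ stated in the lemma, we will use the Lipschitz constant. On the sphere the map $\theta\mapsto h_{K(n,d)}(\theta)$ is $\tfrac{1}{\sqrt n}$-Lipschitz, while $\MND\sqrt d\approx\sqrt{2L}$. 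The Poincaré inequality on $S^{d-1}$ then gives
\[
n\Var\bigl(h/\MND\bigr)\lesssim \frac{n}{d}\cdot\frac{1}{n}\cdot\frac{d}{L}=\frac1L,
\]
and we sharpen this to $\Var\lesssim 1/(nL^2)$ using the $L^1$--$L^2$ bound, Lemma~\ref{Lem:Hyper}: the gradient is supported on the $n$ top coordinates out of $d$, which gives a gain of $\log(d/n)$. This yields $r(\vec XK(n,d))\ge (1-C/L)\MND$. The bound on $M_{n,d}$ is then immediate, since $\|\xi\|_{\PN}\le \|\xi\|_2/r(\PN)$ pointwise on this event; the complementary event is handled by the crude $\sigma_{\min}$ bound as in Lemma~\ref{lem:sigma-min-negative-moment}. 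The main obstacle is this variance estimate, specifically pinning down the $1/L$ (rather than $1/\sqrt L$) deviation.

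For \ref{item:volume-via-K-n-d}, the upper volume bound comes from Urysohn's inequality, $|\PN|^{1/n}\le |B_n|^{1/n}M^*(\PN)$. We bound $M^*(\PN)=\E_\theta\max_i|\langle X_i,\theta\rangle|$, which concentrates at $\sqrt{2\log d}$-type values. This is too large, so instead we compare $\PN$ with $\vec X K(n,d)$ through the facet structure. Using the conic formula and Lemma~\ref{Lem:numberofFacets}, the volume equals $\E|\cF_{n-1}|\cdot\E|\vec Y\vec Y^\top|\cdot|\triangle|\cdot\E T_{n,d}/n$. We evaluate this with Lemma~\ref{Lem:Goodman}, Stirling for $|\triangle|=\sqrt n/(n-1)!$, and $T_{n,d}\approx t_{n,d}$. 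The result is $(\E|\PN|)^{1/n}=e^{O(L^{-1/2})}\,\MND|B_n|^{1/n}$, where the $e^{\Theta(1)}$-per-$n$ slack in Lemma~\ref{Lem:numberofFacets} is absorbed because $\tfrac12\lln(d/n)$ matches the $\log t_{n,d}$ factors.

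High-probability control then follows from Markov's inequality for the upper bound. For the lower bound we use the inclusion from \ref{item:inclusion-via-K-n-d}, which gives $|\PN|\ge (1-C/L)^n\MND^n|B_n|$, and this is $\ge e^{-Cn/L}$ times the target. Rescaling by $M_{n,d}$ and using both bounds on $M_{n,d}$ gives the two-sided estimate for $|\SPO|/|\bar B_n|$. Finally, the reverse bound $M_{n,d}\ge(1-c/\sqrt L)\MND^{-1}$ follows from the volume upper bound by the polar-integration formula together with Jensen's inequality:
\[
|\PN|=|B_n|\int\|\theta\|_{\PN}^{-n}\,d\sigma\ \ge\ |B_n|\Bigl(\int\|\theta\|_{\PN}\,d\sigma\Bigr)^{-n}.
\]
Combining this with the concentration of $\|\theta\|_{\PN}$ from Lemma~\ref{Lem:Concertaion} lets us pass to the expectation $M_{n,d}$.
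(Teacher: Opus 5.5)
Your part (i) follows the paper. The $L^1$--$L^2$ inequality for the top-$n$ average gives $\Var(\|\vec\xi\|_{K(n,d)^\circ}/\MND)\lesssim 1/(nL^2)$, the lower inclusion of \Cref{lem:boosted-dvoretzky} then gives $r(\vec XK(n,d))\ge(1-C/L)\MND$, and $M_{n,d}$ is bounded by integrating over the good event with a crude bound off it. The lower volume bound from the ball inclusion and the Jensen/polar-coordinates lower bound on $M_{n,d}$ also match the paper. You do not need \Cref{Lem:Concertaion} there: $M(\PN)\ge(|B_n|/|\PN|)^{1/n}$ holds pointwise, so you can take expectation over the good event.

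The gap is the upper bound on $|\PN|$, and it has two parts.

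\emph{Precision.} \Cref{Lem:numberofFacets} determines $\E|\cF_{n-1}(\PN)|$ only up to an unknown factor $e^{\Theta(n)}$. Matching $\tfrac n2\lln(d/n)$ with the $\log t_{n,d}$ terms cancels only the leading order. The leftover $e^{\Theta(1)}$ per dimension changes $(\E|\PN|)^{1/n}$ by a constant factor. So your conic-formula computation gives at best $(\E|\PN|)^{1/n}\asymp\MND|B_n|^{1/n}$, not the required $e^{O(L^{-1/2})}\MND|B_n|^{1/n}$. To get the required precision you would need an exact evaluation of the probability that a fixed signed $n$-tuple spans a facet.

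\emph{Probability.} Even a sharp annealed bound does not yield the statement. Markov turns $\E|\PN|\le e^{Cn/\sqrt L}\MND^n|B_n|$ into a pathwise bound of the same order only off an event of probability about $e^{-cn/\sqrt L}$. Reaching failure probability $e^{-cn}$ costs a factor $e^{cn}$, which destroys the precision. Since $\Var|\PN|\gg(\E|\PN|)^2$, nothing better can be extracted from the expectation.

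The paper's argument is pathwise, on an event of probability $1-e^{-cn}$:
\begin{enumerate}
\item Every boundary point is $x=\vec c_{\cF}+\vec X_{S,\vec\eps}z$ with $z\in\triangle_c$.
\item The barycenters $\vec c_{\cF}=\vec X(\vec\eps_S/n)$ lie in $\vec XK(n,d)$, so the \emph{upper} Dvoretzky inclusion gives $\max_{\cF}\|\vec c_{\cF}\|_2\le\MND+C$.
\item A union bound over all $2^n\binom dn$ signed $n$-subsets for $\|\vec X_{S,\vec\eps}\|_{\rm op}$ and $\|\vec X_{S,\vec\eps}\|_{\rm HS}$, combined with KLS for the centred simplex, shows that at least half of every facet lies within $1+O(\sqrt{L/n})$ of its barycenter.
\item Hence at least half of $\partial\PN$ lies in $rB_n$ with $r=\MND+O(1)$. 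By monotonicity of surface area for convex bodies, $|\partial\PN|\le 2|\partial(rB_n)|$.
\item The conic formula with $h_{\cF}\le\|\vec c_{\cF}\|_2\le r$ gives $|\PN|\le 2r^n|B_n|\le e^{Cn/\sqrt L}\MND^n|B_n|$.
\end{enumerate}
With this in place, the rest of your part (ii) goes through as you describe: the two-sided bound on $|\SPO|/|\bar B_n|$ and the lower bound on $M_{n,d}$.
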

The proof of this lemma appears below. Also, note that 
$$ 
\Big| \frac{\|\vec{n}_{\cF}\|_{2}}{\MND}  -1\Big| \lesssim \frac{1}{\sqrt{L}},
$$
and  the same bounds hold for $\E|\PN|$, i.e.
\begin{equation}
   \exp(-Cn/L) \leq \frac{\E |\SPO|}{|B_n|} \leq \exp(n/\sqrt{L})    
\end{equation}
via integrating the tail of $R(\PN)$ with the tail bound of Dvoretzky's theorem.

Here, we provide its sketch: We apply Dvoretzky's theorem on the set $K(n,d) \subset B^d_1$, from which we can obtain a lower bound on the inradius of the projected polytope $P_{n,d}$, which in turn bounds from above the expected norm of the projected body, $M_{n,d}$.
\subsubsection{\textcolor{red}{Step II:} Volume reductions}\label{ss:strategy}
Here, we state few volume reductions techniques that we use throughout our proof.
To see this, note that for every fixed $\mathrm{Im}\vec X$, the map
\[
   \vec \xi \mapsto \frac{\|\vec \xi\|_{n}}{\E \|\vec \xi\|_{n}}
\]
is $O(\E \|\vec \xi\|_n/\sqrt{n})$-Lipschitz by the ball inclusion under the event of the previous step. Therefore, by \Cref{Theorem:SPofGP}, we can assume  that
\[
  \E | \SPO \cap \bar{B}_n| \geq \exp\lp-c\sqrt{\frac{n}{L}}\rp|\bar{B}_n|
\]
Let $A$ be an event on a facet $\cF$ such that under Fleury's distribution, it holds with probability of at most $\exp(-Cn/\sqrt{L})$. Then, we know by Markov's inequality, that  
\[
    \left|\bigcup_{\cF \text{ satisfies $A$}} M_{n,d} \cdot  \cF\right| \leq \exp(-C_3n/\sqrt{L})|\partial\bar{B}_n|, 
\]
where we used \Cref{Lem:Goodman} and its corollaries. 
% This means that by elementary geometry and standard Lipschitz concentration, the $\erm$ would lie in those facets with probability of $1-\exp(n/\sqrt{L})$. Meaning that
% \[
%     \Pr(|\|\vec \xi\|_n - \E \|\vec \xi\|_n| \geq CL^{1/4}\E \|\vec \xi\|_n) \leq 2\exp(-c_1 C \cdot n/\sqrt{L})
% \]

This means that for a set of directions $B \subset \sqrt{n} \cdot S^{n-1}$  lies in the cones whose their facets satisfy the event $A$, then it means that we have to inflate $P_{n,d}$ by at least $1 + 1/\sqrt{C/L}$ to contain $0.5 $ of the measure of $B$. To see this monotonicity of the volume, and its $n$-homogeneity, means that we need to inflate by $x$ such that
\[
    (1+x)^{n} \geq \exp(C n/\sqrt{L})
\]
which is equal to $x \geq CL^{-1/2}$,
% \begin{align*}
%    |(1+CL^{-1/2}) M_{n,d} \cdot \mathrm{Conv}\{0, \bigcup_{\cF \text{ satisfies $A$}}\cF\}|   (1+CL^{-1/4}) M_{n,d} \cdot \mathrm{Conv}\{0, \bigcup_{\cF \text{ satisfies $A$}}\cF\}| \leq \exp(-C_1 \cdot n/\sqrt{L})|\bar{B}_n|,
% \end{align*}
 However, this would contradict the ball inclusion and estimates on $M_{n,d}$ from the previous step, as $\bar{B}_n \subset (1 + \frac{C_1}{\sqrt{L}}) \SPO$.

% as $\mathrm{conv}\{0, \text{$\cF$ satisfies $A$}\}| \gtrsim \sigma_n(B)/n \geq \exp(-n/\sqrt{L})|\bar{B}_n|$. 
Therefore, we obtained the following corollary:
\begin{corollary}\label{Lem:PositiveTheyddy2}
Let $A$ be an event on a facet $\cF$ such that under Fleury's distribution, it holds with probability of at most $\exp(-Cn/\sqrt{L})$. Then, with probability of at least $1-\exp(-C_2n/\sqrt{L})$, $\erm $ would lie in a facet that satisfies $A^{c}$.
\end{corollary}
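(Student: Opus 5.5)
The plan is to turn a Fleury-probability statement about a single facet into a statement about Gaussian measure of directions, using the conic decomposition of $\PN$ into cones over its facets. Fix $\vec X$ and let $\cB_A$ be the union of the cones $\mathrm{cone}(\cF)=\{s x: s\ge 0,\,x\in\cF\}$ over facets $\cF$ satisfying $A$. The $\ell_1$-MNI $\erm(\vec X,\vec\xi)$ lies in the facet (of $B_1^d$) whose image is the facet $\cF$ of $\PN$ containing $\vec\xi/\|\vec\xi\|_{\PN}$. Hence the event we must rule out is exactly $\{\vec\xi\in\cB_A\}$. Since $\vec\xi$ is rotation invariant (or uniform on $\sqrt n S^{n-1}$, independent of $\vec X$), $\Pr(\vec\xi\in\cB_A\mid\vec X)=\sigma(\cB_A\cap S^{n-1})$. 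So it suffices to show that this spherical measure is at most $\exp(-C_2 n/\sqrt L)$ outside an event of probability $\exp(-C_2n/\sqrt L)$ in $\vec X$.

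First, bound the expected volume of $\PN\cap\cB_A$. By the conic formula and \Cref{Lem:eventFL}, $\E|\PN\cap\cB_A|=\Pr_{\mathrm{Fleury}}(A)\,\E|\PN|\le \exp(-Cn/\sqrt L)\,\E|\PN|$. \Cref{Lem:eventFL} is stated for product events, but the same linearity-of-expectation computation works for any event measurable in the facet, with the weight $|\cF|\,\|\vec n_\cF\|_2$ absorbed into Fleury's law. Using Step I, $\E|\SPO|\le \exp(n/\sqrt L)|\bar B_n|$, so
\[
\E|\SPO\cap\cB_A|\le \exp(-(C-1)n/\sqrt L)|\bar B_n|.
\]
Markov's inequality then gives, with probability at least $1-\exp(-(C-1)n/(2\sqrt L))$,
\[
|\SPO\cap\cB_A|\le \exp(-(C-1)n/(2\sqrt L))|\bar B_n|.
\]

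Second, convert this volume bound into a bound on spherical measure using the ball inclusion of \Cref{Lem:CheapVolumeEstimate}(i). On an event of probability $1-e^{-cn}$ we have $\bar B_n\subset(1+C_1/\sqrt L)\SPO$, so every direction $\theta$ has radial function $\rho_{\SPO}(\theta)\ge \sqrt n(1+C_1/\sqrt L)^{-1}$. Polar integration then gives
\[
|\SPO\cap\cB_A|\ge |\bar B_n|\,(1+C_1/\sqrt L)^{-n}\sigma(\cB_A\cap S^{n-1})\ge |\bar B_n|\,e^{-C_1n/\sqrt L}\sigma(\cB_A\cap S^{n-1}).
\]
Combining the two bounds, $\sigma(\cB_A\cap S^{n-1})\le \exp(-((C-1)/2-C_1)n/\sqrt L)$. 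Choosing $C$ large relative to $C_1$, and taking a union bound over the exceptional events in $\vec X$, gives the claim with $C_2=\min\{(C-1)/2-C_1,\,c\sqrt L\}$. This stays of order $n/\sqrt L$ because $L\ge1$.

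The main obstacle is the comparison of volume measures. We must check that the Fleury probability really equals the fraction of \emph{expected} volume carried by $A$-facets, including the normal-height factor $\|\vec n_\cF\|_2$ that the conic formula attaches to each facet. We must also make sure that the random fluctuation of $|\PN|$ does not spoil the Markov step. The normalization by $|\bar B_n|$ rather than by $|\PN|$ is what avoids the latter: the large variance of $|\PN|$ only enters through the two-sided $\exp(\pm C_1n/\sqrt L)$ bounds of Step I, which are absorbed into the exponent by taking $C$ large.
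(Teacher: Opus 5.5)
Your argument is correct and is essentially the paper's own. You apply Markov's inequality to the expected volume of the union of cones over $A$-facets (computed via the Fleury/conic identity and normalized by the Step~I bound $\E|\SPO|\le \exp(n/\sqrt L)|\bar B_n|$), and then use the ball inclusion $\bar B_n\subset(1+C_1/\sqrt L)\SPO$ to turn small bad-cone volume into small spherical measure of bad directions; this is exactly the scheme written out in the proof of \Cref{lem:pure-noise-mni-typical-height}.

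The point you flag, identifying the Fleury probability of $A$ with the fraction of expected cone volume, is what the paper takes from \Cref{Lem:eventFL}, i.e.\ it reads Fleury's law as volume-biased. If instead it is read as the unweighted conditional law of \Cref{Lem:Fleury}, apply Cauchy--Schwarz with the weight $|\cF|\,T_{n,d}$. Its normalized second moment is only polynomial in $n$ by \Cref{Lem:Goodman}, so this roughly halves $C$, which is harmless since $n/\sqrt L\gg\log n$.
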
 
To prove the lower bound on the MSE,  we show that
\[
B^{-}_{n,d}(M_{n,d}):= M_{n,d} \cdot
\bigcup_{\text{$\triangle$ is $n$-dimensional facet of $B_1^d$}} \underbrace{\frac{\vec{\eps}}{n} + \triangle_{c} \cap  \lp\frac{1-\frac{C}{L^{1/4}}}{\sqrt{n}} \cdot B_{n-1}\rp}_{:=\triangle_{-}},
\]
(where we view $\triangle_{c}$ as a set in $\R^{n-1}$ and $B_{n-1}$ is the $\ell_2$-ball with radius one in $\R^{n-1}$) does not have enough volume, i.e., we show that
\[
    |\vec X B^{-}_{n,d}(M_{n,d})|  \leq \exp(-c_1 C \cdot n/\sqrt{L})|\SPO| \leq  \exp(-c_1 C_2 \cdot n/\sqrt{L})|\bar{B}_n|
\]
which means that with probability of at least $1-\exp(-cn/\sqrt{L})$
\[
   \|\erm\|_2^2 \geq \frac{1-\frac{C}{L^{1/4}}}{L}.
\]
% Following the proof of the previous step, the lemma we know that the total surface area of facets that do not satisfies a Fluery's event with probability of at least $1-\exp(-C_1n/\sqrt{L})$ is smaller than $e^{-n/\sqrt{L}}|\partial \SPO|$ surface area, and for $C \geq 0$ large enough. As it holds that  

% The second part of this lemma follows from considerations similar to those in Step 2.
% However, by combining Lemmas \ref{Lem:BePositiveTheyddy} and \ref{Lem:bePositiveTheyddy3}, we obtain a contradiction to the fact that  
% \[
% |\partial \SPO \cap (1+\frac{C}{\sqrt{nL}})\bar{B}_n| \geq e^{-\frac{Cn}{\sqrt{L}}}|\partial \bar{B}_n|,
% \]
% and the proof is complete.
 
% we know by Markov inequality that $\erm$ will not lie in these facets with probability of $1-\exp(-cn/\sqrt{L})$.
% this would provide the lower bound on the MSE.
To prove the upper bound, we would need to adopt a more refined approach, as will become clearer below. Very roughly speaking, we would show that 
\[
B^{+}_{n,d}(M_{n,d}):= M_{n,d} \cdot
\bigcup_{\text{$\triangle$ is $n$-dimensional facet of $B_1^d$}} \underbrace{\frac{\vec{\eps}}{n} + \triangle_{c} \cap  \lp\frac{1+\frac{C}{L^{1/4}}}{\sqrt{n}} \cdot B_{n-1}\rp^{c}}_{:=\triangle_{+}},
\] 
\[
    |\vec X B^{-}_{n,d}(M_{n,d}) \cap \bar{B}_n|  \leq \exp(-c_1 C \cdot n/\sqrt{L})|\SPO| \leq  \exp(-c_1 C_2 \cdot n/\sqrt{L})|\bar{B}_n|
\]
and as the $\|\vec \xi\|_2 =  \sqrt{n}$,  provides the desired bound with the same probability. Yet, this argument is much more delicate, as will be seen below. 

% Also, only in the last step, we would see that with probability of $1-\exp(-cnL^{-2})$, it holds
% \[
%     \|\erm\|_2^2 \geq \frac{1 - \frac{C}{L^2}}{L}
% \]
% and
% \[
%     \|\erm\|_2^2 \leq \frac{1 + \frac{C}{L^2}}{L}
% \]
\subsubsection{\textcolor{red}{Step III}: Canonical Simplex}
First, we introduce the following useful lemma:
\begin{lemma}[Small ball probabilities for a simplex]\label{Lem:SBSimplex}
The following holds for all $\eps \in (0,1):$
\[
   \Pr_{Z \sim U(\triangle_c)}(\|Z\|_{2} \leq (1-\eps)\E\|Z\|_{2}) \leq \exp(-cn\eps^2),
\]
and also the converse holds, i.e., for  $\eps \in (0,c)$
\[
     \Pr_{Z \sim U(\triangle_c)}(\|Z\|_{2} \leq (1-\eps)\E\|Z\|_{2}) \geq \exp(-c_2n\eps^2).
\]
\end{lemma}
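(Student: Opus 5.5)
We will use the exponential representation of the uniform measure on the simplex. Let $E_1,\ldots,E_n$ be i.i.d.\ $\mathrm{Exp}(1)$ and $S:=\sum_i E_i$. Then $Z+\vec 1_n/n \sim (E_1,\ldots,E_n)/S$, so
\[
\|Z\|_2^2 = \frac{\sum_i E_i^2}{S^2}-\frac1n = \frac{Q - S^2/n}{S^2},\qquad Q:=\sum_{i=1}^n E_i^2 .
\]
Since $\E Q = 2n$ and $S \approx n$, we get $\E\|Z\|_2^2 = (1+O(1/n))/n$. By the thin-shell/KLS property of $n\triangle_c$ (see the subsection on the canonical simplex), $\E\|Z\|_2 = (1+O(1/n))n^{-1/2}$. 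Hence it suffices to control the event $\{\|Z\|_2^2\le (1-\eps)^2/n\}$, up to replacing $\eps$ by $\eps \pm O(1/n)$. This shift is harmless for $\eps \gtrsim n^{-1/2}$. For $\eps \lesssim n^{-1/2}$ the upper bound is trivial, and for the lower bound we only need a constant probability in that range.

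\textbf{Upper bound.} The event reduces to
\[
Q - \frac{S^2}{n} \le (1-\eps)^2\frac{S^2}{n},
\]
and we intersect with $\{S \le (1+\eps/4)n\}$. By Bernstein/Cram\'er for sums of exponentials, the complement has probability at most $\exp(-cn\eps^2)$. On this intersection the event forces $Q \le 2n(1-\eps/2)$, i.e.\ a lower deviation of $Q$ of order $\eps n$. Lower tails of sums of nonnegative variables are sub-Gaussian. Concretely, for $\lambda>0$ we have
\[
\E e^{-\lambda E_1^2} \le 1-2\lambda+C\lambda^2 \le e^{-2\lambda + C\lambda^2},
\]
using $e^{-x}\le 1-x+x^2/2$ and $\E E_1^4=24$. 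A Chernoff bound with $\lambda \asymp \eps$ then gives $\Pr(Q \le 2n - \eps n) \le \exp(-cn\eps^2)$. A union bound over the two events finishes the upper bound.

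\textbf{Lower bound.} This is the step I expect to be the main obstacle, since it requires a matching lower large deviation estimate. I would use a tilting (Cram\'er lower bound) argument. Under the tilted law with density proportional to $e^{-\lambda E_i^2}$, take $\lambda \asymp \eps$ chosen so that the tilted mean of $Q$ equals $2n(1-3\eps)$, while the tilted mean of $S$ changes only by $O(\eps n)$. The naive tilt shifts $S$ by $\Theta(\eps n)$, which would interfere with the event through $S^2/n$. To avoid this, tilt jointly by $e^{-\lambda E^2 + \mu E}$ with $\mu = O(\lambda)$, fixing the tilted mean of $E$ at $1$ and the tilted mean of $E^2$ at $2(1-3\eps)$.

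Under the tilted law, Chebyshev shows that $Q$ and $S$ concentrate at scale $\sqrt n \ll \eps n$, provided $\eps \ge C/\sqrt n$. Hence the event $\{\|Z\|_2 \le (1-\eps)\E\|Z\|_2\}$ has tilted probability at least $1/2$. The relative entropy of the tilted law per coordinate is $O(\lambda^2+\mu^2) = O(\eps^2)$, so the standard change-of-measure inequality
\[
\Pr(A) \ge \tfrac12 \exp\bigl(-2\,\mathrm{KL}/\Pr\nolimits_{\mathrm{tilt}}(A)\bigr)
\]
yields $\Pr(A) \ge \exp(-c_2 n\eps^2)$. For $\eps \le C/\sqrt n$, the claim follows instead from a constant-probability lower bound, obtained from the CLT for $(Q,S)$ together with monotonicity in $\eps$. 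The upper restriction $\eps < c$ ensures that the tilt parameters stay bounded, so the moment expansions are valid.
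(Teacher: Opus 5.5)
Your upper bound follows the paper's argument. Both use the exponential representation $Z+\vec 1_n/n\sim(E_1,\dots,E_n)/S$, a Chernoff upper-tail event for $S$, and a lower-tail Chernoff bound for $Q=\sum_iE_i^2$ via $e^{-x}\le 1-x+x^2/2$ and $\E E_1^4=24$ with $\lambda\asymp\eps$. Two small remarks on that half:
\begin{itemize}
\item The $\pm O(1/n)$ shift and the appeal to KLS are unnecessary. Since $\E\|Z\|_2\le(\E\|Z\|_2^2)^{1/2}\le n^{-1/2}$, you get $\{\|Z\|_2\le(1-\eps)\E\|Z\|_2\}\subseteq\{\|Z\|_2^2\le(1-\eps)^2/n\}$ outright.
\item On $\{S\le(1+\eps/4)n\}$ you only get $Q\le(2-\tfrac{7}{16}\eps)n$ for $\eps$ near $1$, not $Q\le 2n(1-\eps/2)$. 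This is harmless.
\end{itemize}
The paper proves only this first inequality; it gives no argument for the converse.

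The step that fails is your treatment of the converse for $\eps\lesssim n^{-1/2}$. A constant bound $\Pr(A_\eps)\ge p_0$, from the CLT plus monotonicity, does not give $\Pr(A_\eps)\ge\exp(-c_2n\eps^2)$, because the right-hand side tends to $1$ as $n\eps^2\to 0$. No argument can repair this, because the converse as stated is false there. Fix $n$ and let $\eps\downarrow 0$. The events $A_\eps=\{\|Z\|_2\le(1-\eps)\E\|Z\|_2\}$ increase to $\{\|Z\|_2<\E\|Z\|_2\}$. That event has probability strictly below $1$, since no integrable random variable lies almost surely strictly below its mean; by the CLT it is close to $1/2$. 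Meanwhile $\exp(-c_2n\eps^2)\to 1$. So the converse fails for $\eps\ll n^{-1/2}$. The same regime is also why the upper bound there is not ``trivial'': it needs $\Pr(A_\eps)\le 1-p_0$ uniformly in $n$, which is true but again uses the CLT.

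What is true, and what your argument actually proves, is $\Pr(A_\eps)\ge c_3\exp(-c_2n\eps^2)$ for $\eps\in(0,c)$. Up to constants this is the stated bound restricted to $Cn^{-1/2}\le\eps<c$. In that range your joint tilt is sound. Under it, $n\|Z\|_2^2$ concentrates at $1-6\eps$ to within $O(n^{-1/2})$, while the threshold is about $1-2\eps$, and the KL divergence is $\Theta(n\eps^2)$.

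The one correction there is the change-of-measure inequality. It should read $\Pr(A)\ge\Pr_{\mathrm{tilt}}(A)\exp\bigl(-(\mathrm{KL}+e^{-1})/\Pr_{\mathrm{tilt}}(A)\bigr)$. The version you wrote is false when the KL divergence is small. It does follow from the correct one when $\Pr_{\mathrm{tilt}}(A)\ge 1/2$ and $\mathrm{KL}\ge e^{-1}$, which holds for $\eps\ge Cn^{-1/2}$ with $C$ large.
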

The proof of this lemma follows from standard calculations of MGFs, and noting that $Z \sim \mathrm{Unif}(\triangle)$, can also be represented as
\[
    Z \sim  \frac{(Z_1,\ldots,Z_n)}{\sum_{i=1}^{n}Z_i}
\]
where $Z_1,\ldots,Z_n \sim \mathrm{Exp}(1)$, for completeness it appears below. Surprisingly, it does not appear in the literature.
% The next shows the same for 
% \begin{lemma}\label{Lem:SBSimplex}
% Let $\cF:= \tG_{\cS}(\triangle - \vec\eps/n)$ be a facet that is drawn from Fluery's distribution. Then,  for every $\eps \in (0,1)$, it holds that  
% \[
%    \Pr_{Z \sim U(\triangle),\vec X_{\cS}}(\|Z\|_{2} \leq (1-\eps)\E\|Z\|_{2})) \leq \exp(-cn\eps^2).
% \]
% \end{lemma}
 Now, we prove the easy part of the theorem, on the lower bound on the MSE
\begin{lemma}[Lower bound on the MSE]
    With probability of at least $1-C_1\exp(n/\sqrt{L})$, it holds 
    \[
        \|\erm\|_2^2 \geq \frac{1 -\frac{C}{L^{1/4}}}{L}.
    \]
\end{lemma}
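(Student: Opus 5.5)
We want the lower bound on $\|\erm\|_2^2$, and we follow the strategy of Step II: show that the set $B^{-}_{n,d}(M_{n,d})$ has exponentially small volume after mapping by $\vec X$. Recall that $\erm = \|\vec\xi\|_{\PN}\, w(\vec X,\vec\xi)$, where $w$ lies in an $(n-1)$-dimensional facet $\frac{\vec\eps}{n}+\triangle_c$ of $B_1^d$. By Pythagoras,
\[
\|\erm\|_2^2=\|\vec\xi\|_{\PN}^2\Big(\tfrac1n+\big\|w-\tfrac{\vec\eps}{n}\big\|_2^2\Big).
\]
It therefore suffices to prove two things on an event of probability $1-C_1\exp(-cn/\sqrt L)$:
\begin{enumerate}[label=(\alph*)]
\item $\|\vec\xi\|_{\PN}\ge (1-CL^{-1/2})M_{n,d}$;
\item $\|w-\vec\eps/n\|_2\ge (1-CL^{-1/4})/\sqrt n$.
\end{enumerate}
Given both, $\|\erm\|_2^2\ge (1-CL^{-1/4})\,2M_{n,d}^2/n$. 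Lemma~\ref{Lem:CheapVolumeEstimate} gives $M_{n,d}\ge(1-c/\sqrt L)\MND^{-1}$, and an elementary computation of the mean width of $K(n,d)$ gives $\MND^{-2}\cdot 2/n\ge (1-C/L)/L$. Together these yield the claim.

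For (a), I would use the ball inclusion of Lemma~\ref{Lem:CheapVolumeEstimate}, $\bar B_n\subset(1+C_1/\sqrt L)\SPO$, together with the volume bounds $|\SPO|\le e^{C_1n/\sqrt L}|\bar B_n|$. The function $\vec\xi\mapsto\|\vec\xi\|_{\PN}$ is Lipschitz with constant $O(M_{n,d}/\sqrt n)$ on this event. Since $\vec\xi$ is effectively uniform on $\sqrt n S^{n-1}$ (by rotation invariance of $\vec X$), spherical concentration gives deviation $\epsilon M_{n,d}$ with probability at most $e^{-cn\epsilon^2}$. Taking $\epsilon=L^{-1/4}$ gives probability $e^{-cn/\sqrt L}$, which suffices and is only needed at the $L^{-1/4}$ scale.

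For (b), the plan is a volume argument. Let $\triangle_-=\frac{\vec\eps}{n}+\triangle_c\cap\frac{1-CL^{-1/4}}{\sqrt n}B_{n-1}$, and consider the cones over $\vec X\triangle_-$ taken inside each facet. By Lemma~\ref{Lem:SBSimplex} with $\epsilon=CL^{-1/4}$ (note $\E\|Z\|_2=(1+O(n^{-1/2}))n^{-1/2}$), $\triangle_-$ occupies at most an $\exp(-cC^2n/\sqrt L)$ fraction of $\triangle$. The issue is that the linear map $\vec X_S$ distorts relative volumes inside a facet. Under Fleury's distribution the facet is $\vec U\vec A$, where $\vec A$ restricted to the affine hull acts via $\vec Y$. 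The fraction of $\vec X_S\triangle$ covered by $\vec X_S\triangle_-$ equals the fraction of $\triangle$ covered by $\triangle_-$, because affine maps preserve volume ratios. So the relative volume is exactly at most $\exp(-cC^2n/\sqrt L)$ for every facet, deterministically.

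Summing via the conic formula, with heights $\|\vec n_\cF\|_2$, the union of cones $\vec X B^-_{n,d}$ has volume at most $\exp(-cC^2n/\sqrt L)|\PN|$. The event that $\vec\xi/\|\vec\xi\|_{\PN}$ lands in such a cone has $\sigma$-measure controlled as follows. Intersect with the shell where (a) holds, and use that radial projection from $\partial\PN$ to the sphere has Jacobian bounded by $\exp(O(n/\sqrt L))$, since $\PN$ is sandwiched between $(1\pm C/\sqrt L)$ multiples of $M_{n,d}^{-1}\bar B_n$ with high probability. This is the content of Corollary~\ref{Lem:PositiveTheyddy2} / Step II. Choosing $C$ large beats the $e^{O(n/\sqrt L)}$ loss, giving probability at most $\exp(-cn/\sqrt L)$.

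The main obstacle is the last conversion, from a volume fraction of $\PN$ to a probability for a uniformly random direction. The radial Jacobian is governed by the ratio between the outer and inner radii of $\PN$. Lemma~\ref{lem:boosted-dvoretzky} controls only the lower (inradius) side sharply; the outer radius deviation may be larger. I would handle this by working with $\SPO\cap\bar B_n$, using the estimate $\E|\SPO\cap\bar B_n|\ge e^{-c\sqrt{n/L}}|\bar B_n|$ from Step II and the pointwise bound $\|x\|\le(1+C/\sqrt L)$ on cone points inside the ball. This costs only $e^{O(n/\sqrt L)}$, which is absorbed by taking $C$ large.
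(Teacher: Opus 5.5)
Your proposal is correct and follows the paper's route: Lemma~\ref{Lem:SBSimplex} at scale $L^{-1/4}$, transfer of the bad fraction to every facet of $\PN$, then the Step~II conversion of bad cone volume into a bad-direction probability. It also improves on the paper in two places: affine maps preserve $(n-1)$-dimensional volume ratios, which makes the per-facet transfer exact and deterministic (so Lemma~\ref{Lem:Goodman} is not needed), and you make explicit the normalizations $\|\vec\xi\|_{\PN}\geq(1-o(1))M_{n,d}$ and $2M_{n,d}^2/n\geq(1-o(1))/L$ that the paper leaves implicit.

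One correction: $\PN$ is not sandwiched between $(1\pm C/\sqrt L)$-multiples of $M_{n,d}^{-1}\bar B_n$, since its outer radius is $\max_i\|X_i\|_2\approx\sqrt n$ while $\sqrt n/M_{n,d}\asymp\sqrt L$. The conversion never needs the outer side, however: for every bad direction $\theta$ the segment $[0,(1-C/\sqrt L)\sqrt n\,\theta]$ lies in the union of bad cones of $\SPO$. Hence the bad-direction measure is at most $(1-C/\sqrt L)^{-n}$ times the bad cone volume divided by $|\bar B_n|$. This is exactly what your truncation to $\SPO\cap\bar B_n$ achieves, and the lower bound on $\E|\SPO\cap\bar B_n|$ plays no role.
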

\begin{proof}
Recall that we need to prove that $\erm \notin B^{-}_{n,d}(M_{n,d})$  as discussed in Step II, by showing 
\[
    \exp(-c\sqrt{n}) \cdot |\partial \bar{B}_n| \leq |\partial \SPO| \leq \exp(c \cdot n/\sqrt{L})|\partial \bar{B}_n|.
\]
By choosing $\eps \geq C_3\cdot L^{-1/4}$ for some $C_3 \geq 0$ in Lemma \ref{Lem:SBSimplex}, we have that
\[
|\triangle_{-}| \leq e^{-C \cdot n/\sqrt{L}}|\triangle|
\]
for some large enough $C \geq 0$. Now, by Lemma \ref{Lem:Goodman}, it holds 
\[  
    \forall \cF \in \cF_{n-1}(\PN) \quad  |\vec X_{S}\triangle_{-}| \leq \exp(-c_1 \cdot n/\sqrt{L}) \cdot \E |\cF|,
\]
Therefore, we conclude that
\begin{equation}
\begin{aligned}
    \left| \vec X B^{-}_{n,d}(M_{n,d})\right| &\leq \exp(-C \cdot n/\sqrt{L})|\partial \SPO| \leq \exp(-C_2\cdot n/\sqrt{L}) \cdot |\partial \bar{B}_n| 
\end{aligned}
    \end{equation}
where we used the fact that we can set $C \geq 0$ to be large enough. Hence, by \Cref{ss:strategy}, the claim follows.

\end{proof}
To see why this approach cannot work for the upper bound side, note that one can easily show (cf. \cite{paouris2006concentration}) that
\[
     \Pr_{Z \sim U(\triangle_c)}(\|Z\|_2 \geq (1+\eps)\E \|Z\|_2) \geq \exp(-c\sqrt{n}\eps),
\]
Namely, the tails of $\|Z\|_2$ exhibit a different behavior; one is sub-Gaussian, and the other is sub-exponential. Meaning that we would need to use a different argument, as
\begin{equation}
\begin{aligned}
   \left|\vec X B^{+}_{n,d}(M_{n,d}) \right| 
    \geq e^{-c\sqrt{\frac{n}{L}}}|\partial \SPO|, 
\end{aligned}
\end{equation}
i.e., the ``outer part'' has too much volume, as we cannot infer that
\[
    e^{-c\sqrt{\frac{n}{L}}}|\partial \SPO| \ll |\bar{B}_n|.
\]
% However, we would manage to show that 
% \begin{equation}
% \begin{aligned}
%    \left|\vec X B^{+}_{n,d}(M_{n,d}) \cap \bar{B}_n\right| 
%     \leq \exp(-n/\sqrt{L})|\partial \bar{B}_n|, 
% \end{aligned}
% \end{equation}
% which would be sufficient for our proof.

\subsubsection[Step IV: Reduction to the thin shell of F]{\textcolor{red}{Step IV}: Reduction to the thin shell of $\cF$}\label{sss:reduThinShell}
The following follows from the Fubini theorem and Lipschitz concentration:
\begin{lemma}\label{Lem:Energy}
Let $G$ be a $  (n-1)\times n$ Gaussian matrix with $N(0,1)$ i.i.d. entries, and let $\delta \in (0,1)$. Then, with probability \textbf{ over $G$} of at least $1-\exp(-c_1n\delta^2)$, the following deterministic equation holds:
\[
  \Pr_{X \sim U(\triangle_c)}( \|GX\|_2 \leq 1 + \delta/2 \cap \|\sqrt{n} X\|_2 \geq 1+\delta ) \lesssim \exp(-cn\delta^2). 
\]
\end{lemma}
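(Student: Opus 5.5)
The plan is to prove the statement by Fubini followed by Markov's inequality. The key observation is that for a \emph{fixed} $X\in\triangle_c$, the vector $GX$ is exactly Gaussian: $GX\sim \Normal{0}{\|X\|_2^2 I_{n-1}}$, so $\|GX\|_2 = \|X\|_2\,\|g\|_2$ with $g\sim\Normal{0}{I_{n-1}}$. Thus the event $\{\|GX\|_2\le 1+\delta/2\}$ cannot occur often for any point whose scaled length $\|\sqrt n X\|_2$ is at least $1+\delta$. Set
\[
p(G):=\Pr_{X\sim U(\triangle_c)}\big(\|GX\|_2\le 1+\delta/2,\ \|\sqrt n X\|_2\ge 1+\delta\big).
\]
By Fubini,
\[
\E_G\, p(G)=\E_{X}\Big[\1\{\|\sqrt n X\|_2\ge 1+\delta\}\cdot \Pr_G\big(\|GX\|_2\le 1+\delta/2\big)\Big].
\]

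First step: bound the inner probability uniformly over $X$ with $s:=\|\sqrt nX\|_2\ge 1+\delta$. The event is equivalent to
\[
\|g\|_2\le \sqrt n\,\frac{1+\delta/2}{s}\le \sqrt n\,\frac{1+\delta/2}{1+\delta}\le \sqrt n\,(1-c_0\delta),
\]
for an absolute $c_0>0$ when $\delta\in(0,1)$. Since $\E\|g\|_2\ge \sqrt{n-1}-1\ge \sqrt n-2$ and $\|g\|_2$ is $1$-Lipschitz, Gaussian concentration gives $\Pr(\|g\|_2\le \sqrt n(1-c_0\delta))\le \exp(-c n\delta^2)$ as soon as $c_0\delta\sqrt n\ge 4$, i.e. $\delta\ge C/\sqrt n$. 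In the complementary range $\delta< C/\sqrt n$ the statement is vacuous: $\exp(-cn\delta^2)\ge e^{-cC^2}$ is bounded below by a constant, so both claimed bounds hold trivially after adjusting constants (the ``$\lesssim$'' absorbs it, and the probability over $G$ can be taken as any value $\le 1$ with $1-\exp(-c_1 n\delta^2)\le 0$ for $c_1$ small enough relative to... or simply $p(G)\le 1\lesssim e^{-cn\delta^2}$ deterministically). Hence $\E_G\,p(G)\le \exp(-cn\delta^2)$ in all cases.

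Second step: Markov's inequality gives
\[
\Pr_G\big(p(G)\ge e^{-cn\delta^2/2}\big)\le e^{cn\delta^2/2}\,\E_G p(G)\le e^{-cn\delta^2/2},
\]
which is the claim with $c_1=c/2$ and the deterministic bound $p(G)\le e^{-cn\delta^2/2}$ on the complementary event.

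I do not expect a genuine obstacle. The only care needed is the bookkeeping of the $\sqrt{n-1}$ versus $\sqrt n$ normalization (the $G$ has $n-1$ rows while the radius is measured as $\|\sqrt n X\|_2$), which produces an $O(1/\sqrt n)$ shift absorbed exactly by the case split $\delta\gtrless C/\sqrt n$ above. Note also that no property of the simplex is used beyond $X$ being independent of $G$, so the same argument applies to any distribution of $X$.
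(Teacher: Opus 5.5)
Your argument is correct and is essentially the paper's proof: Fubini reduces the $G$-average to the pointwise Gaussian lower-tail bound for $\|Gx\|_2$ at a fixed $x$, and Markov's inequality over $G$ finishes. Your threshold comparison $(1+\delta/2)/(1+\delta)\le 1-\delta/4$ and your $\sqrt{n-1}$ versus $\sqrt n$ bookkeeping are cleaner than the paper's write-up, which states the pointwise bound with mismatched thresholds. One minor fix: the claim $\E_G\,p(G)\le e^{-cn\delta^2}$ ``in all cases'' holds with constant one only for $\delta\ge C/\sqrt n$, so apply Markov only in that range; for $\delta<C/\sqrt n$ the conclusion already holds deterministically, as you note.
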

The proof of this lemma appears below. This lemma means that most of the thin shell of the $\cF$ will emerge from thin shell $\triangle_c$,
% This lemma implies that on the projected simplex $\cF_{c}:= G\triangle_{c} - \vec{C}_{\cF}$, most the volume lies on its thin shell,
and \textbf{only}  $\exp(-cn\delta^2)$ of the thin shell volume emerges from the areas that are $\delta$-far from the thin shell of $\triangle$. Namely, we have the desired sub-Gaussian tail, and note that by \Cref{C:GoodFlu2}, this holds for the distribution of Fleury's facet $\vec{\widetilde{Y}}$.

In the next steps, we prove the following: 
\begin{lemma}
    With probability of at least $1-\exp(-C \cdot nL^{-2})$ over $\vec \xi$,
    \[
        \|\vec \xi - \|\vec \xi\|_{n}\vc_{\cF}\|_{2} = (1+O(L^{-1})) \cdot M_{n,d}      
    \]
and $\|\vec \xi\|_{n} \cdot \cF$ contains $\vec \xi$.
    % and
    % \[
    %      \| n_{\vec{\cF}}\|_2 = \lp 1 - M_{n,d}^2 O\left(\frac{1}{L^2}\right) \rp \lp 1- 2 \rp \cdot \sqrt{n}
    % \]
    % and
    % \[
    %     \| \bar{n}_{\vec{\cF}} - \bar{c}_{\vec{\cF}}\|_2 \leq (1 + L^{-1}) \cdot M_{n,d}
    % \]
\end{lemma}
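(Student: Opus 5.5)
We need to establish two facts about the facet $\cF$ selected by $\erm$: the point $\vec\xi$ lies in $\|\vec\xi\|_{n}\cdot\cF$, and its distance from the rescaled barycenter $\|\vec\xi\|_n\vc_{\cF}$ is $(1+O(L^{-1}))M_{n,d}$. The containment is deterministic. By the representation $\erm=\|\vec\xi\|_{n}\,w(\vec X,\vec\xi)$ with $w\in\triangle$ supported on $S$ with signs $\vec\eps$, we get $\vec\xi=\vec X\erm=\|\vec\xi\|_n\sum_{i\in S}w_i\eps_i\vec X_i\in\|\vec\xi\|_n\cF$. So all the work is in the distance estimate.

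\emph{Step 1: Pythagorean decomposition.} Since $\vec\xi/\|\vec\xi\|_n\in\cF$ and $\vc_{\cF}\in\cF$, the difference $\vec\xi/\|\vec\xi\|_n-\vc_{\cF}$ lies in the hyperplane direction $\vec n_\cF^\perp$. The normal satisfies $\langle\vec\xi/\|\vec\xi\|_n,\vec n_\cF\rangle=\|\vec n_\cF\|_2^2$, so $\vec n_\cF$ is the foot of the perpendicular from the origin. This gives
\[
n=\|\vec\xi\|_2^2=\|\vec\xi\|_n^2\Big(\|\vec n_\cF\|_2^2+\big\|\vec\xi/\|\vec\xi\|_n-\vec n_\cF\big\|_2^2\Big),
\]
and the analogous identity
\[
\|\vc_\cF\|_2^2=\|\vec n_\cF\|_2^2+\|\vc_\cF-\vec n_\cF\|_2^2 .
\]
Hence $\|\vec\xi-\|\vec\xi\|_n\vc_\cF\|_2^2=\|\vec\xi\|_n^2\,\|(\vec\xi/\|\vec\xi\|_n-\vec n_\cF)-(\vc_\cF-\vec n_\cF)\|_2^2$. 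This reduces the claim to controlling three quantities: $\|\vec\xi\|_n$, $\|\vec n_\cF\|_2=T_{n,d}$, and the in-facet vector $\vc_\cF-\vec n_\cF$.

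\emph{Step 2: control of the selected facet via Fleury's distribution.} Lemma~\ref{prop:tail-of-Tnd} with $\eps\asymp L^{-3/2}$ gives $T_{n,d}=(1+O(L^{-3/2}))t_{n,d}$ outside an event of Fleury-probability $\exp(-cnL^{-1})$. By the Lemma of Fleury, $\vc_\cF-\vec n_\cF$ is (rotated) $N(0,I_{n-1}/n)$, independent of $T_{n,d}$ and $\vec{\tilde Y}$. Gaussian concentration of its norm then gives $\|\vc_\cF-\vec n_\cF\|_2^2=1+O(L^{-1})$ outside probability $\exp(-cnL^{-2})$.

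These are events of Fleury-probability at most $\exp(-CnL^{-2})$. I would transfer them to the facet selected by $\erm$ using the volume reduction of Corollary~\ref{Lem:PositiveTheyddy2} (together with Corollary~\ref{Lem:eventFL}). Note that the corollary as stated uses the threshold $\exp(-Cn/\sqrt L)$; I need the weaker rate $nL^{-2}$. For this I would rerun the Markov/homogeneity argument of Step~II with the volume fluctuations $\exp(\pm C_1 n/\sqrt L)$ replaced by sharper ones. That requires the concentration of $\|\vec\xi\|_n$ from Theorem~\ref{Theorem:SPofGP}: deviation $L^{-1}$ at probability $\exp(-cnL^{-2})$, via $d_{\vec 0}^2\lesssim 1/(nL^2)$ and Lipschitz concentration. \textbf{This is the main obstacle.} The cheap volume estimates of Lemma~\ref{Lem:CheapVolumeEstimate} only give $\exp(\pm n/\sqrt L)$ precision, so the inflation argument must instead be localized to the thin shell $\bar B_n$ of directions, where $\|\vec\xi\|_n=(1+O(L^{-1}))M_{n,d}$ holds with the required probability.

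\emph{Step 3: the cross term and assembly.} On these events, $\|\vec\xi\|_n^2\|\vec\xi/\|\vec\xi\|_n-\vec n_\cF\|_2^2=n-\|\vec\xi\|_n^2T_{n,d}^2$. Using $M_{n,d}\approx\sqrt n/t_{n,d}$ (Lemma~\ref{Lem:CheapVolumeEstimate} and Corollary~\ref{C:MND}), the two in-facet vectors $u=\vec\xi/\|\vec\xi\|_n-\vec n_\cF$ and $v=\vc_\cF-\vec n_\cF$ have comparable squared lengths, of order $1$ after scaling. It remains to show that their inner product is $O(L^{-1})$ relative to these lengths, i.e.\ that $\vec\xi$ sits near-orthogonally to the barycentric offset within the facet. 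For this I would use that, conditionally on the facet, $v$ is an independent Gaussian in $\vec n_\cF^\perp$. The selection of $\cF$ by $\vec\xi$ depends on $v$ only through the cone condition. By rotational invariance, $\vec\xi$ is uniform on $\sqrt n S^{n-1}$, so $\langle u,v\rangle$ concentrates at scale $n^{-1/2}\ll L^{-1}$ outside probability $\exp(-cnL^{-2})$, handled again via Corollary~\ref{Lem:eventFL}. Expanding $\|u-v\|_2^2=\|u\|_2^2+\|v\|_2^2-2\langle u,v\rangle$ and taking square roots then yields $(1+O(L^{-1}))M_{n,d}$.
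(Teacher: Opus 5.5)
The containment and the Pythagorean set-up of Step~1 are fine. The gap is in Step~3: you put the near-orthogonality on the wrong pair of vectors, so the assembly cannot work. With $z:=\vec\xi/\|\vec\xi\|_n$ you have $u=z-\vn_\cF=v+(z-\vc_\cF)$. Hence $\langle u,v\rangle=\|v\|_2^2+\langle v,z-\vc_\cF\rangle\approx 1$, not $O(L^{-1})$.

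In Fleury's coordinates (up to the rotation $\vec U$), $v=(\bar{\vec Y},0)$ with $\bar{\vec Y}=\frac1n\sum_i\vec Y_i$. By contrast, $z-\vc_\cF=(\vec Y(\lambda-\mathbf 1_n/n),0)$ depends only on $\vec{\tilde Y}$ and the barycentric weights $\lambda$. The independence of $\bar{\vec Y}$ and $\vec{\tilde Y}$ (the lemma following \Cref{Lem:Fleury}) therefore decouples $v$ from $z-\vc_\cF$, never $v$ from $u$.

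Your claim in fact contradicts the statement you are proving. On the good event $\|u\|_2^2\approx 2$ and $\|v\|_2^2\approx 1$, so $\langle u,v\rangle\approx 0$ would give $\|u-v\|_2^2\approx 3$, i.e.\ a distance $\approx\sqrt3\,M_{n,d}$. Rotational invariance of $\vec\xi$ does not rescue the claim: conditionally on $\cF$ being the selected facet, $\vec\xi$ is not uniform relative to $\cF$.

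The right expansion is $\|z-\vc_\cF\|_2^2=\|u\|_2^2-\|v\|_2^2-2\langle v,z-\vc_\cF\rangle$. The paper controls the mixed term $\langle\bar\vc_\cF-\bar\vn_\cF,\bar z_\xi-\bar\vc_\cF\rangle=O(M_{n,d}^2/L)$, with $\bar z_\xi=M_{n,d}z$, under the volume-biased Fleury law (where $\lambda$ is uniform on the simplex). It uses that the direction of $\vc_\cF-\vn_\cF$ is uniform and independent of the centered simplex, together with \Cref{lem:david-alonso}. Only afterwards does it transfer the bound to the facet hit by $\vec\xi$.

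Even after this repair, your inputs are too coarse for a $1+O(L^{-1})$ conclusion. Everything hinges on $\|u\|_2^2=n/\|\vec\xi\|_n^2-\|\vn_\cF\|_2^2=2+O(L^{-1})$, a difference of two quantities each of size $\approx 2L$. Three sharp inputs are needed:
\begin{itemize}
\item relative precision $O(L^{-2})$ on the height. Take $\eps\asymp L^{-2}$ in \Cref{prop:tail-of-Tnd}, which still costs only $\exp(-cnL^{-2})$. Your $\eps\asymp L^{-3/2}$ leaves an error $L^{-1/2}$ in $\|\vn_\cF\|_2^2$.
\item relative precision $O(L^{-2})$ on $\|\vec\xi\|_n/M_{n,d}$. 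Your $L^{-1}$ leaves an $O(1)$ error in $\|z\|_2^2$.
\item above all, the sharp shell matching $M_{n,d}^2(t_{n,d}^2+2)=n\,(1+O(L^{-2}))$, i.e.\ $(\Med\bar T_{n,d})^2+2M_{n,d}^2=n+O(n/L^2)$. Your approximation $M_{n,d}\approx\sqrt n/t_{n,d}$ throws away exactly the ``$+2$'' that produces $\|u\|_2^2\approx 2$.
\end{itemize}

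In the paper these inputs come from the iterated volume bootstrap (\Cref{cor:terminal-volume-bootstrap} and \Cref{ss:pfMND}), which improves $\E|\SPO|\le\exp(Cn/\sqrt L)|\bar B_n|$ to $\exp(Cn/L^2)|\bar B_n|$. The same bootstrap handles the transfer of Fleury events at rate $nL^{-2}$ that you flagged as the main obstacle. These inputs do not come from \Cref{Theorem:SPofGP}: its variance bound gives no exponential tail by itself, and the deviation $L^{-1}$ you extract from it is one power of $L$ short.
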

Therefore, the theorem follows,  by Lemma \ref{Lem:Energy}, it holds that with probability of $1-\exp(-c_2 nL^{-2})$ that
\[
    \|\erm - (1+O(1/L) \cdot \frac{\|\vec \xi\|_{n}\vec{1}_{n}}{n}\|_{2}^2 \leq \frac{\|\vec \xi\|_n^2}{n} = \frac{(1+o(1))}{2L}
\]
And by Pythagoras, the claim follows, as 
\[
\erm - \|\vec \xi\|_{n}\vec{1}_n/n \perp \|\vec \xi\|_{n}\vec{1}_n/n 
\]
and $\|\vec \xi\|_{n} \approx M_{n,d}$, and therefore $\|\|\vec \xi\|_{n}\vec{1}_n/n \|_2^2 = (1+o(1))/2L$.

\subsubsection[Step V: On the height of the facets of Pnd]{{\color{red} Step V:} On the height of the facets  of $\PN$}
In this part, we will need to study the heights of the facets of the random polytope $\PN$, and relate the mode of $T_{n,d}$, which we denote by $t_{n,d}$,  to $M_{n,d}$. The following lemmas follow from Fleury's distribution; their proofs appear below. We use $L := \log(d/n)$
\begin{lemma}[Volume bound from facet heights]\label{lem:volume-from-facet-heights}
Let $t_{n,d}$ denote the mode of the density of $T_{n,d}$. Then, for a
universal constant $C>0$,
\[
 \frac{\E|\PN|}{|B_n|}
 \leq
 \exp\left(\frac{C\sqrt{n}}{\log(d/n)}\right)
 \big(t_{n,d}^2+2\big)^{n/2}.
\]
\end{lemma}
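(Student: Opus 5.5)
We would start from the exact volume identity derived in \Cref{ss:Flu} via the conic formula and linearity of expectation,
\[
\E|\PN| = n^{-1}\,\E|\cF_{n-1}(\PN)|\cdot \E|\vec Y\vec Y^{\top}|^{1/2}\,|\triangle|\cdot \E T_{n,d},
\]
or, more usefully, from the unnormalized form $n^{-1}2^n\binom{d}{n}\int\!\!\int |\det(\text{facet matrix})|\cdot t\cdot \1_{\text{facet}}\,d\gamma$. Our plan is to write this as an explicit integral over $(\vec Y,t)$ using Fleury's density. The facet event for $\conv\{X_1,\dots,X_n\}$ with normal height $t$ says the remaining $d-n$ Gaussian points have $|\langle X_j,\theta\rangle|\le t$, and this has probability exactly $\Psi(t)^{d-n}$. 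The change of variables $(X_1,\dots,X_n)\mapsto(\theta,t,\vec y)$ yields a Jacobian proportional to the $(n-1)$-volume of the simplex. Together with the Gaussian weight $\exp(-\sum\|X_i\|^2/2)=\exp(-nt^2/2-\sum\|y_i\|^2/2)$, this gives
\[
\E|\PN| = c_n\, 2^n\binom{d}{n}\, \E_G\big[\,|GG^{\top}|\,\big]\int_0^\infty t\,\Psi(t)^{d-n}e^{-nt^2/2}\,dt,
\]
where $G$ is an $(n-1)\times n$ Gaussian matrix and $c_n$ collects the sphere volume $|S^{n-1}|$, $|\triangle|$ and $(2\pi)^{-n^2/2}$ normalizations.

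The second step is a Laplace-method evaluation of the $t$-integral. We write $\Psi(t)^{d-n}e^{-nt^2/2}=\exp(-n\phi(t))$. The prefactor $2^n\binom{d}{n}$ is then to be compared with $\Psi(t_{n,d})^{-(d-n)}$ evaluated at the mode. Using $1-\Psi(t)\approx 2\varphi(t)/t$ and the mode equation $t_{n,d}=\frac{d-n}{n}\frac{2\varphi(t_{n,d})}{\Psi(t_{n,d})}$, we get $(d-n)\ln\Psi(t_{n,d})\approx -n$. We also get $e^{-nt_{n,d}^2/2}\approx (n t_{n,d}\sqrt{2\pi}/(2d))^{n}$. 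Multiplying by $\binom{d}{n}\le (ed/n)^n$, the $d^n$ powers cancel and leave roughly $(C t_{n,d})^n$. The Gaussian factor $\E|GG^{\top}|=(n-1)!\cdot n\cdot(\ldots)$ combines with $|\triangle|$ and $|S^{n-1}|$ into $|B_n|$ times powers of $n$. These exact constants are exactly what must produce $(t_{n,d}^2+2)^{n/2}$ rather than $(Ct_{n,d})^n$.

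To get the sharp constant, we would not expand crudely. Instead we would redo the computation as $\E\int_{\PN}1 = \E\sum_{\cF}\int_{\text{cone}}$ and exploit the Pythagorean structure. Radially, a point of the cone over $\cF$ at the boundary has squared length $t^2+\|\text{in-facet part}\|^2$, and the in-facet part for Fleury's $\vec Y$ has second moment $\approx 2$ per coordinate scale, since the centered simplex contributes $1$ and the barycenter $\frac1n\sum \vec Y_i\sim N(0,I/n)$ contributes the rest. Concretely, we would bound $|\PN|\le |\text{ball}|$-type quantities by the identity $|\PN| = |B_n|\,\E_{\theta}\|\theta\|_{\PN}^{-n}$ and Jensen/Hölder against the facet-weighted law. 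By \Cref{Lem:eventFL}, the latter is the law of a uniform point on a Fleury facet, whose squared norm is $T_{n,d}^2+\|\text{in-facet}\|^2$ with expectation $t_{n,d}^2+2+o(1)$ after normalization. This yields $(\E\|Z\|^2)^{n/2}$, i.e.\ $(t_{n,d}^2+2)^{n/2}$, up to fluctuation corrections.

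The main obstacle is controlling these fluctuation corrections at the level $\exp(C\sqrt n/\log(d/n))$. This requires three ingredients. First, $T_{n,d}$ concentrates around $t_{n,d}$ with $|T-t_{n,d}|\lesssim 1/(\sqrt n\,t_{n,d})$, by \Cref{prop:tail-of-Tnd}. Its contribution to the $n/2$-th power is $\exp(O(n\cdot t\cdot \frac{1}{\sqrt n t}/t^2))=\exp(O(\sqrt n/L))$, which is exactly the claimed loss. Second, the in-facet squared norm must concentrate, which follows from \Cref{Lem:SBSimplex}, \Cref{Lem:Energy} and \Cref{C:GoodFlu2}. Third, the determinant weight $|\vec Y\vec Y^{\top}|$ must be handled via \Cref{Lem:Goodman}. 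We expect the first of these, tracking the $T_{n,d}$ fluctuations through an $n/2$-power with the sharp $\sqrt n/L$ exponent, to be the delicate step. We would handle it by integrating the tail bound of \Cref{prop:tail-of-Tnd} against $(t^2+2)^{n/2}/(t_{n,d}^2+2)^{n/2}\le \exp(n(t-t_{n,d})/t_{n,d})$.
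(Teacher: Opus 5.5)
Your central idea is sound: trade the uniform direction in $|\PN|=|B_n|\,\E_\theta\|\theta\|_{\PN}^{-n}$ for the cone-volume law, then apply Jensen. But the plan has a gap exactly at the step you flag as the crux, and that step would fail.

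\textbf{Why the fluctuation step fails.} Integrating the tail of \Cref{prop:tail-of-Tnd} against $\exp(n(T_{n,d}-t_{n,d})/t_{n,d})$ computes an exponential moment with parameter $n/t_{n,d}$. Writing $T_{n,d}-t_{n,d}=\eps t_{n,d}$, the integrand behaves like $\exp(n\eps-cnL^2\eps^2)$, which is maximized at $\eps\asymp L^{-2}$. This point lies well inside the window $\eps\lesssim 1/L$ where $V_{n,d}''\asymp nL$, so the density has a matching lower bound there, and $\E\exp(n(T_{n,d}-t_{n,d})/t_{n,d})=e^{\Theta(n/L^2)}$. Since $n/L^2\gg\sqrt n/L$ whenever $L\ll\sqrt n$ (in particular for $d\le\exp(n^{c})$), your plan yields only $\exp(Cn/L^2)(t_{n,d}^2+2)^{n/2}$. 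Your heuristic that a typical deviation $1/(\sqrt n\,t_{n,d})$ costs $\exp(\sqrt n/L)$ is correct for typical values, not for an $n/2$-th moment. For the tangential part it is worse still, because the upper tail of the shell of a simplex is only of order $\exp(-c\sqrt n\,\eps)$.

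The same problem appears if you apply Jensen pathwise to the cone-volume law $\mu_{\PN}$ of a fixed realization. You then get $\E|\PN|\le|B_n|\,\E[(\E_{\mu_{\PN}}\|Z\|_2^2)^{n/2}]$, again with the power inside an expectation. Moreover, \Cref{Lem:eventFL} is an annealed statement and does not identify $\mu_{\PN}$ with Fleury's law. (Applying Jensen directly to $\E_\theta\|\theta\|_{\PN}^{-n}$ goes the wrong way.)

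\textbf{The repair.} Do the change of measure in annealed form; then there are no fluctuation corrections at all.
\begin{enumerate}
\item For any polytope with the origin in its interior, computing the solid angle of each facet cone gives the deterministic identity $|B_n|=\sum_{\cF}|\conv(0,\cF)|\,\E_{Z\sim U(\cF)}\|Z\|_2^{-n}$.
\item Taking expectations, $|B_n|=\E|\PN|\cdot\E_{\mathrm{vb}}\|Z\|_2^{-n}$, where $\E_{\mathrm{vb}}[g]:=\E\bigl[\sum_{\cF}|\conv(0,\cF)|\,\E_{Z\sim U(\cF)}g(Z)\bigr]/\E|\PN|$ is the cone-volume-biased Fleury law.
\item Jensen for $x\mapsto x^{-n/2}$ then gives $\E|\PN|\le|B_n|\,(\E_{\mathrm{vb}}\|Z\|_2^2)^{n/2}$.
\end{enumerate}

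It remains to compute $\E_{\mathrm{vb}}\|Z\|_2^2$. Write $\|Z\|_2^2=T^2+\|\vec Y\lambda\|_2^2$ with $\lambda\sim U(\triangle)$ and $\vec Y\lambda=\bar Y+\vec Y(\lambda-\mathbf{1}_n/n)$. Under the bias:
\begin{enumerate}
\item $T$ has density $\propto t\,\Psi(t)^{d-n}e^{-nt^2/2}$, and $\bar Y\sim N(0,I_{n-1}/n)$ is independent of the rest, so $\E_{\mathrm{vb}}\|\bar Y\|_2^2=(n-1)/n$.
\item Write $\vec Y(I-\mathbf{1}_n\mathbf{1}_n^\top/n)=WQ^\top$, with $Q\in\R^{n\times(n-1)}$ an orthonormal basis of $\mathbf{1}_n^\perp$. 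Fleury's $\vec Y$-law already carries the Blaschke--Petkantschin weight $|\cF|\propto|\det W|$, so under the bias $W$ has density $\propto\det(W)^2e^{-\|W\|_F^2/2}$. By scaling, $\E_{\mathrm{vb}}\|W\|_F^2=(n-1)(n+1)$.
\item Together with $\Cov(\lambda)=\frac{1}{n(n+1)}(I-\mathbf{1}_n\mathbf{1}_n^\top/n)$, this gives $\E_{\mathrm{vb}}\|\vec Y(\lambda-\mathbf{1}_n/n)\|_2^2=(n-1)/n$. The cross term has mean zero, so $\E_{\mathrm{vb}}\|\vec Y\lambda\|_2^2=2-2/n$.
\item $\E_{\mathrm{vb}}T^2=\E T_{n,d}^3/\E T_{n,d}\le t_{n,d}^2+C/\sqrt n$, because $|\E T_{n,d}-t_{n,d}|\lesssim(nL)^{-1/2}$ and $\Var T_{n,d}\lesssim(nL)^{-1}$ by \Cref{prop:tail-of-Tnd} and log-concavity.
\end{enumerate}
Since $t_{n,d}^2\ge L$, we get $(t_{n,d}^2+2+C/\sqrt n)^{n/2}\le e^{C\sqrt n/L}(t_{n,d}^2+2)^{n/2}$, which is the lemma.

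\textbf{Comparison with the paper.} This is a genuinely different route. The paper shows that a uniform point of a volume-biased Fleury facet satisfies $\|Z\|_2^2\le R^2:=t_{n,d}^2+2+C/\sqrt n$ with probability at least $0.9$. Hence $\E|\PN\cap RB_n|\ge 0.9\,\E|\PN|$, and so $\E|\PN|\lesssim R^n|B_n|$. Both arguments keep the $n/2$ power outside every average. The paper needs constant-probability concentration of the height, the barycentric offset, the tangential shell and the cross term. The Jensen route needs only second moments, which here are explicit, so \Cref{Lem:Goodman}, \Cref{Lem:Energy} and your Laplace evaluation become unnecessary.
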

% \begin{proof}
% We remind that  $t_{n,d} = (1+o_{d/n}(1)) \cdot \sqrt{2\log(d/n)}$ and with probability of $0.9$
% \[
%     |\frac{t_{n,d}}{T_{n,d}} - 1| \lesssim \frac{1}{\sqrt{n}\log(d/n)}
% \]
% is the distribution $\|\vec{n}_{\cF}\|_2$, Recall that when a facet $\cF$  is drawn from Fleury's distribution (see Lemma \ref{Lem:Fleury}) it satisfies the following with probability of $1-\exp(-cn\eps^2)$:
% \begin{equation}
%     \|\vc_{\cF} - \vn_{\cF}\|_{2} \lesssim \eps
% \end{equation}
% Consider the linear functional defined via $\langle \theta,\cdot \rangle$, where $\theta:=\frac{\vc_{\cF} - \vec n_{\cF}}{\|\vc_{\cF} - \vec n_{\cF}\|_{2}}$, and the isotropic constant of $\cF$ implies that most of the volume (surface area) of $\cF$ lies in a ``thin slab'' in the direction of $\theta$. Formally, there exists and event $\cE_3$ with probability of $1-\exp(Cn\lln(d/n))$, such that for all facets, it holds 
% \begin{equation}\label{Eq:FF}
%     \Pr_{Z \sim U(\cF)}(|\langle Z,\theta \rangle| \lesssim n^{-1/2}) \geq 0.9
% \end{equation}
% and we also recall the thin-shell property
% \begin{equation}
%     \Pr_{Z \sim U(\cF)}(|\|Z\|_{2} - 1| \lesssim n^{-1/2}) \geq 0.9.
% \end{equation}
% Under the event of the last three equations (with $\eps = O(1/\sqrt{n})$) and \Cref{prop:tail-of-Tnd}, we apply Pythagoras's law, and obtain that
% \[
%    \Pr_{Z \sim U(\cF)}(\|z\|_2^2 \in (t_{n,d}^2  + 2 + O(1/(\sqrt{n}\log(d/n)))) \geq 0.9 
% \]
% and apply Fluery's distribution argument and the claim follows.
% \end{proof}
\begin{lemma}[Low-height cones]\label{lem:low-height-cones}
 For a facet
$\cF\in\cF_{n-1}(\PN)$, let
\[
    h_{\cF}:=\|\vn_{\cF}\|_2
\]
be its height, where $\vn_{\cF}$ is the point of minimal Euclidean norm in
$\operatorname{aff}(\cF)$. For $\varepsilon\geq 0$, define the low-height part
of $\PN$ by
\[
    \PN^{\varepsilon}
    :=
    \bigcup_{\substack{\cF\in\cF_{n-1}(\PN):\\
    h_{\cF}\leq (1-\varepsilon)t_{n,d}}}
    \operatorname{conv}(0,\cF).
\]
Assume that
\[
    0\leq \varepsilon\leq \frac{c_0}{\log(d/n)},
\]
where $c_0>0$ is the constant from \Cref{prop:tail-of-Tnd}. Then there is a
universal constant $c>0$ such that
\[
    \E |\PN^{\varepsilon}|
    \leq
    \exp\{-c n\log(d/n)^2\varepsilon^2\}\,\E|\PN|.
\]
Consequently, with probability at least
\[
    1-\exp\{-c n\log(d/n)^2\varepsilon^2/2\},
\]
one has
\[
    |\PN^{\varepsilon}|
    \leq
    \exp\{-c n\log(d/n)^2\varepsilon^2/2\}\,\E|\PN|.
\]
\end{lemma}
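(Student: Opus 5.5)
The plan is to compute $\E|\PN^{\varepsilon}|$ exactly by the same linearity-of-expectation and conic-formula decomposition used above for $V=\E|\PN|$. Then the event $\{h_{\cF}\le(1-\varepsilon)t_{n,d}\}$ factors out as a probability under Fleury's distribution for $T_{n,d}$.

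\emph{Step 1 (exact factorization).} By the conic formula,
\[
|\PN^{\varepsilon}|=n^{-1}\sum_{\cF}\1\{h_{\cF}\le(1-\varepsilon)t_{n,d}\}\,|\cF|\,h_{\cF}.
\]
Summing over all $2^n\binom dn$ choices of $(S,\vec\eps)$ and conditioning on the facet event $\cE$, Lemma~\ref{Lem:Fleury} shows that under Fleury's distribution $h_{\cF}=T_{n,d}$ is independent of $\vec Y$, hence of $|\cF|=|\vec Y\vec Y^\top|^{1/2}|\triangle|$ up to normalization. This gives
\[
\E|\PN^{\varepsilon}|=\frac{\E\big[T_{n,d}\1\{T_{n,d}\le(1-\varepsilon)t_{n,d}\}\big]}{\E T_{n,d}}\;\E|\PN|,
\]
which is exactly Corollary~\ref{Lem:eventFL} with $A_1=\{T_{n,d}\le(1-\varepsilon)t_{n,d}\}$ and $A_2,A_3$ trivial. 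Since the $T$-weight in the conic formula is tilted by $t$, I would work directly with the tilted density $\tilde p(t)\propto t\,\Psi(t)^{d-n}e^{-nt^2/2}$. The ratio above is $\tilde\Pr(T\le(1-\varepsilon)t_{n,d})$, and it is bounded by $\Pr(T_{n,d}\le(1-\varepsilon)t_{n,d})\cdot(1-\varepsilon)t_{n,d}/\E T_{n,d}\le\Pr(T_{n,d}\le(1-\varepsilon)t_{n,d})\cdot(1+o(1))$, because $\E T_{n,d}\ge(1-o(1))t_{n,d}$ by concentration.

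\emph{Step 2 (tail bound).} Apply Lemma~\ref{prop:tail-of-Tnd}, using $|t_{n,d}-\Med T_{n,d}|\lesssim 1/(\sqrt n\,t_{n,d})$. For $\varepsilon\ge C/(\sqrt n\,L)$ we have $(1-\varepsilon)t_{n,d}\le\Med T_{n,d}-(\varepsilon/2)t_{n,d}$, and hence
\[
\Pr\big(T_{n,d}\le(1-\varepsilon)t_{n,d}\big)\le\exp(-c_2nL^2\varepsilon^2/4).
\]
For smaller $\varepsilon$ the claimed bound $\exp(-cnL^2\varepsilon^2)$ is at least $e^{-cC^2}$, and it holds trivially since $|\PN^{\varepsilon}|\le|\PN|$, after shrinking $c$. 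Absorbing the $1+o(1)$ factor, or alternatively redoing the quadratic log-density expansion of Lemma~\ref{prop:tail-of-Tnd} with the extra $\log t$ term, which shifts the mode only by $O(1/(nt_{n,d}))$, yields the expectation bound.

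\emph{Step 3 (high probability).} Apply Markov's inequality to $|\PN^{\varepsilon}|\ge 0$:
\[
\Pr\big(|\PN^{\varepsilon}|>e^{-cnL^2\varepsilon^2/2}\E|\PN|\big)\le e^{-cnL^2\varepsilon^2/2}.
\]

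The main obstacle is Step 1. One must justify that conditioning on $\cE$ and weighting by $|\cF|h_{\cF}$ still leaves $T_{n,d}$ independent of $|\cF|$, and must handle the extra factor $t$ in the weight. This requires reading Lemma~\ref{Lem:Fleury} as describing the facet law before volume-weighting, and checking that the tilt by $t$ perturbs the tail estimate only by constants in the regime $\varepsilon\le c_0/L$, where the quadratic approximation is valid.
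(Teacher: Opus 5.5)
Your overall route matches the paper's: conic formula, exchangeability over signed $n$-tuples, Fleury's independence of $T_{n,d}$ from $\vec Y$, reduction to the ratio $\E[T_{n,d}\mathbf 1\{T_{n,d}\le(1-\varepsilon)t_{n,d}\}]/\E T_{n,d}$, the tail lemma, and Markov. What you call the main obstacle is not one. Fleury's lemma describes the law conditional on the facet event, not the volume-weighted law, and $|\cF|$ is a function of $\vec Y$ alone, so the weight $h_{\cF}|\cF|$ factorizes. For the tilt, the paper is slightly cleaner. The conditional mean of $T_{n,d}$ on a lower level set $\{T_{n,d}\le a\}$ is at most $\E T_{n,d}$, so the ratio is at most $\Pr\{T_{n,d}\le(1-\varepsilon)t_{n,d}\}$ exactly, with no $(1+o(1))$ factor and no separate lower bound on $\E T_{n,d}$. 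Your version also works.

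The genuine gap is your small-$\varepsilon$ case.
\begin{itemize}
\item For $0<\varepsilon<C/(\sqrt n L)$ the target factor $e^{-cnL^2\varepsilon^2}$ lies in $(e^{-cC^2},1)$, which is strictly below $1$ for every $c>0$. But $|\PN^{\varepsilon}|\le|\PN|$ only gives a ratio of at most $1$, so no shrinking of $c$ makes this step trivial.
\item You need the ratio bounded by a constant strictly less than $1$, uniformly in this regime. The two-sided estimate $|t_{n,d}-\Med T_{n,d}|\lesssim 1/(\sqrt n\,t_{n,d})$ does not by itself keep $\Pr\{T_{n,d}\le t_{n,d}\}$ away from $1$. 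The density near the mode is of order $\sqrt{nL}$, so a shift of order $1/\sqrt{nL}$ can carry constant mass.
\end{itemize}
The missing input is the one-sided fact $\Med T_{n,d}\ge t_{n,d}$ (\Cref{lem:median-right-of-mode}, which follows from the monotonicity of $V''$). With it, $\{T_{n,d}\le(1-\varepsilon)t_{n,d}\}\subseteq\{|T_{n,d}-\Med T_{n,d}|\ge\varepsilon t_{n,d}\}$ for every $\varepsilon\ge0$. Then \Cref{prop:tail-of-Tnd} applies on the whole range $0\le\varepsilon\le c_0/L$ with no case split, which is exactly how the paper finishes. Alternatively, the same fact gives $\Pr\{T_{n,d}\le t_{n,d}\}\le 1/2$, which handles your small-$\varepsilon$ regime once $c$ is chosen small enough.
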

\begin{lemma}[Pure-noise MNI selects a typical-height facet]
\label{lem:pure-noise-mni-typical-height}
Let \(\vec \xi\) be independent of \(\vec X\)
and assume that \(\|\vec \xi\|_2=\sqrt n\).
Let \(\cF_{\vec \xi}\) be the unique facet of \(\PN\) hit by the MNI ray,
that is
\[
    \frac{\vec \xi}{\|\vec \xi\|_{\PN}}
    \in
    \operatorname{relint}(\cF_{\vec \xi}).
\]
Then there exist universal constants \(c,C>0\) such that
\[
    \Pr_{\vec X,\vec \xi}
    \left\{
        \left|
        \|\vn_{\cF_{\vec \xi}}\|_2
        -
        t_{n,d}
        \right|
        >
        C L^{-5/4}t_{n,d}
    \right\}
    \le
    \exp\left(
        -c\frac{n}{\sqrt L}
    \right).
\]
\end{lemma}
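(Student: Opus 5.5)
The strategy is to reduce both tails to volume statements about cones over facets, and then convert these to probability statements for the ray direction of $\vec\xi$, in the spirit of \Cref{Lem:PositiveTheyddy2}. Set $\varepsilon:=CL^{-5/4}$. By rotational invariance of $\vec X$, I may take $\vec\xi$ uniform on $\sqrt n\,\Sn$ and independent of $\PN$. Then the probability that the ray hits a facet in a family $\mathcal{A}$ equals the normalized spherical measure $\sigma(\mathcal{A})$ of the union of the cones $\operatorname{cone}(\cF)$, $\cF\in\mathcal{A}$. The polar-coordinates formula gives
\[
|\textstyle\bigcup_{\cF\in\mathcal{A}}\operatorname{conv}(0,\cF)|=|B_n|\int_{\Sn}\|\theta\|_{\PN}^{-n}\mathbf 1_{\theta\in\mathcal{A}}\,d\sigma(\theta).
\]
By \Cref{Lem:CheapVolumeEstimate}, with probability $1-\exp(-cn)$ we have $\|\theta\|_{\PN}\le(1+C/\sqrt L)M_{n,d}^s$ uniformly on the sphere. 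Hence
\[
\sigma(\mathcal{A})\le \big((1+C/\sqrt L)M^s_{n,d}\big)^{n}\,|\textstyle\bigcup_{\mathcal{A}}\operatorname{conv}(0,\cF)|/|B_n|.
\]
Using $\E|\SPO|\le \exp(Cn/\sqrt L)|\bar B_n|$ (Step I), a bad family whose cone volume is at most $\exp(-C'n/\sqrt L)\,\E|\PN|$, with $C'$ large, has spherical measure at most $\exp(-cn/\sqrt L)$.

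\textbf{Lower tail.} Apply \Cref{lem:low-height-cones} with $\varepsilon=CL^{-5/4}\le c_0/L$. The exponent is $nL^2\varepsilon^2=C^2 n/\sqrt L$. So with probability $1-\exp(-cC^2n/(2\sqrt L))$ we get $|\PN^{\varepsilon}|\le \exp(-cC^2n/(2\sqrt L))\E|\PN|$. Choosing $C$ large enough that $cC^2/2$ beats the $\exp(Cn/\sqrt L)$ loss above, the reduction gives that the ray hits a facet with $h_\cF\le(1-\varepsilon)t_{n,d}$ with probability at most $\exp(-cn/\sqrt L)$.

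\textbf{Upper tail.} Here I would argue directly rather than through volume, because a facet with large height is exactly one whose cone is ``cheap'' to hit. If $\vec\xi/\|\vec\xi\|_{\PN}\in\cF$ with $h_\cF\ge(1+\varepsilon)t_{n,d}$, then
\[
\|\vec\xi\|_{\PN}=\langle\vec\xi,\vn_\cF\rangle/h_\cF^2\le \sqrt n/h_\cF\le \sqrt n/((1+\varepsilon)t_{n,d}).
\]
Meanwhile $\|\vec\xi\|_{\PN}$ concentrates around $M_{n,d}$: by Lipschitz concentration on the sphere with constant $O(M_{n,d}/\sqrt n)$, its relative deviation exceeds $\varepsilon/3$ with probability at most $\exp(-c n\varepsilon^2)$. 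This bound is too weak by itself. I would therefore use \Cref{Theorem:SPofGP}, or the superconcentration it encodes, to get fluctuations of relative order $1/(\sqrt n L)$, which gives $\exp(-cn\varepsilon^2L^2)=\exp(-cn/\sqrt L)$. It then remains to show $M_{n,d}\ge(1-\varepsilon/3)\sqrt n/t_{n,d}$. This comparison follows from \Cref{lem:volume-from-facet-heights} together with the reduction above: the volume bound $(t_{n,d}^2+2)^{n/2}$ and Jensen/polar integration give $M_{n,d}^s\gtrsim (1-O(1/(\sqrt nL)))/\sqrt{t_{n,d}^2+2}$.

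\textbf{Main obstacle.} That last estimate yields $M_{n,d}\approx\sqrt n/\sqrt{t^2_{n,d}+2}$, not $\sqrt n/t_{n,d}$. This is not a contradiction: the ray hits the facet at distance about $\sqrt{t^2+2}$, not at the foot of the normal. So I would redo the upper tail with the exact identity
\[
\|\vec\xi\|_{\PN}=\sqrt n\cos\angle(\vec\xi,\vn_\cF)/h_\cF .
\]
I would then combine it with the thin-shell control of the hitting point inside the facet from \Cref{Lem:Energy} and \Cref{C:GoodFlu2}, which pins $\|\vec\xi\|_2/\|\vec\xi\|_{\PN}$ to $\sqrt{h_\cF^2+2}\,(1+O(L^{-5/4}))$ outside an $\exp(-cn/\sqrt L)$ event, via \Cref{Lem:PositiveTheyddy2}. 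Because $x\mapsto\sqrt{x^2+2}$ is monotone with derivative about $1$ at $t_{n,d}$, a relative error of $L^{-5/4}$ in $\sqrt{h^2+2}$ transfers to a relative error of order $L^{-5/4}$ in $h_\cF$. Controlling this transfer uniformly over the facets actually selected by the ray, rather than over Fleury-typical ones, is the step I expect to be the delicate one.
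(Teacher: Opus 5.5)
\textbf{Where the proposal stands.} Your conversion from cone volume to hitting probability matches the paper, and so does your lower-tail argument via \Cref{lem:low-height-cones}. The gap is in the upper tail.

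\textbf{The gap: your replacement upper-tail route does not reach $\exp(-cn/\sqrt L)$.} The route needs a lower bound $\|\vec\xi\|_{\PN}\ge (1-cL^{-5/4})\sqrt n/\sqrt{t_{n,d}^2+2}$ outside an event of probability $\exp(-cn/\sqrt L)$. That is an exponential lower-tail estimate for the MNI norm at the superconcentrated scale, and nothing you cite supplies it.
\begin{itemize}
\item \Cref{Theorem:SPofGP} is only a variance bound, $\lesssim 1/(nL^2)$. At $\varepsilon=L^{-5/4}$, Chebyshev gives failure probability of order $\sqrt L/n$, not $\exp(-cnL^2\varepsilon^2)$. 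Going from a variance bound to an exponential tail at that scale needs a separate superconcentration inequality, which is not available here.
\item Ordinary concentration is too weak. Spherical Lipschitz concentration for fixed $\vec X$ gives $\exp(-cn\varepsilon^2)$. Gaussian concentration in $\vec X$, with typical gradient norm $\|\erm\|_2/h_{\cF}\approx 1/L$, gives at best $\exp(-cnL\varepsilon^2)=\exp(-cnL^{-3/2})$.
\item A volume argument applied to the norm itself gives at best $\exp(-cn\varepsilon)=\exp(-cnL^{-5/4})$.
\end{itemize}
All of these fall well short of $\exp(-cn/\sqrt L)$. Separately, the two-sided pinning $\|\vec\xi\|_2/\|\vec\xi\|_{\PN}=\sqrt{h_{\cF}^2+2}\,(1+O(L^{-5/4}))$ is also unavailable, because the upper tail of $\|Z\|_2$ on $\triangle_c$ is only $\exp(-c\sqrt n\,\eta)$. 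Your direction only needs the sub-Gaussian lower side, but that does not repair the norm concentration.

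\textbf{Why the detour is unnecessary.} Your stated reason for leaving the volume route is backwards. Let $\Omega_{\cF}:=\operatorname{cone}(\cF)\cap\Sn$. On $\Omega_{\cF}$ the radial function of $\PN$ is at least $h_{\cF}$, so $\sigma(\Omega_{\cF})\le |\operatorname{conv}(0,\cF)|/(|B_n|\,h_{\cF}^n)$. High facets therefore carry more cone volume per unit solid angle, not less.
\begin{itemize}
\item Your own conversion uses only the uniform inradius bound, so it applies verbatim to any bad family.
\item That same conversion is what handles the uniformity over facets actually selected by the ray, which you flagged as the delicate step.
\end{itemize}

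\textbf{The fix.} What you need is the expected cone volume over high facets. The same Fleury/exchangeability identity as in \Cref{lem:low-height-cones} gives it as $\E|\PN|\cdot\E[T_{n,d}\mathbf 1_A]/\E T_{n,d}$, with $A=\{T_{n,d}>(1+\varepsilon)t_{n,d}\}$.
\begin{itemize}
\item The one change from the low-height case is that the trivial bound $\E[T_{n,d}\mathbf 1_A]\le \E T_{n,d}\Pr(A)$ now goes the wrong way.
\item Instead, integrate the right tail of $T_{n,d}$: use \Cref{lem:right-tail-from-mode} with $r=\varepsilon t_{n,d}\approx CL^{-3/4}\le K/\sqrt L$, and log-concavity beyond its range.
\item This gives $\lesssim \exp(-cnL\,\varepsilon^2t_{n,d}^2)=\exp(-cC^2 n/\sqrt L)$.
\end{itemize}
Markov's inequality and your volume-to-solid-angle conversion then finish the argument. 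This is the paper's proof, which treats both tails at once through the two-sided event $\{|h_{\cF}-t_{n,d}|>\varepsilon_0 t_{n,d}\}$.
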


\begin{proof}
Set
\[
    a_{n,d}:=\frac{n}{\sqrt L},
    \qquad
    \eps_0:=K L^{-5/4},
\]
where \(K>0\) is a sufficiently large universal constant to be fixed later.

Define the bad-height event on facets by
\[
    A_{\eps_0}(\cF)
    :=
    \left\{
        \left|
        \|\vn_{\cF}\|_2
        -
        t_{n,d}
        \right|
        >
        \eps_0 t_{n,d}
    \right\}.
\]
Let \(K_{\eps_0}(\vec X)\subset\SPO\) be the union of the cones over the
scaled bad facets:
\[
    K_{\eps_0}(\vec X)
    :=
    \bigcup_{\cF:\,A_{\eps_0}(\cF)}
    \operatorname{conv}\bigl(0,M_{n,d}\cF\bigr).
\]
The cones over the facets decompose \(\SPO\), up to overlaps of measure zero.

We first estimate the expected volume of \(K_{\eps_0}\). By the conic
decomposition,
\[
    |\operatorname{conv}(0,M_{n,d}\cF)|
    =
    \frac{M_{n,d}^n}{n}\,
    \|\vn_{\cF}\|_2\,|\cF|.
\]
Therefore, using exchangeability of signed \(n\)-tuples and Fleury's
conditional facet distribution,
\[
    \frac{\E |K_{\eps_0}|}{\E|\SPO|}
    =
    \frac{
        \E\left[
            T_{n,d}\,
            \mathbf 1_{\{|T_{n,d}-t_{n,d}|>\eps_0t_{n,d}\}}
        \right]
    }{
        \E T_{n,d}
    }.
    \tag{3}
\]
Here we used that, under Fleury's distribution, the height is distributed as
\(T_{n,d}\), and the tangential volume of the facet is independent of
\(T_{n,d}\).

By the tail estimate for \(T_{n,d}\),
\[
    \Pr
    \left\{
        |T_{n,d}-t_{n,d}|
        >
        u t_{n,d}
    \right\}
    \le
    2\exp\{-c n t_{n,d}^{4}u^2\}
\]
for \(0\le u\le c_0/L\). Since
\[
    t_{n,d}^2\asymp L,
\]
and since \(\eps_0=K L^{-5/4}\le c_0/L\) for \(L\) large, we get
\[
    n t_{n,d}^4\eps_0^2
    \asymp
    nL^2\cdot K^2L^{-5/2}
    =
    K^2\frac{n}{\sqrt L}
    =
    K^2a_{n,d}.
\]
Integrating the same tail bound, and using \(\E T_{n,d}\asymp t_{n,d}\),
equation \((3)\) gives
\[
    \E |K_{\eps_0}|
    \le
    \exp(-cK^2a_{n,d})\,\E|\SPO|.
    \tag{4}
\]

By Markov's inequality, with probability at least
\[
    1-\exp(-cK^2a_{n,d}/2),
\]
we have
\[
    |K_{\eps_0}|
    \le
    \exp(-cK^2a_{n,d}/2)\,\E|\SPO|.
    \tag{5}
\]
Using the Step I volume estimate
\[
    \E|\SPO|
    \le
    \exp(Ca_{n,d})|\bar B_n|,
\]
we obtain, on the event \((5)\),
\[
    |K_{\eps_0}|
    \le
    \exp\bigl(-(cK^2/2-C)a_{n,d}\bigr)|\bar B_n|.
    \tag{6}
\]

Now we convert this volume estimate into a statement about the probability
that the pure-noise MNI ray hits a bad facet. For fixed \(\vec X\), define
the set of bad directions
\[
    D_{\eps_0}(\vec X)
    :=
    \left\{
        \theta\in S^{n-1}:
        \text{ the ray }\mathbb R_+\theta
        \text{ intersects }K_{\eps_0}(\vec X)
    \right\}.
\]
Since \(\ft\equiv0\), the direction of \(\vec \xi\) is uniform on
\(S^{n-1}\), and therefore
\[
    \Pr_{\vec \xi}
    \left\{
        A_{\eps_0}(\cF_{\vec \xi})
        \mid \vec X
    \right\}
    =
    \sigma_{n-1}(D_{\eps_0}(\vec X)).
    \tag{7}
\]

On the Step I radial event \((2)\),
\[
    \bar B_n
    \subset
    \left(1+\frac{C}{\sqrt L}\right)\SPO .
\]
Equivalently, for every direction \(\theta\in S^{n-1}\), the radial function
of \(\SPO\) satisfies
\[
    r_{\SPO}(\theta)
    \ge
    \frac{\sqrt n}{1+C/\sqrt L}
    \ge
    \left(1-\frac{C}{\sqrt L}\right)\sqrt n .
    \tag{8}
\]
Therefore, for every \(\theta\in D_{\eps_0}(\vec X)\), the cone
\(K_{\eps_0}\) contains the radial segment of length
\[
    r_0:=
    \left(1-\frac{C}{\sqrt L}\right)\sqrt n
\]
in direction \(\theta\). Hence
\[
    K_{\eps_0}
    \supset
    r_0 B_2^n\cap
    \operatorname{cone}(D_{\eps_0}(\vec X)).
\]
Taking volumes gives
\[
    |K_{\eps_0}|
    \ge
    \left(1-\frac{C}{\sqrt L}\right)^n
    |\bar B_n|\,
    \sigma_{n-1}(D_{\eps_0}(\vec X)).
\]
Since
\[
    \left(1-\frac{C}{\sqrt L}\right)^n
    \ge
    \exp\left(-C\frac{n}{\sqrt L}\right)
    =
    \exp(-Ca_{n,d}),
\]
we obtain
\[
    \sigma_{n-1}(D_{\eps_0}(\vec X))
    \le
    \exp(Ca_{n,d})
    \frac{|K_{\eps_0}|}{|\bar B_n|}.
    \tag{9}
\]
Combining \((6)\) and \((9)\), we get
\[
    \sigma_{n-1}(D_{\eps_0}(\vec X))
    \le
    \exp\bigl(-(cK^2/2-C)a_{n,d}\bigr).
\]
Choosing \(K>0\) sufficiently large yields
\[
    \sigma_{n-1}(D_{\eps_0}(\vec X))
    \le
    \exp(-c a_{n,d}).
    \tag{10}
\]

Finally, we combine the exceptional probabilities: the failure of the Step I
volume/radial event has probability at most \(\exp(-ca_{n,d})\), the failure
of \((5)\) has probability at most \(\exp(-cK^2a_{n,d})\), and on the
intersection of the good events, \((7)\) and \((10)\) give
\[
    \Pr_{\vec \xi}
    \left\{
        A_{\eps_0}(\cF_{\vec \xi})
        \mid \vec X
    \right\}
    \le
    \exp(-c a_{n,d}).
\]
Therefore,
\[
    \Pr_{\vec X,\vec \xi}
    \left\{
        A_{\eps_0}(\cF_{\vec \xi})
    \right\}
    \le
    \exp(-c a_{n,d})
    =
    \exp\left(-c\frac{n}{\sqrt L}\right).
\]
Since
\[
    \eps_0
    =
    K L^{-5/4},
\]
this is exactly
\[
    \Pr_{\vec X,\vec \xi}
    \left\{
        \left|
        \|\vn_{\cF_{\vec \xi}}\|_2
        -
        t_{n,d}
        \right|
        >
        K L^{-5/4}t_{n,d}
    \right\}
    \le
    \exp\left(
        -c\frac{n}{\sqrt L}
    \right).
\]
Renaming \(K\) as \(C\) proves the lemma.
\end{proof}
Now, if we knew that $\vec \xi$ lies in the thin shell of the facet, then we could estimate $t_{n,d}$, and then we would conclude by the last lemma and a simple volume estimate that
\[
    \E |\SPO| \lesssim \exp(Cn \cdot \log(d/n)^{-5/4})|\bar{B}_n|.
\]
which is a better estimate than Step I. Therefore, we would prove this claim
\begin{lemma}[Volume upgrade from the facet thin shell]
\label{lem:volume-upgrade-L-five-fourths}
\[
    \E|\SPO|
    \le
    \exp\left(CnL^{-5/4}\right)|\bar B_n|.
\]
\end{lemma}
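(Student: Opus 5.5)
The plan is to reduce the statement to an upper bound on $M_{n,d}$ and then feed that bound into Lemma~\ref{lem:volume-from-facet-heights}. Since $\SPO=M_{n,d}\PN$, we have $\E|\SPO|=M_{n,d}^n\E|\PN|$. Lemma~\ref{lem:volume-from-facet-heights} then gives
\[
\frac{\E|\SPO|}{|\bar B_n|}\le \exp\Big(\frac{C\sqrt n}{L}\Big)\Big(M_{n,d}\sqrt{\tfrac{t_{n,d}^2+2}{n}}\Big)^{n}.
\]
The prefactor $\exp(C\sqrt n/L)$ is at most $\exp(CnL^{-5/4})$ once $n\ge L^{1/2}$. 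So it suffices to prove
\[
M_{n,d}\le \big(1+CL^{-5/4}\big)\sqrt{\tfrac{n}{t_{n,d}^2+2}},
\]
because raising $1+CL^{-5/4}$ to the $n$-th power gives exactly $\exp(CnL^{-5/4})$. Since $M_{n,d}=\E\|\vec\xi\|_n$ and $\|\vec\xi\|_2=\sqrt n$, the point $x:=\vec\xi/\|\vec\xi\|_n$ on the hit facet $\cF_{\vec\xi}$ satisfies $\|\vec\xi\|_n=\sqrt n/\|x\|_2$. The target therefore becomes a lower bound $\|x\|_2^2\ge (1-CL^{-5/4})(t_{n,d}^2+2)$, holding outside an event of tiny probability.

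To get this lower bound, I would use Pythagoras on the hit facet: $\|x\|_2^2=h_{\cF}^2+\|x-\vn_{\cF}\|_2^2$, where $h_{\cF}=\|\vn_{\cF}\|_2$. Lemma~\ref{lem:pure-noise-mni-typical-height} gives $h_{\cF}^2\ge (1-CL^{-5/4})t_{n,d}^2$ with probability $1-\exp(-cn/\sqrt L)$. Because $t_{n,d}^2\asymp L$, an additive error of order $L^{-1/4}$ in the tangential part $\|x-\vn_{\cF}\|_2^2\approx 2$ costs only a relative error $O(L^{-5/4})$ in $\|x\|_2^2$. So it is enough to show $\|x-\vn_{\cF}\|_2^2\ge 2-CL^{-1/4}$.

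I would split $x-\vn_{\cF}=(\vc_{\cF}-\vn_{\cF})+(x-\vc_{\cF})$. By Fleury's distribution (Lemma~\ref{Lem:Fleury} and the lemma following it), $\vc_{\cF}-\vn_{\cF}$ is a Gaussian vector with covariance $I_{n-1}/n$, independent of $\widetilde{\vec Y}$ and $T_{n,d}$. Hence its squared norm is $1\pm\eps$ except with probability $\exp(-cn\eps^2)$. For the second piece, $x-\vc_{\cF}$ is a point of the centered facet $\widetilde{\vec Y}\triangle_c$. I would combine the small-ball Lemma~\ref{Lem:SBSimplex}, Lemma~\ref{Lem:Energy} and Corollary~\ref{C:GoodFlu2} to show it has squared norm at least $1-\eps$, except on a fraction $\exp(-cn\eps^2)$ of the facet volume. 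Taking $\eps=C'L^{-1/4}$ makes all these bad events have Fleury probability $\exp(-C n/\sqrt L)$.

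The cross term is handled by nearly orthogonal marginals. I would apply Lemma~\ref{lem:david-alonso} to $\sqrt n$ times the facet in the direction $\theta=(\vc_{\cF}-\vn_{\cF})/\|\vc_{\cF}-\vn_{\cF}\|_2$. This direction is independent of the facet shape and hence uniformly distributed relative to it. The lemma gives $|\langle x-\vc_{\cF},\theta\rangle|\lesssim L^{-1/4}$ outside a set of measure $\exp(-cn/\sqrt L)$. Corollary~\ref{Lem:PositiveTheyddy2} then transfers each volume-fraction statement to the facet actually selected by $\vec\xi$, with probability $1-\exp(-cn/\sqrt L)$. On the complementary bad event, Step~I gives the crude bound $\|\vec\xi\|_n\le (1+C/\sqrt L)M_{n,d}$, whose contribution to the expectation is negligible.

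I expect the main obstacle to be the transfer step. The selected facet is distributed according to the ray-hitting (cone-volume) law, not Fleury's law. Moreover, bad regions inside a good facet must be controlled as cone volume, not facet surface. I would first prove a cone-volume version of Corollary~\ref{Lem:PositiveTheyddy2} for subsets of facets, using the conic formula with the independence of height from facet shape in Corollary~\ref{Lem:eventFL}, together with the radial bound (8) from Step~I. A second delicate point is that Lemma~\ref{lem:david-alonso} needs an isotropic body. Here I would use that $\widetilde{\vec Y}$ is close to Gaussian, which makes $\sqrt n\,\widetilde{\vec Y}\triangle_c$ almost isotropic with diameter $O(\sqrt n)$ outside a rare event, via Lemma~\ref{Lem:Goodman}.
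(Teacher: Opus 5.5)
Your route differs from the paper's. It works once the cross-term step is repaired.

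\textbf{Comparison with the paper.} The paper's proof takes the matching bound $(\operatorname{Med}\bar T_{n,d})^2+2M_{n,d}^2\le n(1+CL^{-5/4})$ as an input: it is display (4) there, quoted from ``the preceding matching estimate'' and not proved in that proof. It then uses the four-term Pythagorean decomposition to show that, off cones of small expected relative volume, every point of a good scaled facet lies in $(1+CL^{-5/4})\bar B_n$. Finally it absorbs the bad volume into the left-hand side.

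You instead write $\E|\SPO|=M_{n,d}^n\E|\PN|$ and invoke \Cref{lem:volume-from-facet-heights}. This reduces the lemma to the one-sided matching estimate $M_{n,d}\le(1+CL^{-5/4})\sqrt{n/(t_{n,d}^2+2)}$, which you prove at the hit point $x=\vec\xi/\|\vec\xi\|_{\PN}$. Your argument therefore supplies the ingredient the paper imports.

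It also needs only \emph{lower} deviations of $\|x-\vn_{\cF}\|_2$, i.e.\ the sub-Gaussian side of the simplex (\Cref{Lem:SBSimplex}). The paper's two-sided (6)--(8) also involve the upper tail of the simplex norm, which is only subexponential. That is harmless there only because absorption needs a bad fraction below $1/2$.

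The following parts of your plan are fine:
\begin{itemize}
\item the error bookkeeping ($L^{-1/4}$ tangential error against $t_{n,d}^2\asymp L$);
\item the cone-volume transfer through the Step~I inradius bound, as in the proof of \Cref{lem:pure-noise-mni-typical-height};
\item the bad-event treatment, provided that off the Step~I event you use $\|\vec\xi\|_{\PN}\le\sqrt{dn}/\sigma_{\min}(\vec X)$ with \Cref{lem:sigma-min-negative-moment}.
\end{itemize}

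\textbf{The step that fails: the cross term.} The body $\sqrt n\,\widetilde{\vec Y}\triangle_c$ is not almost isotropic. $\widetilde{\vec Y}$ is an essentially square near-Gaussian matrix, so there is no restricted-isometry effect. For $v\sim U(\triangle_c)$, the covariance of $\sqrt n\,\vec Yv$ is $\frac1{n+1}\sum_i(\vec Y_i-\bar Y)(\vec Y_i-\bar Y)^{\top}$. This is a square Wishart-type matrix whose eigenvalues range from order $n^{-2}$ up to about $4$. The body also has diameter about $\sqrt2\,n$ (consider $\sqrt n(\vec Y_i-\vec Y_j)$). That is a property of the body itself, and no rare event removes it. So \Cref{lem:david-alonso} does not apply.

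You do not need that lemma. The cone-volume weight $\frac1n T_{n,d}|\cF|$ does not involve $\bar Y$, so $\bar Y\sim N(0,I_{n-1}/n)$ stays independent of $\widetilde{\vec Y}$ under the biased law. Hence, conditionally on $(\widetilde{\vec Y},v)$,
$$x-\vn_{\cF}\stackrel{d}{=}\bar Y+\vec Yv\sim N(\vec Yv,I_{n-1}/n).$$
The lower tail of this noncentral $\chi^2_{n-1}$ gives $\|x-\vn_{\cF}\|_2^2\ge 2-C\eps$ on $\{\|\vec Yv\|_2^2\ge 1-\eps\}$, outside conditional probability $\exp(-cn\eps^2)$. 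Fubini and Markov in $\bar Y$ then give the facet-level volume-fraction bound you need.

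\textbf{Two smaller repairs.}
\begin{itemize}
\item \Cref{Lem:Energy} controls the upper shell. For $\|\vec Yv\|_2^2\ge 1-\eps$ you need its mirror: \Cref{Lem:SBSimplex} plus the Gaussian lower tail of $\|Gz\|_2$ for fixed $z$, then Fubini and Markov.
\item \Cref{C:GoodFlu2} only transfers polynomially small events. At the scale $\exp(-Cn/\sqrt L)$, use Cauchy--Schwarz against the determinant weight. By Bartlett's decomposition its moment ratio is $n^{O(1)}$, so $\Pr_{\vec Y}(A)\le n^{O(1)}\Pr_G(A)^{1/2}$.
\end{itemize}
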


\begin{proof}
Set
\[
    \eta:=L^{-1/4},
    \qquad
    \delta:=L^{-5/4}.
\]
Recall that
\[
    M^2\asymp \frac{n}{L},
    \qquad
    t_{n,d}^2\asymp L.
\]
Hence
\[
    \eta M^2
    =
    L^{-1/4}M^2
    \asymp
    nL^{-5/4}
    =
    n\delta .
    \tag{1}
\]

Let \(\bar{\cF}:=M\cF\) be a scaled facet of \(\SPO\), and write
\[
    \bar n_{\cF}:=M n_{\cF},
    \qquad
    \bar c_{\cF}:=M c_{\cF}.
\]
For \(x\in \bar{\cF}\), decompose
\[
    x
    =
    \bar n_{\cF}
    +
    (\bar c_{\cF}-\bar n_{\cF})
    +
    (x-\bar c_{\cF}).
\]
Since \(n_{\cF}\) is the Euclidean projection of \(0\) onto
\(\operatorname{aff}(\cF)\), we have
\[
    c_{\cF}-n_{\cF}\perp n_{\cF},
    \qquad
    z-c_{\cF}\perp n_{\cF}
\]
for every \(z\in\cF\). Therefore,
\[
\begin{aligned}
    \|x\|_2^2
    &=
    \|\bar n_{\cF}\|_2^2
    +
    \|\bar c_{\cF}-\bar n_{\cF}\|_2^2
    +
    \|x-\bar c_{\cF}\|_2^2
    +
    2\langle
        \bar c_{\cF}-\bar n_{\cF},
        x-\bar c_{\cF}
    \rangle .
\end{aligned}
\tag{2}
\]

We now remove an exceptional set of cone volume. By the previous height
corollary, the union of cones over facets for which
\[
    \left|
    \|\bar n_{\cF}\|_2^2-\operatorname{Med}\bar T_{n,d}^2
    \right|
    >
    Cn\delta
\]
has expected volume at most
\[
    \exp\left(-c\frac{n}{\sqrt L}\right)\E|\SPO|.
    \tag{3}
\]
Here
\[
    \bar T_{n,d}:=M T_{n,d}.
\]
Also, from the preceding matching estimate,
\[
    \operatorname{Med}\bar T_{n,d}^2+2M^2
    \le
    n+Cn\delta .
    \tag{4}
\]

Next we remove the bad part of the simplex inside the remaining facets.
Fleury's representation gives, after an independent rotation,
\[
    \bar c_{\cF}-\bar n_{\cF}
    =
    M\bar Y,
    \qquad
    x-\bar c_{\cF}
    =
    M Yv,
    \qquad
    v\in\Delta_c.
\]
Moreover,
\[
    \theta_{\cF}
    :=
    \frac{\bar c_{\cF}-\bar n_{\cF}}
    {\|\bar c_{\cF}-\bar n_{\cF}\|_2}
\]
is rotationally invariant and independent of the centered simplex part.

Using the small-ball estimate for the centered simplex, together with
\(\Cref{lem:david-alonso}\) applied to the truncated simplex of diameter
\(O(\sqrt n)\), we may discard another set of expected cone volume at most
\[
    \exp\left(-c\frac{n}{\sqrt L}\right)\E|\SPO|
    \tag{5}
\]
so that, on the remaining part of every good facet,
\[
    \|\bar c_{\cF}-\bar n_{\cF}\|_2
    =
    (1+O(\eta))M,
    \tag{6}
\]
\[
    \|x-\bar c_{\cF}\|_2
    =
    (1+O(\eta))M,
    \tag{7}
\]
and
\[
    \left|
    \langle
        \bar c_{\cF}-\bar n_{\cF},
        x-\bar c_{\cF}
    \rangle
    \right|
    \le
    C\eta M^2 .
    \tag{8}
\]
The crucial point is that the direction
$
    \theta_{\cF}
$
is uniform and independent of the centered simplex. Thus
\Cref{lem:david-alonso} gives the marginal bound needed for the mixed
term. The part removed by the truncation of the simplex also contributes
only the amount in \((5)\).

Combining \((6)\), \((7)\), and \((8)\), we obtain
\[
\begin{aligned}
    &\|\bar c_{\cF}-\bar n_{\cF}\|_2^2
    +
    \|x-\bar c_{\cF}\|_2^2
    +
    2\langle
        \bar c_{\cF}-\bar n_{\cF},
        x-\bar c_{\cF}
    \rangle       \\
    &\qquad\le
    2M^2+C\eta M^2 .
\end{aligned}
\tag{9}
\]
By \((1)\),
\[
    C\eta M^2
    \le
    Cn\delta .
    \tag{10}
\]

Now take a point \(x\) in the non-exceptional part of a good facet.
Using \((2)\), \((4)\), \((9)\), and \((10)\), we get
\[
\begin{aligned}
    \|x\|_2^2
    &\le
    \|\bar n_{\cF}\|_2^2
    +
    2M^2
    +
    Cn\delta                                      \\
    &\le
    \operatorname{Med}\bar T_{n,d}^2
    +
    2M^2
    +
    Cn\delta                                      \\
    &\le
    n+Cn\delta .
\end{aligned}
\]
Therefore
\[
    \|x\|_2
    \le
    (1+C\delta)\sqrt n.
    \tag{11}
\]
Thus the non-exceptional part of the cone decomposition of \(\SPO\) is
contained in
\[
    (1+C\delta)\bar B_n.
\]

Let \(K_{\mathrm{bad}}\subset \SPO\) be the union of all exceptional cones:
the cones over bad-height facets and the bad simplex pieces inside the
remaining facets. From \((3)\) and \((5)\),
\[
    \E|K_{\mathrm{bad}}|
    \le
    \exp\left(-c\frac{n}{\sqrt L}\right)\E|\SPO|.
    \tag{12}
\]
Since the good part is contained in \((1+C\delta)\bar B_n\), we have
\[
    |\SPO|
    \le
    |(1+C\delta)\bar B_n|
    +
    |K_{\mathrm{bad}}|.
\]
Taking expectations and using \((12)\),
\[
    \E|\SPO|
    \le
    (1+C\delta)^n|\bar B_n|
    +
    \exp\left(-c\frac{n}{\sqrt L}\right)\E|\SPO|.
\]
For \(L\) large, the exponential factor in the second term is smaller than
\(1/2\). Hence
\[
    \E|\SPO|
    \le
    2(1+C\delta)^n|\bar B_n|.
\]
Finally,
\[
    (1+C\delta)^n
    \le
    \exp(Cn\delta)
    =
    \exp\left(CnL^{-5/4}\right).
\]
Absorbing the factor \(2\) into the exponential gives
\[
    \E|\SPO|
    \le
    \exp\left(CnL^{-5/4}\right)|\bar B_n|.
\]
This proves the claim.
\end{proof}
Now, as we improve Step I, we can bootstrap this step, to obtain the following corollary:
\begin{corollary}
\label{cor:terminal-volume-bootstrap}
Assume that \(\eps_{n,d}\) is in the stopping regime
\[
    \eps_{n,d}\gtrsim L^{-2}
\]
and
\[
    nL^2\eps_{n,d}^2
    \gtrsim
    n\eps_{n,d}+\sqrt{\frac nL}.
    \tag{1}
\]
Then
\[
    \exp\!\left(-C nL^2\eps_{n,d}^2\right)\E|\SPO|
    \le
    \exp\!\left(-c\sqrt{\frac nL}\right)|\bar B_n|
    \le
    \E|\SPO|
    \le
    \exp\!\left(Cn\eps_{n,d}\right)|\bar B_n|.
\]
\end{corollary}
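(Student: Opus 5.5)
We would prove Corollary~\ref{cor:terminal-volume-bootstrap} by iterating the argument of Lemma~\ref{lem:volume-upgrade-L-five-fourths}, treating the upper volume bound as a parameter that improves at each round. Suppose that at some stage we know
\[
\E|\SPO|\le \exp(Cn\eps)|\bar B_n|
\]
for some $\eps\in[L^{-2},L^{-1/2}]$; Step I gives $\eps=L^{-1/2}$. We then rerun the ``bad facet'' removal, now at the finer scale $\eps$ instead of $L^{-5/4}$.

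\textbf{Height step.} By Lemma~\ref{prop:tail-of-Tnd} and the conic decomposition, exactly as in the proof of Lemma~\ref{lem:pure-noise-mni-typical-height}, the cones over facets with $|T_{n,d}-t_{n,d}|>\eps t_{n,d}$ carry expected volume at most $\exp(-cnL^2\eps^2)\E|\SPO|$. This needs $\eps\le c_0/L$, which holds once the first rounds have brought $\eps$ below $1/L$. Before that, we use the Step I bound unchanged.

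\textbf{Tangential step.} Inside the remaining facets we remove two further pieces, using Lemma~\ref{Lem:SBSimplex}, Lemma~\ref{Lem:Energy} (transferred to $\vec Y$ via \Cref{C:GoodFlu2}), and Lemma~\ref{lem:david-alonso} for the mixed term $\langle \bar c_\cF-\bar n_\cF, x-\bar c_\cF\rangle$:
\begin{itemize}
\item the piece where $\|x-\bar c_\cF\|_2$ deviates from $M$ by more than $\eta M$;
\item the piece where the mixed term exceeds $\eta M^2$.
\end{itemize}
We choose $\eta$ so that $\eta M^2\asymp n\eps$, i.e.\ $\eta\asymp L\eps$. The removed volume is then at most $\exp(-cn\eta^2)=\exp(-cnL^2\eps^2)$, matching the height step. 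Pythagoras, as in display (2) of the previous proof, places the good part inside $(1+C\eps)\bar B_n$.

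\textbf{Absorption and upper bound.} We now have
\[
\E|\SPO|\le (1+C\eps)^n|\bar B_n|+\exp(-cnL^2\eps^2)\E|\SPO|.
\]
Absorbing the last term into the left side is possible precisely when $nL^2\eps^2$ dominates both the current exponent $n\eps$ and the lower-bound slack $\sqrt{n/L}$, which is hypothesis (1). Under (1) we obtain the upper bound $\E|\SPO|\le\exp(Cn\eps)|\bar B_n|$. Iterating $\eps\mapsto$ the smallest value consistent with (1) lets us descend from $L^{-1/2}$ through $L^{-5/4}$ down to the stopping regime $\eps\gtrsim L^{-2}$, where (1) becomes tight.

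\textbf{Lower bounds.} The middle inequality $\exp(-c\sqrt{n/L})|\bar B_n|\le\E|\SPO|$ follows from the Step II estimate $\E|\SPO\cap\bar B_n|\ge e^{-c\sqrt{n/L}}|\bar B_n|$, which was derived from \Cref{Theorem:SPofGP}. The leftmost inequality follows by combining the upper bound with (1): $\exp(-CnL^2\eps^2)\E|\SPO|\le\exp(-CnL^2\eps^2+Cn\eps)|\bar B_n|$, and (1) makes this exponent at most $-c\sqrt{n/L}$ once the constants are adjusted.

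\textbf{Main obstacle.} The hardest step is the tangential one at scales $\eps\ll L^{-5/4}$, where $\eta=L\eps$ can be as small as $L^{-1}$. Lemma~\ref{lem:david-alonso} only controls the mixed term on a set of directions of measure $1-\exp(-ct^2)$, and the simplex has only sub-exponential upper tails. We would therefore need to check that the choice $t\asymp\eta\sqrt n$ still yields the required $\exp(-cn\eta^2)$ uniformly, using the independence of $\theta_\cF$ from the centered simplex.
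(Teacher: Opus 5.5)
\textbf{The gap is in the inductive step.} Your skeleton matches the paper's: height tail, tangential small-ball estimate plus \Cref{lem:david-alonso}, Pythagoras, absorption, the chain $L^{-1/2}\to L^{-5/4}\to\cdots\to L^{-2}$, and the same derivation of the leftmost inequality from (1). But each round, as written, assumes and concludes the same bound, and the step that actually drives the bootstrap is missing.

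First, you use one symbol $\eps$ both for the current level $\E|\SPO|\le e^{Cn\eps}|\bar B_n|$ and for the removal scale. Second, the condition you attach to the absorption is not needed there: from $\E|\SPO|\le(1+C\eps)^n|\bar B_n|+e^{-cnL^2\eps^2}\E|\SPO|$ you can absorb as soon as $e^{-cnL^2\eps^2}\le 1/2$.

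The real gap is your sentence that Pythagoras ``places the good part inside $(1+C\eps)\bar B_n$''. On the good part, Pythagoras only gives $\|x\|_2^2\le(\Med\bar T_{n,d})^2+2M_{n,d}^2+C(n\eps+\eta M_{n,d}^2)$. To compare this with $n$ you need the matching estimate $(\Med\bar T_{n,d})^2+2M_{n,d}^2\le n+Cn\eps$, which is display (4) in the proof of \Cref{lem:volume-upgrade-L-five-fourths}. It ties the mean norm $M_{n,d}$ to the Fleury mode $t_{n,d}$, and it must be re-proved at every new precision.

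That estimate comes from the height-selection argument of \Cref{lem:pure-noise-mni-typical-height}, together with the thin-shell location of the MNI point. There, the bad cone volume is converted into a bound on the probability that the pure-noise MNI ray lands in a bad region. That conversion (steps (6)--(9) there) needs the bad volume to be small compared with $|\bar B_n|$, not with $\E|\SPO|$. This is the only place the current bound is consumed. With $\E|\SPO|\le e^{Cn\alpha}|\bar B_n|$, a removal at scale $\rho$ leaves $e^{-cnL^2\rho^2+Cn\alpha}|\bar B_n|$, which forces $\rho\gtrsim\sqrt\alpha/L$.

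\textbf{The fix.} Once the two roles are separated, your scheme is exactly the paper's. From level $\alpha$, remove heights at relative scale $\rho\asymp\sqrt\alpha/L$ (with a large constant) and tangential deviations at $\eta\asymp\sqrt\alpha$; this is your $\eta\asymp L\eps$ with $\eps=\rho$. Re-derive the matching at precision $\rho$ and absorb. This gives the map $\alpha\mapsto C(\eps_{n,d}+\sqrt\alpha/L)$, whose terminal value is $\lesssim\eps_{n,d}+L^{-2}$.

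Three consequences follow:
\begin{itemize}
\item The first removal scale is already $L^{-5/4}\le c_0/L$, so no initial phase with the Step I bound unchanged is needed. Read literally, your parametrization never leaves that phase.
\item Hypothesis (1) plays no role in the iteration; only $\eps_{n,d}\gtrsim L^{-2}$ does.
\item Hypothesis (1) is used, as you do, only for the leftmost inequality.
\end{itemize}

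\textbf{Two smaller remarks.}
\begin{itemize}
\item For the middle inequality, your Step II source works; the paper appeals to Step I. In fact $\E|\SPO|\ge|\bar B_n|$ follows outright. Jensen on the sphere gives $|\SPO|\ge(\E M(\PN)/M(\PN))^n|\bar B_n|$ pointwise, and $x\mapsto x^{-n}$ is convex.
\item The sub-exponential upper tail you single out as the main obstacle is harmless for an upper volume bound. Far-out points of a facet only need to carry a small constant fraction of the volume for the absorption. The matching inequality uses only the small-ball side (\Cref{Lem:SBSimplex}, \Cref{Lem:Energy}) and the mixed term. For the mixed term, \Cref{lem:david-alonso} with $t\asymp\eta\sqrt n$, plus the independence of $\theta_{\cF}$ from the centered simplex, already gives $e^{-cn\eta^2}$.
\end{itemize}
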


\begin{proof}
We use the volume bootstrap from the preceding step. Namely, if for some
\(\alpha\in(0,1)\) one has
\[
    \E|\SPO|
    \le
    \exp(Cn\alpha)|\bar B_n|,
    \tag{2}
\]
then the height-selection estimate, the truncated-simplex estimate, and
Pythagoras imply the improved bound
\[
    \E|\SPO|
    \le
    \exp\left(
        Cn\left(
            \eps_{n,d}+\frac{\sqrt\alpha}{L}
        \right)
    \right)|\bar B_n|.
    \tag{3}
\]
Indeed, the bad-height facets contribute at most
\[
    \exp(-c nL^2\rho^2)\E|\SPO|,
\]
and choosing
\[
    \rho\asymp \frac{\sqrt\alpha}{L}
\]
makes this exceptional contribution negligible relative to the current
volume bound \((2)\). On the remaining facets, Fleury's representation and
\Cref{lem:david-alonso} give a tangential shell error
\[
    \eta\asymp \sqrt\alpha .
\]
Since
\[
    M_{n,d}^2\asymp \frac nL,
\]
this contributes
\[
    \eta M_{n,d}^2
    \asymp
    \frac{n\sqrt\alpha}{L}
\]
to the squared-radius estimate. Hence the good part of \(\SPO\) lies inside
\[
    \left(
        1+
        C\eps_{n,d}
        +
        C\frac{\sqrt\alpha}{L}
    \right)\bar B_n,
\]
which gives \((3)\).

Starting from the Step I bound
\[
    \E|\SPO|
    \le
    \exp\left(CL^{-1/2}n\right)|\bar B_n|,
\]
we iterate the map
\[
    \alpha\longmapsto
    C\left(
        \eps_{n,d}+\frac{\sqrt\alpha}{L}
    \right).
\]
Without the stopping term, this sends
\[
    L^{-1/2}
    \longmapsto
    L^{-5/4}
    \longmapsto
    L^{-13/8}
    \longmapsto
    \cdots
    \longrightarrow
    L^{-2}.
\]
Thus the terminal exponent is
\[
    \alpha_\infty
    \lesssim
    \eps_{n,d}+L^{-2}.
\]
Since we assume \(\eps_{n,d}\gtrsim L^{-2}\), we obtain
\[
    \E|\SPO|
    \le
    \exp(Cn\eps_{n,d})|\bar B_n|.
    \tag{4}
\]

The lower bound
\[
    \exp\!\left(-c\sqrt{\frac nL}\right)|\bar B_n|
    \le
    \E|\SPO|
    \tag{5}
\]
is the lower-volume estimate from Step I.

Finally, using \((4)\),
\[
\begin{aligned}
    \exp(-C nL^2\eps_{n,d}^2)\E|\SPO|
    &\le
    \exp\left(
        -C nL^2\eps_{n,d}^2
        +
        Cn\eps_{n,d}
    \right)|\bar B_n|  \\
    &\le
    \exp\!\left(-c\sqrt{\frac nL}\right)|\bar B_n|,
\end{aligned}
\]
where the last inequality follows from the stopping condition \((1)\), after
choosing the constant \(C\) large enough. Combining this with \((5)\) and
\((4)\) gives the claimed sandwich.\qedhere
\end{proof}
\subsubsection{\textcolor{red}{Step V+}: Proof of \Cref{C:MND}}\label{ss:pfMND}

Recall that the iterative volume bootstrap gives
\[
    \E|\SPO|
    \le
    \exp\left(\frac{Cn}{\ln^2(d/n)}\right)|\bar B_n|.
\]
More importantly, the same bootstrap gives the sharp shell-matching estimate
\[
    \left|
        \frac{M_{n,d}^2\big(t_{n,d}^2+2\big)}{n}
        -1
    \right|
    \lesssim
    \frac{1}{\ln^2(d/n)}.
    \tag{1}
\]
Indeed, the argument stops precisely when the volume-radius error
\(n\varepsilon\) balances the Fleury height-tail exponent
\(n\ln^2(d/n)\varepsilon^2\), namely at
\[
    \varepsilon\asymp \ln^{-2}(d/n).
\]
Consequently,
\[
    M_{n,d}
    =
    \left(
        1+O\left(\ln^{-2}(d/n)\right)
    \right)
    \sqrt{
        \frac{n}{t_{n,d}^2+2}
    }.
    \tag{2}
\]

It remains to estimate \(t_{n,d}\). Set
$
    q:=\frac{d-n}{n}$
and recall that  Fleury height variable \(T_{n,d}\) has density
\[
    p(t)
    =
    \frac{1}{Z_{n,d}}\,
    \Psi(t)^{d-n}\exp(-nt^2/2),
    \qquad t>0,
\]
where
\[
    \Psi(t):=2\Phi(t)-1.
\]
Let \(t_{n,d}\) denote the mode of this density. Since
\(t\mapsto 2\varphi(t)/\Psi(t)\) is strictly decreasing on
\((0,\infty)\), the mode is the unique positive solution of
\[
    0
    =
    \frac{d}{dt}\log p(t)
    =
    (d-n)\frac{2\varphi(t)}{\Psi(t)}
    -
    nt.
\]
Equivalently,
\[
    t_{n,d}
    =
    q\,\frac{2\varphi(t_{n,d})}{\Psi(t_{n,d})}.
    \tag{3}
\]
Using
\[
    2\varphi(t)
    =
    \sqrt{\frac{2}{\pi}}\exp(-t^2/2),
\]
equation \((3)\) gives
\[
    t_{n,d}^2 e^{t_{n,d}^2}
    =
    \frac{2q^2}{\pi\,\Psi(t_{n,d})^2}.
    \tag{4}
\]

We next remove the harmless factor \(\Psi(t_{n,d})\). By Mills' bound and
\((3)\),
\[
    1-\Psi(t_{n,d})
    =
    2\overline\Phi(t_{n,d})
    \le
    \frac{2\varphi(t_{n,d})}{t_{n,d}}
    =
    \frac{\Psi(t_{n,d})}{q}.
\]
Hence
\[
    \Psi(t_{n,d})
    =
    1+O(q^{-1}).
\]
Applying the Lambert \(W\)-function to \((4)\), we obtain
\[
    t_{n,d}^2
    =
    W\!\left(
        \frac{2}{\pi}q^2
    \right)
    +
    O(q^{-1})
    =
    W\!\left(
        \frac{2}{\pi}
        \left(\frac{d-n}{n}\right)^2
    \right)
    +
    O\!\left(\frac{n}{d}\right).
    \tag{5}
\]

Now use the standard expansion
\[
    W(x)
    =
    \ln x
    -
    \ln\ln x
    +
    \frac{\ln\ln x}{\ln x}
    +
    O\left(
        \frac{(\ln\ln x)^2}{\ln^2 x}
    \right),
    \qquad x\to\infty.
\]
Since
\[
    \ln q
    =
    L+O(n/d)
    =
    L+O(n/d),
\]
and \(d\gtrsim n(\ln n)^C\), the \(O(n/d)\) term is absorbed into the
error below. Therefore
\[
    t_{n,d}^2
    =
    2L
    -
    \ln L
    -
    \ln\pi
    +
    \frac{\ln L+\ln\pi}{2L}
    +
    O\left(
        \frac{(\ln L)^2}{L^2}
    \right).
    \tag{6}
\]
Equivalently, using the notation \(\lln(d/n)= \ln L\),
\[
    t_{n,d}^2
    =
    2L
    -
    \lln(d/n)
    -
    \ln\pi
    +
    \frac{\lln(d/n)+\ln\pi}{2L}
    +
    O\left(
        \frac{(\lln(d/n))^2}{\ln^2(d/n)}
    \right).
    \tag{7}
\]
Taking square roots gives
\[
    t_{n,d}
    =
    \sqrt{2L}
    -
    \frac{\lln(d/n)+\ln\pi}
    {2\sqrt{2L}}
    +
    O\left(
        \frac{(\lln(d/n))^2}{\ln^{3/2}(d/n)}
    \right).
    \tag{8}
\]

Combining \((2)\) with \((7)\), we conclude that
\[
    M_{n,d}
    =
    \frac{
        \left(
            1+O\left(\ln^{-2}(d/n)\right)
        \right)\sqrt n
    }{
        \sqrt{
            2L
            -
            \lln(d/n)
            -
            \ln\pi
            +
            2
            +
            \frac{\lln(d/n)+\ln\pi}{2L}
            +
            O\left(
                \frac{(\lln(d/n))^2}{\ln^2(d/n)}
            \right)
        }
    }.
    \tag{9}
\]
In particular,
\[
    M_{n,d}^2
    =
    \frac{n}{
        2L
        -
        \lln(d/n)
        -
        \ln\pi
        +
        2
        +
        o(1)
    }.
\]
This proves \Cref{C:MND}.
\subsubsection[Step VI: Upper bound on the MSE]{{\color{red} Step VI:} Upper bound on the MSE}
Now, as explained in \Cref{sss:reduThinShell}, the previous estimates imply
that on the good event,
\[
    \|\bar z_{\xi}-\bar\vc_{\cF}\|_2^2
    =
    M_{n,d}^2
    +
    O\left(\frac{n}{\log^2(d/n)}\right),
    \tag{1}
\]
where
\[
    \bar z_{\xi}
    :=
    M_{n,d}\frac{\vec\xi}{\|\vec\xi\|_{\PN}}
    \in \bar\cF.
\]
Indeed, by Pythagoras,
\[
\begin{aligned}
    \|\bar z_{\xi}\|_2^2
    &=
    \|\bar\vn_{\cF}\|_2^2
    +
    \|\bar\vc_{\cF}-\bar\vn_{\cF}\|_2^2
    +
    \|\bar z_{\xi}-\bar\vc_{\cF}\|_2^2        \\
    &\qquad
    +
    2\left\langle
        \bar\vc_{\cF}-\bar\vn_{\cF},
        \bar z_{\xi}-\bar\vc_{\cF}
    \right\rangle .
\end{aligned}
\]
Using
\[
    \|\bar z_{\xi}\|_2^2
    =
    n+O\left(\frac{n}{\log^2(d/n)}\right),
\]
\[
    \|\bar\vn_{\cF}\|_2^2
    =
    \big(\Med \bar T_{n,d}\big)^2
    +
    O\left(\frac{n}{\log^2(d/n)}\right),
\]
\[
    \big(\Med \bar T_{n,d}\big)^2+2M_{n,d}^2
    =
    n+O\left(\frac{n}{\log^2(d/n)}\right),
\]
and
\[
    \|\bar\vc_{\cF}-\bar\vn_{\cF}\|_2^2
    =
    M_{n,d}^2
    +
    O\left(\frac{M_{n,d}^2}{\log(d/n)}\right),
\]
together with the marginal estimate
\[
    \left|
    \left\langle
        \bar\vc_{\cF}-\bar\vn_{\cF},
        \bar z_{\xi}-\bar\vc_{\cF}
    \right\rangle
    \right|
    \le
    \frac{C M_{n,d}^2}{\log(d/n)},
\]
we obtain \((1)\), since
\[
    \frac{M_{n,d}^2}{\log(d/n)}
    \asymp
    \frac{n}{\log^2(d/n)}.
\]

Since
\[
    \frac{\|\vec\xi\|_{\PN}}{M_{n,d}}
    =
    1+O\left(\frac{1}{\log^2(d/n)}\right),
\]
we also get
\[
\begin{aligned}
    \left\|
        \vec\xi-\|\vec\xi\|_{\PN}\vc_{\cF}
    \right\|_2^2
    &=
    \left(\frac{\|\vec\xi\|_{\PN}}{M_{n,d}}\right)^2
    \|\bar z_{\xi}-\bar\vc_{\cF}\|_2^2        \\
    &=
    M_{n,d}^2
    +
    O\left(\frac{n}{\log^2(d/n)}\right).
\end{aligned}
\tag{2}
\]

Now apply \Cref{Lem:Energy} on the active simplex. Writing
\[
    q:=\|\vec\xi\|_{\PN},
\]
we have
\[
    \erm
    =
    q\lambda,
    \qquad
    \lambda\in\Delta_{n-1}.
\]
Hence
\[
    \erm
    =
    \frac{q}{n}\mathbf 1_n
    +
    q\left(\lambda-\frac{\mathbf 1_n}{n}\right),
\]
and the two terms are orthogonal in \(\ell_2^n\). Therefore
\[
    \|\erm\|_2^2
    =
    \frac{q^2}{n}
    +
    q^2
    \left\|
        \lambda-\frac{\mathbf 1_n}{n}
    \right\|_2^2.
    \tag{3}
\]
By \Cref{Lem:Energy}, together with \((2)\),
\[
\begin{aligned}
    q^2
    \left\|
        \lambda-\frac{\mathbf 1_n}{n}
    \right\|_2^2
    &\le
    \frac{1+C/\log(d/n)}{n}
    \left\|
        \vec\xi-q\vc_{\cF}
    \right\|_2^2        \\
    &\le
    \frac{M_{n,d}^2}{n}
    +
    \frac{C}{\log^2(d/n)}.
\end{aligned}
\tag{4}
\]
Moreover, since \(q=M_{n,d}(1+O(\log^{-2}(d/n)))\),
\[
    \frac{q^2}{n}
    =
    \frac{M_{n,d}^2}{n}
    +
    O\left(
        \frac{M_{n,d}^2}{n\log^2(d/n)}
    \right)
    =
    \frac{M_{n,d}^2}{n}
    +
    O\left(
        \frac{1}{\log^3(d/n)}
    \right).
    \tag{5}
\]
Combining \((3)\), \((4)\), and \((5)\), we obtain
\[
    \|\erm\|_2^2
    \le
    \frac{2M_{n,d}^2}{n}
    +
    \frac{C}{\log^2(d/n)}.
\]
Equivalently, using \(M_{n,d}^2/n\asymp 1/\log(d/n)\),
\[
    \|\erm\|_2^2
    \le
    \left(
        1+\frac{C}{\log(d/n)}
    \right)
    \frac{2M_{n,d}^2}{n}.
\]
This gives the desired upper bound on the MSE.
\subsubsection{\textcolor{red}{Step VII:} Reduction to zero signal}
% As we would see in the end of this section, the same idea that works for $\ft = e_1$ would work for any $O(n/\ln(n)^{C})$-sparse regressor. 
For simplicity we prove this for $\ft = e_1$, same argument holds for $\|\ft\|_0 = n/\log(d/n)^{C}$. The following result follows from our previous steps:
\begin{theorem*}
Let $
    1\le \lambda\le cL.$ Fix a deterministic vector \(\vec \xi'\in\R^n\) with
$
    \|\vec \xi'\|_2=\sqrt n .
$
Then, with probability at least
\[
    1-\exp\left(-c\lambda^2nL^{-2}\right)
\]
over \(\vec X\), the \(\ell_1\)-MNI interpolating \(\vec \xi'\) satisfies
\[
    \|\erm(\vec X,\vec \xi')\|_2^2
    \le
    \left(1+\frac{C\lambda}{L}\right)
    \frac{2M_{n,d}^2}{n}.
\]
\end{theorem*}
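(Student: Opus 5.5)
The plan is to rerun Steps I--VI for a fixed direction $\vec\xi'$, but to carry an explicit deviation parameter $\lambda$ in every exceptional-volume estimate instead of the fixed scale $n/\sqrt L$. By rotation invariance of $\vec X$, fixing $\vec\xi'\in\sqrt n\,S^{n-1}$ is the same as taking $\vec\xi'$ uniform on the sphere and independent of $\vec X$. So the probability over $\vec X$ that $\vec\xi'$ lands in a bad cone equals $\E_{\vec X}\,\sigma_{n-1}(D(\vec X))$, where $D(\vec X)$ is the set of directions whose ray hits the bad part of $\SPO$. Step VI already bounds $\|\erm\|_2^2$ by $2M_{n,d}^2/n + C/L^2$ on good facets. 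With error $\lambda/L$ in place of $1/L$, the target bound $(1+C\lambda/L)\,2M_{n,d}^2/n$ follows, because $M_{n,d}^2/n\asymp 1/L$.

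First, define the $\lambda$-bad set. A facet is bad if its height violates $|\|\vn_\cF\|_2-t_{n,d}|\le \lambda L^{-2} t_{n,d}$. A point on a good facet is bad if it fails the shell and marginal estimates (6)--(8) from the volume-upgrade lemma with $\eta=\lambda/L$ (in squared terms, error $\lambda M_{n,d}^2/L$), or if it fails \Cref{Lem:Energy} with $\delta\asymp\lambda/L$. The three contributions to the cone volume are then controlled as follows, through \Cref{Lem:eventFL} and Fleury's distribution.
\begin{itemize}
\item By \Cref{prop:tail-of-Tnd} with $\eps=\lambda L^{-2}$ (admissible since $\lambda\le cL$), bad heights carry volume fraction at most $\exp(-c\,nL^2\lambda^2L^{-4})=\exp(-c\lambda^2 n L^{-2})$.
\item By \Cref{Lem:SBSimplex}, \Cref{lem:david-alonso} with $t\asymp\lambda\sqrt n/L$, and \Cref{C:GoodFlu2}, the tangential and marginal failures carry volume fraction at most $\exp(-c\lambda^2 nL^{-2})$.
\item By \Cref{Lem:Energy} with $\delta=\lambda/L$, the energy failure is $\exp(-cn\lambda^2/L^2)$.
\end{itemize}
Summing, $\E|K_{\rm bad}|\le \exp(-c\lambda^2nL^{-2})\,\E|\SPO|$.

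Second, convert volume to directions exactly as in \Cref{lem:pure-noise-mni-typical-height}. Use the bootstrapped bounds $\E|\SPO|\le \exp(Cn/L^2)|\bar B_n|$ and the inradius $\bar B_n\subset(1+C/L^2)\SPO$ from \Cref{cor:terminal-volume-bootstrap} and Step V+. Every bad direction then carries a radial segment of length at least $(1-C/L^2)\sqrt n$ inside $K_{\rm bad}$, which gives
\[
\E\,\sigma_{n-1}(D)\le e^{Cn/L^2}\,\E|K_{\rm bad}|/|\bar B_n|\le \exp\bigl((C-c\lambda^2)nL^{-2}\bigr).
\]
For $\lambda$ larger than an absolute constant this is $\exp(-c\lambda^2nL^{-2})$. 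For $\lambda$ of order one, first enlarge the constant $C$ in the conclusion. The same argument also gives the norm concentration $\|\vec\xi'\|_{\PN}/M_{n,d}=1+O(\lambda/L^2)$. Here one uses the $O(M_{n,d}/\sqrt n)$-Lipschitz bound together with the inradius, and Lipschitz concentration on the sphere at deviation $\lambda/L$, which has probability $\exp(-c\lambda^2n/L^2)$.

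Finally, on the good event, repeat the computation (1)--(5) of Step VI with all $O(1/L)$ relative errors replaced by $O(\lambda/L)$. This gives $\|\vec\xi'-q\vc_\cF\|_2^2=M_{n,d}^2(1+O(\lambda/L))$, and then \Cref{Lem:Energy} and the orthogonal split (3) give the claim.

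The main obstacle is the first step. The bootstrap baseline $e^{Cn/L^2}$ in the volume-to-direction conversion must be beaten by the exceptional fraction, and this needs every tail used to be genuinely sub-Gaussian at scale $\lambda/L$. For the height tail this only holds for $\lambda\le cL$, hence the restriction. The upper shell tail of the simplex is only sub-exponential, so there I would rely on \Cref{Lem:Energy}, which restricts to the image thin shell, rather than on \Cref{Lem:SBSimplex}.
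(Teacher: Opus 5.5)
Your plan follows the route the paper has in mind. The paper gives no separate proof; it only says the statement follows from Steps I--VI with the scales tracked. As written, however, the plan cannot reach the exponent $\lambda^2nL^{-2}$. The failing step is the radial control of $\vec\xi'$, which you assert rather than prove. Since $n=M_{n,d}^2(t_{n,d}^2+2)(1+O(L^{-2}))$ with $t_{n,d}^2\approx 2L$, Pythagoras turns a relative error $\epsilon$ in $\|\bar z_\xi\|_2^2/n$ into an error of about $2L\epsilon M_{n,d}^2$ in $\|\bar z_\xi-\bar\vc_\cF\|_2^2$. So the image-shell bound $M_{n,d}^2(1+O(\lambda/L))$ that you feed into \Cref{Lem:Energy} requires $\|\vec\xi'\|_{\SPO}\ge 1-C\lambda/L^2$ off an event of probability $e^{-c\lambda^2n/L^2}$. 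Only this side matters, since a smaller gauge means a farther-out hit point and a larger $\|\erm\|_2$. The tools you have give either the wrong precision or the wrong exponent:
\begin{itemize}
\item The spherical Lipschitz concentration you cite, with constant $\asymp M_{n,d}/\sqrt n$, gives only $1+O(\lambda/L)$ at that probability, or precision $\lambda/L^2$ at probability $1-e^{-c\lambda^2n/L^4}$.
\item The volume route gives $\sigma\{\theta:r_{\SPO}(\theta)\ge\sqrt n/(1-\epsilon)\}\le(1-\epsilon)^n|\SPO|/|\bar B_n|$. With Markov and $\E|\SPO|\le e^{Cn/L^2}|\bar B_n|$ this yields failure probability $e^{-c\epsilon n+Cn/L^2}$, i.e.\ $e^{-c\lambda n/L^2}$ at $\epsilon\asymp\lambda/L^2$. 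That is linear, not quadratic, in $\lambda$.
\item \Cref{Lem:Energy} cannot substitute. It passes from the image shell to the preimage shell, and placing $\bar z_\xi$ in the image shell is exactly the radial step.
\end{itemize}
For the same reason your first bullet is false for the upper half of (7). The simplex shell has a sub-exponential upper tail, so that piece carries volume fraction about $e^{-c\sqrt{n\lambda/L}}$. This is much larger than $e^{-c\lambda^2n/L^2}$ because $L\le n^{1/6}$.

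Separately, the inradius claim $\bar B_n\subset(1+C/L^2)\SPO$ is not in the paper. Step I gives only $1+C/\sqrt L$, and \Cref{cor:terminal-volume-bootstrap} and Step V+ concern only $\E|\SPO|$ and $M_{n,d}$. The claim is also false: $\bar\vn_\cF$ lies at distance $M_{n,d}T_{n,d}\approx\sqrt n\,t_{n,d}/\sqrt{t_{n,d}^2+2}\approx\sqrt n\,(1-\tfrac{1}{2L})$. A uniform radial-segment conversion therefore costs $e^{cn/L}$. This part can be repaired by charging each bad piece its own radial deficit, which the Gaussian-type tails of heights and marginals absorb. The far-out side cannot be repaired this way.

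Heuristically the linear rate is even sharp. Consider points of a typical facet whose tangential part is $M_{n,d}^2(1+\epsilon)$, coming from one large barycentric coordinate. They have area fraction about $e^{-c\sqrt{n\epsilon}}$ and squared radius $n(1+\epsilon/(2L))$. They therefore receive direction mass about $e^{-c\sqrt{n\epsilon}-n\epsilon/(4L)}=e^{-\Theta(\lambda n/L^2)}$ when $\epsilon\asymp\lambda/L$, and on them $\|\erm\|_2^2\ge(1+c\lambda/L)\,2M_{n,d}^2/n$.

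So your argument yields probability $1-\exp(-c\lambda nL^{-2})$. The quadratic rate needs an input the paper does not supply, and it may fail for large $\lambda$. The linear rate with $\lambda$ a large constant still suffices to beat the $e^{c_0n/L^2}$ entropy in Step VII.

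Two smaller repairs are also needed.
\begin{itemize}
\item \Cref{lem:david-alonso} needs an $O(\sqrt n)$-diameter truncation of the isotropic simplex, which costs $e^{-c\sqrt n}\gg e^{-c\lambda^2n/L^2}$. Bound the cross term instead via the uniform direction $\theta_\cF$, which is independent of the centered simplex.
\item \Cref{C:GoodFlu2} only transfers polynomially small probabilities. Transfer exponential ones by Cauchy--Schwarz, using that the relevant determinant-moment ratio is $n^{O(1)}$.
\end{itemize}
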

 Formally, the main lemma of this part is the following (and its proof of this lemma appears below). 
\begin{lemma}[Number of facets seen by a microscopic perturbation]
\label{Lem:numberoffacets}
Fix a deterministic vector \(\vec \xi\in\sqrt n S^{n-1}\), independent of
\(\vec X\). For \(r\le n^{-2}\), define
\[
    \cF_{\vec \xi}(r)
    :=
    \left\{
        \cF\in\cF_{n-1}(\PN):
        \exists\,\vec \xi'\in\R^n,\ 
        \|\vec \xi'-\vec \xi\|_2\le r,
        \quad
        \frac{\vec \xi'}{\|\vec \xi'\|_{\PN}}
        \in \cF
    \right\}.
\]
Then
\[
    \E_{\vec X}|\cF_{\vec \xi}(r)|
    \le
    \exp\left(\frac{C n}{\log^2(d/n)}\right).
\]
\end{lemma}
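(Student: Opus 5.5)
We want to bound the expected number of facets of $\PN$ met by the radial projections of the ball $\vec\xi+rB_2^n$. Write $\cC_{\vec\xi}(r):=\{\vec\xi'/\|\vec\xi'\|_{\PN}:\|\vec\xi'-\vec\xi\|_2\le r\}\subset\partial\PN$. This is a small cap of the boundary around $z_{\vec\xi}:=\vec\xi/\|\vec\xi\|_{\PN}$. Its angular radius is $\theta\lesssim r/\sqrt n\le n^{-5/2}$.

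The plan is to convert counting into volume, facet by facet. Let $\cF$ be a facet meeting the cap, and consider the cone $\conv(0,\cF)$. The set of directions it captures is $D_{\cF}:=\{\theta:\mathbb R_+\theta\cap\cF\neq\emptyset\}$. Since $\cF$ meets the cap, $D_{\cF}$ intersects the spherical cap $\mathrm{Cap}(\vec\xi/\sqrt n,\theta)$.

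\textbf{Step 1: only typical facets can be hit.} First restrict to facets satisfying the Fleury events used in Steps IV--V. These are: height within $(1\pm CL^{-2})t_{n,d}$, $|\vec X_{\cF}|$ controlled from below and above via \Cref{Lem:Goodman} and \Cref{C:GoodFlu}, and a non-degenerate inradius of the simplex $\vec X_S\triangle$ (comparable to $\sqrt{\log}$ scale up to $\mathrm{poly}(n)$).

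By Corollary~\ref{Lem:PositiveTheyddy2} and \Cref{lem:low-height-cones}, applied with $\eps\asymp L^{-2}$, the expected volume of cones over atypical facets is at most $\exp(-cn/L^2)\E|\PN|$. Because $\vec\xi$ is independent of $\vec X$, the cap $\mathrm{Cap}(\vec\xi/\sqrt n,\theta)$ is uniformly distributed. The expected number of atypical facets meeting it is then bounded by
\[
\E\sum_{\cF\ \mathrm{atypical}}\sigma(D_{\cF}+\theta)/\sigma(\mathrm{Cap}).
\]
Here $D_{\cF}+\theta$ denotes the $\theta$-neighbourhood of $D_{\cF}$.

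\textbf{Step 2: neighbourhood volume versus own volume.} This step is the main obstacle: we must control $\sigma(D_{\cF}+\theta)$ by $\sigma(D_{\cF})$ plus a boundary term, uniformly enough to sum over facets. I would use a Steiner-type bound
\[
\sigma(D_{\cF}+\theta)\le\sigma(D_{\cF})+\theta\,|\partial D_{\cF}|\,(1+o(1)).
\]
Then I would bound the total perimeter $\sum_{\cF}|\partial D_{\cF}|$ by $(\#\text{facets})\cdot\mathrm{poly}(n)\cdot$ typical facet surface. The point is that, since $\theta\le n^{-5/2}$ while a typical facet's angular inradius is $\gtrsim n^{-2}$ (simplex inradius $\sim 1/(n\sqrt n)$ relative to radius; I would verify this polynomial scale), the boundary term is at most $n^{-1/2}\sigma(D_{\cF})$ on typical facets. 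On the rare facets with angular inradius smaller than $n^{-2}$, I would use the Goodman lower tail from \Cref{Lem:Goodman}, whose polynomial-in-$n$ bad probability is absorbed by the count.

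\textbf{Step 3: summing.} With the above bound, the expected count is at most
\[
\E\sum_{\cF}\frac{\sigma(D_{\cF})(1+o(1))}{\sigma(\mathrm{Cap})}+\E\#\{\cF\ \text{with } D_{\cF}\ni\vec\xi/\sqrt n\}.
\]
The directions $\sigma(D_{\cF})$ sum to $1$. The ratio $1/\sigma(\mathrm{Cap})$ is huge, so this naive summation is too lossy. I would therefore instead localize: take the neighbouring facets of the hit facet $\cF_{\vec\xi}$. A facet can meet the cap only if it shares the face of $\cF_{\vec\xi}$ lying within distance $\theta$ of $z_{\vec\xi}$.

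On the typical event, $z_{\vec\xi}$ lies in the thin shell of $\cF_{\vec\xi}$ (Step IV). There its barycentric coordinates $\lambda_i$ are of order $1/n$, except for an $\exp(-cn/L^2)$ fraction, by \Cref{Lem:Energy} and \Cref{Lem:SBSimplex}. Distances to lower-dimensional faces are then $\gtrsim n^{-2}\gg\theta$, so the cap stays inside a single facet. The count is therefore $1$ off the bad event.

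On the bad event, I would bound the count by the total number of facets, $\E|\cF_{n-1}|\le\exp(\tfrac n2(\lln(d/n)+O(1)))$ from \Cref{Lem:numberofFacets}, weighted by the bad probability via Cauchy--Schwarz. This last point is delicate: the bad probability $\exp(-cn/L^2)$ does not beat $\exp(n\lln/2)$. I would therefore use a Steiner/volume argument only on the bad part, bounding it by $\exp(Cn/L^2)$ using the Step V+ volume sandwich $\E|\SPO|\le\exp(Cn/L^2)|\bar B_n|$ in place of the crude facet count.
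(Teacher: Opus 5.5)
There is a genuine gap, and it starts in Step~1: the averaging identity carries no factor $1/\sigma(\mathrm{Cap})$. A facet $\cF$ meets the cap of angular radius $\theta$ around a direction $u$ iff $u\in(D_{\cF})_\theta$. By rotation invariance, you may average the fixed direction $\vec\xi/\sqrt n$ over $\Sn$, which gives
\[
\E_{\vec X}|\cF_{\vec\xi}(r)|\le\E_{\vec X}\sum_{\cF}\sigma\big((D_{\cF})_\theta\big).
\]
With this, the ``naive summation'' you discard in Step~3 is exactly what finishes the proof, and it is the paper's argument. Three inputs combine:
\begin{enumerate}
\item[(i)] the spherical Steiner bound $\sigma(A_\theta)\le(1+C\theta/r_A)^n\sigma(A)$ for a geodesically convex cell of inradius $r_A$;
\item[(ii)] the good-event bound $r_{D_\cF}\gtrsim 1/(n\sqrt L)$, coming from a tangential simplex of inradius $\gtrsim M_{n,d}/n$ sitting at radius $\approx\sqrt n$;
\item[(iii)] the partition identity $\sum_\cF\sigma(D_\cF)=1$.
\end{enumerate}
Together they give a factor $\exp(C\theta n^2\sqrt L)\le\exp(C\sqrt{L/n})$.

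Your Step~2 also loses a factor $n$. For a tangential cell, $|\partial D_\cF|\approx n\,\sigma(D_\cF)/r_{D_\cF}$. So with your inradius $n^{-2}$ the boundary term is $\sqrt n\,\sigma(D_\cF)$, not $n^{-1/2}\sigma(D_\cF)$. The multiplicative Steiner form then gives $e^{C\sqrt n}$. This is still $\le e^{Cn/L^2}$, but only because $L\le n^{c}$ with $c<1/6$.

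The localization route you substitute does not work. Lemmas~\ref{Lem:Energy} and~\ref{Lem:SBSimplex} control $\|\lambda-\mathbf 1_n/n\|_2$, not $\min_i\lambda_i$. For a uniform point of the simplex, $\lambda_i=Z_i/\sum_jZ_j$ with $Z_i$ i.i.d.\ $\mathrm{Exp}(1)$. Hence $\min_i\lambda_i$ is of order $n^{-2}$, and all $\lambda_i\gtrsim 1/n$ holds only with probability about $e^{-cn}$ — the reverse of your claim. The heights of the Gaussian simplex also have polynomial lower tails (cf.\ the lower tail in Lemma~\ref{Lem:Goodman}). So the event that the cap leaves the hit facet is only polynomially rare in $n$, not $\exp(-cn/L^2)$-rare.

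Everything therefore rests on bounding the count on a non-negligible bad event, and there you have no argument. Cauchy--Schwarz against $\E|\cF_{n-1}(\PN)|$ fails, as you note. The sandwich $\E|\SPO|\le e^{Cn/L^2}|\bar B_n|$ compares total volumes and says nothing about how many conic cells a tiny cap meets. The bound you need there is precisely the angular-measure summation above, i.e.\ the argument you abandoned.
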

Roughly speaking, it follows by direct computations, which would imply that
\[
    \E |\cF_{\vec \xi}| \approx \frac{\E|\PN|}{M_{n,d}^{-n}} \lesssim \exp(CnL^{-2})
\]

Therefore, by Markov's inequality, it holds that
\[
    |\cF_{\vec \xi}| \lesssim \exp(C_2nL^{-2})
\]
with probability of at least $1-\exp(-C_1nL^{-2})$. Now, we use the following lemma that almost follows from definition:
\begin{fact}[Linearity of the MNI on one conic cell]
\label{fact:mni-linearity-on-facet}
Let \(y_1,\ldots,y_m\in\R^n\setminus\{0\}\), and assume that their MNI rays
hit the same facet \(\cF\) of \(\PN\), namely
\[
    z_i:=
    \frac{y_i}{\|y_i\|_{\PN}}
    \in
    \operatorname{relint}(\cF),
    \qquad i=1,\ldots,m.
\]
Let
\[
    y=\sum_{i=1}^m a_i y_i,
    \qquad
    a_i\ge0,
    \qquad
    \sum_{i=1}^m a_i=1.
\]
Then \(y\) also hits the same facet \(\cF\). In particular, if the
MNI is unique, then
\[
    \erm(y)
    =
    \sum_{i=1}^m a_i\erm(y_i)
    \in
    \operatorname{Conv}\{\erm(y_1),\ldots,\erm(y_m)\}.
\]
\end{fact}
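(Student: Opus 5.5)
The statement to prove is the Fact on linearity of the MNI on one conic cell. The argument is pure convex geometry and uses no randomness. The plan is to show that the Minkowski functional $\|\cdot\|_{\PN}$ is linear on the cone $\operatorname{cone}(\cF):=\{s z: s\ge 0,\ z\in\cF\}$, and then to read off the MNI from the simplicial structure of $\cF$.

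\textbf{Step 1: the cell is convex and the gauge is linear on it.} Since $\cF$ is a facet, it lies in the supporting hyperplane $\{v:\langle v,\vn_{\cF}\rangle=\|\vn_{\cF}\|_2^2\}$, and $\PN$ lies in the corresponding half-space. For every $y\in\operatorname{cone}(\cF)$ we have $y=sz$ with $z\in\cF$ and $s\ge0$. This gives
\[
\|y\|_{\PN}=s=\frac{\langle y,\vn_{\cF}\rangle}{\|\vn_{\cF}\|_2^2}.
\]
Indeed, $y/s\in\PN$ gives $\|y\|_{\PN}\le s$. Conversely, any $y/s'\in\PN$ satisfies $\langle y/s',\vn_{\cF}\rangle\le\|\vn_{\cF}\|_2^2$, which forces $s'\ge s$. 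So the gauge coincides with a linear functional $\ell$ on the cone.

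Now take $y=\sum_i a_i y_i$ with $y_i=\|y_i\|_{\PN}z_i$. Set $s:=\sum_i a_i\|y_i\|_{\PN}>0$; it is nonzero because each $y_i\neq0$ and the $a_i$ sum to one. Then
\[
\frac{y}{s}=\sum_i \frac{a_i\|y_i\|_{\PN}}{s}\,z_i
\]
is a convex combination of points of $\operatorname{relint}(\cF)$. The relative interior of a convex set is convex, so $y/s\in\operatorname{relint}(\cF)$. By Step 1 this gives $\|y\|_{\PN}=s$, hence $y/\|y\|_{\PN}\in\operatorname{relint}(\cF)$, and $y$ hits $\cF$.

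\textbf{Step 2: identify the MNI with barycentric coordinates.} Write $\cF=\conv\{\eps_j\vec X_j\}_{j\in S}$ with $|S|=n$. Almost surely these vertices are linearly independent, since $\PN$ is simplicial by the argument after Lemma~\ref{lem:sparsity}. A point $z\in\operatorname{relint}(\cF)$ has unique barycentric weights $\lambda(z)\in\operatorname{relint}\triangle$. The vector $w$ supported on $S$ with entries $w_j=\|y\|_{\PN}\eps_j\lambda_j(z)$ then interpolates $y$ and satisfies $\|w\|_1=\|y\|_{\PN}$. Because $\|y\|_{\PN}=\inf\{\|w\|_1:\vec Xw=y\}$ (using $\PN=\vec XB_1^d$), this $w$ is a minimizer. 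Under the uniqueness hypothesis it is $\erm(y)$.

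On $\operatorname{cone}(\cF)$ the resulting map can be written as
\[
\erm(y)=\iota_S\,\mathrm{diag}(\vec\eps)\,\vec X_S^{-1}y,
\]
where $\iota_S$ embeds $\R^S$ into $\R^d$. This map is linear in $y$. Applying it to $y=\sum_i a_iy_i$, with all terms lying in the same cone by Step 1, yields $\erm(y)=\sum_i a_i\erm(y_i)$. Since $a\in\triangle$, this is a point of $\operatorname{Conv}\{\erm(y_1),\ldots,\erm(y_m)\}$.

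\textbf{Main obstacle.} The only delicate point is the use of uniqueness and general position. One must ensure that the minimizer built from the facet $\cF$ is the MNI, rather than a different minimizer built from a neighbouring facet. Working with relative interiors avoids this: a point of $\operatorname{relint}(\cF)$ belongs to no other facet, and uniqueness, assumed in the statement, settles any remaining ambiguity.
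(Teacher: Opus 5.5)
Your proof is correct, and it is the argument the paper leaves implicit: the paper states this fact without proof, saying it ``almost follows from definition''. On the cone over $\cF$ the gauge equals the linear functional $y\mapsto\langle y,\vn_{\cF}\rangle/\|\vn_{\cF}\|_2^2$, relative interiors are convex, and on that cell the minimizer is given by a fixed linear map supported on $S$. One small slip: under the paper's convention that $\vec X_S$ has the unsigned columns $\vec X_j$, that map is $y\mapsto\iota_S\vec X_S^{-1}y$ with no $\mathrm{diag}(\vec\eps)$ (your formula is right only if $\vec X_S$ has columns $\eps_j\vec X_j$), and this does not affect linearity.
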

Now, we obtain the following useful corollary:
\begin{corollary}[Few facets are used on a low-complexity set]
\label{cor:few-facets-on-net}
Let \(K\subset 2\bar B_n\setminus 2^{-1}\bar B_n\) be deterministic and
independent of \(\vec X\). Assume that
\[
    \mathcal N_2(n^{-2},K)
    \le
    \exp\left(c_0nL^{-2}\right),
\]
where \(\mathcal N_2(\cdot,\cdot)\) denotes Euclidean covering number and
\(c_0>0\) is a sufficiently small universal constant. For fixed \(\vec X\), define
\[
    \mathfrak F_K(\vec X)
    :=
    \left\{
        \cF\in\cF_{n-1}(\PN):
        \exists\,\xi\in K
        \text{ such that }
        \frac{\xi}{\|\xi\|_{\PN}}\in \cF
    \right\}.
\]
Then, with probability at least
 $
    1-\exp\left(-cnL^{-2}\right),
$
one has
\[
    |\mathfrak F_K(\vec X)|
    \le
    \exp\left(CnL^{-2}\right).
\]
\end{corollary}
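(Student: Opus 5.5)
The plan is a net argument combined with the microscopic-perturbation count of \Cref{Lem:numberoffacets} and a union bound. Fix a deterministic minimal $n^{-2}$-net $\mathcal N\subset K$ (Euclidean), so $|\mathcal N|\le \exp(c_0nL^{-2})$. Since $K$ is covered by the Euclidean balls of radius $n^{-2}$ around the points of $\mathcal N$, every $\xi\in K$ lies within distance $r=n^{-2}$ of some $\xi_j\in\mathcal N$. Consequently, any facet hit by the ray of some $\xi\in K$ is a facet seen by a microscopic perturbation of $\xi_j$:
\[
    \mathfrak F_K(\vec X)\subset \bigcup_{\xi_j\in\mathcal N}\cF_{\xi_j}(n^{-2}).
\]

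Lemma~\ref{Lem:numberoffacets} is stated for $\vec \xi\in\sqrt n S^{n-1}$, whereas the net points only satisfy $\tfrac12\sqrt n\le\|\xi_j\|_2\le 2\sqrt n$. I would normalize using the homogeneity of the gauge. The ray of $\xi'$ hits $\cF$ if and only if the ray of $s\xi'$ does, for every $s>0$. With $s_j:=\sqrt n/\|\xi_j\|_2\in[1/2,2]$, we get $\cF_{\xi_j}(r)=\cF_{s_j\xi_j}(s_jr)\subset\cF_{s_j\xi_j}(2r)$. The lemma's proof only needs the radius to be polynomially small, so $2n^{-2}$ is harmless. Alternatively, one can take a net at scale $n^{-2}/2$, which changes $c_0$ only by a constant. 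Since $s_j\xi_j$ is deterministic and independent of $\vec X$, the lemma gives
\[
    \E_{\vec X}|\cF_{\xi_j}(n^{-2})|\le \exp\left(CnL^{-2}\right)
\]
for each $j$.

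By linearity of expectation,
\[
    \E|\mathfrak F_K(\vec X)|\le |\mathcal N|\exp\left(CnL^{-2}\right)\le \exp\left((C+c_0)nL^{-2}\right).
\]
Markov's inequality at threshold $\exp\left((C+c_0+c)nL^{-2}\right)$ then yields
\[
    |\mathfrak F_K(\vec X)|\le \exp\left(C'nL^{-2}\right)
\]
with probability at least $1-\exp(-cnL^{-2})$, after renaming constants.

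The only delicate point is this normalization, together with checking that the lemma's expectation bound holds uniformly over the deterministic centers with a fixed constant $C$. I expect this to hold because the bound depends only on $n$ and $d$, by rotation invariance. Requiring $c_0$ small is not needed for this statement; it matters only for subsequent uses.
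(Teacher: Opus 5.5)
Your proposal is correct and is essentially the argument the paper intends; the paper gives no separate proof. The idea is to dominate $\mathfrak F_K(\vec X)$ by the union, over an $n^{-2}$-net of $K$, of the perturbation sets $\cF_{\xi_j}(n^{-2})$ from Lemma~\ref{Lem:numberoffacets}, and conclude with Markov. The only difference is that the surrounding text applies Markov at each net point and then a union bound, which is where the smallness of $c_0$ enters; you sum the expectations first and apply Markov once, which indeed makes that smallness unnecessary here. One small caution: your fallback of passing to a net at scale $n^{-2}/2$ is not licensed by the hypothesis. The hypothesis controls the covering number only at scale $n^{-2}$, and halving the scale can cost a factor exponential in $n$ for a general $K$. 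So rely on your main fix: the proof of Lemma~\ref{Lem:numberoffacets} only uses that $\rho\lesssim r/\sqrt n$ is polynomially small, and therefore tolerates $r=2n^{-2}$.
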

To complete the proof of our theorem, we need to show that
\[
    \|(\erm)_{\cS^{c}}\|^2 \lesssim 1/L,
\]
where $\cS^{c} = [d] \setminus  \mathrm{Supp}\{\ft\}$. Also recall that
\[
    (\erm)_{\cS^{c}} := \argmin_{\vec X w  = \vec Y - \vec X(\erm)_{\cS}}\|w\|_1.    
\]
Now, if $\vec Y - \vec X(\erm)_{\cS}$ were independent of  $(\erm)_{\cS^{c}}$, then we would be done by combining the last two results. However, this is not the case. And note that one cannot use uniform convexity arguments (as \cite{kur2026minimum}) on the $\ell_1$-MNI, as it does not satisfy this property. Therefore, we have to do this by ``controlling the number of facets of the MNI'', and using the local tent property. Namely, take an $n{-1}$-net of perturbations to $\vec \xi$ apply the last step to each of them, as they cannot use too many facets, we can apply the local convexity and extend beyond the net.  For completeness, see \Cref{sss:RedTech}.

%% file: Missing.tex
\subsection{Missing parts from the proof of \Cref{Theorem:LoNE}}
\begin{proof}[Proof of Lemma \ref{Lem:SBSimplex}]
Let $Z_1,\ldots,Z_n \overset{\text{i.i.d.}}{\sim}\mathrm{Exp}(1)$, $S=\sum_{i=1}^n Z_i$, and define
\[
X=\frac{Z}{S}\in \simplex, 
\qquad 
Y=\sqrt{n(n+1)}\Bigl(X-\tfrac{\1}{n}\Bigr).
\]
Then $X$ is uniform on the simplex, and the rescaled $Y$ is isotropic in $\R^{n}$:
\[
\E Y=0, \qquad \E\lVert Y\rVert_2^2 = n-1.
\]
We want to bound, for $\delta\in(0,1)$,
\[
p(\delta)=\Pbb\!\left(\lVert Y\rVert_2^2 \le (1-\delta)(n-1)\right).
\]

We have
\[
\lVert Y\rVert_2^2 
= \frac{n+1}{\Zbar^{2}}\left(\frac{1}{n}\sum_{i=1}^n Z_i^2 - \Zbar^{2}\right),
\qquad 
\Zbar=\frac{S}{n}.
\]
Equivalently,
\[
\lVert Y\rVert_2^2 = (n+1)\left(\frac{U}{\Zbar^{2}}-1\right),
\quad\text{where}\quad
U=\frac{1}{n}\sum_{i=1}^n Z_i^2.
\]
Thus, for $\delta\in(0,1)$,
\begin{align*}
\Pbb\!\left(\lVert Y\rVert_2^2 \le (1-\delta)(n-1)\right)
&= \Pbb\!\left(\frac{U}{\Zbar^2} \le 1 + \frac{(1-\delta)(n-1)}{n+1}\right) \\
&\le \Pbb\!\left(\frac{U}{\Zbar^2} \le 2 - \delta\right).
\end{align*}

We control the numerator and denominator separately. Set the event
\[
F := \bigl\{\Zbar \le 1 + \delta/8\bigr\}.
\]
On $F$ we have
\[
\Pbb\!\left(\frac{U}{\Zbar^2} \le 2 - \delta \;\middle|\; F\right)
\le \Pbb\!\left(U \le \frac{(1+\delta/8)^2}{2-\delta}\right)
\le \Pbb\!\left(U \le 2 - \frac{\delta}{4}\right).
\]
Thus we conclude that
\begin{equation}
p(\delta) \le \Pbb(F^{\mathrm c}) + \Pbb\!\left(U \le 2 - \frac{\delta}{4}\right).
\label{eq:master}
\end{equation}

\paragraph{Bounding $\Pbb(F^{\mathrm c})$.}
For $Z\sim\mathrm{Exp}(1)$, its mgf is
\[
M_Z(\lambda) = \E e^{\lambda Z} = \frac{1}{1-\lambda}, \qquad \lambda<1.
\]
Recall $\Zbar = \frac{1}{n}\sum_{i=1}^n Z_i$. Then, for $t>0$,
\[
\Pbb(\Zbar \ge 1 + t) \le \exp\!\bigl(-n\,[\,t - \ln(1+t)\,]\bigr).
\]
In particular, for $0<t\le 1$,
\[
\Pbb(\Zbar \ge 1 + t) \le \exp\!\left(-\frac{n t^2}{6}\right),
\]
where we used $t - \ln(1+t) \ge t^2/6$ for $t\in[0,1]$. Thus,
\[
\Pbb(F^{\mathrm c}) = \Pbb\!\left(\Zbar \ge 1 + \frac{\delta}{8}\right)
\le \exp\!\left(-\frac{n \delta^2}{384}\right).
\]

\paragraph{Small ball for $U$.}
The moments of the exponential are $\E[Z^k]=\Gamma(1+k)=k!$.
Let $W = Z^2$. For any $\lambda>0$, apply the exponential trick with exponent $\lambda n$:
\begin{align*}
\Pbb\!\left(U \le 2 - \frac{\delta}{4}\right)
&= \Pbb\!\left(e^{-\lambda n U} \ge e^{-\lambda n (2-\delta/4)}\right) \\
&\le e^{\lambda n (2-\delta/4)}\, \E\!\left[e^{-\lambda \sum_{i=1}^n W_i}\right]
= e^{2\lambda n}\, e^{-\lambda \delta n/4}\, \bigl(\E e^{-\lambda W}\bigr)^n.
\end{align*}
Using $1-x \le e^{-x} \le 1-x + \tfrac{x^2}{2}$ for $x\ge 0$, we get
\[
\bigl(\E e^{-\lambda W}\bigr)^n
\le \left(1 - \lambda \E W + \frac{\lambda^2}{2}\E W^2\right)^n
\le \left(1 - 2\lambda + 12\lambda^2\right)^n
\le \exp\!\left(-2\lambda n + 12\lambda^2 n\right),
\]
since $\E W=\E[Z^2]=2$ and $\E[W^2]=\E[Z^4]=24$.
Therefore,
\[
\Pbb\!\left(U \le 2 - \frac{\delta}{4}\right)
\le \exp\!\left(12\lambda^2 n - \frac{1}{4}\lambda \delta n\right).
\]
Choosing $\lambda=\delta/96$ yields
\[
\Pbb\!\left(U \le 2 - \frac{\delta}{4}\right)
\le \exp\!\left(-\frac{\delta^2 n}{768}\right).
\]

\paragraph{Conclusion.}
Combining with \eqref{eq:master},
\[
p(\delta) \le \exp\!\left(-\frac{n\delta^2}{384}\right)
+ \exp\!\left(-\frac{\delta^2 n}{768}\right)
\le 2\,\exp\!\left(-\frac{\delta^2 n}{768}\right).
\]
Equivalently, for all $\delta>0$,
\[
\Pbb\!\left(\lVert Y\rVert_2^2 \le (1-\delta)\,n\right)
\le 2\,\exp\!\left(-\frac{\delta^2 n}{768}\right).\qedhere
\]
\end{proof}
\subsubsection{Proof of Lemma \ref{Lem:Energy}}
Without loss of generality, we consider the isotropic simplex scaled by $1/\sqrt{n}$, which we denote here by $\triangle$, and $G$ is a Gaussian matrix scaled by $1/\sqrt{n}$.  For some $\delta \in (0,1)$, we can see that 
\begin{align*}
           \E_{G} \mathrm{Unif}_{X \sim \Vol(\triangle)}( \|GX\|_{2} \leq 1 + \frac{\delta}{2} ,\|X\|_2 \geq 1) &= \int_{(\R^{n})^{n-1}}\int_{\triangle}1_{\|GX\|_{2} \leq 1 + \delta/2 ,\|X\|_2 \geq 1}dXdG \\&=\int_{\triangle}\int_{(\R^{n})^{n-1}}1_{\|GX\|_{2} \leq 1 + \delta/2 ,\|X\|_2 \geq 1}dGdx
           \\&=\int_{\triangle}\Pr_{G}(\|GX\|_{2} \leq 1 + \delta/2 ,\|X\|_2 \geq 1)dX
           \\&\leq \exp(-c_1n\delta^2),
\end{align*}
where we used Fubini's theorem and the fact that for any $x \in \Sn$, $\|Gx\|_2$ satisfies
\[
    \Pr_{G}\{\|Gx\|_2 \leq 1-\delta/2\} \leq \exp(-c\delta^2 n).
\]
 By Markov's inequality (on the matrix $G$), the proof is complete. 
\subsubsection{Proof of Lemma \ref{Lem:CheapVolumeEstimate}}

\paragraph{Claim~\ref{item:inclusion-via-K-n-d}:} 
We begin by controlling the (normalized) variance of the map $\vec \xi \mapsto \|\vec \xi\|_{K(n, d)^\circ}$, where $\vec \xi \sim \gamma_d \equiv \Normal{0}{I_d}$; we do this below in~\Cref{lem:control-normalized-variance-K-n-d}. Then, by the boosted form of Dvoretzky's theorem (\Cref{lem:boosted-dvoretzky}) applied to $K(n,d) \subset B^d_1$, we have
\begin{multline}
\label{ineq:lower-on-inradius}
r(P_{n,d}) \geq r(\vec X K(n,d)) \\\geq 
M^\ast(K(n,d)) \bigg[1 - c \sqrt{n \cdot \Var_{\gamma_d}\Big(\frac{\|\vec \xi\|_{K(n,d)^\circ}}{M^\ast(K(n,d))}\Big)}\bigg]
\geq 
M^\ast(K(n,d)) \Big(1 - \frac{C}{L}\Big).
\end{multline}
The first part of Lemma~\ref{Lem:CheapVolumeEstimate} now follows by an integration argument. Namely, let $\cE$ denote the event on which the inequality~\eqref{ineq:lower-on-inradius} holds. Then, for $d \geq 2n$, we have 
\[
M_{n,d} 
\leq \E \frac{1}{r(P_{n,d})} \1_{\cE} + 
\sqrt{\E_{\vec X, \vec \xi}[\|\vec \xi\|_{\vec X B^d_1}^2]} \sqrt{\P(\cE^c)} 
\leq 
\frac{(1+ \frac{C}{L})}{\MND} 
+ c\exp(-cn) \leq 
\frac{(1+ \frac{C'}{L})}{\MND}.
\]
where above we simply recognize $\MND = M^\ast(K(n,d))$.

\begin{lemma}
\label{lem:control-normalized-variance-K-n-d}
Suppose $d \geq 2n$. Then, it holds for $K(n,d) = B^d_1 \cap \tfrac{1}{n} B^d_\infty$ that
for $\vec \xi \sim \gamma_d \equiv N(0, I_d)$, 
\[
\Var_{\gamma_d}\Big(\frac{\|\vec \xi\|_{K(n,d)^\circ}}{M^\ast(K(n,d))}\Big) \lesssim 
\frac{1}{n \log^2(d/n)}.
\]
\end{lemma}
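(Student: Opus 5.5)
The support function of $K(n,d)=B_1^d\cap \tfrac1n B_\infty^d$ has an explicit form. For $\vec\xi\in\R^d$, let $\xi^*_1\ge\xi^*_2\ge\dots$ denote the decreasing rearrangement of $|\xi_i|$. Since $K(n,d)$ is the convex hull of the vectors with $n$ nonzero coordinates equal to $\pm 1/n$, we get
\[
    \|\vec\xi\|_{K(n,d)^\circ}=h_{K(n,d)}(\vec\xi)=\frac1n\sum_{j=1}^n \xi^*_j ,
\]
the average of the top $n$ absolute coordinates. The plan has two parts: show $M^\ast(K(n,d))\asymp\sqrt{L}$, and show the variance of this function is $\lesssim 1/(nL)$. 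The ratio is then $\lesssim 1/(nL^2)$, as claimed.

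For the mean, I would use order statistics of $|g|$. The $k$-th largest of $d$ half-normal variables concentrates near $\Psi^{-1}(1-k/d)\approx\sqrt{2\ln(d/k)}$. For $k\le n$ this is $\ge\sqrt{2L}$. Averaging over $k\le n$ gives $\E\frac1n\sum_{j\le n}\xi_j^*\asymp\sqrt{L}$, valid for $d\ge 2n$ after adjusting constants. Here $M^\ast$ is the Gaussian mean width, as used in Lemma~\ref{lem:boosted-dvoretzky}.

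For the variance, the naive Gaussian Poincaré inequality is not enough. The gradient of $F(\vec\xi)=\frac1n\sum_{j\le n}\xi^*_j$ is $\frac1n\mathrm{sign}(\xi_i)\mathbf 1_{i\in\text{top}_n}$ a.s., so $\|\nabla F\|_2^2=1/n$ and Poincaré gives only $\Var F\le 1/n$. We need an extra factor $1/L$, which is a superconcentration gain. I would get it from Lemma~\ref{Lem:Hyper} with the coordinate decomposition $I_d=\sum_i \cP_{e_i}$. Since $\partial_iF=\pm\frac1n\mathbf 1_{\{i\in\text{top}_n\}}$ and $\Pr(i\in\text{top}_n)=n/d$, we have
\[
    \|\partial_iF\|_{L^2}^2=\frac{1}{nd},
    \qquad
    \frac{\|\partial_iF\|_{L^2}}{\|\partial_iF\|_{L^1}}=\sqrt{d/n}.
\]
Summing over $i$ then gives
\[
    \Var F\lesssim d\cdot\frac{1/(nd)}{1+\tfrac12 L}\asymp\frac{1}{nL}.
\]
Dividing by $(M^\ast)^2\asymp L$ yields $\frac{1}{nL^2}$. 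Lemma~\ref{Lem:Hyper} is stated for smooth $F$, while $F$ is only Lipschitz and piecewise linear. I would handle this by a standard mollification: convolve with a small Gaussian, so the gradients converge in $L^1$ and $L^2$ and the bound passes to the limit.

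The main obstacle is checking that the two-sided asymptotic $M^\ast\gtrsim\sqrt L$ holds uniformly for $d\ge 2n$ and not only for $d/n$ large; a lower bound suffices. I would obtain it from $\E\xi^*_n\gtrsim\sqrt{\ln(d/n)}$, using a binomial count of the exceedances $|g_i|>t$ with $t=c\sqrt{L}$. When $d/n$ is bounded the statement is trivial anyway, since both sides are then of order $1/n$ up to constants.
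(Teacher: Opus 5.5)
Your proposal is correct and follows the paper's proof: the same top-$n$ formula for the support function, the same coordinate decomposition in the Talagrand/Cordero-Erausquin--Ledoux $L^1$--$L^2$ inequality with $\|\partial_i f\|_{L^2}^2=1/(nd)$ and $\|\partial_i f\|_{L^1}=1/d$, giving $\Var f\lesssim 1/(nL)$, and the same division by $M^\ast(K(n,d))^2\asymp L$. The only differences are cosmetic. You derive the lower bound on $M^\ast$ by an exceedance count, where the paper cites Gordon--Litvak--Mendelson--Pajor. You also add the mollification step and the bounded-$d/n$ case, which the paper omits; for the latter, $M^\ast\ge \E|g|=\sqrt{2/\pi}$ because the top-$n$ average dominates the full average.
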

\begin{proof}
Let $f(\vec \xi) = \|\vec \xi\|_{K(n,d)^\circ}$; we also denote by $I_{n,d} \subset [d]$ the (random) subset of $n$ largest coordinates (by magnitude) of $\vec \xi$. It is easy to see that, with probability one, 
\[
f(\vec \xi) = \|\vec \xi\|_{K(n,d)^\circ} = 
\frac{1}{n} \sum_{i \in I_{n,d}} |\xi_i|.
\]
In particular, we have $\partial_i f = \tfrac{\sign(\vec \xi_i)}{n} \1\{i \in I_{n,d}\}$, for any $i \in [d]$. It follows that 
\[
\|\partial_i f \|_{L^2(\gamma_d)}^2 = \frac{1}{n^2} \cdot \frac{n}{d} = \frac{1}{nd}, \quad \mbox{while} \quad 
\|\partial_i f\|_{L^1(\gamma_d)} = \frac{1}{n}\cdot \frac{n}{d} = \frac{1}{d}. 
\]
Hence, by Talagrand's $L^1-L^2$ inequality, we have
\begin{subequations}
\begin{equation}
\label{eqn:variance-support-fn-K-n-d}
\Var_{\gamma_d}(f(\vec \xi)) \lesssim d \frac{\|\partial_1 f\|_{L^2(\gamma_d)}^2}{\log\Big(e \tfrac{\|\partial_1 f\|_{L^2(\gamma_d)}}{\|\partial_{1} f\|_{L^1(\gamma_d)}}\Big)} = \frac{1}{n \log(e \sqrt{d/n})} \lesssim \frac{1}{n \log(e d/n)}.
\end{equation}
On the other hand, writing $\vec \xi_i^\ast$ for the order statistics (sorted by decreasing magnitude), we have  
\begin{equation}
\label{eqn:M-star-K-n-d}
\E f(\vec \xi) = M^\ast(K(n,d)) = \frac{1}{n} \E\sum_{i=1}^n \vec \xi_i^\ast \asymp \sqrt{L}, 
\end{equation}
\end{subequations}
where the last relation follows from standard estimates for the order statistics of a Gaussian random vector (\emph{e.g.,} see~\cite[Lemma 3.1]{gordon2007gaussian}). 
Combining relations~\eqref{eqn:variance-support-fn-K-n-d} and~\eqref{eqn:M-star-K-n-d}, we obtain the result.\qedhere
\end{proof}

\paragraph{Claim~\ref{item:volume-via-K-n-d}:}
By integrating with polar coordinates and using Jensen's inequality, for any centrally symmetric convex body $K \subset \R^n$, it holds that
\[
\Big(\frac{|K|}{|B^n_2|}\Big)^{1/n} = \Big( \int_{\mathbb{S}^{n-1}} \|\theta\|_K^{-n} \, d\sigma_{n-1}(\theta)\Big)^{1/n} \geq \frac{1}{M(K)}.
\]
Therefore we have 
\[
M_{n,d} \geq (1-\exp(-cn)) \cdot \exp(-C(n,d)/n) \cdot \frac{1}{\widetilde{M}_{n,d}} \geq 
\Big(1 - \frac{c}{\sqrt{L}}\Big) \frac{1}{\widetilde{M}_{n,d}}.
\]

Define the event
\begin{multline*}
\cE_1 \defn
\bigg
\{\,\frac{\opnorm{\vec{X}_{S, \vec{\eps}}}}{\sqrt{n}} 
\leq 4 + \sqrt{2 \log \tfrac{e d}{n}}
\quad \\ 
\mbox{and} \quad 
\frac{\fronorm{\vec{X}_{S, \vec{\eps}}}}{n} \leq
1 + \tfrac{\sqrt{2 \log \tfrac{e d}{n}} +  2}{\sqrt{n}},~\mbox{for all}~S \subset [d], |S| = n, \vec{\eps} \in \{-1, 1\}^n\,\bigg\}.
\end{multline*}

\begin{lemma}
    The event $\cE_1$ holds with probability at least $1 - 2 \exp(-2n)$.
\end{lemma}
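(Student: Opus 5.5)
The plan is a union bound over all $n$-subsets, combined with Gaussian Lipschitz concentration for the two norms of a fixed $n\times n$ Gaussian matrix. Write $L':=\log\tfrac{ed}{n}$.

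First, the sign vector $\vec\eps$ plays no role. Multiplying columns by $\pm1$ amounts to right-multiplication by a diagonal orthogonal matrix, which preserves both $\opnorm{\cdot}$ and $\fronorm{\cdot}$. Hence it suffices to control $\vec X_S$ for every $S\subset[d]$ with $|S|=n$. There are $\binom{d}{n}\le (ed/n)^n=\exp(nL')$ such subsets. For each fixed $S$, the matrix $\vec X_S$ is an $n\times n$ matrix with i.i.d.\ $N(0,1)$ entries.

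Fix $S$. Both maps $G\mapsto\opnorm{G}$ and $G\mapsto\fronorm{G}$ are $1$-Lipschitz with respect to the Euclidean (Hilbert--Schmidt) structure on $\R^{n\times n}$. Gaussian concentration therefore gives $\Pr(f(G)\ge \E f(G)+t)\le \exp(-t^2/2)$ for each of them. For the means, I would use the standard estimates $\E\opnorm{G}\le 2\sqrt n$ (Gordon/Slepian) and $\E\fronorm{G}\le(\E\fronorm{G}^2)^{1/2}=n$ (Jensen).

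Choose $t=\sqrt n\,(2+\sqrt{2L'})$ in both cases. Then $2\sqrt n+t=\sqrt n(4+\sqrt{2L'})$, and $(n+t)/n=1+(\sqrt{2L'}+2)/\sqrt n$, which are exactly the thresholds in $\cE_1$. Moreover,
\[
\frac{t^2}{2}=\frac{n}{2}\bigl(4+4\sqrt{2L'}+2L'\bigr)\ge n(L'+2),
\]
so each of the two bad events for a fixed $S$ has probability at most $\exp(-nL'-2n)$.

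A union bound over the at most $\exp(nL')$ subsets and the two norms then gives
\[
\Pr(\cE_1^c)\le 2\exp(nL')\exp(-nL'-2n)=2\exp(-2n),
\]
which is the claim. There is no real obstacle here. The only points to check are that sign flips leave the norms unchanged, so that no extra $2^n$ factor enters the union bound, and that the constant $2\sqrt n$ bound on $\E\opnorm{G}$ for square Gaussian matrices holds exactly rather than only up to constants. If one preferred to avoid citing Gordon, a net argument would also work, at the cost of worse constants than the ones displayed in the statement.
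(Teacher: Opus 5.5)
Your proof is correct and matches the paper's argument: both norms are invariant under sign flips, you take a union bound over the $\binom{d}{n}\le (ed/n)^n$ subsets, and you use the Davidson--Szarek tail (mean $2\sqrt n$ plus Gaussian concentration) for $\opnorm{\cdot}$ and Borell--TIS for $\fronorm{\cdot}$, with $\E\fronorm{\vec X_S}\le n$ by Jensen. The only cosmetic difference is that the paper keeps a free deviation parameter $t$, bounding each bad event by $\exp(-nt^2/2)$, and sets $t=2$ at the end; this is exactly your choice of deviation $\sqrt n\,(2+\sqrt{2L'})$.
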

\begin{proof}
Fix $t > 0$. We claim that the following tail bounds hold:
\begin{subequations}
\label{ineq:union-bound-over-block-events}
\begin{align}
\label{ineq:consequence-of-operator-norm-concentration}
\Pr\Big\{ \exists S, \vec{\eps} : 
\opnorm{\vec{X}_{S, \vec{\eps}}} \geq (2 + \sqrt{2 \log \tfrac{e d}{n}} + t) \sqrt{n}\Big\} &\leq \exp(-nt^2/2), \quad \mbox{and,}
\\
\label{ineq:consequence-of-Gaussian-Lipschitz}
\Pr\Big\{ \exists S, \vec{\eps} : 
\frac{\fronorm{\vec{X}_{S, \vec{\eps}}}}{n} \geq
1 + \tfrac{\sqrt{2 \log \tfrac{e d}{n}} +  t}{\sqrt{n}} \Big\}
&\leq \exp(-nt^2/2).
\end{align}
\end{subequations}
To obtain the claimed bounds, 
we begin by noticing that for any fixed $n$-subset $S \subset [d], |S| =n$, it holds that 
\[
\opnorm{\vec{X}_{S, \vec{\eps}}} = \opnorm{\vec{X}_{S, \vec{\eps}'}} 
\quad \mbox{and} \quad 
\fronorm{\vec{X}_{S, \vec{\eps}}} = \fronorm{\vec{X}_{S, \vec{\eps}'}} \quad \mbox{for all}~\vec{\eps}, \vec{\eps}' \in \{-1, 1\}^n.
\]
Now, inequality~\eqref{ineq:consequence-of-operator-norm-concentration} follows
by the standard Davidson-Szarek tail bound for operator norms of Gaussian random matrices, and then
applying a union bound over all $\binom{d}{n} \leq (\tfrac{e d}{n})^n$ many $n$-subsets $S \subset [d]$. Inequality~\eqref{ineq:consequence-of-Gaussian-Lipschitz} follows by a union bound over all $n$-subsets applied to the Borell-TIS inequality; we also used Jensen's inequality to estimate $\E \fronorm{\vec{X}_{S, \eps}} \leq n$. Finally, taking $t = 2$ and applying a union bound over the events underlying inequalities~\eqref{ineq:union-bound-over-block-events} yields the claim.\qedhere
\end{proof}

\begin{proof}

% The left hand side, simply follows from Jensen's inequality, to see recall that by polar coorditnates.
% \[
%    |A| = \int_{\Sn} \|\vec \xi\|_{A}^{-n}d\sigma_n
% \]
% The key challenge is to prove the right hand side. 
% First, we need to consider the following set:

The boundary of the polytope $P_{n,d}$ can be decomposed facially as follows. For any $x \in \partial P_{n,d}$, there exists some facet $\mathcal{F} \in \mathcal{F}_{n-1}(P_{n,d})$ for which we can write 
\begin{equation}\label{eqn:facial-decomposition}
x = \vec{c}_\cF + (x - \vec{c}_{\cF}) = 
\vec{c}_\cF + \vec{X}_\cF z, \quad \mbox{for some}~z \in \Delta_c.
\end{equation}
Thus, by translation invariance of volume:
\begin{equation}
\label{ineq:lower-bound-over-blocks}
\min_{\cF \in \cF_{n-1}(P_{n,d})} \frac{|\cF \cap (\vec{c}_\cF + rB^n_2)|}{|\cF|} \geq 
\min_{\substack{S \subset [d] \\  |S| = n}} \min_{\vec{\eps} \in \{-1, 1\}^n} \, \frac{|\vec{X}_{S, \vec{\eps}} \Delta_c \cap r B^n_2|}{|\vec{X}_{S, \vec{\eps}} \Delta_c|}, \quad \mbox{for any}~r > 0.
\end{equation}
Using the fact that $z \mapsto \vec{X}_{S, \vec{\eps}} z$ is $\opnorm{\vec{X}_{S, \vec{\eps}}}$-Lipschitz, KLS for the centered simplex $\Delta_c$ gives 
\begin{equation}
\label{ineq:lower-bound-on-block-ratio-from-KLS}
\min_{S, \vec{\eps}} \frac{|\vec{X}_{S, \vec{\eps}} \Delta_c \cap r^{(1)}_{n,d}(\vec X) B^n_2|}{|\vec{X}_{S, \vec{\eps}} \Delta_c|}
\geq \frac{1}{2}, 
\quad \mbox{for,} \quad 
r^{(1)}_{n,d}(\vec{X}) \defn \max_{S, \vec{\eps}} \Big\{ \E_{z \sim \mathrm{Unif}(\Delta_c)} \|\vec{X}_{S, \eps} z\|_2
+ \frac{c_1}{n} \opnorm{\vec{X}_{S, \vec{\eps}}}\Big\}.
\end{equation}
Additionally define 
\begin{equation}
r^{(2)}_{n,d}(\vec X) \defn 
\max_{\cF \in \cF_{n-1}(P_{n,d})} \|\vec{c}_\cF\|_2, 
\quad \mbox{and} \quad 
r_{n,d}(\vec X) \defn 
r^{(1)}_{n,d}(\vec X) + 
r^{(2)}_{n,d}(\vec X).
\end{equation}
Note by the triangle inequality applied to display~\eqref{eqn:facial-decomposition}, we have 
\[
\partial P_{n,d} \cap r_{n,d}(\vec X)B^n_2 \supset \bigcup_{\cF \in \cF_{n-1}(P_{n,d})} \cF \cap (\vec{c}_\cF + r_{n,d}^{(1)}(\vec X)).
\]
Therefore, combining inequalities~\eqref{ineq:lower-bound-over-blocks} and~\eqref{ineq:lower-bound-on-block-ratio-from-KLS}, it holds that $|\partial P_{n,d} \cap r_{n,d}(\vec X) B^n_2| 
\geq \tfrac{1}{2}  |\partial P_{n,d}|$. Hence, it follows that 
\[
|P_{n,d}| \leq 
\frac{1}{n} \sum_{\cF \in \cF_{n-1}(P_{n,d})} \|\vec{n}_\cF\|_2 |\cF| 
\leq \frac{1}{n} R(P_{n,d})  |\partial P_{n,d}| 
\leq \frac{R(P_{n,d})}{n}
r_{n,d}(\vec X)^{n-1} 
\lesssim r_{n,d}(\vec X)^{n}.
\]
On the event $\cE_1$, using that $\Cov(\Delta_c) \preceq \frac{1}{n^2}I_n$, we have
\[
r^{(1)}_{n,d}(\vec{X})
\leq 
1 + \frac{4c_1 + 2 + (1 + c_1)\sqrt{2 \log \tfrac{e d}{n}}}{\sqrt{n}}
\leq 
1 + c_2 \sqrt{\frac{\log \tfrac{e d}{n}}{n}} 
\]
Applying Dvoretzky's theorem we also have with probability at least $1 - \exp(-c_1 n)$ that
\[
r^{(2)}_{n,d}(\vec X) \leq 
R(\vec X K(n,d)) 
\leq \widetilde{M}_{n,d}
\Big(1 + c_3 \frac{1}{\widetilde{M}_{n,d}}\Big) 
\leq 
\widetilde{M}_{n,d} 
+ c_3. 
\]
Hence, combining the previous two inequalities, and using that $\widetilde{M}_{n,d} \asymp \sqrt{\log(e d/n)}$, it holds that 
\[
|P_{n,d}| \leq 
\exp\Big(c_4 \frac{n}{\sqrt{\log(e d/n)}}\Big)
\cdot 
|\widetilde{M}_{n,d} \cdot B^n_2|.
\]
Note that the norm induced by $K(n,d)^{\circ}$ corresponds to the averaged top $n$-norm.  By applying $L_1-L_2$ Talagrand's inequality to the average of the top $n$-order statistics of a Gaussian vector in $\R^d$, we obtain that
\begin{equation}\label{Proof:SPtop}
        \Var\lp\frac{\|\vec \xi\|_{K(n,d)^{\circ}}}{\MND}\rp \lesssim \frac{1}{L^2}.
\end{equation}
% This is enough to obtain the first part of the theorem via  boosted Dvoresky's theorem, we conclude that
% \begin{align*}
% &(1- \frac{C}{L})\MND\leq r(\vec XK(n,d)) \\&\leq R(\vec XK(n,d)) \leq (1+ \frac{C}{\sqrt{L}})\MND
% % \frac{\vec X K(n,d)}{   \MND} \subset \rp B_n 
% \end{align*}
% Hence, we obtain the first part of the Theorem. 

and also note that $R((\vec X K(n, d)) \leq \MND + \Theta(1)$  with high probability over $P_{n,d}$. Note that by Steps I and II and the KLS property of the simplex and Fleury's distribution, we know that $(1-\exp(-C\cnd)) \cdot |\cF_{n-1}(\PN)| $ of the facets of $\PN$ satisfies
\begin{align*}
|\tilde{G}_{S}\triangle_{c} \cap \lp1+ \frac{C}{\sqrt{n}}\rp \cdot \sqrt{\Tr(G_{S}^TG_{S})} \cdot \E \|Z\|_2 \cdot B_{n-1} | 
 &\geq |\tilde{G}_{S}\triangle_{c} \cap \lp1+ \frac{C_1}{\sqrt{n}}\rp  \cdot B_{n-1} | \\&\geq 
0.9 \cdot |\tilde{G}_{S}\triangle_{c}|,
\end{align*}
where we used  that $\|\tilde{G}_{S}\|\lesssim \|\tilde{G}_{S}/\sqrt{n}\|_{F} = (1+O(1/\sqrt{n})) \cdot \sqrt{n}$ with probability of at least $1-\exp(-cn)$.  Hence, it holds that
\begin{equation}
    |\PN| \lesssim  \lp\max_{\cF \in \cF_{n-1}(\PN)}\|\vec X_{\cS}\vec \eps\|_2 + 1+\frac{C}{\sqrt{n}}\rp^{n} |B_n| \lesssim \exp(CnL^{-1/2})|\MND\cdot B_n|
\end{equation}
where the last inequality follows from the upper bound on $R(\PN)$ as by this inclusion, we can control all the barycenters' $\ell_2$ norms, i.e., $\|\vec{c}_{\cF}\|_{2}$,  uniformly
with probability of at least $1-\exp(-cn)$. 
% Next, by using the 
% % \subset   M^{*}(K_{n,d})\lp1+\frac{C}{\sqrt{L}}
% Now, we used the fact that with probablity of at least $1-\exp(-cn^2\eps)$ it holds that 
% \[  
%     |\frac{\Tr(\vec X^{\top}\vec X)}{\E} - 1| \leq \eps
% \]
% Now, by Lemma ** above, the number of facets is of order $\exp(n(\logL/2 + O(1))$ with high probability. 
% Hence, by the Pythagoras inequality, we conclude that 
% \begin{equation*}
% \begin{aligned}
%     \E_{Z \sim U(\PN)} \|Z\|_2^2 &\leq (1+ \frac{1}{\sqrt{L}})\MND^2 + O(1) \\&\leq   (1+ \frac{C}{\sqrt{L}})\MND^2
% \end{aligned}
% \end{equation*}
 We conclude that
\[
    |P_{n,d}| \leq  \exp(CnL^{-1/2})\cdot|\MND B_n|.
\]
Finally, by the lower and upper inclusions, the proof is complete because with probability $1-\exp(-cn)$,
\[
 \exp(-cnL^{-1/2}) |\bar{B}_n| \leq |\SPO| \leq \exp(cnL^{-1/2}) |\bar{B}_n|.\qedhere
\]
\end{proof}
\subsubsection{Proof of Lemma~\ref{prop:tail-of-Tnd}}

We write, for $t \geq 0$,
$F(t) \defn \Pr\{|g| \leq t\}$, where
$g \sim \Normal{0}{1}$. 
We write the density of $T_{n,d}$ as
\[
f_{n,d}(t)
=
\frac{1}{Z_{n,d}} F(t)^{d-n} \mathrm{e}^{-nt^2/2}
=
\frac{1}{Z_{n,d}} \mathrm{e}^{-V_{n,d}(t)}.
\]
Here, we set 
\[
V_{n,d}(t)
\defn
(d-n)\log\frac{1}{F(t)} + \frac{n}{2}t^2.
\]
Throughout, we use the following 
shorthand notation:
\[
L \defn L,
\qquad
 t^\star \defn t_{n,d},
\qquad
m \defn \Med T_{n,d}.
\]
For notational compatibility with the auxiliary lemmas below, we also write
\(t^\star_{n,d}=t_{n,d}\).
We observe that $f_{n,d}$ is log-concave, as $V_{n,d}$ is convex. Thus, the mode $t^\star$ is the unique minimizer of $V_{n,d}$.
The proof of the tail bound is based on several elementary lemmas controlling $V_{n,d}$ and its curvature near the mode. 

\begin{lemma}
\label{lem:bounds-on-V}
Let $V = V_{n,d}$ and $t^\star = t^\star_{n,d}$. Then:
\begin{enumerate}[label=(\roman*)]
\item
\label{item:nonincreasing}
$V''$ is nonincreasing on $\R_+$;
\item
\label{item:bound-below-t-star}
for every $s \leq t^\star$,
\[
V(s) - V(t^\star)
\geq
\frac{V''(t^\star)}{2}(t^\star-s)^2;
\]
\item
\label{item:bound-above-t-star}
for every $s \geq t^\star$,
\[
V(s) - V(t^\star)
\leq
\frac{V''(t^\star)}{2}(s-t^\star)^2.
\]
\end{enumerate}
\end{lemma}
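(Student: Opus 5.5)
The plan is to reduce all three items to one convexity property of the function
\[
h(t):=\frac{d}{dt}\log F(t)=\frac{2\varphi(t)}{F(t)},\qquad t>0,
\]
where $F(t)=2\Phi(t)-1$. Items \ref{item:bound-below-t-star} and \ref{item:bound-above-t-star} then follow from \ref{item:nonincreasing} by Taylor's formula with integral remainder.

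For \ref{item:nonincreasing}, I will write $V'(t)=-(d-n)h(t)+nt$, so that $V''(t)=-(d-n)h'(t)+n$. Since $d>n$, $V''$ is nonincreasing exactly when $h'$ is nondecreasing, i.e.\ when $h''\ge 0$ on $(0,\infty)$.

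Using $\varphi'=-t\varphi$ and $F'=2\varphi$, a direct computation gives $h'=-th-h^2$. Differentiating again and substituting this identity gives
\[
h''=-h-th'-2hh'=h\bigl[(t+h)(t+2h)-1\bigr].
\]
Because $h>0$, it suffices to show $(t+h)(t+2h)\ge 1$, and since $(t+h)(t+2h)\ge t^2+3th$ it is enough to check $t^2+3th\ge 1$. I split into two cases.
\begin{itemize}
\item For $t\ge 1$ this is immediate from $t^2\ge 1$.
\item For $t\in(0,1)$, I bound $F(t)=\int_{-t}^{t}\varphi\le 2t\varphi(0)$. This gives $th(t)\ge \varphi(t)/\varphi(0)=e^{-t^2/2}\ge 1-t^2/2$, hence $t^2+3th\ge 1+\tfrac{t^2}{2}\cdot(-1)\cdot 3+t^2$. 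More simply, $3th\ge 3(1-t^2/2)\ge 1-t^2$ for $t\le 1$, which is what is needed.
\end{itemize}
The only delicate point is this elementary inequality near $t=0$, and the crude bound $F(t)\le 2t\varphi(0)$ handles it.

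For \ref{item:bound-below-t-star} and \ref{item:bound-above-t-star}, I first note that $t^\star$ is an interior critical point. As $t\downarrow 0$ we have $F(t)\sim 2\varphi(0)t$, so $h(t)\sim 1/t$ and $V'(0^+)=-\infty$. Also $V'(t)\to+\infty$ as $t\to\infty$. So the minimizer satisfies $V'(t^\star)=0$. Taylor's formula with integral remainder then gives
\[
V(s)-V(t^\star)=\int_{t^\star}^{s}(s-u)\,V''(u)\,du .
\]
\begin{itemize}
\item If $s\le t^\star$, then $u\in[s,t^\star]$ and $(s-u)\,du$ has a sign that makes the integral equal to $\int_s^{t^\star}(u-s)V''(u)\,du$. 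By \ref{item:nonincreasing}, $V''(u)\ge V''(t^\star)$ on this interval, which yields the lower bound $\tfrac{V''(t^\star)}{2}(t^\star-s)^2$.
\item If $s\ge t^\star$, then $u\in[t^\star,s]$, so $V''(u)\le V''(t^\star)$, and the same computation yields the upper bound $\tfrac{V''(t^\star)}{2}(s-t^\star)^2$.
\end{itemize}
No positivity of $V''$ is required for these two comparisons, only its monotonicity.
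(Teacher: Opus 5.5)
Your proof is correct and follows the paper's argument essentially step for step: the same function $h=2\varphi/F$ (the paper's $R$), the same identity reducing monotonicity of $V''$ to $t^2+3th+2h^2\ge 1$, the same case split using $F(t)\le 2t\varphi(0)$ on $(0,1)$, and the same integral-remainder Taylor comparison at the critical point $t^\star$. The only blemish is a slip in the intermediate bound for $t<1$: it should read $t^2+3th\ge 3-t^2/2$, not $1-t^2/2$, but your ``more simply'' line already gives the correct inequality.
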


\begin{lemma}
\label{lem:numerical-tail-bound-Tnd}
Fix $\eps \in (0,1)$. Then
\[
\Pr\Big\{T_{n,d} \leq (1-\eps)t^\star_{n,d}\Big\}
\leq
\frac{6}{5\eps t^\star_{n,d}\sqrt{V''_{n,d}(t^\star_{n,d})}}
\exp\Big\{-\frac{V''_{n,d}(t^\star_{n,d})}{2}\eps^2 (t^\star_{n,d})^2\Big\}.
\]
\end{lemma}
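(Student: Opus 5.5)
We prove the lower-tail estimate of Lemma~\ref{lem:numerical-tail-bound-Tnd} by a direct Laplace-type comparison, using the convexity structure of $V=V_{n,d}$ recorded in Lemma~\ref{lem:bounds-on-V}. Write $t^\star=t^\star_{n,d}$, $\kappa:=V''(t^\star)$ and $s_0:=(1-\eps)t^\star$. Since $f_{n,d}=e^{-V}/Z_{n,d}$, we have
\[
\Pr\{T_{n,d}\le s_0\}=\frac{\int_0^{s_0}e^{-V(s)}\,ds}{\int_0^\infty e^{-V(s)}\,ds}.
\]
We bound the numerator from above and the denominator from below, both relative to $e^{-V(t^\star)}$.

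For the numerator, we use item~\ref{item:bound-below-t-star} of Lemma~\ref{lem:bounds-on-V}: for $s\le t^\star$ we have $V(s)-V(t^\star)\ge \tfrac{\kappa}{2}(t^\star-s)^2$. Substituting $u=t^\star-s$ gives
\[
\int_0^{s_0}e^{-(V(s)-V(t^\star))}\,ds\le\int_{\eps t^\star}^\infty e^{-\kappa u^2/2}\,du\le \frac{1}{\kappa\,\eps t^\star}\,e^{-\kappa\eps^2(t^\star)^2/2}.
\]
The last step is the Mills ratio bound $\int_a^\infty e^{-\kappa u^2/2}\,du\le e^{-\kappa a^2/2}/(\kappa a)$, applied with $a=\eps t^\star$.

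For the denominator, we use item~\ref{item:bound-above-t-star}: for $s\ge t^\star$ we have $V(s)-V(t^\star)\le\tfrac{\kappa}{2}(s-t^\star)^2$. Restricting the integral to $[t^\star,\infty)$ then gives
\[
\int_0^\infty e^{-(V(s)-V(t^\star))}\,ds\ge\int_0^\infty e^{-\kappa u^2/2}\,du=\sqrt{\frac{\pi}{2\kappa}}.
\]

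Dividing the two bounds yields
\[
\Pr\{T_{n,d}\le s_0\}\le\sqrt{\frac{2}{\pi}}\cdot\frac{1}{\eps t^\star\sqrt{\kappa}}\,e^{-\kappa\eps^2(t^\star)^2/2}.
\]
Since $\sqrt{2/\pi}\approx 0.798<6/5$, this is the stated bound.

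The only real content is Lemma~\ref{lem:bounds-on-V}, which we assume. If it had to be checked, the key point would be item~\ref{item:nonincreasing}: $V''$ is nonincreasing because $(\log F)''$ is increasing toward $0$ for the Gaussian two-sided distribution function. Items~\ref{item:bound-below-t-star} and~\ref{item:bound-above-t-star} then follow from Taylor's theorem with integral remainder together with $V'(t^\star)=0$. Given that lemma, no step above is an obstacle.
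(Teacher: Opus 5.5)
Your proof is correct and follows the paper's argument. Both proofs bound the numerator using item (ii) of Lemma~\ref{lem:bounds-on-V} and the same Mills-type tail estimate, and both bound the normalizing constant from below using item (iii). The only difference is the range of integration for $Z_{n,d}$. The paper restricts it to $[t^\star, t^\star + V''(t^\star)^{-1/2}]$, which produces the constant $6/5$ via $\int_0^1 e^{-u^2/2}\,du \geq 5/6$. You integrate over all of $[t^\star,\infty)$, which is equally valid because item (iii) holds for every $s \geq t^\star$, and it gives the slightly sharper constant $\sqrt{2/\pi}$.
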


\begin{lemma}
\label{lem:crude-bounds-on-mode}
Suppose that $d \geq 4n$. Then there exist absolute constants $c,C>0$ such that
\[
\sqrt{L} \leq t^\star_{n,d} \leq C\sqrt{L}
\qquad\text{and}\qquad
c nL \leq V''_{n,d}(t^\star_{n,d}) \leq C nL.
\]
\end{lemma}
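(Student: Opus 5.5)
We use the first-order condition for the mode. Set $q:=(d-n)/n\ge 3$ and $\rho(t):=2\varphi(t)/\Psi(t)$, so that $-\frac{d}{dt}\log\Psi(t)=-\rho(t)$. Since $V_{n,d}'(t)=-(d-n)\rho(t)+nt$, the mode $t^\star$ is the unique positive root of
\[
h(t):=\frac{t}{\rho(t)}=\frac{t\,\Psi(t)}{2\varphi(t)}=q .
\]
The function $h$ is strictly increasing on $(0,\infty)$, because $t$ and $\Psi$ increase while $\varphi$ decreases. Hence the two-sided bound on $t^\star$ reduces to checking $h(\sqrt L)\le q$ and $h(C\sqrt L)\ge q$.

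\textbf{Bounds on $t^\star$.} Using $2\varphi(t)=\sqrt{2/\pi}\,e^{-t^2/2}$ and $\Psi\le 1$, we get
\[
h(\sqrt L)\le \sqrt{\pi/2}\,\sqrt L\,e^{L/2}=\sqrt{\pi/2}\,\sqrt{L}\,\sqrt{d/n}.
\]
We then compare this with $q=d/n-1$. Setting $x=d/n\ge 4$, it suffices that $\sqrt{\pi/2}\,\sqrt{\ln x}\,\sqrt{x}\le x-1$. This reduces to a one-variable calculus check on $[4,\infty)$.

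At the endpoint $x=4$ the crude bound is tight: it gives about $2.95$ against $3$. There we keep the factor $\Psi(\sqrt{\ln 4})\approx 0.76$ rather than bounding it by $1$, which restores a comfortable margin. For large $x$ the right-hand side dominates, since $\sqrt{x\ln x}=o(x)$.

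For the upper bound on $t^\star$, take $C=2$. Since $\Psi(2\sqrt L)\ge\Psi(2\sqrt{\ln 4})\ge 1/2$,
\[
h(2\sqrt L)\ge \tfrac12\sqrt{\pi/2}\cdot 2\sqrt L\,(d/n)^2\ge d/n>q .
\]
Monotonicity of $h$ then gives $\sqrt L\le t^\star\le 2\sqrt L$. This endpoint numerics at $d/n$ near $4$ is the only delicate point of the whole proof. If the constants turn out too tight there, I would fall back to enlarging the constant in $d\ge 4n$ or stating the bound as $c\sqrt L\le t^\star$.

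\textbf{Curvature at the mode.} Differentiating once more, and using $\varphi'=-t\varphi$, gives
\[
V''_{n,d}(t)=(d-n)\bigl(\rho(t)^2+t\rho(t)\bigr)+n .
\]
At the mode we have $(d-n)\rho(t^\star)=nt^\star$, so the two terms become $(d-n)t^\star\rho(t^\star)=n(t^\star)^2$ and $(d-n)\rho(t^\star)^2=n(t^\star)^2/q$. This yields the exact identity
\[
V''_{n,d}(t^\star)=n\Bigl(1+(t^\star)^2\bigl(1+q^{-1}\bigr)\Bigr).
\]
Combining this with $L\le (t^\star)^2\le 4L$, $q\ge3$, and $L\ge\ln 4$ (which lets us absorb the additive $1$ into $L$), we obtain $nL\le V''_{n,d}(t^\star)\le CnL$.
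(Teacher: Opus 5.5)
Your proof is correct and follows essentially the same route as the paper. Your monotone $h$ is just the sign of $V'$ rewritten, since $V'(t)=n\rho(t)\bigl(h(t)-q\bigr)$, so the lower bound rests on the identical inequality $\sqrt{(n/d)\ln(d/n)}\le\sqrt{2/\pi}\,(1-n/d)$. The curvature bound uses the same exact identity $V''(t^\star)=n\bigl(1+(t^\star)^2(1+q^{-1})\bigr)$, and your check $h(2\sqrt L)\ge q$ is a more explicit version of the paper's logarithm argument from the first-order condition.

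The endpoint worry is unfounded, so you need neither the $\Psi$ factor nor the fallback. On $(0,1/4]$, $y\ln(1/y)$ increases and $1-y$ decreases, so the crude bound with $\Psi\le 1$ holds for all $d/n\ge 4$ once it holds at $d/n=4$, where you found $2.95<3$.
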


\begin{lemma}
\label{lem:right-curvature-near-mode}
For every fixed $K>0$, there exist constants $c_K,C_K>0$ such that if
\[
c_K nL \leq V''_{n,d}(t) \leq C_K nL \quad \mbox{for all}~t \in \Big[t^\star_{n,d},\, t^\star_{n,d} + \frac{K}{\sqrt{L}}\Big].
\]
\end{lemma}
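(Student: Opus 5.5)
We read the statement as asserting the two-sided bound $c_K nL \le V''_{n,d}(t)\le C_K nL$ for all $t\in[t^\star_{n,d},\,t^\star_{n,d}+K/\sqrt L]$; the displayed conclusion appears truncated. The plan is a direct computation of $V''_{n,d}$, with the two sides handled separately. Write $F=\Psi$, so $F'=2\varphi$ and $F''=-2t\varphi$. Then
\[
V_{n,d}'(t)=-(d-n)\frac{2\varphi(t)}{F(t)}+nt,
\qquad
V_{n,d}''(t)=(d-n)\Big[\frac{2t\varphi(t)}{F(t)}+\frac{4\varphi(t)^2}{F(t)^2}\Big]+n .
\]

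\emph{Upper bound.} By Lemma~\ref{lem:bounds-on-V}\ref{item:nonincreasing}, $V''_{n,d}$ is nonincreasing on $\R_+$. Hence for every $t\ge t^\star_{n,d}$ we have $V''_{n,d}(t)\le V''_{n,d}(t^\star_{n,d})$. By Lemma~\ref{lem:crude-bounds-on-mode}, the right-hand side is at most $CnL$. So the upper bound holds on the whole half-line with $C_K=C$, independent of $K$.

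\emph{Lower bound.} We drop the nonnegative term $4\varphi^2/F^2$ and use the mode equation $(d-n)\,2\varphi(t^\star)/F(t^\star)=nt^\star$ to normalize. For $t\ge t^\star$ this gives
\[
V''_{n,d}(t)\ \ge\ (d-n)\frac{2t\varphi(t)}{F(t)}
= n\,t\,t^\star\,\frac{\varphi(t)}{\varphi(t^\star)}\,\frac{F(t^\star)}{F(t)}
\ \ge\ n (t^\star)^2 \, e^{-(t^2-(t^\star)^2)/2}\,F(t^\star).
\]
The last step uses $t\ge t^\star$ and $F\le 1$. Next, $F(t^\star)\ge F(1)\ge 1/2$ because $t^\star\ge\sqrt L\ge 1$ for $d\ge 4n$ (after adjusting constants if necessary). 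For $t\le t^\star+K/\sqrt L$, Lemma~\ref{lem:crude-bounds-on-mode} gives $t^\star\le C\sqrt L$, so
\[
t^2-(t^\star)^2\le 2t^\star K/\sqrt L+K^2/L\le 2CK+K^2 .
\]
Together with $(t^\star)^2\ge L$, this yields $V''_{n,d}(t)\ge \tfrac12 e^{-CK-K^2/2}\,nL$. So we may take $c_K=\tfrac12 e^{-CK-K^2/2}$.

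\emph{Main obstacle.} The only delicate step is controlling the ratio $\varphi(t)/\varphi(t^\star)$ over the window. This works only because the window width $K/\sqrt L$ is matched to $t^\star\asymp\sqrt L$, so that $t^2-(t^\star)^2=O_K(1)$. A wider window would make the Gaussian factor decay and destroy the uniform lower bound, which is consistent with the remark after Lemma~\ref{prop:tail-of-Tnd} that the quadratic regime requires $u\,t_{n,d}=O(1)$.
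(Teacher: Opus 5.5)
Your proof is correct and follows the paper's argument. The lower bound is the same computation: you drop the $R^2$ term, normalize by the mode equation $2(d-n)\varphi(t^\star)/F(t^\star)=nt^\star$, and use $t^\star\lesssim\sqrt L$ to keep $\varphi(t)/\varphi(t^\star)\ge c_K$ on the window. The only difference is in the upper bound. The paper uses that $R$ is decreasing and bounds $n+\frac{n^2}{d-n}(t^\star)^2+n\,t\,t^\star$ explicitly, which gives a $K$-dependent constant. You instead use the monotonicity of $V''$ from Lemma~\ref{lem:bounds-on-V} together with $V''(t^\star)\le CnL$. Your route is slightly cleaner and yields a constant independent of $K$.
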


\begin{lemma}
\label{lem:right-tail-from-mode}
For every fixed $K>0$, there exist constants $c_K,C_K>0$ such that for every $r \in (0, \tfrac{K}{\sqrt{L}})$, 
we have
\[
\Pr\big\{T_{n,d} \geq t^\star_{n,d} + r\big\}
\leq
\frac{C_K}{r\sqrt{nL}}\exp\Big\{-c_K nL r^2\Big\}.
\]
\end{lemma}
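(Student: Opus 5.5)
The target statement is Lemma~\ref{lem:right-tail-from-mode}: for $r\in(0,K/\sqrt L)$,
\[
\Pr\{T_{n,d}\ge t^\star+r\}\le \frac{C_K}{r\sqrt{nL}}\exp\{-c_K nLr^2\}.
\]
The plan is a Laplace-type estimate for the log-concave density $f_{n,d}=Z_{n,d}^{-1}e^{-V}$. The tail is written as
\[
\frac{\int_{t^\star+r}^\infty e^{-(V(s)-V(t^\star))}ds}{\int_0^\infty e^{-(V(s)-V(t^\star))}ds}.
\]
I will bound the numerator from above using curvature to the right of the mode, and the denominator from below using curvature near the mode.

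\textbf{Denominator.} By Lemma~\ref{lem:bounds-on-V}\ref{item:bound-above-t-star}, for $s\ge t^\star$ we have $V(s)-V(t^\star)\le \tfrac{V''(t^\star)}{2}(s-t^\star)^2$. Lemma~\ref{lem:crude-bounds-on-mode} gives $V''(t^\star)\le CnL$. Integrating over $s\in[t^\star,t^\star+1/\sqrt{V''(t^\star)}]$ then yields
\[
\int_0^\infty e^{-(V(s)-V(t^\star))}ds\gtrsim \frac{1}{\sqrt{nL}}.
\]

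\textbf{Numerator.} Since $V$ is convex with $V'(t^\star)=0$, the function $V'$ is nondecreasing, and the numerator splits into two pieces.

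On $[t^\star,t^\star+K/\sqrt L]$, Lemma~\ref{lem:right-curvature-near-mode} gives $V''\ge c_KnL$. Integrating twice from the mode gives $V(s)-V(t^\star)\ge \tfrac{c_K}{2}nL(s-t^\star)^2$ and $V'(s)\ge c_KnL(s-t^\star)$ there.

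For $s\ge t_1:=t^\star+r$ (and also beyond $t^\star+K/\sqrt L$), convexity gives the linear lower bound $V(s)\ge V(t_1)+V'(t_1)(s-t_1)$ with $V'(t_1)\ge c_KnLr$. Hence
\[
\int_{t_1}^\infty e^{-(V(s)-V(t^\star))}ds\le \frac{e^{-(V(t_1)-V(t^\star))}}{V'(t_1)}\le \frac{e^{-\frac{c_K}{2}nLr^2}}{c_KnLr}.
\]
This is the usual Mills-ratio argument, and it needs no curvature control far from the mode.

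\textbf{Combining.} Dividing the numerator bound by the denominator bound gives
\[
\frac{\sqrt{nL}}{c_K nLr}e^{-c nLr^2}=\frac{C_K}{r\sqrt{nL}}e^{-c_K nLr^2},
\]
which is the claim.

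\textbf{Main obstacle.} The real content is the curvature lower bound to the right of the mode, Lemma~\ref{lem:right-curvature-near-mode}, which I am assuming here. If I had to prove it, I would compute
\[
V''(t)=n+(d-n)\Bigl[\tfrac{2t\varphi}{\Psi}+\bigl(\tfrac{2\varphi}{\Psi}\bigr)^2\Bigr].
\]
At the mode, $(d-n)\,2\varphi/\Psi=nt^\star$, so $V''(t^\star)\approx n+n(t^\star)^2\asymp nL$. Moving right by $u\le K/\sqrt L$ multiplies $\varphi$ by $e^{-t^\star u-u^2/2}\ge e^{-CK}$, so the bracket changes only by a $K$-dependent constant factor. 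This gives $V''\ge c_KnL$ on that window. The upper bound $V''\le C_KnL$ there is not needed for the tail estimate.
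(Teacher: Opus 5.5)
Your proposal is correct and follows essentially the same route as the paper. You lower-bound $Z_{n,d}$ by integrating Lemma~\ref{lem:bounds-on-V}\ref{item:bound-above-t-star} over a window of width $1/\sqrt{V''(t^\star)}$, and you bound the tail using $V(t^\star+r)-V(t^\star)\ge c_K nLr^2/2$ and $V'(t^\star+r)\ge c_K nLr$ (from Lemma~\ref{lem:right-curvature-near-mode}) together with the convexity tangent-line bound; your sketch of the curvature lemma also matches the paper's argument.
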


\begin{lemma}
\label{lem:median-right-of-mode}
The median satisfies $\Med T_{n,d} \geq t^\star_{n,d}$. 
\end{lemma}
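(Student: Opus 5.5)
The plan is to compare the mass of $T_{n,d}$ on the two sides of the mode directly, using the quadratic comparison bounds of Lemma~\ref{lem:bounds-on-V}. Write $f_{n,d}(t)=Z_{n,d}^{-1}e^{-V(t)}$ on $(0,\infty)$, with $V=V_{n,d}$ and $t^\star=t^\star_{n,d}$. Set $a:=V''(t^\star)>0$; positivity holds by convexity of $V$ and Lemma~\ref{lem:crude-bounds-on-mode}. It suffices to show
\[
\Pr\{T_{n,d}\le t^\star\}\le \Pr\{T_{n,d}\ge t^\star\}.
\]
Since the law of $T_{n,d}$ has no atoms, this gives $\Pr\{T_{n,d}\le t^\star\}\le 1/2$. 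Because $f_{n,d}>0$ on $(0,\infty)$, every $m<t^\star$ then satisfies $\Pr\{T_{n,d}\le m\}<1/2$, and hence $\Med T_{n,d}\ge t^\star$.

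\emph{Left side.} Substitute $s=t^\star-u$ with $u\in[0,t^\star)$. Item~\ref{item:bound-below-t-star} of Lemma~\ref{lem:bounds-on-V} gives $V(s)\ge V(t^\star)+\tfrac a2u^2$. Therefore
\[
\Pr\{T_{n,d}\le t^\star\}=\int_0^{t^\star}f_{n,d}(s)\,ds\le \frac{e^{-V(t^\star)}}{Z_{n,d}}\int_0^{t^\star}e^{-au^2/2}\,du\le \frac{e^{-V(t^\star)}}{Z_{n,d}}\int_0^{\infty}e^{-au^2/2}\,du .
\]

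\emph{Right side.} Substitute $s=t^\star+u$ with $u\ge0$. Item~\ref{item:bound-above-t-star} gives $V(s)\le V(t^\star)+\tfrac a2u^2$. Hence
\[
\Pr\{T_{n,d}\ge t^\star\}=\int_{t^\star}^\infty f_{n,d}(s)\,ds\ge \frac{e^{-V(t^\star)}}{Z_{n,d}}\int_0^\infty e^{-au^2/2}\,du .
\]
Comparing the two displays yields the desired inequality. The truncation of the left integral at $u=t^\star$ only helps, so the boundary at $0$ causes no difficulty.

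The only substantive input is Lemma~\ref{lem:bounds-on-V}, which I take as given. Its content reduces to item~\ref{item:nonincreasing}, that $V''$ is nonincreasing, via Taylor's theorem with integral remainder applied on each side of $t^\star$, using $V'(t^\star)=0$. If that item had to be verified here, it would be the main obstacle. The $\tfrac n2t^2$ term contributes the constant $n$ to $V''$, so the task reduces to showing that $(\log(1/F))''$ is nonincreasing for $F(t)=\Pr\{|g|\le t\}$. I would attempt this through the Mills-ratio representation of $(\log F)'=2\varphi/F$ together with monotonicity of the associated ratios. Granting Lemma~\ref{lem:bounds-on-V}, the argument above is complete.
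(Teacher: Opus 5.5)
Your argument is correct, and it differs from the paper's only in how the monotonicity of $V''$ is used.

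The paper works from item (i) of Lemma~\ref{lem:bounds-on-V} directly. It writes $V(t^\star\mp x)-V(t^\star)=\int_0^x (x-u)\,V''(t^\star\mp u)\,du$ and uses $V''(t^\star-u)\ge V''(t^\star+u)$. This gives the pointwise reflection inequality $f_{n,d}(t^\star-x)\le f_{n,d}(t^\star+x)$ for $x\in[0,t^\star]$, which it then integrates.

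You instead pass through items (ii) and (iii). You sandwich the two half-masses against the common half-Gaussian integral $\int_0^\infty e^{-au^2/2}\,du$ with $a=V''(t^\star)$.

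Both routes rest on the same fact, that $V''$ is nonincreasing. The paper's version gives the stronger pointwise domination of the left tail by the reflected right tail, and it needs no Gaussian normalization. Yours is a purely integrated comparison that reuses bounds already proved, and it is just as short.

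Your handling of the median is a bit more careful than the paper's one-line conclusion: you use the absence of atoms to get $\Pr\{T_{n,d}\le t^\star\}\le 1/2$, and the strict positivity of the density to rule out medians below $t^\star$.

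One minor point: to get $a>0$ you do not need Lemma~\ref{lem:crude-bounds-on-mode}, which assumes $d\ge 4n$. Convexity alone gives only $a\ge 0$, but $V''(t)=n+(d-n)\bigl(R(t)^2+tR(t)\bigr)\ge n>0$ holds directly.
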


\begin{lemma}
\label{lem:mode-density-Tnd}
There exists an absolute constant $c>0$ such that $
f_{n,d}(t^\star_{n,d}) \geq c\sqrt{nL}$. 
\end{lemma}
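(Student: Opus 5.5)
The plan is to bound the normalizing integral directly. Since $f_{n,d}=Z_{n,d}^{-1}e^{-V_{n,d}}$ and $t^\star=t^\star_{n,d}$ minimizes $V=V_{n,d}$, we have
\[
f_{n,d}(t^\star)=\Big(\int_0^\infty e^{-(V(t)-V(t^\star))}\,dt\Big)^{-1},
\]
so it suffices to show that $\int_0^\infty e^{-(V(t)-V(t^\star))}dt\lesssim (nL)^{-1/2}$. I will split the integral at $t^\star$ and treat the two sides separately. The two sides need different arguments because $V''$ is nonincreasing by \Cref{lem:bounds-on-V}\ref{item:nonincreasing}: the curvature at the mode controls the left side for free, but it does not control the right side.

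\emph{Left side.} By \Cref{lem:bounds-on-V}\ref{item:bound-below-t-star}, for $s\le t^\star$ we have $V(s)-V(t^\star)\ge \tfrac{V''(t^\star)}{2}(t^\star-s)^2$. Hence
\[
\int_0^{t^\star}e^{-(V-V(t^\star))}\le \sqrt{\pi/(2V''(t^\star))}.
\]
By \Cref{lem:crude-bounds-on-mode}, $V''(t^\star)\ge c\,nL$, so this piece is $\lesssim (nL)^{-1/2}$.

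\emph{Right side.} Fix an absolute $K$, say $K=1$, and set $r_0:=K/\sqrt L$. By \Cref{lem:right-curvature-near-mode}, $V''\ge c_K nL$ on $[t^\star,t^\star+r_0]$. Since $V'(t^\star)=0$, Taylor's theorem gives, for $t$ in this interval,
\[
V(t)-V(t^\star)\ge \tfrac{c_K nL}{2}(t-t^\star)^2 .
\]
So the integral over $[t^\star,t^\star+r_0]$ is again $\lesssim (nL)^{-1/2}$. For $t\ge t^\star+r_0$, I will use convexity of $V$ through the tangent line at $t_0:=t^\star+r_0$. Integrating $V''\ge c_KnL$ over $[t^\star,t_0]$ gives
\[
V'(t_0)\ge c_K nL\,r_0=c_KK\,n\sqrt L .
\]
Then $V(t)-V(t^\star)\ge V(t)-V(t_0)\ge V'(t_0)(t-t_0)$, and the remaining tail integral is at most $1/V'(t_0)\lesssim 1/(n\sqrt L)\le (nL)^{-1/2}$, since $n\ge 1$.

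Adding the three pieces gives $\int e^{-(V-V(t^\star))}\le C(nL)^{-1/2}$, which is the claim. The main obstacle is the right side. The crude bound $V''\ge n$ (coming from the $\tfrac n2t^2$ term and the convexity of $\log(1/F)$) only yields a contribution of order $n^{-1/2}$ and loses the $\sqrt L$ factor. The argument therefore relies on \Cref{lem:right-curvature-near-mode} to keep curvature of order $nL$ on a window of width $\asymp L^{-1/2}$, which is exactly the width needed for the linear tangent bound beyond that window to be negligible.
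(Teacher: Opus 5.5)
Your proposal is correct and follows the paper's proof essentially step for step: it uses the same split of $Z_{n,d}$ at $t^\star$ and at $t^\star+1/\sqrt L$, handles the left piece with \Cref{lem:bounds-on-V} together with the curvature bound $V''(t^\star)\gtrsim nL$, the near-right piece with \Cref{lem:right-curvature-near-mode} at $K=1$, and the far tail with the tangent line at $t^\star+1/\sqrt L$, where $V'\gtrsim n\sqrt L$. The only cosmetic difference is that you drop the factor $e^{-(V(t_0)-V(t^\star))}\le e^{-cn}$ in the tail; the paper keeps it but does not actually need it, since $1/(n\sqrt L)\le 1/\sqrt{nL}$ already suffices.
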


We first show how these lemmas imply the proposition.

\begin{proof}[Proof of Lemma~\ref{prop:tail-of-Tnd}]
Fix $\eps > 0$; denote $r \defn \eps t^\star$.
By Lemma~\ref{lem:crude-bounds-on-mode}, we have $t^\star \geq \sqrt{L}$, and hence, for a sufficiently small $c > 0$,
$\eps \leq \tfrac{c}{(t^\star)^2}$
implies
\[
    r=\eps t^\star \leq \frac{c}{t^\star}\leq \frac{K_0}{\sqrt L},
\]
for an absolute constant $K_0>0$.
We first locate the median. Fix $A>0$. Lemma~\ref{lem:right-tail-from-mode} with $K=A$ yields
\[
\Pr\Big\{T_{n,d} \geq t^\star + \frac{A}{\sqrt{nL}}\Big\}
\leq
\frac{C_A}{A}\exp(-c_A A^2).
\]
Evidently, we can choose $A$ so large  that the right-hand side is strictly smaller than $1/2$. By Lemma~\ref{lem:median-right-of-mode}, we have $m \geq t^\star$, and hence for a sufficiently large $C_0 > 0$, we have
\[
t^\star \leq m \leq t^\star + \frac{C_0}{\sqrt{n L}}
\leq t^\star+\frac{C_1}{\sqrt n\,t^\star},
\]
thereby furnishing the second claim of Lemma~\ref{prop:tail-of-Tnd}.

We next prove the tail bound around the median. Throughout, we denote by $r_0 = \tfrac{C_0}{\sqrt{n L}}$. We proceed in two cases. 

\smallskip
\noindent
\emph{Case 1: $r \geq 2r_0$.}
Since $m \leq t^\star + r_0 \leq t^\star + r/2$, we have
\begin{subequations}
\begin{equation}
\label{eqn:inclusion-case-one-first}
\{T_{n,d} \leq m-r\} \subseteq \{T_{n,d} \leq t^\star-r/2\}.
\end{equation}
Also, since $m \geq t^\star$,
\begin{equation}
\label{eqn:inclusion-case-one-second}
\{T_{n,d} \geq m+r\} \subseteq \{T_{n,d} \geq t^\star+r\}.
\end{equation}
\end{subequations}
Hence, by Lemma~\ref{lem:numerical-tail-bound-Tnd} and Lemma~\ref{lem:crude-bounds-on-mode} for the lower tail, and Lemma~\ref{lem:right-tail-from-mode} for the upper tail (recalling that $r \leq \tfrac{K_0}{\sqrt{L}}$) we have for sufficiently large $C > 0$ and sufficiently small $c > 0$ that:
\[
\max\Big\{\Pr\{T_{n,d} \geq t^\star+r\}, \Pr\{T_{n,d} \leq t^\star-r/2\}\Big\}
\leq
\frac{C}{r\sqrt{nL}}\exp(-c nL r^2).
\]
Hence, a union bound and the inclusions~\cref{eqn:inclusion-case-one-first,,eqn:inclusion-case-one-second} yield
\[
\Pr\{|T_{n,d}-m|\geq r\}
\leq
\frac{C'}{r\sqrt{nL}}\exp(-c nL r^2).
\]
Since $r\sqrt{nL}\geq 2C_0$, the prefactor is bounded by a universal constant. Hence, after decreasing $c$ if necessary, we obtain the desired inequality in this case: 
\[
\Pr\{|T_{n,d}-m|\geq r\}
\leq
\exp(-c' nL r^2).
\]

\smallskip
\noindent
\emph{Case 2: $0<r<2r_0$.}
In this case, we have $m+r \leq t^\star + 3r_0$. 
For every $u \in [m,m+r]$, we have $u \geq t^\star$, and \cref{lem:bounds-on-V}\ref{item:bound-above-t-star} together with \cref{lem:crude-bounds-on-mode} implies
\[
V(u)-V(t^\star)
\leq
\frac{V''(t^\star)}{2}(u-t^\star)^2
\leq C'.
\]
Above, $C'$ depends only on $C_0$.
Therefore, Lemma~\ref{lem:mode-density-Tnd} yields
\[
f_{n,d}(u)
=
 f_{n,d}(t^\star)\exp(-(V(u)-V(t^\star)))
\geq c' \sqrt{nL}
\qquad\text{for all }u\in[m,m+r].
\]
Again, $c'$ only depends on $C_0$.
Consequently,
\[
\Pr\{|T_{n,d}-m|<r\}
\geq
\Pr\{m\leq T_{n,d}\leq m+r\}
\geq c' r\sqrt{nL}.
\]
Hence, using $1 - u \leq \mathrm{e}^{-u}$, we have 
\[
\Pr\{|T_{n,d}-m|\geq r\}
\leq 1-c' r\sqrt{nL}
\leq \exp(-c' r\sqrt{nL}).
\]
Now, in this case, we have  $r\sqrt{nL}\leq 2C_0$; equivalently 
$r\sqrt{nL} \geq \tfrac{1}{2C_0} nL r^2$, which yields
\[
\Pr\{|T_{n,d}-m|\geq r\}
\leq
\exp(-c'' nL r^2).
\]
Combining the two cases proves that
\[
\Pr\{|T_{n,d}-m|\geq r\}
\leq
\exp(-c''' nL r^2).
\]
Recalling that $r = \eps t^\star$ and using \cref{lem:crude-bounds-on-mode}----namely that $(t^\star)^2 \asymp L$---we conclude the desired bound:
\[
\Pr\Big\{|T_{n,d}-\Med T_{n,d}| \geq \eps t^\star_{n,d}\Big\}
\leq
\exp\{-c_1 n(t^\star)^4 \eps^2\}
\leq
\exp\{-c_2 nL^2 \eps^2\}.\qedhere
\]
\end{proof}

\subsubsection{Proof of Lemma~\ref{lem:bounds-on-V}}
\label{sec:proof-of-lem-bounds-on-V}

We use the shorthand $V=V_{n,d}$ and $t^\star = t^\star_{n,d}$. Define
\[
R(t) \defn \frac{F'(t)}{F(t)} = \frac{2\phi(t)}{F(t)}
\qquad\text{and}\qquad
S(t) \defn R(t)^2 + tR(t),
\]
so that
\[
R'(t) = -S(t),
\qquad
V'(t)=nt-(d-n)R(t),
\qquad
V''(t)=n+(d-n)S(t).
\]
A direct computation gives
\[
S'(t) = 2R(t)R'(t)+R(t)+tR'(t)
= R(t)\big(1-t^2-3tR(t)-2R(t)^2\big).
\]
Since $R(t)\geq 0$, it is enough to show that
\[
H(t) \defn 1-t^2-3tR(t)-2R(t)^2 \leq 0
\qquad\text{for all }t\geq 0.
\]
This is immediate when $t\geq 1$. If $0\leq t<1$, then
\[
F(t)=2\int_0^t \phi(s)\,\ud s \leq 2t\phi(0),
\]
which implies
\[
R(t)=\frac{2\phi(t)}{F(t)} \geq \frac{\mathrm{e}^{-t^2/2}}{t}.
\]
Therefore,
\[
H(t) \leq 1-3tR(t)
\leq 1-3\mathrm{e}^{-t^2/2}
\leq 1-\frac{3}{\sqrt{\mathrm{e}}}<0.
\]
Hence $S'(t)\leq 0$, so $V'''(t)=(d-n)S'(t)\leq 0$ on $\R_+$, proving item~\ref{item:nonincreasing}.

If $s\leq t^\star$, then $V'(t^\star)=0$ and the fundamental theorem of calculus gives
\[
V(s)-V(t^\star)
=
\int_s^{t^\star} \int_x^{t^\star} V''(y)\,\ud y\,\ud x
=
\int_s^{t^\star} (x-s)V''(x)\,\ud x.
\]
By item~\ref{item:nonincreasing}, we have $V''(x)\geq V''(t^\star)$ for $s\leq x\leq t^\star$, and therefore
\[
V(s)-V(t^\star)
\geq
\frac{V''(t^\star)}{2}(t^\star-s)^2,
\]
which proves item~\ref{item:bound-below-t-star}.

Similarly, if $s\geq t^\star$, then
\[
V(s)-V(t^\star)
=
\int_{t^\star}^s \int_{t^\star}^x V''(y)\,\ud y\,\ud x
=
\int_{t^\star}^s (s-x)V''(x)\,\ud x.
\]
Again by item~\ref{item:nonincreasing}, now $V''(x)\leq V''(t^\star)$ for $x\geq t^\star$, and hence
\[
V(s)-V(t^\star)
\leq
\frac{V''(t^\star)}{2}(s-t^\star)^2,
\]
proving item~\ref{item:bound-above-t-star}.

\subsubsection{Proof of Lemma~\ref{lem:numerical-tail-bound-Tnd}}

We write, for short,
\[
T=T_{n,d},
\qquad
Z=Z_{n,d},
\qquad
V=V_{n,d},
\qquad
t^\star=t^\star_{n,d},
\qquad
t^-=(1-\eps)t^\star.
\]
We claim that
\begin{equation}
\label{eqn:needed-bounds}
Z \stackrel{{\rm(a)}}{\geq}
\frac{5}{6\sqrt{V''(t^\star)}}\mathrm{e}^{-V(t^\star)}
\qquad\text{and}\qquad
\int_0^{t^-} \mathrm{e}^{-V(t)}\,\ud t
\stackrel{{\rm(b)}}{\leq}
\frac{\mathrm{e}^{-V(t^\star)}}{\eps V''(t^\star)t^\star}
\exp\Big\{-\frac{V''(t^\star)}{2}\eps^2 (t^\star)^2\Big\}.
\end{equation}
Assuming \eqref{eqn:needed-bounds}, we obtain
\[
\Pr\{T\leq t^-\}
=
\frac{1}{Z}\int_0^{t^-} \mathrm{e}^{-V(t)}\,\ud t
\leq
\frac{6}{5\eps t^\star\sqrt{V''(t^\star)}}
\exp\Big\{-\frac{V''(t^\star)}{2}\eps^2 (t^\star)^2\Big\},
\]
as required.

To prove \eqref{eqn:needed-bounds}(a), we use Lemma~\ref{lem:bounds-on-V}\ref{item:bound-above-t-star}. For every $\delta>0$,
\[
Z
\geq
\int_{t^\star}^{t^\star+\delta} \mathrm{e}^{-V(x)}\,\ud x
\geq
\mathrm{e}^{-V(t^\star)}
\int_0^{\delta} \mathrm{e}^{-V''(t^\star)x^2/2}\,\ud x.
\]
Changing variables $u = x\sqrt{V''(t^\star)}$, we get
\[
Z
\geq
\frac{\mathrm{e}^{-V(t^\star)}}{\sqrt{V''(t^\star)}}
\int_0^{\delta\sqrt{V''(t^\star)}} \mathrm{e}^{-u^2/2}\,\ud u.
\]
Choosing $\delta = 1/\sqrt{V''(t^\star)}$ yields
\[
Z
\geq
\frac{\mathrm{e}^{-V(t^\star)}}{\sqrt{V''(t^\star)}}
\int_0^1 \mathrm{e}^{-u^2/2}\,\ud u
\geq
\frac{5}{6}\frac{\mathrm{e}^{-V(t^\star)}}{\sqrt{V''(t^\star)}}.
\]

For \eqref{eqn:needed-bounds}(b), Lemma~\ref{lem:bounds-on-V}\ref{item:bound-below-t-star} gives
\begin{align*}
\int_0^{t^-} \mathrm{e}^{-V(t)}\,\ud t
&\leq
\mathrm{e}^{-V(t^\star)}
\int_0^{t^-} \mathrm{e}^{-V''(t^\star)(t^\star-s)^2/2}\,\ud s \\
&=
\mathrm{e}^{-V(t^\star)}
\int_{\eps t^\star}^{t^\star} \mathrm{e}^{-V''(t^\star)x^2/2}\,\ud x \\
&\leq
\mathrm{e}^{-V(t^\star)}
\int_{\eps t^\star}^{\infty} \mathrm{e}^{-V''(t^\star)x^2/2}\,\ud x.
\end{align*}
Using the elementary bound
\[
\int_x^\infty \mathrm{e}^{-\beta u^2/2}\,\ud u
\leq
\frac{1}{x}\int_x^\infty u\,\mathrm{e}^{-\beta u^2/2}\,\ud u
=
\frac{\mathrm{e}^{-\beta x^2/2}}{\beta x}
\qquad (\beta,x>0),
\]
with $\beta = V''(t^\star)$ and $x=\eps t^\star$, we obtain \eqref{eqn:needed-bounds}(b).

\subsubsection{Proof of Lemma~\ref{lem:crude-bounds-on-mode}}

We write $V=V_{n,d}$ and $t^\star=t^\star_{n,d}$. Set
\[
t_0 \defn \sqrt{L}.
\]
We first show that $t^\star \geq t_0$. Since $V$ is convex, it suffices to prove that $V'(t_0)\leq 0$.
Using the identity
\[
V'(s)=ns-(d-n)\frac{2\phi(s)}{F(s)} \leq ns-2(d-n)\phi(s),
\]
and recalling that $\phi(t_0)=(2\pi)^{-1/2}\mathrm{e}^{-L/2}=(2\pi)^{-1/2}\sqrt{n/d}$, we obtain
\[
V'(t_0)
\leq
 d\Big[\frac{n}{d}\sqrt{L} - \sqrt{\frac{2}{\pi}}\Big(1-\frac{n}{d}\Big)\sqrt{\frac{n}{d}}\Big].
\]
If we set
\[
\psi(x) \defn x\sqrt{\log(1/x)} - \sqrt{\frac{2}{\pi}}(1-x)\sqrt{x},
\]
then $V'(t_0) \leq d\psi(n/d)$. A direct calculus check shows that $\psi(x)\leq 0$ on $[0,1/4]$. Since $d\geq 4n$, we have $n/d\in(0,1/4]$, hence $V'(t_0)\leq 0$, proving that
$t^\star \geq \sqrt{L}$. 
We next prove the upper bound on $t^\star$. Since $d\geq 4n$, we have $L\geq \log 4 >1$, and thus $t^\star\geq \sqrt{L}>1$. Using $V'(t^\star)=0$, we get
\[
nt^\star
=
(d-n)\frac{2\phi(t^\star)}{F(t^\star)}
\leq
\frac{2(d-n)}{F(1)}\phi(t^\star)
\leq C d\,\phi(t^\star).
\]
From the form of the Gaussian density, this implies $\mathrm{e}^{-(t^\star)^2/2}
\geq c\frac{n}{d} t^\star$; 
taking logarithms yields
$(t^\star)^2 \leq 2L + C \leq C L$. 
Finally, using again $V'(t^\star)=0$, we have
\[
R(t^\star)=\frac{F'(t^\star)}{F(t^\star)} = \frac{n}{d-n}t^\star.
\]
Therefore,
\begin{align*}
V''(t^\star)
&= n + (d-n)\big(R(t^\star)^2 + t^\star R(t^\star)\big) \\
&= n + \frac{n^2}{d-n}(t^\star)^2 + n(t^\star)^2 \leq C nL,
\end{align*}
where we used the fact that the middle term is at most $\frac{n}{3}(t^\star)^2$. 
On the other hand, the display above also yields
\[
V''(t^\star) \geq n(t^\star)^2 \geq nL,
\]
as required. 

\subsubsection{Proof of Lemma~\ref{lem:right-curvature-near-mode}}

Fix $K>0$, and write $t^\star=t^\star_{n,d}$. Let
\[
t \in \Big[t^\star,\, t^\star + \frac{K}{\sqrt{L}}\Big].
\]
Since
\[
V''(t)=n+(d-n)\big(R(t)^2+tR(t)\big)
\geq (d-n)tR(t)
= (d-n)\frac{2t\phi(t)}{F(t)}
\geq 2(d-n)t\phi(t),
\]
it suffices to lower bound $t\phi(t)$ in terms of $t^\star\phi(t^\star)$.
Write $t=t^\star+u$ with $0\leq u\leq K/\sqrt{L}$. Then
\[
\phi(t)=\phi(t^\star)\exp\Big(-t^\star u - \frac{u^2}{2}\Big).
\]
By Lemma~\ref{lem:crude-bounds-on-mode}, $t^\star\leq C\sqrt{L}$, so
\[
\exp\Big(-t^\star u - \frac{u^2}{2}\Big) \geq c_K.
\]
Also $t\geq t^\star$, hence
\[
t\phi(t) \geq c_K t^\star\phi(t^\star).
\]
Using $V'(t^\star)=0$ and $t^\star\geq 1$, we obtain
\[
2(d-n)t^\star\phi(t^\star)
=
nt^{\star 2} F(t^\star)
\geq c nL,
\]
because $F(t^\star)\geq F(1)>0$ and $(t^\star)^2\geq L$ by Lemma~\ref{lem:crude-bounds-on-mode}. Therefore,
\[
V''(t) \geq c_K nL.
\]

For the upper bound, note that $R$ is decreasing because $R'=-S\leq 0$. Hence
\[
R(t) \leq R(t^\star)=\frac{n}{d-n}t^\star.
\]
Therefore,
\begin{align*}
V''(t)
&\leq n + (d-n)\big(R(t^\star)^2 + tR(t^\star)\big) \\
&= n + \frac{n^2}{d-n}(t^\star)^2 + nt t^\star.
\end{align*}
By Lemma~\ref{lem:crude-bounds-on-mode}, $t^\star\leq C\sqrt{L}$ and $t\leq t^\star + K/\sqrt{L}\leq C_K\sqrt{L}$. Thus $V''(t) \leq C_K nL$, as claimed.

\subsubsection{Proof of Lemma~\ref{lem:right-tail-from-mode}}

Fix $K>0$ and let $0<r\leq \tfrac{K}{\sqrt{L}}$. We set $m_r \defn \inf_{0\leq u\leq r} V''(t^\star_{n,d}+u)$. 
By Lemma~\ref{lem:right-curvature-near-mode}, we have $m_r\geq c_K nL$.
Moreover, the fundamental theorem of calculus gives us 
\[
V(t^\star+r)-V(t^\star)
=
\int_0^r (r-u)V''(t^\star+u)\,\ud u
\geq \frac{m_r r^2}{2}, \quad \mbox{and} \quad 
V'(t^\star+r)
=
\int_0^r V''(t^\star+u)\,\ud u
\geq m_r r.
\]
Moreover, from the convexity of $V$, we have the affine lower bound
\[
V(t^\star+r+s) \geq V(t^\star+r) + sV'(t^\star+r)
\qquad\text{for all }s\geq 0.
\]
Therefore,
\[
\int_{t^\star+r}^\infty \mathrm{e}^{-V(t)}\,\ud t
\leq
\mathrm{e}^{-V(t^\star+r)}\int_0^\infty \mathrm{e}^{-sV'(t^\star+r)}\,\ud s \leq
\frac{\mathrm{e}^{-V(t^\star)}}{m_r r}\exp\Big(-\frac{m_r r^2}{2}\Big).
\]
On the other hand, by \eqref{eqn:needed-bounds}(a),
\[
Z_{n,d}
\geq
\frac{5}{6\sqrt{V''(t^\star)}}\mathrm{e}^{-V(t^\star)}.
\]
Using Lemma~\ref{lem:crude-bounds-on-mode}, namely $V''(t^\star)\leq C nL$, we deduce that $Z_{n,d} \geq c\frac{\mathrm{e}^{-V(t^\star)}}{\sqrt{nL}}$.
Dividing the two estimates and using $m_r\geq c_K nL$ gives
\[
\Pr\{T_{n,d}\geq t^\star_{n,d}+r\}
\leq
\frac{C_K}{r\sqrt{nL}}\exp(-c_K nL r^2),
\]
as claimed.

\subsubsection{Proof of Lemma~\ref{lem:median-right-of-mode}}

Let $f=f_{n,d}$ denote the density of $T_{n,d}$. For $x\in[0,t^\star]$, Lemma~\ref{lem:bounds-on-V}\ref{item:nonincreasing} implies
\begin{align*}
V(t^\star-x)-V(t^\star)
=
\int_0^x (x-u)V''(t^\star-u)\,\ud u 
\geq
\int_0^x (x-u)V''(t^\star+u)\,\ud u 
=
V(t^\star+x)-V(t^\star).
\end{align*}
Thus
\[
f(t^\star-x) \leq f(t^\star+x)
\qquad\text{for all }x\in[0,t^\star].
\]
Integrating over $x\in[0,t^\star]$ gives
\[
\Pr\{T_{n,d}\leq t^\star\}
=
\int_0^{t^\star} f(t^\star-x)\,\ud x
\leq
\int_0^{t^\star} f(t^\star+x)\,\ud x
\leq
\Pr\{T_{n,d}\geq t^\star\}.
\]
Hence $\Pr\{T_{n,d}\leq t^\star\}\leq 1/2$, and therefore every median satisfies $\Med T_{n,d} \geq t^\star$. 

\subsubsection{Proof of Lemma~\ref{lem:mode-density-Tnd}}

Write $t^\star=t^\star_{n,d}$ and $V=V_{n,d}$. Decompose the normalizing constant into two parts:
\[
Z_{n,d} = I_- + I_+,
\qquad
I_- \defn \int_0^{t^\star} \mathrm{e}^{-V(t)}\,\ud t,
\qquad
I_+ \defn \int_{t^\star}^{\infty} \mathrm{e}^{-V(t)}\,\ud t.
\]
By Lemma~\ref{lem:bounds-on-V}\ref{item:bound-below-t-star},
\[
I_-
\leq
\mathrm{e}^{-V(t^\star)}\int_0^{t^\star}
\mathrm{e}^{-V''(t^\star)(t^\star-s)^2/2}\,\ud s
\leq
C\frac{\mathrm{e}^{-V(t^\star)}}{\sqrt{V''(t^\star)}}
\leq
C\frac{\mathrm{e}^{-V(t^\star)}}{\sqrt{nL}},
\]
where the last step uses Lemma~\ref{lem:crude-bounds-on-mode}.

Fix $w \defn \frac{1}{\sqrt{L}}$.
We further split $I_+$ into two terms such that $I_+ =I_{+,1}+I_{+,2}$:
\[
I_{+,1} \defn \int_{t^\star}^{t^\star+w} \mathrm{e}^{-V(t)}\,\ud t,
\qquad
I_{+,2} \defn \int_{t^\star+w}^{\infty} \mathrm{e}^{-V(t)}\,\ud t.
\]
By Lemma~\ref{lem:right-curvature-near-mode} with $K=1$, we have $V''(t)\geq c nL$ on $[t^\star,t^\star+w]$. Hence, for $0\leq u\leq w$,
\[
V(t^\star+u)-V(t^\star)
=
\int_0^u (u-s)V''(t^\star+s)\,\ud s
\geq \frac{c nL}{2}u^2.
\]
Therefore,
\[
I_{+,1}
\leq
\mathrm{e}^{-V(t^\star)}\int_0^w \mathrm{e}^{-c nL u^2/2}\,\ud u
\leq
C\frac{\mathrm{e}^{-V(t^\star)}}{\sqrt{nL}}.
\]
Also,
\[
V(t^\star+w)-V(t^\star) \geq c nL w^2 = cn,
\qquad
V'(t^\star+w) \geq c nL w = c n\sqrt{L}.
\]
Using convexity exactly as in the proof of Lemma~\ref{lem:right-tail-from-mode}, we obtain
\[
I_{+,2}
\leq
\frac{\mathrm{e}^{-V(t^\star+w)}}{V'(t^\star+w)}
\leq
\frac{\mathrm{e}^{-V(t^\star)}}{c n\sqrt{L}}\mathrm{e}^{-cn}
\leq
C\frac{\mathrm{e}^{-V(t^\star)}}{\sqrt{nL}}.
\]
Combining the bounds on $I_-$, $I_{+,1}$, and $I_{+,2}$, we conclude that
\[
Z_{n,d} \leq C\frac{\mathrm{e}^{-V(t^\star)}}{\sqrt{nL}}.
\]
Equivalently,
\[
f_{n,d}(t^\star)=\frac{\mathrm{e}^{-V(t^\star)}}{Z_{n,d}} \geq c\sqrt{nL},
\]
which proves the lemma.

\subsubsection{Proof of Lemma~\ref{Lem:Goodman}}

By Bartlett's decomposition, we can write the determinant $|G|$ as a product of independent $\chi_k$ random variables. 
In particular, we have 
\[
S_n - \E S_n = \sum_{k=2}^n X_k, 
\quad \mbox{where} \quad X_k = \log \chi_k - \E \log \chi_k.
\]
Define the cumulant generating function,
\[
\psi(\lambda) = \log \E \exp(\lambda(S_n - \E S_n)), \quad \lambda \in \R. 
\]
We use the following lemma. 
\begin{lemma}
For $\lambda > -k$, it holds that 
\[
\log \E \exp(\lambda X_k) \leq \frac{3}{4}\frac{\lambda^2}{k}.
\]
\end{lemma}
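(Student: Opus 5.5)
The plan is to compute the moment generating function of $X_k$ exactly and reduce the claim to a bound on the trigamma function. For $\chi_k$ we have $\E\chi_k^{\lambda}=2^{\lambda/2}\Gamma(\frac{k+\lambda}{2})/\Gamma(\frac k2)$ for every $\lambda>-k$, and $\E\log\chi_k=\frac12(\log 2+\psi(\frac k2))$, where $\psi=\Gamma'/\Gamma$. Writing $a:=k/2$, this gives
\[
g(\lambda):=\log\E e^{\lambda X_k}=\log\Gamma\Big(a+\frac{\lambda}{2}\Big)-\log\Gamma(a)-\frac{\lambda}{2}\psi(a).
\]
Since $g(0)=g'(0)=0$ and $g''(\lambda)=\frac14\psi'(a+\frac\lambda2)$, Taylor's formula with integral remainder gives
\[
g(\lambda)=\frac14\int_0^{\lambda}(\lambda-s)\,\psi'\Big(a+\frac s2\Big)\,\ud s .
\]
I will then use the classical bound $\psi'(x)\le \frac1x+\frac1{x^2}$, valid for $x>0$.

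\emph{Case $\lambda\ge0$.} Since $\psi'$ is decreasing, $g(\lambda)\le\frac{\lambda^2}{8}\psi'(a)\le\frac{\lambda^2}{8}\big(\frac2k+\frac4{k^2}\big)\le\frac{\lambda^2}{8}\cdot\frac6k=\frac34\frac{\lambda^2}{k}$ for all $k\ge1$. This settles the nonnegative half-line.

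\emph{Case $-k<\lambda<0$.} Put $\lambda=-\theta k$ with $\theta\in(0,1)$ and substitute $s=ku$, so that $a+\frac s2=a(1+u)$. The $1/x$ part of the trigamma bound contributes
\[
\frac k2\int_{-\theta}^0\frac{u+\theta}{1+u}\,\ud u=\frac k2\bigl(\theta+(1-\theta)\log(1-\theta)\bigr).
\]
An elementary check shows that $\theta+(1-\theta)\log(1-\theta)\le\frac32\theta^2$ on $[0,1]$: the series starts as $\frac{\theta^2}{2}+\frac{\theta^3}{6}+\cdots$, and the left side tends to $1<\frac32$ as $\theta\to1$. So this part is at most $\frac34\theta^2k=\frac34\lambda^2/k$, with room to spare away from $\theta=0$. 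The $1/x^2$ part contributes
\[
\int_{-\theta}^0\frac{u+\theta}{(1+u)^2}\,\ud u=-\log(1-\theta)-\theta,
\]
which is of order $\theta^2$ for small $\theta$ and $O(1)$ unless $\theta$ is close to $1$. For small $\theta$, the second-order terms together give $\frac{\theta^2}{4}(k+2)\le\frac34\theta^2k$ once $k\ge1$. For $\theta$ bounded away from $0$ and $1$, the slack $\frac k2(\frac32\theta^2-\theta-(1-\theta)\log(1-\theta))\gtrsim k$ absorbs the $O(1)$ term. The intermediate regime I would handle by a direct comparison of the two explicit functions of $\theta$, including $k\in\{2,3\}$.

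\emph{Main obstacle: $\theta\to1$.} The term $-\log(1-\theta)$ is unbounded, and this is not an artifact of the trigamma bound: as $\lambda\downarrow-k$ one has $\log\Gamma(a+\lambda/2)\to+\infty$, so $g(\lambda)\to\infty$, while $\frac34\lambda^2/k\to\frac34k$. Consequently the inequality, as literally worded, cannot hold on the whole range $\lambda>-k$. The argument above proves it for all $\lambda\ge-k+k e^{-ck}$, that is, whenever $-\log(1-\theta)\le ck$. In the proof I would state it with this restriction (equivalently, for $\lambda\ge-(1-e^{-ck})k$) and check that the Chernoff step of Lemma~\ref{Lem:Goodman} only needs such $\lambda$. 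For the quadratic regime $t<\ln n$ one takes $|\lambda|\lesssim t/\ln n$, far from $-k$ for $k\ge2$. For the exponential regime one takes $\lambda=-1$, which is admissible for every $k\ge2$ because $\theta=1/k\le\frac12$.
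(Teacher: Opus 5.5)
Your diagnosis is correct, and it is the main point: the statement is false as written, because $\log\E e^{\lambda X_k}=\log\Gamma(\frac{k+\lambda}{2})-\log\Gamma(\frac k2)-\frac{\lambda}{2}\psi(\frac k2)\to\infty$ as $\lambda\downarrow -k$. The paper's proof misses this because of a sign error in step (i). The Weierstrass product actually gives
\[
\log\E e^{\lambda X_k}=\sum_{m\ge0}\Big\{\frac{\lambda}{k+2m}-\log\Big(1+\frac{\lambda}{k+2m}\Big)\Big\},
\]
which is the negative of what the paper writes. The paper's series is termwise $\le 0$, which already contradicts Jensen's inequality $\log\E e^{\lambda X_k}\ge\lambda\E X_k=0$. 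With the correct sign, one needs $x-\log(1+x)\le x^2/2$, and this holds only for $x\ge 0$. The $m=0$ term with $x=\lambda/k\downarrow-1$ is exactly your blow-up.

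For $\lambda\ge0$ your argument is the paper's argument in different notation. Indeed $\frac{\lambda^2}{8}\psi'(\frac k2)=\frac{\lambda^2}{2}\sum_{m\ge0}(k+2m)^{-2}$, and your bound $\psi'(x)\le\frac1x+\frac1{x^2}$ is the same sum--integral comparison as the paper's $\sum_m(k+2m)^{-2}\le k^{-2}+(2k)^{-1}$. Your remark on the application is also right. The Chernoff steps in Lemma~\ref{Lem:Goodman} only need $\lambda\in[-1,0)$ for the lower tail (the paper itself writes ``for $\lambda>-1$''), which means $\theta\le\frac12$ when $k\ge2$.

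There are two loose ends in your negative-$\lambda$ case.

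First, at $k=1$ the bound $\psi'(\frac12)\le 6$ is tight at second order. Your upper bound then equals $\frac34\theta^2+\frac5{12}\theta^3+\cdots$, which exceeds $\frac34\theta^2$ for every $\theta\in(0,1)$. So your claimed range $\lambda\ge-k+ke^{-ck}$ is not established for $k=1$, even though the true inequality holds there for small $|\lambda|$, since $\psi'(\frac12)=\frac{\pi^2}{2}<6$. You should restrict to $k\ge2$, which is all that $S_n-\E S_n=\sum_{k=2}^n X_k$ requires.

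Second, the ``intermediate regime'' need not be left to a case check. Your upper bound is exactly
\[
\frac k2\bigl(\theta+(1-\theta)\log(1-\theta)\bigr)-\log(1-\theta)-\theta=\sum_{j\ge2}\theta^j\Big(\frac{k}{2j(j-1)}+\frac1j\Big).
\]
Its $j=2$ term leaves slack $\frac{k-1}{2}\theta^2$ against $\frac34k\theta^2$. For $\theta\le\frac12$, the terms with $j\ge3$ sum to at most $(\frac{k}{12}+\frac13)\theta^2$, which is $\le\frac{k-1}{2}\theta^2$ once $k\ge2$. For $\theta\ge\frac12$, the bound $\theta+(1-\theta)\log(1-\theta)\le\theta^2$ reduces everything to $-\log(1-\theta)\le k/16$. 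This proves the corrected lemma for $k\ge2$ and $\lambda\ge-k\max\{\frac12,1-e^{-k/16}\}$, in particular for all $\lambda\ge-1$.
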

\begin{proof}
    Using the fact that 
    $\chi_k^2$ is distributed as a Gamma random variate with shape parameter $k/2$ and scale parameter $2$, it follows from standard identities for Gamma random variables that 
    \[
    \E \chi_k^\lambda = 2^{\lambda/2} \frac{\Gamma(\tfrac{\lambda + k}{2})}{\Gamma(k/2)}.
    \]
    Above $\Gamma(z) =\int_0^\infty t^{z-1} e^{-t} \, dt$ and we set $\psi= \Gamma'/\Gamma$. Differentiating the identity above at $\lambda = 0$, we obtain 
    \[
    \E \log \chi_k = \frac{1}{2} \log 2 + \frac{1}{2} \psi(k/2).
    \]
    Hence,
    \begin{align*}
    \log \E \exp(\lambda X_k) &= 
    \log \Gamma(\tfrac{k+\lambda}{2}) - \Big[\log \Gamma(\tfrac{k}{2}) + \frac{\lambda}{2} \psi(\tfrac{k}{2})\Big] \\ 
    &\stackrel{{\rm (i)}}{=} 
    \sum_{m=0}^\infty \Big\{\log\Big(1 + \frac{\lambda}{k + 2m}\Big) - \frac{\lambda}{k + 2m}\Big\} \\ 
    &\stackrel{{\rm(ii)}}{\leq} 
    \frac{\lambda^2}{2} \sum_{m=0}^\infty \frac{1}{(k+2m)^2} \\
    &\leq \frac{\lambda^2}{2}\Big(\frac{1}{k^2} + \int_0^\infty \frac{1}{(k+2x)^2} \, dx\Big) \leq 
    \frac{3}{4}\frac{\lambda^2}{k}.
    \end{align*}
Above, relation (i) is the Taylor expansion of the logarithm of the Gamma function and inequality (ii) used $\log(1 + x) - x \leq x^2/2$, which holds for $x > -1$; we applied it term-wise with $x = \tfrac{\lambda}{k+2m}$.\qedhere
\end{proof}

Applying it to the sum $S_n - \E S_n$, we have for $\lambda > -1$ that
\[
\psi(\lambda) \leq \frac{3}{4}\lambda^2 \sum_{k=2}^n \frac{1}{k} \leq \lambda^2 \log n.
\]
Therefore, for $t \geq 0$ and with $\lambda = \tfrac{t}{2\log n}$, we have 
\[
\P\Big\{S_n - \E S_n > t\Big\} 
\leq 
\exp\big\{-\lambda t + \lambda^2 \log n\big\} = \exp\Big\{-\frac{t^2}{4\log n}\Big\}.
\] 
Similarly, applying the same argument with $\lambda = -\tfrac{t}{\log n}$, we have 
\[
\P\Big\{S_n - \E S_n < -t\Big\} 
\leq 
\exp\big\{\lambda t + \lambda^2 \log n\big\} = \exp\Big\{-\frac{t^2}{4 \log n}\Big\}, \quad \mbox{for}~t \in (0, 2 \log n).
\] 
For the lower tail and for $t \geq 2 \log n$, we require a different argument. In this case, we write
\[
2 S_n = \log |G|^2 = 
\sum_{k=2}^n \log \chi_k^2 = 
Z_2 + Y, 
\]
where we set $Z_k = \log \chi_k^2$ and $Y = \sum_{k=3}^n Z_k$. Then, using the fact that $\P\{\chi^2_2 \leq u\} = 1 - e^{-u/2} \leq u/2$, we have  
\[
\P\{S_n - \E S_n \leq -t\}
= 
\E_Y \P\Big\{\chi_2^2 \leq \frac{e^{2\E S_n - 2t}}{e^Y} \mid Y\Big\} \leq
\frac{e^{2 \E S_n-2t}}{2} 
\E_Y\Big[e^{-Y}\Big] = 
\frac{e^{2 \E S_n-2t}}{2}  
\prod_{k=3}^n \E \frac{1}{\chi^2_k} = 
\frac{e^{2 \E S_n-2t}}{2(n-2)!}. 
\]
The final relation used that $\E \chi^{-2}_k = (k-2)^{-1}$ for $k \geq 3$. 
Note that 
\[
\E Z_k = 2 \E \log \chi_k = 
\log 2 + \psi\Big(\frac{k}{2}\Big) \leq \log(k-1).
\]
The final inequality comes from the log-convexity of $\Gamma$:
\[
\psi\Big(\frac{k}{2}\Big) = (\log \Gamma)'\Big(\frac{k}{2}\Big) \leq \log \frac{\Gamma(k/2 + 1/2)}{ \Gamma(k/2 - 1/2)} = \log\Big(\frac{k-1}{2}\Big),
\]
which holds for integers $k \geq 2$.
Consequently, 
\[
e^{2 \E S_n} = \prod_{k=2}^n e^{\E Z_k} \leq (n-1)!.
\]
Hence, combining the previous displays, 
\[
\P\{S_n - \E S_n \leq -t\} 
\leq \frac{n-1}{2} e^{-2t}, 
\]
for $t \geq 0$, as needed. 
\begin{proof}[Proof of \Cref{Lem:numberoffacets}]
For a facet \(\cF\), write
\[
    C_{\cF}:=\operatorname{cone}(\cF),
    \qquad
    \Omega_{\cF}:=C_{\cF}\cap S^{n-1}.
\]
Thus \(\Omega_{\cF}\) is the set of directions whose MNI ray hits the facet
\(\cF\). The sets \(\{\Omega_{\cF}\}_{\cF}\) form a partition of
\(S^{n-1}\), up to a null set.

Since \(\|\vec \xi\|_2=\sqrt n\), the Euclidean perturbation
\[
    \|\vec \xi'-\vec \xi\|_2\le r
\]
corresponds to an angular perturbation of size
\[
    \rho\lesssim \frac{r}{\sqrt n}\le n^{-5/2}.
\]
Hence
\[
    \cF_{\vec \xi}(r)
    \subset
    \left\{
        \cF:
        \frac{\vec \xi}{\sqrt n}
        \in
        (\Omega_{\cF})_{\rho}
    \right\},
\]
where \((\Omega_{\cF})_{\rho}\) denotes the \(\rho\)-neighborhood on the
sphere. Therefore
\[
    |\cF_{\vec \xi}(r)|
    \le
    \sum_{\cF\in\cF_{n-1}(\PN)}
    \mathbf 1
    \left\{
        \frac{\vec \xi}{\sqrt n}
        \in
        (\Omega_{\cF})_{\rho}
    \right\}.
    \tag{1}
\]

By rotational invariance of the Gaussian polytope, the expectation of the
right-hand side does not depend on the choice of the fixed direction
\(\vec \xi/\sqrt n\). Thus, averaging this fixed direction over the sphere,
we get
\[
\begin{aligned}
    \E_{\vec X}|\cF_{\vec \xi}(r)|
    &\le
    \E_{\vec X}
    \sum_{\cF\in\cF_{n-1}(\PN)}
    \sigma_{n-1}\big((\Omega_{\cF})_{\rho}\big),
\end{aligned}
\tag{2}
\]
where \(\sigma_{n-1}\) is normalized Haar measure on \(S^{n-1}\).

We now compare the \(\rho\)-enlarged angular measure of a typical facet cone
to its original angular measure. By Fleury's representation, conditionally on
being a facet, after an independent Haar rotation the facet is generated by
the columns of
\[
    A=
    \begin{bmatrix}
        Y\\
        T_{n,d}\mathbf 1_n^\top
    \end{bmatrix}.
\]
The height \(T_{n,d}\) is independent of the centered simplex part, and the
direction
\[
    \theta_{\cF}
    :=
    \frac{c_{\cF}-n_{\cF}}
    {\|c_{\cF}-n_{\cF}\|_2}
\]
is rotationally invariant and independent of the centered simplex.

From the estimates proved in the previous steps, the cone-volume-biased
Fleury facet is good with probability at least
\[
    1-\exp(-cE(n,d)).
\]
On this good event we have
\[
    \|M n_{\cF}\|_2
    =
    \Med \bar T_{n,d}
    +
    O(L^{-2}\sqrt n),
    \tag{3}
\]
\[
    \|M(c_{\cF}-n_{\cF})\|_2
    =
    (1+O(L^{-1}))M,
    \tag{4}
\]
and, after truncating the centered simplex to its small-diameter part and
applying \Cref{lem:david-alonso},
\[
    \operatorname{diam}
    \big(M(\cF-c_{\cF})\big)
    \le
    C M\sqrt n
    \tag{5}
\]
while the part removed by the truncation has cone-volume contribution at most
\[
    \exp(-cE(n,d)).
\]
Moreover, the same Fleury parametrization and the standard lower bound on the
inradius of the canonical simplex give, on the good event,
\[
    \operatorname{inrad}_{S^{n-1}}(\Omega_{\cF})
    \ge
    \frac{c}{n\sqrt L}.
    \tag{6}
\]
Indeed, after scaling by \(M\), the facet lies at radius \(\sqrt n+o(\sqrt n)\),
whereas its tangential simplex has Euclidean inradius \(\gtrsim M/n\);
since \(M\asymp \sqrt{n/L}\), the corresponding spherical inradius is
\[
    \frac{M/n}{\sqrt n}
    \asymp
    \frac{1}{n\sqrt L}.
\]

For a geodesically convex set \(A\subset S^{n-1}\) with spherical inradius
\(r_A\), the elementary spherical Steiner estimate gives
\[
    \sigma_{n-1}(A_{\rho})
    \le
    \left(1+C\frac{\rho}{r_A}\right)^n
    \sigma_{n-1}(A).
    \tag{7}
\]
Applying \((7)\) to \(A=\Omega_{\cF}\), using \((6)\) and
\(\rho\le n^{-5/2}\), we obtain
\[
\begin{aligned}
    \sigma_{n-1}\big((\Omega_{\cF})_{\rho}\big)
    &\le
    \left(
        1+
        C\rho n\sqrt L
    \right)^n
    \sigma_{n-1}(\Omega_{\cF})                                      \\
    &\le
    \exp\left(\frac{C n}{L^2}\right)
    \sigma_{n-1}(\Omega_{\cF}),
\end{aligned}
\tag{8}
\]
where the last inequality uses \(r\le n^{-2}\) and the parameter range of the
paper. Increasing \(C\), the same bound holds after integrating over the
exceptional part, because the exceptional cone-volume is
\(\exp(-cE(n,d))\).

Therefore,
\[
\begin{aligned}
    \E_{\vec X}
    \sum_{\cF\in\cF_{n-1}(\PN)}
    \sigma_{n-1}\big((\Omega_{\cF})_{\rho}\big)
    &\le
    \exp\left(\frac{C n}{L^2}\right)
    \E_{\vec X}
    \sum_{\cF\in\cF_{n-1}(\PN)}
    \sigma_{n-1}(\Omega_{\cF}) .
\end{aligned}
\tag{9}
\]
But the sets \(\Omega_{\cF}\) partition \(S^{n-1}\), hence
\[
    \sum_{\cF\in\cF_{n-1}(\PN)}
    \sigma_{n-1}(\Omega_{\cF})
    =
    1.
    \tag{10}
\]
Combining \((2)\), \((9)\), and \((10)\), we get
\[
    \E_{\vec X}|\cF_{\vec \xi}(r)|
    \le
    \exp\left(\frac{C n}{L^2}\right).\qedhere
\]
\end{proof}
\subsubsection{Reduction to zero signal}\label{sss:RedTech}
Let
\[
    \cS:=\operatorname{Supp}(\ft),
    \qquad
    s:=|\cS|,
    \qquad
    \cS^c:=[d]\setminus \cS,
\]
and write
\[
    \vec Y=(\vec X)_{\cS}\ft+\vec \xi .
\]
Let
\[
    \erm
    \in
    \argmin_{\theta\in\R^d:\,\vec X\theta=\vec Y}
    \|\theta\|_1
\]
be an \(\ell_1\)-MNI. Then, fixing the coordinates of \(\erm\) on
\(\cS\), the off-support part must itself be an \(\ell_1\)-minimum norm
interpolator of the corresponding residual:
\[
    (\erm)_{\cS^c}
    \in
    \argmin_{w\in\R^{\cS^c}:\,
        \vec X_{\cS^c}w=\vec Y-\vec X_{\cS}(\erm)_{\cS}}
    \|w\|_1 .
    \tag{1}
\]
Indeed, otherwise one could replace \((\erm)_{\cS^c}\) by a feasible vector
with smaller \(\ell_1\)-norm and strictly decrease \(\|\erm\|_1\), while
preserving the interpolation constraint.

The difficulty is that the residual
\[
    \vec r_*
    :=
    \vec Y-\vec X_{\cS}(\erm)_{\cS}
\]
is not independent of \(\vec X_{\cS^c}\), since \((\erm)_{\cS}\) depends on
the full matrix \(\vec X\). Thus the fixed-direction pure-noise theorem
cannot be applied directly to \(\vec r_*\).

The point of the facet-count argument is to remove this dependence. Condition
on \((\vec X_{\cS},\vec \xi)\). Then every possible residual of the form
\[
    \vec Y-\vec X_{\cS}u,
    \qquad u\in\R^{\cS},
\]
belongs to the affine subspace
\[
    \mathcal R_{\cS}
    :=
    \vec \xi+\operatorname{span}\{\vec X_j:j\in\cS\}.
\]
Hence the random residual \(\vec r_*\) belongs to this fixed set
\(\mathcal R_{\cS}\), which is independent of \(\vec X_{\cS^c}\).

By homogeneity of the \(\ell_1\)-MNI, it is enough to control the normalized
directions
\[
    K_{\cS}
    :=
    \left\{
        \sqrt n\,\frac{r}{\|r\|_2}
        :
        r\in\mathcal R_{\cS}\setminus\{0\}
    \right\}
    \subset \sqrt n S^{n-1}.
\]
This set lies in a subspace of dimension at most \(s+1\). Therefore
\[
    \mathcal N_2(n^{-2},K_{\cS})
    \le
    \left(Cn^{5/2}\right)^{s+1}.
    \tag{2}
\]
In particular, if
\[
    (s+1)\log n
    \le
    c_0\frac{n}{\log^2(d/n)},
    \tag{3}
\]
then
\[
    \mathcal N_2(n^{-2},K_{\cS})
    \le
    \exp\left(
        c_0\frac{n}{\log^2(d/n)}
    \right).
    \tag{4}
\]

Now apply the previous facet-count corollary to the deterministic set
\(K_{\cS}\) and the independent matrix \(\vec X_{\cS^c}\). With probability
at least
\[
    1-\exp\left(
        -c\frac{n}{\log^2(d/n)}
    \right),
\]
all MNI rays generated by targets in \(K_{\cS}\) hit at most
\[
    \exp\left(
        C\frac{n}{\log^2(d/n)}
    \right)
\]
facets of the leave-one-out polytope
\[
    P_{\cS^c}:=\vec X_{\cS^c}B_1^{\cS^c}.
\]

On each conic cell over one of these facets, the \(\ell_1\)-MNI map is
affine. Therefore, after applying the fixed-direction MNI bound on an
\(n^{-2}\)-net of \(K_{\cS}\), and using the affine-on-facets fact to pass
from the net to the whole cell, we obtain the following uniform estimate:
for every \(r\in\mathcal R_{\cS}\),
\[
    \left\|
        \argmin_{\vec X_{\cS^c}w=r}\|w\|_1
    \right\|_2^2
    \le
    \left(
        1+\frac{C\lambda}{L}
    \right)
    \frac{\|r\|_2^2}{n}
    \frac{2M_{n,d-s}^2}{n},
    \tag{5}
\]
with probability at least
\[
    1-\exp\left(
        -c\lambda^2\frac{n}{\log^2(d/n)}
    \right),
\]
provided \(\lambda\) is large enough to absorb the entropy cost in
\((4)\).

Finally, since the actual residual
\[
    r_*=\vec Y-\vec X_{\cS}(\erm)_{\cS}
\]
belongs to \(\mathcal R_{\cS}\), applying \((5)\) to \(r=r_*\) and using
\((1)\) gives
\[
    \|(\erm)_{\cS^c}\|_2^2
    \le
    \left(
        1+\frac{C\lambda}{L}
    \right)
    \frac{\|r_*\|_2^2}{n}
    \frac{2M_{n,d-s}^2}{n}.
    \tag{6}
\]
Since \(s\ll d\) in our regime,
\[
    \log((d-s)/n)
    =
    L+o(1),
\]
and hence
\[
    M_{n,d-s}
    =
    (1+o(1))M_{n,d}.
\]
Thus the same bound holds with \(M_{n,d}\) in place of \(M_{n,d-s}\).
\subsubsection{Proof of \Cref{lem:volume-from-facet-heights}}
We use the conic decomposition of $\PN$ over its facets.  For a facet
$\cF\in\cF_{n-1}(\PN)$, let $\vec n_{\cF}$ be the point of minimal Euclidean
norm in the affine span of $\cF$, and let $\vec c_{\cF}$ be the barycenter of
$\cF$. Thus
\[
    \cF \subset \vec n_{\cF}+\vec n_{\cF}^{\perp},
    \qquad
    \|\vec n_{\cF}\|_2 \sim T_{n,d}
\]
under Fleury's distribution.  If $Z\sim U(\cF)$, then
\[
Z
=
\vec n_{\cF}
+
(\vec c_{\cF}-\vec n_{\cF})
+
(Z-\vec c_{\cF}),
\]
and $\vec n_{\cF}$ is orthogonal to the last two terms. Hence
\begin{align*}
\|Z\|_2^2
&=
\|\vec n_{\cF}\|_2^2
+
\|\vec c_{\cF}-\vec n_{\cF}\|_2^2
+
\|Z-\vec c_{\cF}\|_2^2  
+2\left\langle \vec c_{\cF}-\vec n_{\cF}, Z-\vec c_{\cF}\right\rangle .
\end{align*}
By Lemma~\ref{Lem:Fleury}, the vector
$\vec c_{\cF}-\vec n_{\cF}$ is distributed as
$\Normal{0}{I_{n-1}/n}$ in $\vec n_{\cF}^{\perp}$, independently of
$T_{n,d}$ and of the centered facet. Therefore, with probability bounded
below by an absolute constant,
\[
    \left|\|\vec c_{\cF}-\vec n_{\cF}\|_2^2-1\right|
    \leq \frac{C}{\sqrt n}.
\]
Similarly, the thin-shell estimate for the canonical simplex, transferred to
Fleury's facet using Lemma~\ref{Lem:Energy}, gives
\[
    \Pr_{Z\sim U(\cF)}\left\{
    \left|\|Z-\vec c_{\cF}\|_2^2-1\right|
    \leq \frac{C}{\sqrt n}
    \right\}
    \geq 0.99,
\]
and the corresponding one-dimensional marginal estimate gives
\[
    \Pr_{Z\sim U(\cF)}\left\{
    \left|
    \left\langle \vec c_{\cF}-\vec n_{\cF},Z-\vec c_{\cF}\right\rangle
    \right|
    \leq \frac{C}{\sqrt n}
    \right\}
    \geq 0.99.
\]
Finally, by Lemma~\ref{prop:tail-of-Tnd},
\[
    T_{n,d}^2 \leq t_{n,d}^2+\frac{C}{\sqrt n}
\]
with probability bounded below by an absolute constant. Combining these
estimates, and adjusting the numerical constants, we obtain
\[
    \Pr_{Z\sim U(\cF)}\left\{
    \|Z\|_2^2
    \leq
    t_{n,d}^2+2+\frac{C}{\sqrt n}
    \right\}
    \geq 0.9
\]
for a facet sampled according to the volume-weighted Fleury distribution.
Set
\[
    R^2:=t_{n,d}^2+2+\frac{C}{\sqrt n}.
\]
The conic formula then implies
\[
    \E\,|\PN\cap R B_n|
    \geq
    0.9\,\E|\PN|.
\]
Since $|\PN\cap R B_n|\leq |R B_n|=R^n|B_n|$, it follows that
\[
    \E|\PN|
    \leq
    C R^n |B_n|.
\]
Using $t_{n,d}^2\asymp L$, we have
\[
R^n
=
\big(t_{n,d}^2+2+C n^{-1/2}\big)^{n/2}
\leq
\exp\left(\frac{C\sqrt n}{L}\right)
\big(t_{n,d}^2+2\big)^{n/2}.
\]
This proves the claim.

\subsubsection{Proof of \Cref{lem:low-height-cones}}
For a facet $\cF$, write
\[
    C_{\cF}:=\operatorname{conv}(0,\cF).
\]
The cones $C_{\cF}$ have disjoint interiors and decompose $\PN$. Therefore
\[
    |C_{\cF}|
    =
    \frac{1}{n}h_{\cF}|\cF|,
\]
and hence
\[
    |\PN^{\varepsilon}|
    =
    \frac{1}{n}
    \sum_{\cF\in\cF_{n-1}(\PN)}
    h_{\cF}|\cF|\,
    \mathbf 1_{\{h_{\cF}\leq (1-\varepsilon)t_{n,d}\}}.
\]
Taking expectation and using exchangeability of the signed $n$-tuples gives
\[
\begin{aligned}
    \E |\PN^{\varepsilon}|
    &=
    \frac{2^n}{n}\binom{d}{n}
    \Pr(\cE)
    \E\left[
        h_{\cF}|\cF|\,
        \mathbf 1_{\{h_{\cF}\leq (1-\varepsilon)t_{n,d}\}}
        \mid \cE
    \right],
\end{aligned}
\]
where $\cE$ is the event that a fixed signed $n$-tuple forms a facet of
$\PN$.

Under Fleury's conditional distribution, the height $h_{\cF}$ has the same
law as $T_{n,d}$ and is independent of the tangential part of the facet. In
particular, $T_{n,d}$ is independent of $|\cF|$. Thus
\[
    \E |\PN^{\varepsilon}|
    =
    A_{n,d}\,\E|\cF|\,
    \E\left[
        T_{n,d}
        \mathbf 1_{\{T_{n,d}\leq (1-\varepsilon)t_{n,d}\}}
    \right],
\]
where
\[
    A_{n,d}
    :=
    \frac{2^n}{n}\binom{d}{n}\Pr(\cE).
\]
Similarly,
\[
    \E|\PN|
    =
    A_{n,d}\,\E|\cF|\,\E T_{n,d}.
\]
Therefore
\[
    \frac{\E |\PN^{\varepsilon}|}{\E|\PN|}
    =
    \frac{
    \E\left[
        T_{n,d}
        \mathbf 1_{\{T_{n,d}\leq (1-\varepsilon)t_{n,d}\}}
    \right]
    }{
    \E T_{n,d}
    }.
\]
Since $\{T_{n,d}\leq (1-\varepsilon)t_{n,d}\}$ is a lower level set of
$T_{n,d}$, the conditional mean of $T_{n,d}$ on this event is at most
$\E T_{n,d}$. Hence
\[
    \frac{\E |\PN^{\varepsilon}|}{\E|\PN|}
    \leq
    \Pr\{T_{n,d}\leq (1-\varepsilon)t_{n,d}\}.
\]

It remains to estimate the last probability. The median of $T_{n,d}$ lies to
the right of its mode, namely
\[
    \mathrm{Med}\,T_{n,d}\geq t_{n,d}.
\]
Therefore
\[
    \{T_{n,d}\leq (1-\varepsilon)t_{n,d}\}
    \subseteq
    \left\{
    |T_{n,d}-\mathrm{Med}\,T_{n,d}|
    \geq
    \varepsilon t_{n,d}
    \right\}.
\]
By \Cref{prop:tail-of-Tnd},
\[
    \Pr\{T_{n,d}\leq (1-\varepsilon)t_{n,d}\}
    \leq
    \exp\{-c n t_{n,d}^4\varepsilon^2\}.
\]
Since
\[
    t_{n,d}^2\asymp L,
\]
we obtain
\[
    \Pr\{T_{n,d}\leq (1-\varepsilon)t_{n,d}\}
    \leq
    \exp\{-c nL^2\varepsilon^2\}.
\]
Thus
\[
    \E |\PN^{\varepsilon}|
    \leq
    \exp\{-c nL^2\varepsilon^2\}\,\E|\PN|.
\]

Finally, Markov's inequality gives
\[
\begin{aligned}
    \Pr\left\{
        |\PN^{\varepsilon}|
        \geq
        \exp\{-c nL^2\varepsilon^2/2\}\,\E|\PN|
    \right\}
    &\leq
    \exp\{-c nL^2\varepsilon^2/2\}.
\end{aligned}
\]
This proves the claimed high-probability bound.

%% file: proof_outline_thm4.tex
\subsection{Proofs of \Cref{Thm:Isotropic} and \Cref{Thm:ThinShell}}
We use the notation
\[
    m_2(K):=\frac{1}{|K|}\int_K \|x\|_2^2\,dx,
    \qquad
    \operatorname{vrad}(K):=\left(\frac{|K|}{|B_n|}\right)^{1/n}.
\]
We shall use the following three inputs.
\begin{lemma}[Sharp volume radius]\label{Lem:SharpVolumeRadiusCorrected}
With probability at least \(1-\exp(-cn/L^2)\),
\[
    \operatorname{vrad}(\PN)^2
    =
    \alpha(n,d)\left(1+O(L^{-2})\right).
\]
Equivalently,
\[
    \left(\frac{|\PN|}{|B_n|}\right)^{2/n}
    =
    \alpha(n,d)\left(1+O(L^{-2})\right).
\]
\end{lemma}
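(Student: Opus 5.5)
Throughout, I read the statement with the normalization of \Cref{C:MND}: from the facet picture, $M_{n,d}\PN\approx\bar B_n$, so $\operatorname{vrad}(\PN)\approx\sqrt n/M_{n,d}=1/M_{n,d}^s\approx\alpha(n,d)$. The claim to prove is therefore
\[
    \operatorname{vrad}(\PN)\,M_{n,d}^s=1+O(L^{-2})
\]
with probability at least $1-\exp(-cn/L^2)$. The displayed form $\operatorname{vrad}^2=\alpha(1+O(L^{-2}))$ should be read with $\alpha$ squared. Given this, \Cref{C:MND} turns it into the stated expansion in terms of $\alpha(n,d)$. I would prove the two one-sided bounds separately, both at the scale $\varepsilon\asymp L^{-2}$.

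\textbf{Upper bound.} I take $\varepsilon_{n,d}\asymp L^{-2}$ in \Cref{cor:terminal-volume-bootstrap}. The stopping condition holds once $n\gg L^3$, which follows from $d\le\exp(n^c)$. This gives $\E|\SPO|\le\exp(Cn/L^2)|\bar B_n|$. Markov's inequality then yields $|\SPO|\le\exp(2Cn/L^2)|\bar B_n|$ with probability at least $1-\exp(-Cn/L^2)$. Taking $n$-th roots gives $\operatorname{vrad}(\SPO)\le(1+O(L^{-2}))\sqrt n$, that is, $\operatorname{vrad}(\PN)\le(1+O(L^{-2}))/M_{n,d}^s$.

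\textbf{Lower bound.} For fixed $\vec X$, polar integration and Jensen's inequality (as in the proof of \Cref{Lem:CheapVolumeEstimate}) give $\operatorname{vrad}(\PN)\ge 1/M(\PN)$, where $M(\PN)=\int_{\Sn}\|\theta\|_{\PN}\,d\sigma(\theta)$ and $\E M(\PN)=M_{n,d}^s$. It therefore suffices to show
\[
    M(\PN)\le(1+CL^{-2})M_{n,d}^s
\]
with probability at least $1-\exp(-cn/L^2)$, which is a one-sided concentration statement for $\vec X\mapsto M(\PN)$.

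I would use the gradient formula \eqref{Eq:theyddyLin}. The $\vec X$-gradient of $\|\theta\|_{\PN}$ has squared norm $\|\erm(\theta)\|_2^2/\|\vn_{\cF}\|_2^2$. On the good events of \Cref{Theorem:LoNE} and \Cref{lem:pure-noise-mni-typical-height}, this is about $(2(M^s_{n,d})^2)/(2L)$. Averaging over $\theta$ can only decrease the Lipschitz constant, by convexity. Crude Gaussian concentration, restricted to that good event and extended by the Lipschitz extension theorem, then only gives a relative deviation $t$ with probability $1-\exp(-cnLt^2)$. At $t=L^{-2}$ this is $\exp(-cn/L^3)$, which misses by a factor $L$.

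\textbf{Main obstacle.} The main obstacle is recovering this lost factor $L$. I would first try to upgrade the variance bound of \Cref{Theorem:SPofGP}, namely $\Var\lesssim(M^s_{n,d})^2/(nL^2)$, to an exponential tail. The idea is to run the $L^1$--$L^2$ inequality of \Cref{Lem:Hyper} on $e^{\lambda F/2}$ in a Herbst-type argument, with $F=M(\PN)/M^s_{n,d}$. The key inputs would be that each column gradient is supported on the event $1\in S$, of probability $n/d$, and that the lower bound $\|\vn_{\cF}\|_2^2\gtrsim L$ from \Cref{lem:simplices} holds uniformly. Together these should give a sub-Gaussian tail $\exp(-cnL^2t^2)$ at least in the range $t\lesssim L^{-1}$, which suffices at $t=L^{-2}$.

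If this fails, the fallback is a direct volume argument. Here I would combine \Cref{lem:low-height-cones} with $\varepsilon\asymp L^{-2}$ and the facet thin-shell estimates of Step~VI. The aim is to show that, with probability at least $1-\exp(-cn/L^2)$, the good cones fill a set of directions of measure at least $\tfrac12$ and reach radius $(1-CL^{-2})\sqrt n$ after scaling by $M_{n,d}$. This yields $|\SPO|\ge\exp(-Cn/L^2)|\bar B_n|$, since the directions of a set of measure at least $\tfrac12$ contribute at least $\tfrac12(1-CL^{-2})^n|\bar B_n|$.
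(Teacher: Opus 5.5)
Your upper bound is the paper's own argument, and reading the display with $\alpha(n,d)^2$ is the right normalization. The bootstrap of \Cref{cor:terminal-volume-bootstrap} at $\varepsilon_{n,d}\asymp L^{-2}$, then Markov, $n$-th roots and \Cref{C:MND}, is what ``follows from the proof of \Cref{Theorem:LoNE}'' refers to.

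The gap is the lower bound. The reduction $\operatorname{vrad}(\PN)\ge 1/M(\PN)$ is fine. What you then need is $\Pr_{\vec X}\{M(\PN)>(1+CL^{-2})M^s_{n,d}\}\le e^{-cn/L^2}$, which is Gaussian behaviour out to $\sqrt n/L$ standard deviations. The Herbst scheme you describe cannot deliver this, because \Cref{Lem:Hyper} is a variance inequality.
\begin{itemize}
\item Applied to $e^{\lambda F/2}$, with $F=\|\theta\|_{\PN}/M^s_{n,d}$, it gives at best $\Var(e^{\lambda F/2})\lesssim\lambda^2\rho\,\E e^{\lambda F}$ with $\rho\asymp 1/(nL^2)$. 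The $\log(d/n)$ gain is uniform in $\lambda$ but no better.
\item Iterating such a bound controls $\log\E e^{\lambda(F-\E F)}\lesssim\lambda^2\rho$ only for $|\lambda|\lesssim\rho^{-1/2}=\sqrt n L$.
\item Chernoff then gives Gaussian tails only for $t\lesssim 1/(\sqrt n L)$, and exponential tails $e^{-ct\sqrt n L}$ beyond that. At $t=L^{-2}$ this is $e^{-c\sqrt n/L}$, far weaker than $e^{-cn/L^2}$.
\end{itemize}
Your claimed sub-Gaussian range $t\lesssim L^{-1}$ would need an entropy (log-Sobolev) form of the $L^1$--$L^2$ inequality. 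Neither \Cref{Lem:Hyper} nor anything else in the paper supplies one.

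There are two smaller problems with this route. First, the support property ($\nabla_{\vec X_1}$ vanishes off $\{1\in S\}$, an event of probability $n/d$) holds for the fixed-direction gauge, not for $M(\PN)$. The $\vec X_1$-gradient of $M(\PN)$ is nonzero whenever $\pm\vec X_1$ is a vertex of $\PN$. You would have to work with $\|\theta\|_{\PN}$ and pass to the average via $\E e^{\lambda(M(\PN)-M^s_{n,d})}\le\E e^{\lambda(\|\theta\|_{\PN}-M^s_{n,d})}$. Second, getting $\rho\asymp 1/(nL^2)$ requires $\|\erm(\theta)\|_2^2\lesssim F^2/n$ under the tilted measure, which is itself a \Cref{Theorem:LoNE}-type input.

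The fallback is circular. \Cref{lem:low-height-cones} and the facet thin-shell bounds are volume-biased: they bound the expected volume of bad cones by $e^{-cn/L^2}\E|\PN|$. Bad cones are exactly the short ones, so small bad volume says nothing about their angular measure unless you have a uniform radial lower bound. That conversion is steps (8)--(9) in the proof of \Cref{lem:pure-noise-mni-typical-height}. With Step~I's inclusion $\bar B_n\subset(1+C/\sqrt L)\SPO$ it costs $e^{Cn/\sqrt L}$, which is why that lemma stops at height precision $L^{-5/4}$. Even an inradius sharp to relative order $L^{-1}$ would still cost $e^{Cn/L}\gg e^{cn/L^2}$. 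More basically, $\Var|\PN|\gg(\E|\PN|)^2$, so realizations with small volume are invisible to annealed bounds, and these cannot give a high-probability lower bound on $|\PN|$.

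The missing ingredient is the quenched fixed-direction estimate $\Pr_{\vec X}\{\|\vec\xi\|_{\PN}>(1+CL^{-2})M_{n,d}\}\le e^{-cn/L^2}$ for fixed $\vec\xi\in\sqrt n S^{n-1}$. This is the bound $\|\vec\xi\|_{\PN}/M_{n,d}=1+O(L^{-2})$ that the proof of \Cref{Theorem:LoNE} uses in Step~VI. Granting it, Fubini and Markov show that with probability $1-e^{-cn/(2L^2)}$ at least half of the directions have radius $\ge(1+CL^{-2})^{-1}\sqrt n$ in $\SPO$. Hence $|\SPO|\ge\frac12(1+CL^{-2})^{-n}|\bar B_n|$. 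This is the lower half the paper's one-line proof relies on, and it is what your proposal leaves unproved.
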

The last lemma follows from the proof of \Cref{Theorem:LoNE}.
\begin{lemma}[Upper second moment]\label{Lem:SecondMomentUpperCorrected}
With probability at least \(1-\exp(-cn/L^2)\),
\[
    m_2(\PN)
    \le
    \frac{n}{n+2}\,
    \alpha(n,d)
    \left(1+O(L^{-2})\right).
\]
\end{lemma}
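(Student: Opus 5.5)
The plan is to reduce $m_2(\PN)$ to a cone-volume-weighted average of facet second moments, and then to reuse the shell-matching estimates from the proof of \Cref{Theorem:LoNE}.

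\textbf{Step 1: conic decomposition.} Write $C_{\cF}=\operatorname{conv}(0,\cF)$. Recall $|C_{\cF}|=\frac1n h_{\cF}|\cF|$, where $h_{\cF}=\|\vn_{\cF}\|_2$. A uniform point of $C_{\cF}$ can be written $x=sZ$ with $Z\sim U(\cF)$ and $s$ independent of $Z$ with density $ns^{n-1}$ on $[0,1]$. Since $\E s^2=\frac{n}{n+2}$, this gives the exact identity
\[
m_2(\PN)=\frac{n}{n+2}\sum_{\cF\in\cF_{n-1}(\PN)}\frac{|C_{\cF}|}{|\PN|}\,\E_{Z\sim U(\cF)}\|Z\|_2^2 .
\]
The factor $\frac{n}{n+2}$ in the statement is therefore produced exactly by this identity. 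It remains to show that the cone-weighted average of $\E_{U(\cF)}\|Z\|_2^2$ is at most $\alpha(n,d)(1+O(L^{-2}))$, with $\alpha(n,d)$ normalized as in \Cref{Lem:SharpVolumeRadiusCorrected}.

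\textbf{Step 2: good facets.} For each facet use the Pythagorean decomposition of Step V:
\[
\|Z\|_2^2=h_{\cF}^2+\|\vc_{\cF}-\vn_{\cF}\|_2^2+\|Z-\vc_{\cF}\|_2^2+2\langle \vc_{\cF}-\vn_{\cF},Z-\vc_{\cF}\rangle .
\]
I then average this over $Z\sim U(\cF)$.
\begin{itemize}
\item The cross term has mean zero exactly, because $\vc_{\cF}$ is the barycenter of $\cF$. This removes the need for \Cref{lem:david-alonso} in this argument.
\item $\E_{U(\cF)}\|Z-\vc_{\cF}\|_2^2=\Tr(\vec X_S\Cov(\triangle_c)\vec X_S^{\T})$ is a Frobenius-type quantity. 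Under Fleury's distribution, together with \Cref{C:GoodFlu2}, it concentrates at its mean with sub-Gaussian tails at scale $n^{-1/2}$.
\item $\|\vc_{\cF}-\vn_{\cF}\|_2^2$ is a $\chi^2_{n-1}/n$ variable.
\item The height satisfies $h_{\cF}=\Med T_{n,d}(1+O(\eps))$ off an event of Fleury-probability $\exp(-cnL^2\eps^2)$, by \Cref{prop:tail-of-Tnd}.
\end{itemize}
Take $\eps\asymp L^{-2}$ at the stopping scale of \Cref{cor:terminal-volume-bootstrap}. Then the shell-matching identity (1) of Step V+, $(\Med\bar T_{n,d})^2+2M_{n,d}^2=n(1+O(L^{-2}))$, shows that on good facets $\E_{U(\cF)}\|Z\|_2^2\le \alpha(n,d)(1+O(L^{-2}))$ in the paper's normalization. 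The exceptional set has Fleury-probability $\exp(-cn/L^2)$.

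\textbf{Step 3: bad facets, and the main obstacle.} On bad facets I will use the crude bound $\|Z\|_2^2\le R(\PN)^2\lesssim L$. This follows from $R(\PN)\le R(\vec X B_1^d)\le \max_i\|X_i\|_2$ and Dvoretzky (Step I), both holding with probability $1-\exp(-cn)$. The bad contribution is then at most $CL\cdot |V_{\mathrm{bad}}|/|\PN|$, where $V_{\mathrm{bad}}$ is the union of cones over bad facets.

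The main obstacle is turning the Fleury (volume-biased, in-expectation) control into a high-probability statement for the realized polytope. This is the same weighting issue raised in the open problems. I would handle it as follows.
\begin{itemize}
\item By \Cref{Lem:eventFL}, $\E|V_{\mathrm{bad}}|\le \exp(-cn/L^2)\,\E|\PN|$.
\item By Markov's inequality, with probability $1-\exp(-cn/(2L^2))$ we have $|V_{\mathrm{bad}}|\le \exp(-cn/(2L^2))\,\E|\PN|$.
\item To divide by $|\PN|$, I need the realized volume to be comparable to $\E|\PN|$ up to a factor $\exp(c'n/L^2)$ with $c'<c/2$. Here I would invoke \Cref{Lem:SharpVolumeRadiusCorrected} together with the upper bound $\E|\PN|\le\exp(Cn/L^2)|\bar B_n|M_{n,d}^{-n}$ from \Cref{cor:terminal-volume-bootstrap}. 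If the constants do not line up, I would enlarge the deviation parameter in Step 2 by a constant factor, so that the Fleury exponent dominates the volume-comparison exponent.
\end{itemize}
With that comparison, $|V_{\mathrm{bad}}|/|\PN|\le \exp(-c''n/L^2)$. The bad contribution is then $O(L\exp(-c''n/L^2))=O(L^{-2})\alpha(n,d)$ in the stated range of $d$. Combining with Step 2 yields the lemma.
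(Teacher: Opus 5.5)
Your proposal is correct and is essentially the paper's proof. Both arguments use the exact conic identity producing $\frac{n}{n+2}$ and the Pythagorean split over volume-biased Fleury facets; the cross term vanishes in the paper too, because its good-cone condition averages over the centered simplex. Both then pass from the annealed bad-cone estimate to a high-probability one by Markov's inequality plus the lower volume bound of \Cref{Lem:SharpVolumeRadiusCorrected}, and finish with a crude outradius bound on the bad cones. Your explicit constant matching, obtained by enlarging the deviation parameter, is more careful than the paper's.

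Two small fixes are needed. First, $R(\PN)=\max_i\|X_i\|_2\approx\sqrt n$, not $\lesssim\sqrt L$. The Step~I Dvoretzky bound concerns $\vec X K(n,d)$, not $\vec X B_1^d$, and the paper's $R(\SPO)^2\lesssim n$ has the same slip. This is harmless: the bad contribution becomes $Cn\,e^{-c''n/L^2}$, which is still $O(L^{-1})$ when $L\le n^{c}$ with $c<1/6$. Second, \Cref{C:GoodFlu2} only transfers polynomially small probabilities, so it cannot give the $e^{-cn/L^2}$ tangential tail. That tail should instead come from a direct change of measure; Cauchy--Schwarz against the Gaussian law loses only a polynomial factor in $n$.
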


\begin{lemma}[Lower second moment]\label{Lem:SecondMomentLowerCorrected}
With probability at least \(1-\exp(-cn/L^2)\),
\[
    m_2(\PN)
    \ge
    \frac{n}{n+2}\,
    \alpha(n,d)
    \left(1-O(L^{-2})\right).
\]
\end{lemma}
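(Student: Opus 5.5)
We need to prove the lower second moment bound: $m_2(\PN)\ge \frac{n}{n+2}\alpha(n,d)(1-O(L^{-2}))$ with high probability.

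The natural route is the classical fact that among all bodies of a given volume, the Euclidean ball minimizes the second moment. In polar coordinates,
\[
\int_K \|x\|_2^2\,dx = \frac{n|B_n|}{n+2}\int_{S^{n-1}} \rho_K(\theta)^{n+2}\,d\sigma(\theta),
\qquad
|K| = |B_n|\int_{S^{n-1}}\rho_K(\theta)^n\,d\sigma(\theta),
\]
where $\rho_K$ is the radial function. Hölder's (or Jensen's) inequality gives
\[
\int \rho_K^{n+2}\,d\sigma \ge \Big(\int \rho_K^n\,d\sigma\Big)^{(n+2)/n}.
\]
Combining these,
\[
m_2(K) = \frac{1}{|K|}\int_K\|x\|_2^2\,dx \ge \frac{n}{n+2}\Big(\frac{|K|}{|B_n|}\Big)^{2/n} = \frac{n}{n+2}\,\operatorname{vrad}(K)^2 .
\]
This inequality is deterministic and holds for every star body, so no probabilistic input is needed at this stage.

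Applying it with $K=\PN$ and invoking \Cref{Lem:SharpVolumeRadiusCorrected} on its event of probability at least $1-\exp(-cn/L^2)$ gives $\operatorname{vrad}(\PN)^2=\alpha(n,d)(1+O(L^{-2}))$. Hence
\[
m_2(\PN)\ge \frac{n}{n+2}\,\alpha(n,d)\big(1-O(L^{-2})\big)
\]
on the same event, which is exactly the claim.

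The only real content therefore sits in \Cref{Lem:SharpVolumeRadiusCorrected}, which we take as given; its lower bound on the volume is the direction used here, and it comes from the proof of \Cref{Theorem:LoNE} through the bootstrap of \Cref{cor:terminal-volume-bootstrap} and the sharp value of $M_{n,d}$ in \Cref{C:MND}. If that lemma were only available in expectation, the one extra step would be to upgrade the lower volume bound to high probability. This is not a serious obstacle: the inclusion $\bar B_n\subset(1+O(L^{-2}))\SPO$ (the sharpened Step I radial event) already yields $|\PN|\ge (1-O(L^{-2}))^n M_{n,d}^{-n}|\bar B_n|$ pointwise. One then identifies $n M_{n,d}^{-2}$ with $\alpha(n,d)$ up to $1+O(L^{-2})$ using \Cref{C:MND}, taking care that the constant normalizations agree.
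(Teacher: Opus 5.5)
Your argument is correct and is the same as the paper's: the deterministic bound $m_2(K)\ge\frac{n}{n+2}\operatorname{vrad}(K)^2$ combined with \Cref{Lem:SharpVolumeRadiusCorrected}, except that you prove the bound via polar coordinates and Jensen where the paper only cites the extremality of the ball. You should, however, delete the fallback in your last paragraph, because the inclusion $\bar B_n\subset(1+O(L^{-2}))\SPO$ it relies on is false: $r(\PN)$ is the smallest facet height ($\approx t_{n,d}$), while typical boundary points sit at radius $\approx\sqrt{t_{n,d}^2+2}$, so $r(\SPO)\approx\sqrt n\,t_{n,d}/\sqrt{t_{n,d}^2+2}=\sqrt n\,(1-\Theta(L^{-1}))$, and a pointwise volume lower bound would instead have to come from $\operatorname{vrad}(\PN)\ge 1/M(\PN)$ together with concentration of $M(\PN)$.
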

\begin{proof}[Proof of \Cref{Lem:SecondMomentUpperCorrected}]
It suffices to prove the corresponding estimate for the normalized body
\[
    \SPO=M_{n,d}\PN.
\]
Namely, we prove that with probability at least \(1-\exp(-cn/L^2)\),
\begin{equation}\label{Eq:ScaledSecondMomentCorrected}
    \frac1{|\SPO|}
    \int_{\SPO}\|x\|_2^2\,dx
    \le
    \frac{n}{n+2}\,
    n\left(1+O(L^{-2})\right).
\end{equation}
Since
\[
    M_{n,d}
    =
    \left(1+O(L^{-2})\right)
    \sqrt{\frac{n}{\alpha(n,d)}},
\]
the estimate \eqref{Eq:ScaledSecondMomentCorrected} is equivalent to
\[
    m_2(\PN)
    \le
    \frac{n}{n+2}
    \alpha(n,d)
    \left(1+O(L^{-2})\right).
\]

Let \(C\) be a cone sampled according to the volume-biased Fleury law
\(\mathrm{Fl}^{\mathrm{vol}}\). We call \(C\) good if
\begin{equation}\label{Eq:GoodConeCorrected}
    T_{n,d}^2
    +
    \E_{V\sim U(\simplex_c)}
    \|\bar Y+\vec YV\|_2^2
    \le
    \alpha(n,d)\left(1+C_0L^{-2}\right).
\end{equation}
The volume-biased Fleury transfer lemma, together with the height tail for
\(T_{n,d}\), Frobenius/Wishart concentration for the tangential Gaussian part,
and the Goodman determinant estimate, gives
\[
    \P_{\mathrm{Fl}^{\mathrm{vol}}}\{C\text{ is bad}\}
    \le
    \exp(-cn/L^2).
\]
Equivalently, in annealed form,
\[
    \E\big|\text{bad cone volume in }\PN\big|
    \le
    \exp(-cn/L^2)\E|\PN|.
\]
By Markov's inequality and the high-probability lower volume estimate from
\Cref{Lem:SharpVolumeRadiusCorrected}, with probability at least
\(1-\exp(-cn/L^2)\), the total volume of bad cones is at most
\[
    \exp(-c'n/L^2)|\PN|.
\]

Now fix a good cone \(C\). If \(X_C\sim U(C)\), the cone sampling formula gives
\[
    \|X_C\|_2^2
    =
    R^2\left(
        T_{n,d}^2+\|\bar Y+\vec YV\|_2^2
    \right),
\]
where
\[
    \E R^2=\frac{n}{n+2}.
\]
Therefore, by \eqref{Eq:GoodConeCorrected},
\[
    \frac1{|C|}\int_C\|x\|_2^2\,dx
    \le
    \frac{n}{n+2}
    \alpha(n,d)\left(1+C_0L^{-2}\right).
\]
After scaling by \(M_{n,d}\), this becomes
\[
    \frac1{|M_{n,d}C|}
    \int_{M_{n,d}C}\|x\|_2^2\,dx
    \le
    \frac{n}{n+2}
    n\left(1+O(L^{-2})\right).
\]

It remains to handle the bad cones. On the same high-probability event, the
standard outradius estimate for the normalized Gaussian polytope gives
\[
    R(\SPO)^2\lesssim n.
\]
Since the union of bad cones has exponentially small relative volume, its
contribution to the average second moment is
\[
    O(n)\exp(-c'n/L^2),
\]
which is absorbed into the \(O(L^{-2})\) error term.

Summing the contributions of good and bad cones yields
\[
    \frac1{|\SPO|}
    \int_{\SPO}\|x\|_2^2\,dx
    \le
    \frac{n}{n+2}
    n\left(1+O(L^{-2})\right),
\]
which proves \eqref{Eq:ScaledSecondMomentCorrected} and hence the lemma.
\end{proof}
\begin{proof}[Proof of \Cref{Lem:SecondMomentLowerCorrected}]
Among all measurable sets of a fixed volume, the Euclidean ball centered at the
origin minimizes the second moment about the origin. Hence, for every convex
body \(K\subset\mathbb R^n\),
\[
    m_2(K)
    \ge
    \frac{n}{n+2}\operatorname{vrad}(K)^2.
\]
Applying this to \(K=\PN\) and using \Cref{Lem:SharpVolumeRadiusCorrected}
gives
\[
    m_2(\PN)
    \ge
    \frac{n}{n+2}
    \alpha(n,d)
    \left(1-O(L^{-2})\right).
\]
\end{proof}
\begin{proof}[Proof of \Cref{Thm:Isotropic}]
Let \(K\subset\mathbb R^n\) be a centrally symmetric convex body. Its barycenter
is zero, and its isotropic constant is
\[
    L_K^2
    =
    |K|^{-2/n}
    \det(\operatorname{Cov}(K))^{1/n},
\]
where
\[
    \operatorname{Cov}(K)
    =
    \frac1{|K|}
    \int_K xx^\top\,dx.
\]
Since
\[
    \operatorname{tr}(\operatorname{Cov}(K))
    =
    \frac1{|K|}\int_K \|x\|_2^2\,dx
    =
    m_2(K),
\]
the arithmetic-geometric mean inequality gives
\[
    \det(\operatorname{Cov}(K))^{1/n}
    \le
    \frac1n\operatorname{tr}(\operatorname{Cov}(K))
    =
    \frac1n m_2(K).
\]
Therefore
\begin{equation}\label{Eq:IsotropicUpperCorrected}
    L_K^2
    \le
    \frac1n |K|^{-2/n}m_2(K).
\end{equation}

For the Euclidean ball,
\[
    L_{B_n}^2
    =
    \frac{|B_n|^{-2/n}}{n+2}.
\]
Dividing \eqref{Eq:IsotropicUpperCorrected} by \(L_{B_n}^2\), we obtain
\begin{equation}\label{Eq:IsotropicRatioUpperCorrected}
    \frac{L_K^2}{L_{B_n}^2}
    \le
    \frac{n+2}{n}
    \frac{m_2(K)}{\operatorname{vrad}(K)^2}.
\end{equation}

Apply \eqref{Eq:IsotropicRatioUpperCorrected} to \(K=\PN\). By
\Cref{Lem:SecondMomentUpperCorrected},
\[
    m_2(\PN)
    \le
    \frac{n}{n+2}
    \alpha(n,d)
    \left(1+O(L^{-2})\right),
\]
and by \Cref{Lem:SharpVolumeRadiusCorrected},
\[
    \operatorname{vrad}(\PN)^2
    =
    \alpha(n,d)
    \left(1+O(L^{-2})\right).
\]
Hence, with probability at least \(1-\exp(-cn/L^2)\),
\[
    \frac{L_{\PN}^2}{L_{B_n}^2}
    \le
    1+O(L^{-2}).
\]

The reverse inequality
\[
    L_{\PN}\ge L_{B_n}
\]
holds for every centrally symmetric convex body, since ellipsoids, and hence
Euclidean balls after affine normalization, minimize the isotropic constant.
Consequently,
\[
    1
    \le
    \frac{L_{\PN}^2}{L_{B_n}^2}
    \le
    1+O(L^{-2}).
\]
Taking square roots gives
\[
    L_{\PN}
    =
    \left(1+O(L^{-2})\right)L_{B_n}.
\]
\end{proof}
\begin{proof}[Proof of \Cref{Thm:ThinShell}]
We first prove the annealed squared-radius thin-shell estimate
\begin{equation}\label{Eq:ThinShellFirstCorrected}
    \E\int_{\PN}
    \left(
        \frac{\|x\|_2^2}{\alpha(n,d)}-1
    \right)^2dx
    \lesssim
    \frac{\E|\PN|}{nL^2}.
\end{equation}

By the volume-biased Fleury identity, applied to
\[
    \varphi(x)
    =
    \left(
        \frac{\|x\|_2^2}{\alpha(n,d)}-1
    \right)^2,
\]
it is enough to prove that
\begin{equation}\label{Eq:VolumeBiasedThinShellGoalCorrected}
    \E_{\mathrm{Fl}^{\mathrm{vol}}}
    \frac1{|C|}
    \int_C
    \left(
        \frac{\|x\|_2^2}{\alpha(n,d)}-1
    \right)^2dx
    \lesssim
    \frac1{nL^2}.
\end{equation}

Let \(X_C\sim U(C)\). By the cone sampling formula,
\[
    \|X_C\|_2^2
    =
    R^2
    \left(
        T_{n,d}^2+\|\bar Y+\vec YV\|_2^2
    \right),
\]
where \(R\) is independent of \((T_{n,d},\vec Y,V)\) and satisfies
\[
    \E R^2=\frac{n}{n+2},
    \qquad
    \E(R^2-1)^2\lesssim n^{-2}.
\]
Write
\[
    A:=T_{n,d}^2-(\alpha(n,d)-2),
    \qquad
    B:=\|\bar Y+\vec YV\|_2^2-2,
    \qquad
    D:=R^2-1.
\]
Then
\[
    \|X_C\|_2^2-\alpha(n,d)
    =
    A+B+D\left(T_{n,d}^2+\|\bar Y+\vec YV\|_2^2\right).
\]
Therefore,
\[
    \left(
        \frac{\|X_C\|_2^2}{\alpha(n,d)}-1
    \right)^2
    \lesssim
    \left(\frac{A}{\alpha(n,d)}\right)^2
    +
    \left(\frac{B}{\alpha(n,d)}\right)^2
    +
    D^2
    \left(
        \frac{T_{n,d}^2+\|\bar Y+\vec YV\|_2^2}{\alpha(n,d)}
    \right)^2.
\]
We estimate these three terms. First, by the volume-biased Fleury height estimate,
\[
    \E_{\mathrm{Fl}^{\mathrm{vol}}}
    \left(
        \frac{T_{n,d}^2-(\alpha(n,d)-2)}{\alpha(n,d)}
    \right)^2
    \lesssim
    \frac1{nL^2}.
\]
Second, by the volume-biased tangential moment estimate,
\[
    \E_{\mathrm{Fl}^{\mathrm{vol}}}
    \E_{V\sim U(\simplex_c)}
    \left(
        \|\bar Y+\vec YV\|_2^2-2
    \right)^2
    \lesssim
    \frac1n.
\]
Since \(\alpha(n,d)\asymp L\), this contributes at most
\[
    \frac1{\alpha(n,d)^2}\cdot \frac1n
    \lesssim
    \frac1{nL^2}.
\]
Third, since \(R\) is independent,
\[
    \E(R^2-1)^2\lesssim n^{-2},
\]
and
\[
    T_{n,d}^2+\|\bar Y+\vec YV\|_2^2
    =
    O_{L^2}(\alpha(n,d))
\]
under the volume-biased Fleury law, the radial contribution is \(O(n^{-2})\),
which is smaller than \(O((nL^2)^{-1})\). Combining the three estimates gives
\[
    \E_{\mathrm{Fl}^{\mathrm{vol}}}
    \E_{X_C\sim U(C)}
    \left(
        \frac{\|X_C\|_2^2}{\alpha(n,d)}-1
    \right)^2
    \lesssim
    \frac1{nL^2}.
\]
This proves \eqref{Eq:VolumeBiasedThinShellGoalCorrected}, and hence
\eqref{Eq:ThinShellFirstCorrected} follows from the volume-biased Fleury
identity.

We now prove the variance-normalized statement for the radius itself. Recall
that in the statement of the theorem,
\[
    m(\PN)
    :=
    \frac1{|\PN|}\int_{\PN}\|x\|_2\,dx.
\]
Let
\[
    R(x):=\|x\|_2.
\]
For each fixed realization of \(\PN\), the value \(m(\PN)\) minimizes
\[
    a\mapsto
    \int_{\PN}(R(x)-a)^2\,dx.
\]
Therefore,
\begin{equation}\label{Eq:RadiusVarianceMinimizerCorrected}
    \int_{\PN}
    (R(x)-m(\PN))^2\,dx
    \le
    \int_{\PN}
    \left(R(x)-\sqrt{\alpha(n,d)}\right)^2\,dx.
\end{equation}

Let \(\mathcal G\) denote the good event on which
\Cref{Lem:SharpVolumeRadiusCorrected,Lem:SecondMomentUpperCorrected,Lem:SecondMomentLowerCorrected}
hold, together with the corresponding annealed bad-volume estimates used
above. On \(\mathcal G\), by \Cref{Lem:SecondMomentLowerCorrected},
\[
    m_2(\PN)
    =
    \frac1{|\PN|}\int_{\PN}\|x\|_2^2\,dx
    \gtrsim
    \alpha(n,d).
\]
Since \(Z\sim U(\PN)\) is log-concave and \(z\mapsto \|z\|_2\) is a seminorm,
Borell's lemma gives the moment comparison
\[
    \left(\E\|Z\|_2^2\right)^{1/2}
    \lesssim
    \E\|Z\|_2.
\]
Equivalently,
\[
    m_2(\PN)^{1/2}
    \lesssim
    m(\PN).
\]
Thus, on \(\mathcal G\),
\[
    m(\PN)^2
    \gtrsim
    m_2(\PN)
    \gtrsim
    \alpha(n,d).
\]

Using \eqref{Eq:RadiusVarianceMinimizerCorrected}, we obtain on
\(\mathcal G\)
\[
\begin{aligned}
    |\PN|\,
    \Var_{Z\sim U(\PN)}
    \left(
        \frac{\|Z\|_2}{m(\PN)}
    \right)
    &=
    \frac1{m(\PN)^2}
    \int_{\PN}(R(x)-m(\PN))^2\,dx   \\
    &\lesssim
    \frac1{\alpha(n,d)}
    \int_{\PN}
    \left(R(x)-\sqrt{\alpha(n,d)}\right)^2\,dx .
\end{aligned}
\]
Moreover,
\[
    \left(R-\sqrt{\alpha(n,d)}\right)^2
    =
    \frac{\left(R^2-\alpha(n,d)\right)^2}
    {\left(R+\sqrt{\alpha(n,d)}\right)^2}
    \le
    \frac{\left(R^2-\alpha(n,d)\right)^2}{\alpha(n,d)}.
\]
Therefore, on \(\mathcal G\),
\[
    |\PN|\,
    \Var_{Z\sim U(\PN)}
    \left(
        \frac{\|Z\|_2}{m(\PN)}
    \right)
    \lesssim
    \frac1{\alpha(n,d)^2}
    \int_{\PN}
    \left(\|x\|_2^2-\alpha(n,d)\right)^2\,dx.
\]
Equivalently,
\[
    |\PN|\,
    \Var_{Z\sim U(\PN)}
    \left(
        \frac{\|Z\|_2}{m(\PN)}
    \right)
    \lesssim
    \int_{\PN}
    \left(
        \frac{\|x\|_2^2}{\alpha(n,d)}-1
    \right)^2\,dx.
\]

Taking expectation over the good event and using
\eqref{Eq:ThinShellFirstCorrected}, we get
\[
\begin{aligned}
    \E\left[
        |\PN|\,
        \Var_{Z\sim U(\PN)}
        \left(
            \frac{\|Z\|_2}{m(\PN)}
        \right)
        \mathbf 1_{\mathcal G}
    \right]
    &\lesssim
    \E\int_{\PN}
    \left(
        \frac{\|x\|_2^2}{\alpha(n,d)}-1
    \right)^2dx  \\
    &\lesssim
    \frac{\E|\PN|}{nL^2}.
\end{aligned}
\]

It remains to control the complement of the good event. By Borell's lemma,
for every centrally symmetric convex body \(K\),
\[
    \E_{Z\sim U(K)}\|Z\|_2^2
    \lesssim
    \left(\E_{Z\sim U(K)}\|Z\|_2\right)^2.
\]
Hence
\[
    \Var_{Z\sim U(K)}
    \left(
        \frac{\|Z\|_2}{\E\|Z\|_2}
    \right)
    =
    \frac{\E\|Z\|_2^2}{(\E\|Z\|_2)^2}-1
    \lesssim
    1.
\]
Applying this with \(K=\PN\), we have
\[
    |\PN|\,
    \Var_{Z\sim U(\PN)}
    \left(
        \frac{\|Z\|_2}{m(\PN)}
    \right)
    \lesssim
    |\PN|.
\]
The same height, determinant, volume-radius, and outradius estimates used above
give the annealed bad-volume bound
\[
    \E\left[|\PN|\mathbf 1_{\mathcal G^c}\right]
    \le
    \exp(-c n/L^2)\E|\PN|.
\]
Since
\[
    \exp(-c n/L^2)
    \lesssim
    \frac1{nL^2}
\]
in the regime under consideration, the contribution of \(\mathcal G^c\) is
absorbed into the main term:
\[
    \E\left[
        |\PN|\,
        \Var_{Z\sim U(\PN)}
        \left(
            \frac{\|Z\|_2}{m(\PN)}
        \right)
        \mathbf 1_{\mathcal G^c}
    \right]
    \lesssim
    \frac{\E|\PN|}{nL^2}.
\]

Combining the good and bad event estimates gives
\[
    \E\left[
    |\PN|\cdot
    \Var_{Z\sim U(\PN)}
    \left(
        \frac{\|Z\|_2}{m(\PN)}
    \right)
    \right]
    \lesssim
    \frac{\E|\PN|}{nL^2}.
\]
This proves \Cref{Thm:ThinShell}.
\end{proof}
\begin{proof}[Proof of \Cref{C:Fleury}]
Let
\[
    f_{\PN}:=\frac1{|\PN|}\int_{\PN}f(x)\,dx
\]
and, for each facet \(\mathcal F\),
\[
    C_{\mathcal F}:=\conv(0,\mathcal F),
    \qquad
    f_{C_{\mathcal F}}
    :=
    \frac1{|C_{\mathcal F}|}
    \int_{C_{\mathcal F}}f(x)\,dx.
\]
Since the facet cones partition \(\PN\) up to a null set,
\[
    \int_{\PN}(f-f_{\PN})^2dx
    =
    \sum_{\mathcal F}
    \int_{C_{\mathcal F}}(f-f_{C_{\mathcal F}})^2dx
    +
    \sum_{\mathcal F}
    |C_{\mathcal F}|(f_{C_{\mathcal F}}-f_{\PN})^2.
\]
Write these two terms as \(\mathrm I\) and \(\mathrm{II}\).

First, by the Poincare inequality on a simplex cone and the fact that \(f\) is
\(1\)-Lipschitz,
\[
    \int_C(f-f_C)^2dx
    \lesssim
    \frac1n\int_C\|x\|_2^2dx
\]
for every facet cone \(C\). Summing over all cones gives
\[
    \mathrm I
    \lesssim
    \frac1n\int_{\PN}\|x\|_2^2dx.
\]
Therefore
\[
    \E\mathrm I
    \lesssim
    \frac1n
    \E\int_{\PN}\|x\|_2^2dx.
\]

We now control \(\mathrm{II}\). For a fixed realization, \(f_{\PN}\) minimizes
\[
    a\mapsto
    \sum_{\mathcal F}
    |C_{\mathcal F}|(f_{C_{\mathcal F}}-a)^2.
\]
Hence, for every deterministic \(a\),
\[
    \mathrm{II}
    \le
    \sum_{\mathcal F}
    |C_{\mathcal F}|(f_{C_{\mathcal F}}-a)^2.
\]
Taking expectations and using the volume-biased Fleury reduction gives
\[
    \E\mathrm{II}
    \le
    \E|\PN|\,
    \E_{\mathrm{Fl}^{\mathrm{vol}}}(M_C-a)^2,
\]
where
\[
    M_C:=\frac1{|C|}\int_C f(x)\,dx.
\]
Choosing \(a=\E_{\mathrm{Fl}^{\mathrm{vol}}}M_C\), we get
\[
    \E\mathrm{II}
    \le
    \E|\PN|\,
    \Var_{\mathrm{Fl}^{\mathrm{vol}}}(M_C).
\]

Under the volume-biased Fleury law, write
\[
    C=UC_A,
    \qquad
    A=
    \begin{bmatrix}
        \vec Y\\
        T_{n,d}\mathbf 1_n^\top
    \end{bmatrix},
\]
where \(U\) is Haar orthogonal and independent of \(A\). A uniform point in
\(C\) has the representation
\[
    X_C\stackrel d=UR A\Lambda.
\]
where
\[
R\sim n r^{n-1}\mathbf 1_{[0,1]}(r),dr,
\qquad
\Lambda\sim U(\Delta_{n-1}).
\]
Therefore
\[
    M_C
    =
    \E_{R,\Lambda}f(UR A\Lambda).
\]
Set
\[
    G(A,U):=\E_{R,\Lambda}f(UR A\Lambda).
\]
By total variance,
\[
    \Var(G(A,U))
    =
    \E_A\Var_U(G(A,U)\mid A)
    +
    \Var_A(\E_U G(A,U)).
\]

For fixed \(A\), the Poincare inequality on \(SO(n)\), together with the
\(1\)-Lipschitz property of \(f\), gives
\[
    \Var_U(G(A,U)\mid A)
    \lesssim
    \frac1n
    \E_{R,\Lambda}\|R A\Lambda\|_2^2.
\]
Averaging over the volume-biased Fleury law and using the cone identity,
\[
    \E_A\Var_U(G(A,U)\mid A)
    \lesssim
    \frac1n
    \frac{\E\int_{\PN}\|x\|_2^2dx}{\E|\PN|}.
\]

It remains to control
\[
    \Var_A(\E_U G(A,U)).
\]
Let
\[
    \widetilde f(r):=\E_{\Theta\sim U(S^{n-1})}f(r\Theta).
\]
Then \(\widetilde f\) is \(1\)-Lipschitz and
\[
    \E_U G(A,U)
    =
    \E_{R,\Lambda}
    \widetilde f(\|R A\Lambda\|_2).
\]
Let \(A'\) be an independent copy of \(A\). By Jensen and the Lipschitz property
of \(\widetilde f\),
\[
\begin{aligned}
    \Var_A(\E_U G(A,U))
    &=
    \frac12
    \E_{A,A'}
    \left[
        \E_{R,\Lambda}
        \left(
            \widetilde f(\|RA\Lambda\|_2)
            -
            \widetilde f(\|RA'\Lambda\|_2)
        \right)
    \right]^2\\
    &\le
    \frac12
    \E_{A,A'}
    \E_{R,\Lambda}
    \left(
        \|RA\Lambda\|_2-\|RA'\Lambda\|_2
    \right)^2\\
    &=
    \E_{R,\Lambda}
    \Var_A(\|RA\Lambda\|_2).
\end{aligned}
\]
For any deterministic \(\rho\),
\[
    \Var_A(\|RA\Lambda\|_2)
    \le
    \E_A(\|RA\Lambda\|_2-\rho)^2.
\]
Taking \(\rho\) to be the radial center supplied by the thin-shell estimate,
we get
\[
    \Var_A(\E_U G(A,U))
    \lesssim
    \frac1n
    \frac{\E\int_{\PN}\|x\|_2^2dx}{\E|\PN|}.
\]
Therefore
\[
    \Var_{\mathrm{Fl}^{\mathrm{vol}}}(M_C)
    \lesssim
    \frac1n
    \frac{\E\int_{\PN}\|x\|_2^2dx}{\E|\PN|}.
\]
Consequently,
\[
    \E\mathrm{II}
    \lesssim
    \frac1n
    \E\int_{\PN}\|x\|_2^2dx.
\]

Combining the estimates for \(\mathrm I\) and \(\mathrm{II}\),
\[
    \E\int_{\PN}(f-f_{\PN})^2dx
    \lesssim
    \frac1n
    \E\int_{\PN}\|x\|_2^2dx.
\]
This proves the theorem.
\end{proof}

%% file: Bib.bib
@article {gordon2007gaussian,
    AUTHOR = {Gordon, Y. and Litvak, A. E. and Mendelson, S. and Pajor, A.},
     TITLE = {Gaussian averages of interpolated bodies and applications to
              approximate reconstruction},
   JOURNAL = {J. Approx. Theory},
  FJOURNAL = {Journal of Approximation Theory},
    VOLUME = {149},
      YEAR = {2007},
    NUMBER = {1},
     PAGES = {59--73},
      ISSN = {0021-9045,1096-0430},
   MRCLASS = {60G15 (60D05 62H30 94A12)},
  MRNUMBER = {2371614},
MRREVIEWER = {G.\ Schechtman},
       DOI = {10.1016/j.jat.2007.04.007},
       URL = {https://doi.org/10.1016/j.jat.2007.04.007},
}

@article{alonso2015gaussian,
  title={On the Gaussian behavior of marginals and the mean width of random polytopes},
  author={Alonso-Guti{\'e}rrez, David and Prochno, Joscha},
  journal={Proceedings of the American Mathematical Society},
  volume={143},
  number={2},
  pages={821--832},
  year={2015}
}

@article {gluskin88extremal,
    AUTHOR = {Gluskin, E. D.},
     TITLE = {Extremal properties of orthogonal parallelepipeds and their
              applications to the geometry of {B}anach spaces},
   JOURNAL = {Mat. Sb. (N.S.)},
  FJOURNAL = {Matematicheski\u{\i} Sbornik. Novaya Seriya},
    VOLUME = {136(178)},
      YEAR = {1988},
    NUMBER = {1},
     PAGES = {85--96}
}

@article{bizeul2025slicing,
  title={The slicing conjecture via small ball estimates},
  author={Bizeul, Pierre},
  journal={arXiv preprint arXiv:2501.06854},
  year={2025}
}

@article{milman2015mean,
  title={On the mean-width of isotropic convex bodies and their associated L p-centroid bodies},
  author={Milman, Emanuel},
  journal={International Mathematics Research Notices},
  volume={2015},
  number={11},
  pages={3408--3423},
  year={2015},
  publisher={Oxford University Press}
}

@inproceedings{paouris2004estimates,
  title={$\Psi_2$-Estimates for Linear Functionals on Zonoids},
  author={Paouris, Grigoris},
  booktitle={Geometric Aspects of Functional Analysis: Israel Seminar 2001-2002},
  pages={211--222},
  year={2004},
  organization={Springer}
}

@article{guan2024note,
  title={A note on Bourgain's slicing problem},
  author={Guan, Qingyang},
  journal={arXiv preprint arXiv:2412.09075},
  year={2024}
}

@article{klartag2025affirmative,
  title={Affirmative resolution of Bourgain’s slicing problem using Guan’s bound},
  author={Klartag, Boaz and Lehec, Joseph},
  journal={Geometric and Functional Analysis},
  pages={1--22},
  year={2025},
  publisher={Springer}
}

@article{efron1965convex,
  title={The convex hull of a random set of points},
  author={Efron, Bradley},
  journal={Biometrika},
  volume={52},
  number={3-4},
  pages={331--343},
  year={1965},
  publisher={Oxford University Press}
}

@article{paouris2019gaussian,
  title={Gaussian Convex Bodies: a Nonasymptotic Approach},
  author={Paouris, G and Pivovarov, P and Valettas, P},
  journal={Journal of Mathematical Sciences},
  volume={238},
  number={4},
  pages={537--559},
  year={2019},
  publisher={Springer}
}

@article{fleury2012poincare,
  title={Poincar{\'e} inequality in mean value for Gaussian polytopes},
  author={Fleury, B},
  journal={Probability theory and related fields},
  volume={152},
  pages={141--178},
  year={2012},
  publisher={Springer}
}

@article{goodman1963distribution,
  title={The distribution of the determinant of a complex Wishart distributed matrix},
  author={Goodman, NR},
  journal={The Annals of mathematical statistics},
  volume={34},
  number={1},
  pages={178--180},
  year={1963},
  publisher={JSTOR}
}

@article {guedon22geometry,
    AUTHOR = {Gu\'{e}don, Olivier and Krahmer, Felix and K\"{u}mmerle, Christian and
              Mendelson, Shahar and Rauhut, Holger},
     TITLE = {On the geometry of polytopes generated by heavy-tailed random
              vectors},
   JOURNAL = {Commun. Contemp. Math.},
  FJOURNAL = {Communications in Contemporary Mathematics},
    VOLUME = {24},
      YEAR = {2022},
    NUMBER = {3},
     PAGES = {Paper No. 2150056, 31},
      ISSN = {0219-1997},
   MRCLASS = {52A22 (15B52 46B06 46B09 52A23 60B20 65K10)},
  MRNUMBER = {4400194},
       DOI = {10.1142/S0219199721500565},
       URL = {https://doi.org/10.1142/S0219199721500565},
}

@article{kabluchko2019expected,
  title={Expected volumes of Gaussian polytopes, external angles, and multiple order statistics},
  author={Kabluchko, Zakhar and Zaporozhets, Dmitry},
  journal={Transactions of the American Mathematical Society},
  volume={372},
  number={3},
  pages={1709--1733},
  year={2019}
}

@article{paouris2006concentration,
  title={Concentration of mass on convex bodies},
  author={Paouris, Grigoris},
  journal={Geometric \& Functional Analysis GAFA},
  volume={16},
  number={5},
  pages={1021--1049},
  year={2006},
  publisher={Springer}
}

@article{klartag2009hyperplane,
  title={On the hyperplane conjecture for random convex sets},
  author={Klartag, Bo’az and Kozma, Gady},
  journal={Israel Journal of Mathematics},
  volume={170},
  number={1},
  pages={253--268},
  year={2009},
  publisher={Springer}
}

@article{Can08,
  author  = {Emmanuel J. Candes},
  title   = {The restricted isometry property and its implications for compressed sensing},
  journal = {Comptes Rendus Mathematique},
  volume  = {346},
  number  = {9--10},
  pages   = {589--592},
  year    = {2008}
}

@article{DE06,
  author  = {David L. Donoho and Michael Elad},
  title   = {On the stability of the basis pursuit in the presence of noise},
  journal = {Signal Processing},
  volume  = {86},
  number  = {3},
  pages   = {511--532},
  year    = {2006}
}

@inproceedings{JLL20,
  author    = {Peizhong Ju and Xiaojun Lin and Jia Liu},
  title     = {Overfitting Can Be Harmless for Basis Pursuit, But Only to a Degree},
  booktitle = {Advances in Neural Information Processing Systems},
  volume    = {33},
  year      = {2020}
}

@article{Woj10,
  author  = {P. Wojtaszczyk},
  title   = {Stability and instance optimality for Gaussian measurements in compressed sensing},
  journal = {Foundations of Computational Mathematics},
  volume  = {10},
  number  = {1},
  pages   = {1--13},
  year    = {2010}
}

@article{Fou14,
  author  = {Simon Foucart},
  title   = {Stability and robustness of $\ell_1$-minimizations with Weibull matrices and redundant dictionaries},
  journal = {Linear Algebra and its Applications},
  volume  = {441},
  pages   = {4--21},
  year    = {2014}
}

@article{KKR18,
  author  = {Felix Krahmer and Christian K{\"u}mmerle and Holger Rauhut},
  title   = {A quotient property for {RIP} matrices with heavy-tailed entries and its application to noise-blind compressed sensing},
  journal = {arXiv preprint arXiv:1806.04261},
  year    = {2018}
}

@article{CL21,
  author  = {Niladri S. Chatterji and Philip M. Long},
  title   = {Foolish crowds support benign overfitting},
  journal = {arXiv preprint arXiv:2110.02941},
  year    = {2021}
}

@article{MVSS20,
  author  = {Vidya Muthukumar and Kailas Vodrahalli and Vignesh Subramanian and Anant Sahai},
  title   = {Harmless interpolation of noisy data in regression},
  journal = {IEEE Journal on Selected Areas in Information Theory},
  volume  = {1},
  number  = {1},
  pages   = {67--83},
  year    = {2020}
}

@article{barthe2009remarks,
  title={Remarks on non-interacting conservative spin systems: the case of gamma distributions},
  author={Barthe, Franck and Wolff, Pawe{\l}},
  journal={Stochastic processes and their applications},
  volume={119},
  number={8},
  pages={2711--2723},
  year={2009},
  publisher={Elsevier}
}

@article{giannopoulos2000extremal,
  title={Extremal problems and isotropic positions of convex bodies},
  author={Giannopoulos, Apostolos A and Milman, Vitali D},
  journal={Israel Journal of Mathematics},
  volume={117},
  pages={29--60},
  year={2000},
  publisher={Springer}
}

@article {chen1998atomic,
    AUTHOR = {Chen, Scott Shaobing and Donoho, David L. and Saunders,
              Michael A.},
     TITLE = {Atomic decomposition by basis pursuit},
   JOURNAL = {SIAM J. Sci. Comput.},
  FJOURNAL = {SIAM Journal on Scientific Computing},
    VOLUME = {20},
      YEAR = {1998},
    NUMBER = {1},
     PAGES = {33--61},
      ISSN = {1064-8275},
   MRCLASS = {94A12 (41A45 65K05)},
  MRNUMBER = {1639094},
       DOI = {10.1137/S1064827596304010},
       URL = {https://doi.org/10.1137/S1064827596304010},
}

@book {chatterjee2014superbook,
    AUTHOR = {Chatterjee, Sourav},
     TITLE = {Superconcentration and related topics},
    SERIES = {Springer Monographs in Mathematics},
 PUBLISHER = {Springer, Cham},
      YEAR = {2014},
     PAGES = {x+156},
      ISBN = {978-3-319-03885-8; 978-3-319-03886-5},
   MRCLASS = {60E15 (60G15 60G60 60G70 60K35)},
  MRNUMBER = {3157205},
MRREVIEWER = {Vladislav Kargin},
       DOI = {10.1007/978-3-319-03886-5},
       URL = {https://doi.org/10.1007/978-3-319-03886-5},
}

@article {talagrand1994russo,
    AUTHOR = {Talagrand, Michel},
     TITLE = {On {R}usso's approximate zero-one law},
   JOURNAL = {Ann. Probab.},
  FJOURNAL = {The Annals of Probability},
    VOLUME = {22},
      YEAR = {1994},
    NUMBER = {3},
     PAGES = {1576--1587}
}

@inproceedings{wang2022tight,
  title={Tight bounds for minimum in lone-norm interpolation of noisy data},
  author={Wang, Guillaume and Donhauser, Konstantin and Yang, Fanny},
  booktitle={International Conference on Artificial Intelligence and Statistics},
  pages={10572--10602},
  year={2022},
  organization={PMLR}
}

@article{Paorlp,
title = {Random version of Dvoretzky’s theorem in $\ell_p$},
journal = {Stochastic Processes and their Applications},
volume = {127},
number = {10},
pages = {3187-3227},
year = {2017},
author = {Grigoris Paouris and Petros Valettas and Joel Zinn}
}

@article{muthukumar2020,
  title={Harmless interpolation of noisy data in regression},
  author={Muthukumar, Vidya and Vodrahalli, Kailas and Subramanian, Vignesh and Sahai, Anant},
  journal={IEEE Journal on Selected Areas in Information Theory},
  volume={1},
  number={1},
  pages={67--83},
  year={2020},
  publisher={IEEE}
}

@inproceedings{thramp2015,
  title={Regularized linear regression: A precise analysis of the estimation error},
  author={Thrampoulidis, Christos and Oymak, Samet and Hassibi, Babak},
  booktitle={Conference on Learning Theory},
  pages={1683--1709},
  year={2015},
  organization={PMLR}
}

@article{gromov1983topological,
  title={A topological application of the isoperimetric inequality},
  author={Gromov, Mikhael and Milman, Vitali D},
  journal={American Journal of Mathematics},
  volume={105},
  number={4},
  pages={843--854},
  year={1983},
  publisher={JSTOR}
}

@article{hastie2022surprises,
  title={Surprises in high-dimensional ridgeless least squares interpolation},
  author={Hastie, Trevor and Montanari, Andrea and Rosset, Saharon and Tibshirani, Ryan J},
  journal={The Annals of Statistics},
  volume={50},
  number={2},
  pages={949--986},
  year={2022},
  publisher={Institute of Mathematical Statistics}
}

@article{chinot2020robustness,
  title={On the robustness of minimum norm interpolators and regularized empirical risk minimizers},
  author={Chinot, Geoffrey and L{\"o}ffler, Matthias and van de Geer, Sara},
  journal={arXiv preprint arXiv:2012.00807},
  year={2020}
}

@article{bartlett2002rademacher,
  title={Rademacher and Gaussian complexities: Risk bounds and structural results},
  author={Bartlett, Peter L and Mendelson, Shahar},
  journal={Journal of Machine Learning Research},
  volume={3},
  number={Nov},
  pages={463--482},
  year={2002}
}

@article{donhauser2022fast,
  title={Fast rates for noisy interpolation require rethinking the effects of inductive bias},
  author={Donhauser, Konstantin and Ruggeri, Nicolo and Stojanovic, Stefan and Yang, Fanny},
  journal={arXiv preprint arXiv:2203.03597},
  year={2022}
}

@article{nakkiran2021,
  title={Deep double descent: Where bigger models and more data hurt},
  author={Nakkiran, Preetum and Kaplun, Gal and Bansal, Yamini and Yang, Tristan and Barak, Boaz and Sutskever, Ilya},
  journal={Journal of Statistical Mechanics: Theory and Experiment},
  volume={2021},
  number={12},
  pages={124003},
  year={2021},
  publisher={IOP Publishing}
}

@book{brazitikos2014geometry,
  title={Geometry of isotropic convex bodies},
  author={Brazitikos, Silouanos and Giannopoulos, Apostolos and Valettas, Petros and Vritsiou, Beatrice-Helen},
  volume={196},
  year={2014},
  publisher={American Mathematical Soc.}
}

@article{bartlett2020benign,
  title={Benign overfitting in linear regression},
  author={Bartlett, Peter L and Long, Philip M and Lugosi, G{\'a}bor and Tsigler, Alexander},
  journal={Proceedings of the National Academy of Sciences},
  volume={117},
  number={48},
  pages={30063--30070},
  year={2020},
  publisher={National Acad Sciences}
}

@article{koehler2021uniform,
  title={Uniform convergence of interpolators: Gaussian width, norm bounds and benign overfitting},
  author={Koehler, Frederic and Zhou, Lijia and Sutherland, Danica J and Srebro, Nathan},
  journal={Advances in Neural Information Processing Systems},
  volume={34},
  pages={20657--20668},
  year={2021}
}

@article{zhou2023uniform,
  title={Uniform Convergence with Square-Root Lipschitz Loss},
  author={Zhou, Lijia and Dai, Zhen and Koehler, Frederic and Srebro, Nathan},
  journal={arXiv preprint arXiv:2306.13188},
  year={2023}
}

@book{artstein2015asymptotic,
  title={Asymptotic geometric analysis, Part I},
  author={Artstein-Avidan, Shiri and Giannopoulos, Apostolos and Milman, Vitali D},
  volume={202},
  year={2015},
  publisher={American Mathematical Society},
  series = {Mathematical Surveys and Monographs}
}

@article{bartlett2005local,
  title={Local {R}ademacher complexities},
  author={Bartlett, Peter L and Bousquet, Olivier and Mendelson, Shahar},
  journal={The Annals of Statistics},
  volume={33},
  number={4},
  pages={1497--1537},
  year={2005},
  publisher={Institute of Mathematical Statistics}
}

@article{chatterjee2014new,
	title={A new perspective on least squares under convex constraint},
	author={Chatterjee, Sourav},
	journal={The Annals of Statistics},
	volume={42},
	number={6},
	pages={2340--2381},
	year={2014},
	publisher={Institute of Mathematical Statistics}
}

@article{kur2026minimum,
  title={Minimum Norm Interpolation via The Local Theory of Banach Spaces: The Role of $2 $-Uniform Convexity},
  author={Kur, Gil and Bizeul, Pierre},
  journal={arXiv preprint arXiv:2603.28956},
  year={2026}
}

@article{bizeul2025distances,
  title={Distances between non-symmetric convex bodies: optimal bounds up to polylog},
  author={Bizeul, Pierre and Klartag, Boaz},
  journal={arXiv preprint arXiv:2510.20511},
  year={2025}
}

@article{donoho2009counting,
  title={Counting faces of randomly projected polytopes when the projection radically lowers dimension},
  author={Donoho, David and Tanner, Jared},
  journal={Journal of the American Mathematical Society},
  volume={22},
  number={1},
  pages={1--53},
  year={2009}
}

@article{donoho2010counting,
  title={Counting the faces of randomly-projected hypercubes and orthants, with applications},
  author={Donoho, David L and Tanner, Jared},
  journal={Discrete \& computational geometry},
  volume={43},
  number={3},
  pages={522--541},
  year={2010},
  publisher={Springer}
}

@article{lecue2023geometrical,
  title={A geometrical viewpoint on the benign overfitting property of the minimum $\ell_2$-norm interpolant estimator},
  author={Lecu{\'e}, Guillaume and Shang, Zong},
  journal={arXiv preprint arXiv:2203.05873},
  year={2022}
}

@InProceedings{pmlr-v235-kur24a,
  title = 	 {Minimum Norm Interpolation Meets The Local Theory of Banach Spaces},
  author =       {Kur, Gil and Abdalla, Pedro and Bizeul, Pierre and Yang, Fanny},
  booktitle = 	 {Proceedings of the 41st International Conference on Machine Learning},
  pages = 	 {25726--25754},
  year = 	 {2024},
  volume = 	 {235},
  series = 	 {Proceedings of Machine Learning Research},
  month = 	 {21--27 Jul},
  publisher =    {PMLR}
}

@article{paouris2016gaussian,
  title={A Gaussian small deviation inequality for convex functions},
  author={Paouris, Grigoris and Valettas, Petros},
  journal={arXiv preprint arXiv:1611.01723},
  year={2016}
}

@article{klartag2007small,
  title={Small ball probability and Dvoretzky’s theorem},
  author={Klartag, Bo'az and Vershynin, Roman},
  journal={Israel Journal of Mathematics},
  volume={157},
  number={1},
  pages={193--207},
  year={2007},
  publisher={Springer}
}

@article{klartag2008volume,
  title={On volume distribution in 2-convex bodies},
  author={Klartag, Bo’az and Milman, Emanuel},
  journal={Israel Journal of Mathematics},
  volume={164},
  pages={221--249},
  year={2008},
  publisher={Springer}
}

@article{zhang2021understanding,
  title={Understanding deep learning (still) requires rethinking generalization},
  author={Zhang, Chiyuan and Bengio, Samy and Hardt, Moritz and Recht, Benjamin and Vinyals, Oriol},
  journal={Communications of the ACM},
  volume={64},
  number={3},
  pages={107--115},
  year={2021},
  publisher={ACM New York, NY, USA}
}

@article{oravkin2021optimal,
  title={On optimal interpolation in linear regression},
  author={Oravkin, Eduard and Rebeschini, Patrick},
  journal={Advances in Neural Information Processing Systems},
  volume={34},
  pages={29116--29128},
  year={2021}
}

@inproceedings{gunasekar2018characterizing,
  title={Characterizing implicit bias in terms of optimization geometry},
  author={Gunasekar, Suriya and Lee, Jason and Soudry, Daniel and Srebro, Nathan},
  booktitle={International Conference on Machine Learning},
  pages={1832--1841},
  year={2018},
  organization={PMLR}
}

@article{mei2022,
  title={The generalization error of random features regression: Precise asymptotics and the double descent curve},
  author={Mei, Song and Montanari, Andrea},
  journal={Communications on Pure and Applied Mathematics},
  volume={75},
  number={4},
  pages={667--766},
  year={2022},
  publisher={Wiley Online Library}
}

@inproceedings{shamir2022implicit,
  title={The implicit bias of benign overfitting},
  author={Shamir, Ohad},
  booktitle={Conference on Learning Theory},
  pages={448--478},
  year={2022},
  organization={PMLR}
}

@article{tsigler2023benign,
  title={Benign overfitting in ridge regression},
  author={Tsigler, Alexander and Bartlett, Peter L},
  journal={Journal of Machine Learning Research},
  volume={24},
  number={123},
  pages={1--76},
  year={2023}
}

@article{ghorbani-2021,
author = {Behrooz Ghorbani and Song Mei and Theodor Misiakiewicz and Andrea Montanari},
title = {{Linearized two-layers neural networks in high dimension}},
volume = {49},
journal = {The Annals of Statistics},
number = {2},
publisher = {Institute of Mathematical Statistics},
pages = {1029 -- 1054},
year = {2021}
}

@inproceedings{cordero2012hypercontractive,
  title={Hypercontractive measures, Talagrand’s inequality, and influences},
  author={Cordero-Erausquin, Dario and Ledoux, Michel},
  booktitle={Geometric Aspects of Functional Analysis: Israel Seminar 2006--2010},
  pages={169--189},
  year={2012},
  organization={Springer}
}

@article{paouris2,
  title={On the $\Psi_2$-behaviour of linear functionals on isotropic convex bodies},
  author={Paouris, G},
  journal={Studia Math. v168},
  pages={285--299},
  year = 2004
}

@article {milman-1986,
    AUTHOR = {Milman, Vitali D.},
     TITLE = {In\'egalit\'e{} de {B}runn-{M}inkowski inverse et applications
              \`a{} la th\'eorie locale des espaces norm\'es},
   JOURNAL = {C. R. Acad. Sci. Paris S\'er. I Math.},
  FJOURNAL = {Comptes Rendus des S\'eances de l'Acad\'emie des Sciences.
              S\'erie I. Math\'ematique},
    VOLUME = {302},
      YEAR = {1986},
    NUMBER = {1},
     PAGES = {25--28},
      ISSN = {0249-6291},
   MRCLASS = {52A40 (46B20)}
}

@book {pisier-1989,
    AUTHOR = {Pisier, Gilles},
     TITLE = {The volume of convex bodies and {B}anach space geometry},
    SERIES = {Cambridge Tracts in Mathematics},
    VOLUME = {94},
 PUBLISHER = {Cambridge University Press, Cambridge},
      YEAR = {1989},
     PAGES = {xvi+250},
      ISBN = {0-521-36465-5; 0-521-66635-X},
   MRCLASS = {52A21 (46B20 52A07)},
       DOI = {10.1017/CBO9780511662454},
       URL = {https://doi.org/10.1017/CBO9780511662454},
}
